\renewcommand{\eprint}[1]{#1}
\numberwithin{equation}{section}
\theoremstyle{plain}
\newtheorem{thm}{Theorem}[section]
\newtheorem{prop}[thm]{Proposition}
\newtheorem{lemma}[thm]{Lemma}
\newtheorem{cor}[thm]{Corollary}
\theoremstyle{definition}
\newtheorem{defn}[thm]{Definition}
\theoremstyle{remark}
\newtheorem{question}[thm]{Question}
\newtheorem{remark}[thm]{Remark}
\newtheorem{example}[thm]{Example}
\theoremstyle{plain}
\newenvironment{customthm}[1]
  {\innercustomthm}
  {\endinnercustomthm}
\newcommand\bp{\begin{proof}}
\newcommand\ep{\end{proof}}
\newcommand\dach{{\!\widehat{\ \ }}}
\newcommand\C{\mathbb{C}}
\newcommand\N{\mathbb{N}}
\newcommand\Q{\mathbb{Q}}
\newcommand\R{\mathbb{R}}
\newcommand\T{\mathbb{T}}
\newcommand\Z{\mathbb{Z}}
\newcommand{\G}{\mathcal{G}}
\newcommand{\HH}{\mathcal{H}}
\newcommand{\M}{\mathcal{M}}
\newcommand{\QQ}{\mathcal{Q}}
\newcommand{\SSS}{\mathcal{S}}
\newcommand{\UU}{\mathcal{U}}
\newcommand\g{\mathfrak{g}}
\newcommand\Ad{\operatorname{Ad}}
\newcommand{\Cl}{\operatorname{Cl}}
\newcommand\diag{\operatorname{diag}}
\newcommand{\Gu}{{\mathcal{G}^{(0)}}}
\newcommand{\Gxx}{\mathcal{G}^x_x}
\newcommand{\Gyy}{\mathcal{G}^y_y}
\newcommand\ev{\mathrm{ev}}
\newcommand\fin{\mathrm{fin}}
\newcommand\hull{\operatorname{hull}}
\newcommand\HS{\operatorname{HS}}
\newcommand\Ind{\operatorname{Ind}}
\newcommand\Mat{\operatorname{Mat}}
\newcommand\Stab{\operatorname{Stab}}
\newcommand\Sub{\operatorname{Sub}}
\newcommand\supp{\operatorname{supp}}
\newcommand\ord{\operatorname{ord}}
\newcommand\prim{\mathrm{prim}}
\newcommand\Prim{\operatorname{Prim}}
\newcommand\Res{\operatorname{Res}}
\newcommand\SL{\operatorname{SL}}
\newcommand\Tr{\operatorname{Tr}}
\newcommand\eps{\varepsilon}
\newcommand\Iso{\operatorname{Iso}(\mathcal G)^{\circ}}
\newcommand\IsoG{\operatorname{Iso}(\mathcal G)}
\newcommand\ee{\nopagebreak\mbox{\ }\hfill$\diamond$}
\begin{document}

\title{The ideal structure of C$^*$-algebras of \'etale groupoids with isotropy groups of local polynomial growth}

\date{December 16, 2024; revised May 30, 2025; minor changes July 5, 2025}

\author{Johannes Christensen}
\address{Department of Mathematics, Aarhus University, Denmark}
\email{johannes@math.au.dk}

\author{Sergey Neshveyev}
\address{Department of Mathematics, University of Oslo, Norway}
\email{sergeyn@math.uio.no}

\thanks{J.C. is supported by the postdoctoral fellowship 1291823N of the Research Foundation Flanders and by a research
grant (VIL72080) from Villum Fonden. S.N. is partially supported by the NFR funded project 300837 ``Quantum Symmetry''.}

\begin{abstract}
Given an amenable second countable Hausdorff locally compact \'etale groupoid $\mathcal G$ such that each isotropy group~$\mathcal G^x_x$ has local polynomial growth, we give a description of $\operatorname{Prim} C^*(\mathcal G)$ as a topological space in terms of the topology on $\mathcal G$ and representation theory of the isotropy groups and their subgroups. The description simplifies when either the isotropy groups are FC-hypercentral or $\mathcal G$ is the transformation groupoid $\Gamma\ltimes X$ defined by an action $\Gamma\curvearrowright X$ with locally finite stabilizers.  To illustrate the class of C$^*$-algebras for which our results can provide a complete description of the ideal structure, we compute the primitive spectrum of $\mathrm{SL}_3(\mathbb Z)\ltimes C_0(\mathrm{SL}_3(\mathbb R)/U_3(\mathbb R))$, where $U_3(\mathbb R)$ is the group of unipotent upper triangular matrices.
\end{abstract}

\maketitle

\section*{Introduction}

In this paper we continue our work~\cite{CN3} on the ideal structure of C$^*$-algebras of amenable second countable Hausdorff locally compact \'etale groupoids $\G$. The basis of our analysis is a version of the Effros--Hahn conjecture stating that for such groupoids every primitive ideal is induced from an isotropy group, which in the case of transformation groupoids was proved by Sauvageot~\cite{Sau} and in the general case by Ionescu and Williams~\cite{IW}, see also~\cite{GR} and~\cite{R}. Therefore there is a surjective induction map
\begin{equation}\tag{$*$}\label{eq:VVW0}
\Ind\colon \Stab(\G)^\prim\to \Prim C^*(\G),
\end{equation}
where $\Stab(\G)^\prim$ is the set of pairs $(x,J)$ such that $x\in\Gu$ is a unit and $J$ is a primitive ideal in~$C^*(\Gxx)$. The problem is to understand the Jacobson topology on $\Prim C^*(\G)$ in terms of this map. In some form it goes back to the origins of the Mackey theory~\cite{MR0031489} that aims to describe the unitary dual of a group with a normal subgroup having a simpler representation theory.

The problem simplifies significantly when the groupoid $\G$ is principal and therefore $\Stab(\G)^\prim$ can be identified with $\Gu$. As was shown already by Effros and Hahn~\cite{EH} for transformation groupoids (and, in fact, even earlier by Glimm~\cite{MR0146297}, if we accept surjectivity of the induction map), we then get a homeomorphism $(\G\backslash\Gu)^\sim\cong \Prim C^*(\G)$, were~$\sim$ denotes the $T_0$-ization, or the Kolmogorov quotient, obtained by identifying points with identical closures; in other words, $(\G\backslash\Gu)^\sim$ is the quasi-orbit space for the action $\G\curvearrowright\Gu$. The same is true when~$\G$ is assumed to be only essentially principal, meaning that every closed invariant subset $X\subset\Gu$ has a dense subset of points with trivial isotropy groups, see~\cites{Rbook,SW} and Section~\ref{ssec:IIP} below. This is closely related to the fact that in this case the induction map~\eqref{eq:VVW0} factors through the unit space~$\Gu$. When~$\G$ is not essentially principal, this property no longer holds and we are forced to work with the entire space $\Stab(\G)^\prim$.

For transformation groupoids $\G=\Gamma\ltimes X$ with stabilizer groups $\Gamma_x$ depending continuously on~$x$, the problem of describing the topology on the induced primitive ideals was solved long ago by Glimm~\cite{MR0146297}, and similar arguments work for more general groupoids with continuous isotropy, see~\cite{MR2966476} and references there, as well as Section~\ref{ssec:questions} below. Since then there hasn't been much progress for groupoids with discontinuous nonabelian isotropy, although a significant progress has been made in the adjacent area of primitive ideal spaces of group C$^*$-algebras, particularly in the nilpotent case, see \citelist{\cite{MR0352326}\cite{BP}\cite{MR1473630}\cite{MR3012147}} and references there. There has been no solution even for type I groupoid C$^*$-algebras, where one would expect the groupoid structure to be much more manageable and the original goals of the Mackey theory to be most achievable. One of the few general results obtained since~\cite{MR0146297} that does not require continuous isotropy is the description of $\Prim(\Gamma\ltimes C_0(X))$ for proper actions $\Gamma\curvearrowright X$  by Echterhoff and Emerson~\cite{EE}. Note that in this case the isotropy structure is still relatively benign: the stabilizers~$\Gamma_x$ are finite, the map $x\mapsto\Gamma_x$ is continuous outside a closed set with empty interior and, in fact, the whole problem quickly reduces to the case when $\Gamma$ is finite. (It should be said that \cite{EE} deals with proper actions of nondiscrete groups as well, in which case the isotropy structure is more complicated.)

The situation is better for groupoids with abelian isotropy. In~\cite{MR0617538} Williams described the topology on $\Prim(\Gamma\ltimes C_0(X))$ under the assumption that the stabilizers of the action $\Gamma\curvearrowright X$ are contained in one abelian subgroup of $\Gamma$. Although his result requires an assumption on the global structure of the isotropy bundle, an important point is that this assumption is purely algebraic. A few years ago van Wyk and Williams~\cite{MR4395600} proposed a hypothetical extension of results of~\cite{MR0617538} to groupoids with abelian isotropy groups. Specifically, for such groupoids, they defined a topology on $\Stab(\G)^\prim$ such that $\Ind$ gives rise to a continuous map
\begin{equation}\tag{$**$}\label{eq:VVW}
\Ind^\sim\colon(\G\backslash \Stab(\G)^\prim)^\sim\to\Prim C^*(\G),
\end{equation}
and conjectured that this map is a homeomorphism ``in most circumstances''. We remark that this can be seen as a more refined version of an old conjecture of Baggett~\cite{MR0409720}*{Conjecture~2}, when the latter is adapted to groupoids with abelian isotropy.

In our previous paper~\cite{CN3} we proved that~\eqref{eq:VVW} is indeed a homeomorphism for all amenable second countable \'etale groupoids with abelian isotropy groups. This covered a fairly large class of C$^*$-algebras including higher rank graph C$^*$-algebras and crossed products $\Gamma\ltimes C_0(X)$ defined by amenable actions $\Gamma\curvearrowright X$ with abelian stabilizers. Although there are plenty of nontrivial examples of such actions, it is nevertheless clear that unless $\Gamma$ is already abelian, the assumption of commutativity of the stabilizers is quite restrictive. At the same time we are not aware of any results in the literature that would describe $\Prim(\Gamma\ltimes C_0(X))$ for some infinite noncommutative group $\Gamma$ and arbitrary actions $\Gamma\curvearrowright X$.

In the present paper we develop further the ideas of~\cite{CN3} and significantly expand the class of groupoids for which $\Prim C^*(\G)$ can be described in terms of the isotropy groups and the topology on $\G$. In order to formulate our main results, denote by $\Sub(\G)^\prim$ the set of pairs~$(S,I)$ such that $S$ is a subgroup of $\G$ and $I\in\Prim C^*(S)$, and equip it with Fell's topology for subgroup-representation pairs, see Section~\ref{ssec:Fell} for details. Then $\Stab(\G)^\prim$ can be viewed as a subset of $\Sub(\G)^\prim$.

\begin{customthm}{A}\label{thm:A}
Assume $\G$ is an amenable second countable Hausdorff locally compact \'etale groupoid such that each isotropy group $\Gxx$ has local polynomial growth. Then the topology on $\Prim C^*(\G)$ is described as follows. Suppose $\Omega$ is a $\G$-invariant subset of $\Stab(\G)^\prim$ and $(x,J)\in\Stab(\G)^\prim$. Then the following conditions are equivalent:
\begin{enumerate}
  \item $\Ind(x,J)$ lies in the closure of $\Ind\Omega$ in $\Prim C^*(\G)$;
  \item $\bigcap_{(S,I)}\Ind^{\Gxx}_S I\subset J$, where the intersection is taken over all points $(S,I)\in\Sub(\G)^\prim$ in the closure of $\Omega$ such that $S\subset\Gxx$.
\end{enumerate}
\end{customthm}

This in particular gives a description of $\Prim(\Gamma\ltimes C_0(X))$ for arbitrary actions $\Gamma\curvearrowright X$ of countable groups of local polynomial growth on second countable locally compact spaces. The theorem also covers arbitrary \emph{amenable} actions $\Gamma\curvearrowright X$ of some groups of exponential growth. For example, we can take free products of groups of local polynomial growth or, more generally, groups that are relatively hyperbolic groups with respect to a family of subgroups of local polynomial growth, see Section~\ref{ssec:main}.

In fact, our results are a bit more general than formulated above. First, we allow a strictly larger class of isotropy groups than groups of local polynomial growth, but we do not have a purely algebraic characterization of this class, see again Section~\ref{ssec:main}. Second, our main technical results do not need amenability of the entire groupoid and second countability, but then they can be used to describe only a part of $\Prim C^*(\G)$. Note also that Theorem~\ref{thm:A} does not have a form suggested by~\eqref{eq:VVW} or~\cite{MR0409720}*{Conjecture~2}, see Section~\ref{ssec:questions} for further discussion.

For more restricted classes of groupoids Theorem~\ref{thm:A} can be improved. In particular, we will prove the following result.

\begin{customthm}{B}\label{thm:B}
Assume $\G$ is an amenable second countable Hausdorff locally compact \'etale groupoid such that each isotropy group $\Gxx$ is virtually nilpotent or, more generally, FC-hypercentral. Then the topology on $\Prim C^*(\G)$ is described as follows. Suppose $\Omega$ is a $\G$-invariant subset of $\Stab(\G)^\prim$ and $(x,J)\in\Stab(\G)^\prim$. Then the following conditions are equivalent:
\begin{enumerate}
  \item $\Ind(x,J)$ lies in the closure of $\Ind\Omega$ in $\Prim C^*(\G)$;
  \item the closure of $\Omega$ in $\Sub(\G)^\prim$ contains a point $(S,I)$ such that $S\subset\Gxx$ and $\Res^{\Gxx}_S J\subset I$.
\end{enumerate}
\end{customthm}

The class of FC-hypercentral groups is, in fact, the largest class of isotropy groups for which one can have such a simple description of the Jacobson topology, see Section~\ref{ssec:FC}. Theorem~\ref{thm:B} describes in particular $\Prim(\Gamma\ltimes C_0(X))$ for actions of FC-hypercentral groups and for amenable actions of groups that are relatively hyperbolic with respect to a family of FC-hypercentral subgroups. The class of such relatively hyperbolic groups includes notably all lattices in semisimple Lie groups of real rank one. The theorem also covers all $\G$ such that $C^*(\G)$ is a type~I C$^*$-algebra, in certain sense completing the Mackey program in the \'etale case. It also implies
that (an adaptation of) \cite{MR0409720}*{Conjecture~2} is true for \'etale groupoids with FC-hyper\-central isotropy groups.

\smallskip

Let us now describe the contents of the paper in more detail. In Section~\ref{sec:prelim} we explain our notation and recall a few results and constructions for \'etale groupoids and associated C$^*$-algebras. A~large part of this section is devoted to a discussion of the topology on $\Sub(\G)^\prim$.

Section~\ref{sec:Frobenius} has two goals. One is to setup a connection between weak containment and $L^2$-estimates for approximate Hilbert--Schmidt intertwiners, which will be our main tool for going from representations of groupoid C$^*$-algebras to those of C$^*$-algebras of isotropy groups and back. The other related goal is to obtain weak forms of Frobenius reciprocity. By now it is well understood when such forms exist for all subgroups/representations of a given discrete group, but for our purposes we need results that are only valid for some subgroups/representations. Along the way we provide short self-contained proofs of a few known results.

Section~\ref{sec:main} contains our main technical results. Here we study the question when a family of induced, not necessarily irreducible, representations weakly contains another induced representation and give a few necessary and, under suitable assumptions, sufficient conditions.

In Section~\ref{sec:primitive} we use the results of the previous section to give a description of $\Prim C^*(\G)$ for certain classes of groupoids. We first prove a more general version of Theorem~\ref{thm:A}. Then we give more transparent descriptions of the primitive spectrum for groupoids satisfying additional assumptions. One such class is the groupoids with FC-hypercentral isotropy groups, as shown by Theorem~\ref{thm:B}. Another is the transformation groupoids defined by actions $\Gamma\curvearrowright X$ with locally finite stabilizers. The description of $\Prim(\Gamma\ltimes C_0(X))$ simplifies even more if we assume that the stabilizers are finite and have the property that, given $x\in X$, we have $\Gamma_y\subset\Gamma_x$ for all~$y$ close to~$x$. This is in particular true for proper actions $\Gamma\curvearrowright X$, in which case we recover the result of Echterhoff and Emerson~\cite{EE}. We then consider some open problems and a few simple examples. We finish the section with a discussion of essentially principal groupoids and the ideal intersection property.

In Section~\ref{sec:SL3} we use Theorem~\ref{thm:B} to give a complete description of the primitive ideal space of $\SL_3(\Z)\ltimes C_0(\SL_3(\R)/U_3(\R))$, where $U_3(\R)\subset\SL_3(\R)$ is the subgroup of unipotent upper triangular matrices. The stabilizers of the action $\SL_3(\Z)\curvearrowright \SL_3(\R)/U_3(\R)$ are either trivial, isomorphic to $\Z^2$ or  isomorphic to the discrete Heisenberg group, and the points of each of these three types are dense in $\SL_3(\R)/U_3(\R)$. In particular, the map $x\mapsto\SL_3(\Z)_x$ is discontinuous and takes values in infinite noncommutative subgroups of $\SL_3(\Z)$ on a dense set of points. The analysis of the primitive ideal space in this case requires a good understanding of quasi-orbit spaces for various actions of $\SL_2(\Z)$ and $\SL_3(\Z)$. As sets, these quasi-orbit spaces are not difficult to understand using the theory of unipotent flows, and our main work lies in describing the topology on them.

\bigskip

\section{Preliminaries} \label{sec:prelim}

\subsection{\'Etale groupoids, their \texorpdfstring{C$^*$}{C*}-algebras, and induction}\label{ssec:groupoids}
We use the same conventions and notation for groupoids and the associated algebras as in~\cite{CN3}. Briefly, we work with Hausdorff locally compact \'etale groupoids~$\G$. Additional assumptions such as second countability or amenability are formulated explicitly when they are needed. The range and source maps are denoted by $r\colon\G\to\Gu$ and $s\colon\G\to\Gu$. A subset $W\subset\G$ is called a bisection if both~$r$ and~$s$ are injective on $W$. We let $\G_x:=s^{-1}(x)$, $\G^x:=r^{-1}(x)$ and $\Gxx:=\G_x\cap\G^x$.
The isotropy bundle is defined by
$$
\IsoG:=\{g\in\G:s(g)=r(g)\}.
$$
We denote by $\Stab(\G)$ the entire collection of the isotropy groups $\Gxx$. Therefore $\IsoG=\bigcup_{H\in\Stab(\G)}H$.

\smallskip

The space $C_c(\G)$ of continuous compactly supported functions on $\G$ is a $*$-algebra with convolution product
\begin{equation*} \label{eprod}
(f_{1}*f_{2})(g) := \sum_{h \in \G^{r(g)}} f_{1}(h) f_{2}(h^{-1}g)
\end{equation*}
and involution $f^{*}(g):=\overline{f(g^{-1})}$. The full groupoid C$^*$-algebra $C^*(\G)$ is by definition the completion of $C_c(\G)$ with respect to the norm $\lVert f \rVert: =\sup_{\pi}\|\pi(f)\|$, where the supremum is taken over all representations of $ C_c(\G)$ by bounded operators on Hilbert spaces.

\smallskip

Given $x\in\Gu$, a subgroup $S\subset\Gxx$ and a unitary representation $\pi\colon S\to U(H)$ on a Hilbert space $H$, we define the induced representation $\Ind\pi=\Ind^\G_S\pi$ of $C^*(\G)$ as follows. The underlying Hilbert space $\Ind H$ of $\Ind\pi$ consists of the functions $\xi \colon  \G_{x} \to H$ such that
\begin{equation*}
\xi(gh)=\pi(h)^{*}\xi(g)\quad (g \in \G_{x},\ h\in S)\quad\text{and}\quad  \sum_{g\in \G_{x}/S}\lVert \xi(g)\rVert^{2}<\infty,
\end{equation*}
and the inner product on $\Ind H$ is given by
\begin{equation*}
(\xi_{1}, \xi_{2} ):=\sum_{g\in \G_{x}/S}( \xi_{1}(g), \xi_{2}(g)).
\end{equation*}
The induced representation is then defined by
\begin{equation}\label{eq:ind-rep}
\big((\Ind\pi)(f)\xi\big)(g) :=
\sum_{h\in \G^{r(g)}}f(h) \xi(h^{-1}g)\quad (g\in\G_x,\ \xi\in\Ind H,\ f\in C_c(\G)).
\end{equation}
We will often view $H$ as a subspace of $\Ind H$ by identifying $\xi\in H$ with the function $\G_x\to H$ supported on $S$ such that $S\ni h\mapsto\pi(h)^*\xi$.

\smallskip

It is known that the kernel of $\Ind\pi$ depends only on the kernel of $\pi$, by which we always mean the kernel of the extension of $\pi$ to $C^*(S)$. In fact, we can write
$$
\ker(\Ind\pi)=\{a\in C^{*}(\mathcal{G}) \mid \vartheta_{S}(b^{*} a^{*}a b)\in \ker\pi\ \text{for all}\ b\in C^{*}(\G)\},
$$
where $\vartheta_{S}\colon C^{*}(\mathcal{G}) \to C^{*}(S)$ is the unique completely positive contraction such that $\vartheta_S(f)=f|_S$ for $f\in C_c(\G)$, cf.~\cite{CN2}*{Lemma 2.6}.
We thus get a well-defined procedure of inducing ideals of $C^*(S)$: if $\ker\pi=J\subset C^*(S)$, then $\Ind^{\G}_SJ:=\ker \Ind^\G_S\pi$ is an ideal in $C^*(\G)$. Note that by an ideal in a C$^{*}$-algebra we always mean a closed two-sided ideal.

The induction applies in particular to subgroups $S\subset\Gamma$ of a discrete group $\Gamma$. In this case~$C^*(S)$ is a subalgebra of $C^*(\Gamma)$ and we also get a restriction map: given an ideal $J\subset C^*(\Gamma)$, we~let
$$
\Res^\Gamma_S J:=J\cap C^*(S).
$$
Therefore if $J=\ker \pi$ for a unitary representation $\pi$ of $\Gamma$, then $\Res^\Gamma_S J=\ker(\pi|_S)$.

\smallskip

Returning to groupoids, if $\pi$ is an irreducible unitary representation of $\Gxx$, then it is not difficult to show that $\Ind^{\G}_{\Gxx}\pi$ is irreducible as well (see~\cite{IW0} for a more general statement in the second countable case). Denote by $\Stab(\G)^\prim$ the set of pairs $(x,J)$ such that $x\in\Gu$ and $J\in\Prim C^*(\Gxx)$. We thus get a well-defined map
\begin{equation}\label{eq:Ind}
\Ind\colon\Stab(\G)^\prim\to\Prim C^*(\G),\quad \Ind(x,J):=\Ind^\G_{\Gxx}J=\ker\Ind^\G_{\Gxx}\pi_J,
\end{equation}
where $\pi_J$ denotes any irreducible representation of $C^*(\Gxx)$ with kernel $J$. We have an action of~$\G$ on $\Stab(\G)^\prim$ defined by
$$
g(x,J):=(r(g),(\Ad g)(J))\quad (g\in\G_x).
$$
The induction map $\Ind$ is constant on every $\G$-orbit (see~\cite{CN3}*{Lemma 1.1}).

A fundamental result is that the induction map~\eqref{eq:Ind} is surjective if $\G$ is second countable and amenable~\cite{IW}. This, and the fact that groupoid amenability implies that the isotropy groups $\Gxx$ are amenable~\cite{MR1799683}*{Proposition~5.1.1}, are essentially the only things we need to know about amenability of groupoids in this paper.

\subsection{Weak containment and the Jacobson topology}
Recall that, given a C$^*$-algebra $A$, the Jacobson topology on the  primitive ideal space $\Prim A$ is the topology in which the closed sets have the form
$$
\operatorname{hull}(J):=\{I\in\Prim A: J\subset I\},
$$
where $J\subset A$ is an ideal. Therefore the closure of a set $\Omega\subset\Prim A$  is $\hull(\bigcap_{I\in\Omega}I)$.

Given a nondegenerate representation $\pi\colon A\to B(H)$, denote by $\SSS_\pi(A)$ the collection of states on $A$ of the form $(\pi(\cdot)\xi,\xi)$, where $\xi\in H$ is a unit vector. One says that a representation~$\pi$ is weakly contained in a representation $\rho$, if $\SSS_\pi(A)$ is contained in the weak$^*$ closure of the convex hull of $\SSS_\rho(A)$, equivalently, $\ker\rho\subset\ker\pi$. We then write $\pi\prec\rho$. If $\pi\prec\rho$ and $\rho\prec\pi$, then the representations $\pi$ and $\rho$ are called \emph{weakly equivalent}, in which case we write $\pi\sim\rho$.

\begin{lemma}[{\cite{MR0146681}*{Theorem~1.4}}]\label{lem:Fell}
Assume $A$ is a C$^*$-algebra, $\pi$ is an irreducible representation of $A$ and $(\pi_i)_{i\in I}$ is a collection of nondegenerate representations of $A$. Then the following conditions are equivalent:
\begin{enumerate}
\item $\pi\prec\bigoplus_{i\in I}\pi_i$;
\item $\SSS_\pi(A)\subset\overline{\bigcup_{i\in I}\SSS_{\pi_i}(A)}^{w^*}$;
\item $\SSS_\pi(A)\cap\overline{\bigcup_{i\in I}\SSS_{\pi_i}(A)}^{w^*}\ne\emptyset$.
\end{enumerate}
\end{lemma}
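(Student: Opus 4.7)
The plan is to dispose of the easy implications (2)$\Longrightarrow$(1) and (2)$\Longrightarrow$(3) first, and then close the loop with (1)$\Longrightarrow$(2) and (3)$\Longrightarrow$(2), both of which use irreducibility of $\pi$ in a crucial way.

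For the easy direction, I would observe that a vector state of $\bigoplus_{i\in I}\pi_i$ arising from a unit vector $(\xi_i)_i$ equals $\sum_i\|\xi_i\|^2\cdot(\pi_i(\cdot)\xi_i/\|\xi_i\|,\xi_i/\|\xi_i\|)$, a convex combination of elements of $\bigcup_i\SSS_{\pi_i}(A)$. Consequently the weak$^*$-closed convex hulls of $\SSS_{\oplus_i\pi_i}(A)$ and of $\bigcup_i\SSS_{\pi_i}(A)$ coincide, so (2)$\Longrightarrow$(1) is immediate from the definition of weak containment. The implication (2)$\Longrightarrow$(3) is clear as $\SSS_\pi(A)\ne\emptyset$ for any nondegenerate $\pi$.

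The crux is (1)$\Longrightarrow$(2), which uses Milman's partial converse to Krein--Milman. I would work inside the weak$^*$-compact quasi-state space $Q(A)$ of positive functionals of norm at most $1$, and let $K$ be the weak$^*$-closure of $\bigcup_i\SSS_{\pi_i}(A)\cup\{0\}$ in $Q(A)$; this $K$ is weak$^*$-compact. Assumption (1) places any $\omega\in\SSS_\pi(A)$ inside $\overline{\operatorname{co}}(K)$. I would then verify that $\omega$ is an extreme point of $\overline{\operatorname{co}}(K)$: a decomposition $\omega=t\varphi+(1-t)\psi$ with $\varphi,\psi\in Q(A)$ and $0<t<1$ forces $\|\varphi\|=\|\psi\|=1$ (the inequality $1=\|\omega\|\le t\|\varphi\|+(1-t)\|\psi\|\le 1$ is saturated), so both $\varphi,\psi$ are states, and the purity of $\omega$, which holds because $\omega$ is a vector state of the irreducible representation $\pi$, forces $\varphi=\psi=\omega$. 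Milman's theorem then gives $\omega\in K$, and since $\omega\ne 0$ the zero limit can be discarded from an approximating net, so $\omega\in\overline{\bigcup_i\SSS_{\pi_i}(A)}^{w^*}$.

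For (3)$\Longrightarrow$(2), I would use irreducibility a second time via Kadison transitivity. Given $\omega_0=(\pi(\cdot)\xi_0,\xi_0)\in\SSS_\pi(A)$ lying in the closure, and any other $\omega=(\pi(\cdot)\xi,\xi)\in\SSS_\pi(A)$, pick $a\in A$ with $\pi(a)\xi_0=\xi$. For a net $\omega_\alpha=(\pi_{i_\alpha}(\cdot)\eta_\alpha,\eta_\alpha)$ converging weak$^*$ to $\omega_0$, one has $\|\pi_{i_\alpha}(a)\eta_\alpha\|^2=\omega_\alpha(a^*a)\to\omega_0(a^*a)=\|\xi\|^2=1$, so the normalised vectors $\eta'_\alpha:=\pi_{i_\alpha}(a)\eta_\alpha/\|\pi_{i_\alpha}(a)\eta_\alpha\|$ define states $\omega'_\alpha\in\SSS_{\pi_{i_\alpha}}(A)$; a short computation shows $\omega'_\alpha(b)=\omega_\alpha(a^*ba)/\|\pi_{i_\alpha}(a)\eta_\alpha\|^2\to\omega(b)$ for every $b\in A$. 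I expect the main obstacle to be the Milman step: identifying the correct compact set, including the point $0$ to guarantee compactness, and then recognising vector states of an irreducible representation as extreme points of its closed convex hull inside $Q(A)$; once these are arranged, the remaining steps are standard functional-analytic manipulations.
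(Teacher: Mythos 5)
Your argument is correct, and it is worth noting that the paper does not actually prove this lemma: it is quoted from Fell's Theorem~1.4, with only the remark that Fell's proof uses irreducibility of $\pi$ alone. Your proof supplies a self-contained argument along what is essentially Fell's route, and it does confirm that remark, since irreducibility of the $\pi_i$ is never used: you invoke irreducibility of $\pi$ exactly twice, once through purity of its vector states (for the Milman step in (1)$\Rightarrow$(2)) and once through Kadison transitivity (for (3)$\Rightarrow$(2)). All the individual steps check out: the identification of the weak$^*$-closed convex hulls of $\SSS_{\oplus_i\pi_i}(A)$ and $\bigcup_i\SSS_{\pi_i}(A)$ (here you should say explicitly that a unit vector of the direct sum with infinitely many nonzero components gives an \emph{infinite} convex combination, which lies in the norm-closed, hence weak$^*$-closed, convex hull); the norm-saturation argument showing that a proper convex decomposition of a state inside the quasi-state space must be a decomposition into states, so that purity makes $\omega$ extreme in $\overline{\operatorname{co}}(K)$; Milman's converse to Krein--Milman, applicable because $\overline{\operatorname{co}}(K)$ is a closed subset of the compact set $Q(A)$; and the transitivity computation $\omega'_\alpha(b)=\omega_\alpha(a^*ba)/\omega_\alpha(a^*a)\to\omega(b)$. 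One cosmetic point: adjoining $0$ to $K$ is harmless but unnecessary, since the weak$^*$ closure of any set of states is automatically contained in the compact quasi-state space $Q(A)$ and is therefore already compact; what matters (and what you do use) is only that $\omega\ne0$, so that membership in $K$ forces membership in $\overline{\bigcup_i\SSS_{\pi_i}(A)}^{w^*}$.
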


We note that formally \cite{MR0146681}*{Theorem~1.4} assumes that $\pi_i$ are irreducible as well, but the proof relies only on irreducibility of $\pi$.

\smallskip

Unitary equivalence of unitary representations $\pi$ and $\rho$ of $A$ obviously implies weak equivalence. An intermediate property is \emph{quasi-equivalence}, which means that the map $\pi(A)\to\rho(A)$, $\pi(a)\mapsto\rho(a)$, is well-defined and extends to an isomorphism $\pi(A)''\cong\rho(A)''$.

\subsection{Fell's topologies and the subgroup bundle}\label{ssec:Fell}

Recall that if $X$ is a topological space, then the Fell topology on the set $\Cl(X)$ of closed subsets of $X$ is defined using as a basis the sets
$$
\UU(K;(U_i)^n_{i=1}):=\{A\in\Cl(X):A\cap K=\emptyset,\ A\cap U_i\ne\emptyset\ \ \text{for}\ \ i=1,\dots,n\},
$$
where $K\subset X$ is compact and $U_i\subset X$ are open~\cite{MR0139135}. Note that it is allowed to have $K=\emptyset$ and $n=0$. With this topology $\Cl(X)$ becomes a compact space, and if~$X$ is locally compact, then $\Cl(X)$ is Hausdorff.

Given a Hausdorff locally compact \'etale groupoid $\G$, we denote by $\Sub(\G)$ the set of subgroups of $\G$ (equivalently, of $\IsoG$), equipped with the Fell topology. The closure of $\Sub(\G)$ in $\Cl(\G)$ is either $\Sub(\G)$, when $\Gu$ is compact, or $\Sub(\G)\cup\{\emptyset\}$, when $\Gu$ is only locally compact. Therefore the space $\Sub(\G)$ is always locally compact.

A bit more explicitly the topology on $\Sub(\G)$ can be described as follows.

\begin{lemma} \label{lem:Sigma-basis}
Assume $S$ is a subgroup of $\Gxx$ for some $x\in\Gu$. For every $g\in\Gxx$, choose an open bisection $W_g\subset\G$ containing $g$. Then a base of open neighbourhoods of $S$ in $\Sub(\G)$ is given by the sets
\begin{equation}\label{eq:Sigma-basis}
\Sub(\G)\cap \UU\Big(\bigcup_{g\in A}(r^{-1}(\bar U)\cap W_g);(r^{-1}(U)\cap W_g)_{g\in B}\Big),
\end{equation}
where $A\subset\Gxx\setminus S$ and $B\subset S$ are finite sets and $U\subset\Gu$ is a relatively compact open neighbourhood of $x$ such that $\bar U\subset r(W_g)$ for all $g\in A\cup B$.
\end{lemma}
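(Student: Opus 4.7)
The plan is to verify (i) that every set of the form \eqref{eq:Sigma-basis} is an open neighbourhood of $S$ in $\Sub(\G)$, and (ii) that every open neighbourhood of $S$ contains one such set. Direction (i) is easy: since $W_g$ is an open bisection of the \'etale groupoid $\G$, the range map $r$ restricts to a homeomorphism $W_g\to r(W_g)$, and the hypothesis $\bar U\subset r(W_g)$ ensures that $r^{-1}(\bar U)\cap W_g$ is compact. Thus $\bigcup_{g\in A}(r^{-1}(\bar U)\cap W_g)$ is compact and each $r^{-1}(U)\cap W_g$ is open, so \eqref{eq:Sigma-basis} equals $\Sub(\G)$ intersected with a Fell-basic open set. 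That $S$ belongs to it is direct from the bisection identity $W_g\cap\Gxx=\{g\}$: for $g\in A\subset\Gxx\setminus S$ one gets $S\cap W_g=\emptyset$, and for $g\in B\subset S$ the point $g$ itself lies in $S\cap(r^{-1}(U)\cap W_g)$ since $r(g)=x\in U$.

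For direction (ii), fix an arbitrary Fell-basic neighbourhood $\UU(K;V_1,\ldots,V_n)\cap\Sub(\G)$ of~$S$. Pick $h_i\in S\cap V_i$ for each $i$ and set $B=\{x,h_1,\ldots,h_n\}\subset S$; including the identity~$x$ will be the crucial device for pinning down the unit of nearby subgroups. Cover the compact set $K$ by finitely many open sets chosen pointwise: for $g\in K\cap\Gxx$ one has $g\in\Gxx\setminus S$ and takes $r^{-1}(\tilde U_g)\cap W_g$ as a neighbourhood of $g$, with $\tilde U_g$ any open neighbourhood of~$x$; for $g\in K\setminus\Gxx$ one has $r(g)\ne x$ (or $s(g)\ne x$), and Hausdorffness of $\Gu$ yields open sets $V_g\ni g$ and $\tilde U_g\ni x$ with $r(V_g)\cap\tilde U_g=\emptyset$ (respectively $s(V_g)\cap\tilde U_g=\emptyset$). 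Extract a finite subcover $g_1,\dots,g_m$, let $A=\{g_j:g_j\in\Gxx\}\subset\Gxx\setminus S$, and choose a relatively compact open neighbourhood $U\ni x$ small enough that $\bar U\subset\tilde U_{g_j}$ for every $j$, $\bar U\subset r(W_g)$ for every $g\in A\cup B$, and $r^{-1}(U)\cap W_{h_i}\subset V_i$ for every $i$.

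It remains to show the resulting set \eqref{eq:Sigma-basis} is contained in $\UU(K;V_1,\ldots,V_n)$. Take any $T$ in it. The positive constraint at $x$ produces $t\in T\cap r^{-1}(U)\cap W_x$, and since a subgroup of $\G$ lies inside an isotropy group, $T\subset\Gyy$ with $y=r(t)=s(t)\in U$, so every $\tau\in T$ has $r(\tau)=s(\tau)=y\in U$. For any cover element $g_j\notin\Gxx$ this forces $\tau\notin V_{g_j}$, since $r(V_{g_j})$ (or $s(V_{g_j})$) is disjoint from $\tilde U_{g_j}\supset U$; for $g_j\in A$ any $\tau\in V_{g_j}\subset W_{g_j}$ would satisfy $r(\tau)\in U\subset\bar U$ and hence lie in $r^{-1}(\bar U)\cap W_{g_j}$, contradicting the negative constraint. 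Thus $T\cap K=\emptyset$, and the positive constraints at the $h_i$ combined with $r^{-1}(U)\cap W_{h_i}\subset V_i$ yield $T\cap V_i\ne\emptyset$ for each~$i$. The main obstacle is the presence of elements of $K$ outside $\Gxx$, which cannot be covered by any $r^{-1}(\bar U)\cap W_{g'}$ with $g'\in\Gxx\setminus S$; this is overcome by placing the identity in $B$, so that the isotropy property of subgroups together with Hausdorffness of $\Gu$ rules such elements out automatically.
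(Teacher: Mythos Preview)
Your proof is correct and follows essentially the same route as the paper: both include the unit $x$ in $B$ to pin down $y\in U$, choose the rest of $B$ from witnesses in $S\cap V_i$, and handle the compact set $K$ via a finite $A\subset\Gxx\setminus S$. The only difference is that where the paper compresses the key step $r^{-1}(U)\cap\IsoG\cap K\subset\bigcup_{g\in A}W_g$ by citing \cite{CN3}*{Lemma~2.1}, you spell out an equivalent finite-subcover argument, using Hausdorffness of $\Gu$ to dispose of elements of $K$ outside $\Gxx$.
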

\bp
Let $K\subset\G$ be a compact set and $\{U_{i}\}_{i=1}^{n}$ be a family of open sets such that $S \in \UU(K;(U_i)^n_{i=1})$. It is not difficult to see that if we put $A:=K\cap \Gxx$, then there is an open neighbourhood~$U$ of~$x$ such that $r^{-1}(U)\cap\IsoG\cap K\subset\bigcup_{g\in A} W_g$, cf.~the proof of~\cite{CN3}*{Lemma~2.1}. Note that $A$ is a finite subset of~$\Gxx\setminus S$. For every $i=1, \dots, n$, pick any $g_{i} \in S\cap U_{i}$. Put $B:=\{e, g_{1}, \dots, g_{n}\}$, where $e=x$ is the unit of~$S$. We can, by possibly picking a smaller neighbourhood $U$ of $x$, assume that~$U$ is relatively compact, $\bar U\subset r(W_g)$ for all $g\in A\cup B$ and $r^{-1}(U) \cap W_{g_{i}} \subset U_{i}$ for $i=1, \dots, n$. We claim that
$$
\Sub(\G)\cap \UU\Big(\bigcup_{g\in A}(r^{-1}(\bar U)\cap W_g);(r^{-1}(U)\cap W_g)_{g\in B}\Big)
\subset
\Sub(\G)\cap \UU(K;(U_i)^n_{i=1}) \; .
$$

Indeed, assume a subgroup $T\subset\G^y_y$, for some $y\in\Gu$, is contained in the neighbourhood on the left. Then $T\cap r^{-1}(U)\cap W_e\ne\emptyset$, hence $y\in U$. It follows that $T\cap K= T\cap r^{-1}(U)\cap\IsoG\cap K=\emptyset$, since $r^{-1}(U)\cap\IsoG\cap K\subset\bigcup_{g\in A}(r^{-1}(\bar U)\cap W_g)$. Finally, for every $i=1,\dots,n$, we have $T\cap U_i\supset T\cap r^{-1}(U)\cap W_{g_i}\ne\emptyset$. Hence $T\in \UU(K;(U_i)^n_{i=1})$, proving the desired inclusion.
\ep

Using Lemma \ref{lem:Sigma-basis} we immediately get the following description of convergence in~$\Sub(\G)$.

\begin{lemma} \label{lem:subG-convergence}
Assume $S$ is a subgroup of $\Gxx$ for some $x\in\Gu$. For every $g\in\Gxx$, choose an open bisection $W_g\subset\G$ containing $g$. Then a net $(S_i)_i$ in $\Sub(\G)$ converges to $S$ if and only if the following holds:
\begin{enumerate}
  \item[(i)] $r(S_i)\to x$ in $\Gu$;
  \item[(ii)] for every $g\in\Gxx$, we have, for all $i$ large enough, that $W_g\cap S_i\ne\emptyset$ if and only if $g\in S$.
\end{enumerate}
\end{lemma}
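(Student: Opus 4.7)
The plan is to apply Lemma~\ref{lem:Sigma-basis} directly, exploiting the crucial fact that any subgroup $T\subset\G^y_y$ is concentrated over a single unit, i.e.\ $T\subset r^{-1}(y)$. This makes the test sets $r^{-1}(\bar U)\cap W_g$ interact with $T$ in a very transparent way, depending only on whether $r(T)\in\bar U$.

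For the forward direction, I would argue as follows. Assume $S_i\to S$. To get (i), take any relatively compact open neighbourhood $U$ of $x$ contained in $r(W_e)$ where $e=x$ is the unit of $S$; then in Lemma~\ref{lem:Sigma-basis} with $A=\emptyset$ and $B=\{e\}$ the set $r^{-1}(U)\cap W_e$ is a neighbourhood of $S$, so eventually $S_i\cap r^{-1}(U)\cap W_e\ne\emptyset$, which forces $r(S_i)\in U$. To get the ``only if'' part of (ii), fix $g\in\Gxx\setminus S$ and apply Lemma~\ref{lem:Sigma-basis} with $A=\{g\}$, $B=\{e\}$ and any relatively compact open $U\ni x$ with $\bar U\subset r(W_g)\cap r(W_e)$; eventually $S_i$ lies in this neighbourhood, so in particular $S_i\cap r^{-1}(\bar U)\cap W_g=\emptyset$. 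Once $r(S_i)\in U$, the fact that $S_i\subset r^{-1}(r(S_i))\subset r^{-1}(\bar U)$ yields $S_i\cap W_g=\emptyset$. For the ``if'' part, fix $g\in S$ and take $A=\emptyset$, $B=\{g\}$, giving eventually $S_i\cap W_g\supset S_i\cap r^{-1}(U)\cap W_g\ne\emptyset$.

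For the converse, suppose (i) and (ii) hold, and fix a basic neighbourhood of $S$ of the form in~\eqref{eq:Sigma-basis}, determined by finite $A\subset\Gxx\setminus S$, finite $B\subset S$ and relatively compact open $U\ni x$ with $\bar U\subset r(W_g)$ for $g\in A\cup B$. By (i), eventually $r(S_i)\in U$, so $S_i\subset r^{-1}(U)\subset r^{-1}(\bar U)$. By (ii) applied to the finitely many elements of $A$ and $B$, eventually $S_i\cap W_g=\emptyset$ for every $g\in A$ and $S_i\cap W_g\ne\emptyset$ for every $g\in B$. Combining these two observations, we eventually have
$$
S_i\cap\Big(\bigcup_{g\in A}(r^{-1}(\bar U)\cap W_g)\Big)=\emptyset\quad\text{and}\quad S_i\cap(r^{-1}(U)\cap W_g)\ne\emptyset\ \text{for all } g\in B,
$$
so $S_i$ belongs to the basic neighbourhood. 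Since such neighbourhoods form a base at $S$ by Lemma~\ref{lem:Sigma-basis}, we conclude $S_i\to S$.

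There is no real obstacle here; the only subtlety is to remember to shrink $U$ so that $\bar U\subset r(W_g)$ for all relevant $g$, and to exploit that every subgroup sits in a single $r$-fibre to turn the condition $S_i\cap r^{-1}(\bar U)\cap W_g=\emptyset$ into the cleaner statement $S_i\cap W_g=\emptyset$ once $r(S_i)\in U$.
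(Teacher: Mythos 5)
Your argument is correct and is exactly the route the paper intends: the paper states this lemma as an immediate consequence of Lemma~\ref{lem:Sigma-basis}, and your proof just spells out that derivation, using the key observation that each $S_i$ sits in a single $r$-fibre so that membership in the basic sets~\eqref{eq:Sigma-basis} reduces to conditions (i) and (ii). The only cosmetic point is to require $\bar U\subset r(W_g)$ (not merely $U\subset r(W_g)$) from the start, which you already flag.
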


Now, consider the space
$$
\Sigma(\G):=\{(S,g)\in\Sub(\G)\times\G: g\in S\}.
$$
This is a bundle of groups over $\Sub(\G)$, that is, a groupoid which coincides with its isotropy.

\begin{lemma} \label{lem:Sigma-etale}
With the topology inherited from $\Sub(\G)\times\G$, the groupoid $\Sigma(\G)$ is locally compact and \'etale.
\end{lemma}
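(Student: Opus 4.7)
My plan is to split the proof into two parts: (1) show $\Sigma(\G)$ is closed in the ambient product $\Sub(\G) \times \G$, which immediately gives local compactness, and (2) produce, around each point, an open bisection on which the source map is a homeomorphism onto an open subset of the unit space, which gives the étale property. The continuity of multiplication and inversion on $\Sigma(\G)$ will be inherited from $\G$, so the topological-groupoid structure requires no separate verification. The identification of the unit space with $\Sub(\G)$ via $(S,r(S))\leftrightarrow S$ makes the source (and range) map of $\Sigma(\G)$ correspond to the first-coordinate projection.

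For closedness, suppose a net $((S_i,g_i))$ in $\Sigma(\G)$ converges to $(S,g) \in \Sub(\G) \times \G$, with $S \subset \Gxx$ for some $x \in \Gu$. Since $g_i \in S_i$, we have $r(g_i) = s(g_i) = r(S_i)$, and Lemma \ref{lem:subG-convergence}(i) gives $r(S_i) \to x$, so $g \in \Gxx$. Choosing any open bisection $W_g \ni g$, the convergence $g_i \to g$ forces $g_i \in W_g \cap S_i$ eventually, and then Lemma \ref{lem:subG-convergence}(ii) forces $g \in S$. Hence $\Sigma(\G)$ is closed in the locally compact Hausdorff space $\Sub(\G) \times \G$, so it is itself locally compact Hausdorff.

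For the étale condition, fix $(S_0,g_0) \in \Sigma(\G)$ and an open bisection $W \subset \G$ containing $g_0$. Set $V := \Sigma(\G) \cap (\Sub(\G) \times W) = \{(T,h) : h \in T \cap W\}$, which is an open neighbourhood of $(S_0,g_0)$. Because $r|_W$ is injective and every $T \in \Sub(\G)$ has $r(T)$ a single point, $T \cap W$ contains at most one element; this shows the source map is injective on $V$. Its image in $\Sub(\G)$ is $\{T : T \cap W \ne \emptyset\}$, which is Fell-open (take $K = \emptyset$ and $U_1 = W$ in the basis sets). The one step requiring a little care, and the only place where I expect a genuine argument rather than bookkeeping, is continuity of the local inverse $T \mapsto h(T)$ where $\{h(T)\} = T \cap W$: given $T_i \to T$ in $\Sub(\G)$ and any open $W' \subset W$ containing $h(T)$, the set $\{A \in \Cl(\G) : A \cap W' \ne \emptyset\}$ is a Fell-open neighbourhood of $T$, so eventually $T_i \cap W' \ne \emptyset$, and the bisection property forces this element to coincide with $h(T_i)$, giving $h(T_i) \to h(T)$. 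This completes the proof that the source map is a local homeomorphism.
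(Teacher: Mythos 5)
Your proof is correct and follows essentially the same route as the paper: closedness in $\Sub(\G)\times\G$ gives local compactness, and bisections are obtained by intersecting $\Sigma(\G)$ with a product over an open bisection $W$ of $\G$, the local inverse being $T\mapsto(T,h(T))$ with $\{h(T)\}=T\cap W$. The only (harmless) differences are that you use $\Sub(\G)\times W$ and check openness of the image and continuity of the inverse directly from the Fell topology, whereas the paper restricts the first coordinate to a basic neighbourhood from Lemma~\ref{lem:Sigma-basis}, and you spell out the closedness argument that the paper declares easy.
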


\bp
Since $\Sigma(\G)$ is closed in $\Sub(\G)\times\G$, it is easy to see that $\Sigma(\G)$ becomes a locally compact topological groupoid in the topology inherited from $\Sub(\G)\times\G$. Using the projection map $\Sigma(\G) \to \Sub(\G)$ we identify $\Sigma(\G)^{(0)}$ with $\Sub(\G)$. Take $(S,g)\in\Sigma(\G)$ and put $x:=r(S)\in\Gu$. Consider a neighbourhood $V$ of $S$ of the form~\eqref{eq:Sigma-basis} such that $B$ contains $g$. Then $W:=\Sigma(\G)\cap (V\times W_g)$ is an open bisection of $\Sigma(\G)$ containing $(S,g)$. Namely, the inverse to the projection map $W\to V$ maps $T\in V$ into $(T,g_T)$, where $g_T$ is the unique element of $W_g\cap T$.
\ep

We can therefore consider the C$^*$-algebra $C^*(\Sigma(\G))$, see~\citelist{\cite{MR0159898},\cite{R}}. Since it is by construction a $C_0(\Sub(\G))$-algebra with the fiber $C^*(S)$ at $S\in\Sigma(\G)^{(0)}=\Sub(\G)$, its primitive spectrum can be identified with the set $\Sub(\G)^{\prim}$ of pairs $(S,J)$ such that $S\in\Sub(\G)$ and $J\in\Prim C^*(S)$. The main point of introducing $C^*(\Sigma(\G))$ is that we get a natural topology on $\Sub(\G)^{\prim}$ defined by the Jacobson topology on $\Prim C^*(\Sigma(\G))$. Convergence in this topology can be described as follows. In order to have a cleaner formulation we will restrict ourselves to the second countable groupoids and comment on the general case afterwards.

\begin{lemma}[cf.~\cite{MR0159898}*{Theorem~3.1}]\label{lem:Fell-convergence}
Assume $\G$ is second countable and $(S,J)\in\Sub(\G)^\prim$. For every $g\in S$, choose an open bisection $W_g\subset\G$ containing $g$. Then a sequence $((S_n,J_n))_n$ converges to $(S,J)$ in $\Sub(\G)^\prim$ if and only if $S_n\to S$ in $\Sub(\G)$ and, for some (equivalently, every) $\varphi\in\SSS_{\pi_J}(C^*(S))$, there exist states $\varphi_{n}\in \SSS_{\pi_{J_{n}}}(C^*(S_{n}))$ such that $\varphi_{n}(g_{n})\to\varphi(g)$ for all $g\in S$, where $g_{n}$ is the unique element of $W_g\cap S_{n}$ (which exists for $n$ large enough).
\end{lemma}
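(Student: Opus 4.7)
The plan is to interpret the statement as the explicit form, in terms of bisections of $\Sigma(\G)$, of the general description of Jacobson convergence for the separable $C_0(\Sub(\G))$-algebra $A := C^*(\Sigma(\G))$. Since $\G$ is second countable, so is $\Sigma(\G)$, and $A$ is a separable $C_0(\Sub(\G))$-algebra whose fiber at $S$ is $C^*(S)$; denote by $\ev_S\colon A \to C^*(S)$ the corresponding quotient map. Every state in $\SSS_{\pi_J}(A)$ is then of the form $\tilde\varphi = \varphi \circ \ev_S$ for a unique $\varphi \in \SSS_{\pi_J}(C^*(S))$. The reformulation I will need, which is the content of Fell's theorem~\cite{MR0159898}*{Theorem~3.1} in our setting, reads: a sequence $((S_n, J_n))_n$ converges to $(S, J)$ in $\Prim A$ if and only if for some (equivalently, every) $\tilde\varphi \in \SSS_{\pi_J}(A)$ there exist states $\tilde\varphi_n \in \SSS_{\pi_{J_n}}(A)$ with $\tilde\varphi_n \to \tilde\varphi$ in the weak-$^*$ topology. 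The nontrivial implication combines Lemma~\ref{lem:Fell} with metrizability of the weak-$^*$ topology on the state space of a separable C$^*$-algebra to diagonally extract a sequence from the weak-$^*$ closure.

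It then remains to translate weak-$^*$ convergence on $A$ into the pointwise condition $\varphi_n(g_n) \to \varphi(g)$ for $g \in S$, given $S_n \to S$ in $\Sub(\G)$ (which in the forward direction falls out of continuity of the projection $\Prim A \to \Sub(\G)$). By Lemma~\ref{lem:Sigma-etale} and its proof, every point of $\Sigma(\G)$ lies in an open bisection $W = \Sigma(\G) \cap (V \times W_h)$ with $V$ an open set in $\Sub(\G)$ and $W_h$ an open bisection in $\G$, and functions in $C_c(W)$ for such $W$ span a dense subspace of $A$. For $f \in C_c(W)$ and $S' \in V$, the fiber $\ev_{S'}(f) \in C_c(S')$ is supported on $W_h \cap S'$, which contains at most one element: if $W_h \cap S' = \{g'\}$ then $\ev_{S'}(f) = f(S', g') \un_{g'}$, and otherwise $\ev_{S'}(f) = 0$.

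For the forward direction, fix $g \in S$ with the chosen bisection $W_g$, take $V$ as in Lemma~\ref{lem:Sigma-basis} so that $W := \Sigma(\G) \cap (V \times W_g)$ is an open bisection containing $(S, g)$, and pick $f \in C_c(W)$ with $f(S, g) = 1$; then $\tilde\varphi(f) = \varphi(g)$. By Lemma~\ref{lem:subG-convergence}, for $n$ large $S_n \in V$ and $g_n = W_g \cap S_n$ exists with $(S_n, g_n) \to (S, g)$, hence $f(S_n, g_n) \to 1$. The identity $\tilde\varphi_n(f) = f(S_n, g_n)\varphi_n(g_n)$ combined with $\tilde\varphi_n(f) \to \varphi(g)$ and $|\varphi_n(g_n)| \leq 1$ yields $\varphi_n(g_n) \to \varphi(g)$. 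For the converse, given $\varphi_n(g_n) \to \varphi(g)$ for all $g \in S$, I verify $\tilde\varphi_n(f) \to \tilde\varphi(f)$ on $f \in C_c(W)$ for $W = \Sigma(\G) \cap (V \times W_h)$. If $W_h \cap S = \{g\}$ for some $g \in S$, then Lemma~\ref{lem:subG-convergence} applied to the bisection $W_h \cap W_g$ (which contains $g$) shows that the unique elements of $W_h \cap S_n$ and $W_g \cap S_n$ coincide for $n$ large, so $\tilde\varphi_n(f) = f(S_n, g_n)\varphi_n(g_n) \to f(S, g)\varphi(g) = \tilde\varphi(f)$. If $W_h \cap S = \emptyset$, then $\tilde\varphi(f) = 0$, and since the compact set $K \subset W_h$ obtained as the image of $\supp f$ under the second projection $\Sigma(\G) \to \G$ is disjoint from $S$, the Fell topology on $\Cl(\G)$ gives $K \cap S_n = \emptyset$ for $n$ large, whence $\tilde\varphi_n(f) = 0$.

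The main obstacle is really the first paragraph: extracting an honest per-$n$ sequence of states from the Jacobson convergence statement. Once this general fact (essentially Fell's theorem in the separable setting) is in hand, the rest is a direct unwinding using the explicit description of bisections of $\Sigma(\G)$ from Lemma~\ref{lem:Sigma-etale} and the convergence criterion for subgroups from Lemma~\ref{lem:subG-convergence}.
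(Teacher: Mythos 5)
Your proposal is correct and follows essentially the same route as the paper: separability of $C^*(\Sigma(\G))$ plus Lemma~\ref{lem:Fell} to turn Jacobson convergence into an honest sequence of states (the paper phrases these as vector states of $\Ind^{\Sigma(\G)}_{S_n}\pi_{J_n}$, which is the same as your $\varphi_n\circ\ev_{S_n}$), necessity of $S_n\to S$ via the $C_0(\Sub(\G))$-structure, and then the evaluation of the states on functions supported on bisections $\Sigma(\G)\cap(V\times W_g)$ using Lemma~\ref{lem:subG-convergence}. Your case analysis for functions over bisections $W_h$ with $W_h\cap S=\emptyset$ or $W_h\cap S=\{g\}$ just spells out the reduction to the special functions that the paper states more tersely.
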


\bp
Since $\G$ is second countable, the groupoid $\Sigma(\G)$ is second countable as well, hence $C^*(\Sigma(\G))$ is separable and $\Sub(\G)^\prim$ is second countable.
Denote by $\pi\colon C^*(\Sigma(\G))\to B(H)$ the representation $\Ind^{\Sigma(\G)}_S\pi_J$, where we identify $S$ with $\{S\}\times S\subset\Sigma(\G)$. In other words, $\pi$ is the composition of $\pi_J$ with the quotient map $C^*(\Sigma(\G))\to C^*(S)$. Similarly, denote by $\pi_n\colon C^*(\Sigma(\G))\to B(H_n)$ the representation $\Ind^{\Sigma(\G)}_{S_n}\pi_{J_n}$. Then by the definition of the topology on $\Sub(\G)^\prim$ and Lemma~\ref{lem:Fell}, we have $(S_n,J_n)\to(S,J)$ if and only if for some (equivalently, every) unit vector $\xi\in H$,  there exist unit vectors $\xi_{n}\in H_{n}$ such that
\begin{equation} \label{eq:subgroup-rep-conv}
(\pi_{n}(a)\xi_{n},\xi_{n})\to(\pi(a)\xi,\xi)
\end{equation}
for all $a\in C^*(\Sigma(\G))$. By taking functions $a\in C_0(\Sub(\G))$ we see that a necessary condition for~\eqref{eq:subgroup-rep-conv} is that $S_n\to S$ in $\Sub(\G)$.

Assume from now on that we indeed have $S_n\to S$. Then, when vectors $\xi_{n}$ are fixed,~\eqref{eq:subgroup-rep-conv} holds for all $a\in C^*(\Sigma(\G))$ if and only if it holds for all functions $a\in C_c(\Sigma(\G))$ that are supported on the bisections $\Sigma(\G)\cap (V\times W_g)$ from the proof of Lemma~\ref{lem:Sigma-etale} and that have the form $a(T,g_T)=f(T)$, with $f\in C_c(V)$. But for every such function $a$ and all $n$ large enough we have
$$
(\pi(a)\xi,\xi)=f(S)(\pi_J(g)\xi,\xi)\quad\text{and}\quad (\pi_{n}(a)\xi_{n},\xi_{n})=f(S_{n})(\pi_{J_{n}}(g_{n})\xi_{n},\xi_{n}),
$$
where $g_{n}$ is the unique element of $W_g\cap S_{n}$, which exists by Lemma~\ref{lem:subG-convergence} when $n$ is sufficiently big. As $f(S_n)\to f(S)$, it follows that~\eqref{eq:subgroup-rep-conv} holds for all $a\in C^*(\Sigma(\G))$ if and only if $(\pi_{J_{n}}(g_{n})\xi_{n},\xi_{n})\to(\pi_J(g)\xi,\xi)$ for all $g\in S$. This gives the result.
\ep

\begin{remark}\label{rem:Fell-continuity}
Without second countability one can argue in a similar way, but the formulation becomes more cumbersome: a net $((S_i,J_i))_i$ converges to $(S,J)$ in $\Sub(\G)^\prim$ if and only if $S_i\to S$ in $\Sub(\G)$ and, for some (equivalently, every) $\varphi\in\SSS_{\pi_J}(C^*(S))$ and every subnet of $((S_i,J_i))_i$,  there exist a subnet $((S_{i_j},I_{i_j}))_j$ of that subnet and states $\varphi_{j}\in \SSS_{\pi_{J_{i_j}}}(C^*(S_{i_j}))$ such that $\varphi_{j}(g_{i_j})\to\varphi(g)$ for all $g\in S$, where $g_{i_j}$ is the unique element of $W_g\cap S_{i_j}$. \ee
\end{remark}

Although $\Stab(\G)^\prim$ can be viewed as a subset of $\Sub(\G)^\prim$, in general this subset is not closed and the induced topology on $\Stab(\G)^\prim$ is not the right one to describe the topology on~$\Prim C^*(\G)$. We will say more on this in Section~\ref{ssec:questions}.

\smallskip

We will need a continuity result for the induction maps $\Ind^\G_S$. A general result of this sort, albeit formally only for transformation groupoids, is established in~\cite{MR0159898}*{Theorem~4.2}. It is also worth noting that a more conceptual way of obtaining such results compared to~\cite{MR0159898} is to interpret $\Ind^\G_S$ as a particular case of Rieffel's induction via a $C^*(\G)$-$C^*(\Sigma(\G))$-correspondence as, for example, in~\cite{MR4395600}*{Section~2}. But for our purposes it will be enough to have the following simple lemma.

\begin{lemma}\label{lem:Ind-continuity}
Assume we are given a subgroup $S\subset\Gxx$ and a state $\varphi$ on $C^*(S)$. For every $g\in S$, choose an open bisection $W_g\subset\G$ containing $g$. Assume a net $(S_i)_i$ converges to $S$ in~$\Sub(\G)$ and~$\varphi_i$ are states on~$C^*(S_i)$ such that $\varphi_i(g_i)\to\varphi(g)$ for all $g\in S$, where $g_i$ is the unique element of $W_g\cap S_i$ (which exists for $i$ large enough). Consider the GNS-representations~$\pi_\varphi$ and~$\pi_{\varphi_i}$ associated with~$\varphi$ and~$\varphi_i$. Then $\Ind^\G_S\pi_\varphi\prec\bigoplus_i\Ind^\G_{S_i}\pi_{\varphi_i}$.
\end{lemma}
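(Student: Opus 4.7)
The plan is to establish the weak containment via the kernel criterion $\pi\prec\rho\iff\ker\rho\subset\ker\pi$, reducing everything to a single scalar identity. The first ingredient is an algebraic reduction. Starting from the kernel formula $\ker\Ind^\G_T\pi=\{a\in C^*(\G):\vartheta_T(b^*a^*ab)\in\ker\pi\ \forall b\}$ recalled in Section~\ref{ssec:groupoids}, I will observe that $\vartheta_T$ is a $C^*(T)$-bimodule map---this is a routine computation on $C_c(\G)$ from the convolution formula, extended by continuity---and use this, together with cyclicity of $\xi_\varphi$, to reduce the condition $a\in\ker\Ind^\G_S\pi_\varphi$ to the single-variable condition
$$\varphi\bigl(\vartheta_S(b^*a^*ab)\bigr)=0\quad\text{for every }b\in C^*(\G).$$
Indeed, $\vartheta_S(b^*a^*ab)\ge 0$ lies in $\ker\pi_\varphi$ iff $\varphi(d^*\vartheta_S(b^*a^*ab)d)=0$ for all $d\in C^*(S)$, and the bimodule identity rewrites this as $\varphi(\vartheta_S((bd)^*a^*a(bd)))=0$, which is absorbed by letting $b$ vary over all of $C^*(\G)$.

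The second and main ingredient is the weak$^*$ convergence
$$\varphi_i\circ\vartheta_{S_i}\longrightarrow\varphi\circ\vartheta_S\quad\text{on }C^*(\G).$$
As these are uniformly bounded functionals, I only need to verify the convergence on the dense subalgebra $C_c(\G)$. Given $f\in C_c(\G)$ with support $K$, the discreteness of $\Gxx$ in $\G$ (étaleness) makes $K\cap\Gxx$ a finite set $\{h_1,\dots,h_m\}$, and I will choose pairwise disjoint open bisections $W_{h_j}\ni h_j$. As in the proof of Lemma~\ref{lem:Sigma-basis} (cf.~\cite{CN3}*{Lemma~2.1}) one can find an open neighbourhood $U\ni x$ with $r^{-1}(\bar U)\cap\IsoG\cap K\subset\bigcup_j W_{h_j}$. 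Lemma~\ref{lem:subG-convergence} then gives, for $i$ large enough, that $r(S_i)\in U$ and that $W_{h_j}\cap S_i$ is empty when $h_j\notin S$ and a singleton $\{(h_j)_i\}$ when $h_j\in S$. The finite sum defining $\varphi_i(\vartheta_{S_i}(f))$ thus takes the form
$$\varphi_i(\vartheta_{S_i}(f))=\sum_{h_j\in S\cap K\cap\Gxx}f((h_j)_i)\,\varphi_i((h_j)_i),$$
with terms satisfying $(h_j)_i\notin K$ automatically vanishing through $f$. Continuity of~$f$, the convergence $(h_j)_i\to h_j$ (the bisections $W_{h_j}$ are local homeomorphisms under $r$ and $r(S_i)\to x$), and the hypothesis $\varphi_i((h_j)_i)\to\varphi(h_j)$ then deliver the desired limit $\varphi(\vartheta_S(f))$.

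Putting the two ingredients together, for any $a\in\bigcap_i\ker\Ind^\G_{S_i}\pi_{\varphi_i}$ and any $b\in C^*(\G)$ I have $\vartheta_{S_i}(b^*a^*ab)\in\ker\pi_{\varphi_i}$ for each $i$, hence $\varphi_i(\vartheta_{S_i}(b^*a^*ab))=0$; the weak$^*$ convergence forces $\varphi(\vartheta_S(b^*a^*ab))=0$, and the first ingredient places $a\in\ker\Ind^\G_S\pi_\varphi$. I expect the main obstacle to be the localization argument in the second ingredient: one must use the Fell-topological convergence $S_i\to S$ together with the étale structure to show that, for $i$ large, the a priori $i$-dependent finite sums defining $\varphi_i(\vartheta_{S_i}(f))$ become indexed by the fixed finite set $\{h_j\in S\cap K\cap\Gxx\}$ via the canonical approximants $(h_j)_i$.
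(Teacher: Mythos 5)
Your first ingredient contains a genuine gap. The reduction of ``$a\in\ker\Ind^\G_S\pi_\varphi$'' to the single-variable condition ``$\varphi(\vartheta_S(b^*a^*ab))=0$ for all $b\in C^*(\G)$'' is justified by a claimed $C^*(S)$-bimodule property of $\vartheta_S$, allowing you to rewrite $\varphi(d^*\vartheta_S(b^*a^*ab)d)$ as $\varphi(\vartheta_S((bd)^*a^*a(bd)))$ and absorb $d$ into $b$. For a genuine \'etale groupoid (as opposed to a group) this does not parse: $C^*(S)$ is not a subalgebra of $C^*(\G)$, and the elements of $S$ do not even act as multipliers of $C^*(\G)$ --- for $h\in S$ and $f\in C_c(\G)$ the convolution $f*\delta_h$ is supported on the discrete set $\G_x$ and is not an element of $C_c(\G)$ or $C^*(\G)$ --- so the product $bd$ with $b\in C^*(\G)$, $d\in C^*(S)$ is undefined and the bimodule identity cannot be formulated, let alone checked by a ``routine computation on $C_c(\G)$''. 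As written, then, you cannot pass from $\varphi(\vartheta_S(b^*a^*ab))=0$ for all $b$ to $\varphi\big(d^*\vartheta_S(b^*a^*ab)d\big)=0$ for all $b,d$, which is what the kernel formula actually requires.

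The reduction itself is true, but its correct proof is exactly the route the paper takes, and once you substitute it your argument becomes the paper's proof in different notation. A direct computation with formula~\eqref{eq:ind-rep} shows that $\varphi\circ\vartheta_S$ is the vector state $\big((\Ind^\G_S\pi_\varphi)(\cdot)\xi_\varphi,\xi_\varphi\big)$, where $\xi_\varphi\in H_\varphi$ is viewed inside $\Ind^\G_S H_\varphi$ as the function supported on $S$, and this vector is cyclic for $\Ind^\G_S\pi_\varphi$; hence $\varphi(\vartheta_S(b^*a^*ab))=\|(\Ind^\G_S\pi_\varphi)(ab)\xi_\varphi\|^2$, and vanishing of this for all $b$ forces $(\Ind^\G_S\pi_\varphi)(a)=0$ by cyclicity. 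With this in place, the functionals $\varphi_i\circ\vartheta_{S_i}$ and $\varphi\circ\vartheta_S$ are precisely the states $\tilde\varphi_i$ and $\tilde\varphi$ of the paper's proof, and your second ingredient --- localizing a fixed $f\in C_c(\G)$ near the finitely many points of $\supp f\cap\Gxx$ via the Fell convergence $S_i\to S$ --- is the same computation, which the paper performs bisection by bisection instead of for a general $f$. One small loose end there: your bisections $W_{h_j}$ need not be the ones fixed in the statement, so to invoke the hypothesis $\varphi_i(g_i)\to\varphi(g)$ you should note that for $h_j\in S$ the element $(h_j)_i\in W_{h_j}\cap S_i$ eventually coincides with the element of $W_{h_j}\cap S_i$ prescribed by the chosen bisection: both lie in $S_i$, both converge to $h_j$, and any bisection containing $h_j$ meets $\G^{r(S_i)}$ in at most one point.
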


\bp
Let us write $\pi$ and $\pi_i$ for $\pi_\varphi$ and $\pi_{\varphi_i}$. Consider the GNS-triples $(H,\pi,\xi)$ and $(H_i,\pi_i,\xi_i)$ associated with $\varphi$ and $\varphi_i$. Viewing $H$ as a subspace of $\Ind^\G_SH$ and $H_i$ as a subspace of $\Ind^\G_{S_i}H_i$, consider the states
$$
\tilde\varphi:=\big((\Ind^\G_S\pi)(\cdot)\xi,\xi),\qquad \tilde\varphi_i:=\big((\Ind^\G_{S_i}\pi_i)(\cdot)\xi_i,\xi_i\big).
$$
Since the vector $\xi$ is cyclic for $\Ind^\G_S\pi$, it is then enough to show that $\tilde\varphi_i\to\tilde\varphi$ weakly$^*$.

For every $g\in\Gxx\setminus S$, choose an open bisection $W_g\subset\G$ containing $g$. Then it suffices to show that $\tilde\varphi_i(f)\to\tilde\varphi(f)$ for functions $f\in C_c(\G)$ such that $f\in C_c(W)$ for some open bisection $W$ and one of the following properties holds:
\begin{enumerate}
\item $W=W_g$ for some $g\in S$;
\item $W=W_g$ for some $g\in\Gxx\setminus S$;
\item $x\notin\overline{s(W)}\cap\overline{r(W)}$.
\end{enumerate}
A straightforward computation using Lemma~\ref{lem:subG-convergence} shows that, for $i$ large enough, we have $\tilde\varphi(f)=f(g)\varphi(g)$ and $\tilde\varphi_i(f)=f(g_i)\varphi_i(g_i)$ in case (1), $\tilde\varphi(f)=\tilde\varphi_i(f)=0$ in cases (2) and (3), cf.~the proof of~\cite{CN3}*{Lemma~2.3}. As $f(g_i)\to f(g)$ and $\varphi_i(g_i)\to\varphi(g)$ for $g\in S$, this finishes the proof of the lemma.
\ep

The assumptions of the above lemma can be formulated in terms of convergence in $\Sub(\G)^\prim$, as follows. Again, for simplicity we consider only the second countable case.

\begin{lemma}\label{lem:Fell-convergence2}
Assume $\G$ is second countable and we are given a subgroup $S\subset\Gxx$, a state $\varphi$ on~$C^*(S)$, a sequence $(S_n)_n$ in $\Sub(\G)$ converging to $S$ and, for every $n$, a unitary representation~$\pi_n$ of~$S_n$. For every $g\in S$, choose an open bisection $W_g\subset\G$ containing $g$. Then the following conditions are equivalent:
\begin{enumerate}
  \item there exist states~$\varphi_n$ on~$C^*_{\pi_n}(S_n):=C^*(S_n)/\ker\pi_n$ such that $\varphi_n(g_n)\to\varphi(g)$ for all $g\in S$, where $g_n$ is the unique element of $W_g\cap S_n$ (which exists for $n$ large enough);
  \item for every $J\in\hull(\ker\pi_\varphi)$, there exist ideals $J_n\in\hull(\ker\pi_n)$ such that $(S_n,J_n)\to(S,J)$ in $\Sub(\G)^\prim$.
\end{enumerate}
\end{lemma}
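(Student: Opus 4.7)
The proof splits into reformulating both conditions in the language of $C^{*}(\Sigma(\G))$ and then attacking the two implications separately. A state $\varphi_n$ on $C^{*}_{\pi_n}(S_n)=C^{*}(S_n)/\ker\pi_n$ pulls back along the fiber projection $p_{S_n}\colon C^{*}(\Sigma(\G))\to C^{*}(S_n)$ to a state $\tilde\varphi_n:=\varphi_n\circ p_{S_n}$ that vanishes on $p_{S_n}^{-1}(\ker\pi_n)$, and similarly for $\tilde\varphi:=\varphi\circ p_S$. Testing on the bisections $\Sigma(\G)\cap(V\times W_g)$ from the proof of Lemma~\ref{lem:Sigma-etale}, exactly as in the proof of Lemma~\ref{lem:Fell-convergence}, shows that condition~(1) is equivalent to $\tilde\varphi_n\to\tilde\varphi$ in the weak$^{*}$ topology on $C^{*}(\Sigma(\G))^{*}$. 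Since $C^{*}(\Sigma(\G))$ is a $C_0(\Sub(\G))$-algebra, any primitive ideal containing $p_{S_n}^{-1}(\ker\pi_n)$ necessarily lies in the fiber over $S_n$, so condition~(2) translates to: for every primitive $\tilde J\supset\ker\tilde\pi_\varphi$ in $C^{*}(\Sigma(\G))$, there exist primitives $\tilde J_n\supset p_{S_n}^{-1}(\ker\pi_n)$ with $\tilde J_n\to\tilde J$ in $\Prim C^{*}(\Sigma(\G))$.

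For (2)$\Rightarrow$(1), I would use Krein--Milman in the (weak$^{*}$-metrizable, by separability) state space of $C^{*}(S)/\ker\pi_\varphi$ to write $\varphi=\lim_m\sum_i t_i^{(m)}\psi_i^{(m)}$ in the weak$^{*}$ topology, where each $\psi_i^{(m)}$ is a pure state in $\SSS_{\pi_{J_i^{(m)}}}(C^{*}(S))$ for some $J_i^{(m)}\in\hull(\ker\pi_\varphi)$. For each $(m,i)$, hypothesis~(2) supplies $J_{n,i}^{(m)}\in\hull(\ker\pi_n)$ with $(S_n,J_{n,i}^{(m)})\to(S,J_i^{(m)})$, and Lemma~\ref{lem:Fell-convergence} then yields states $\psi_{n,i}^{(m)}\in\SSS_{\pi_{J_{n,i}^{(m)}}}(C^{*}(S_n))$ satisfying $\psi_{n,i}^{(m)}(g_n)\to\psi_i^{(m)}(g)$ for every $g\in S$. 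The convex combinations $\varphi_n^{(m)}:=\sum_i t_i^{(m)}\psi_{n,i}^{(m)}$ are states on $C^{*}(S_n)$ that factor through $C^{*}(S_n)/\ker\pi_n$; a standard diagonal argument over an enumeration of the countable set $S$ produces the desired sequence $\varphi_n$.

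For (1)$\Rightarrow$(2), the core ingredient is the following general fact, which I would state and prove separately: if $A$ is a separable C$^{*}$-algebra and $\pi,\pi_n$ are representations of $A$ satisfying $\pi\prec\bigoplus_{n\in N}\pi_n$ for \emph{every} infinite $N\subset\N$, then for every primitive ideal $J\supset\ker\pi$ there exist primitive ideals $J_n\supset\ker\pi_n$ with $J_n\to J$ in $\Prim A$. Its proof is by contradiction: if some open $U\ni J$ satisfied $\hull(\ker\pi_n)\cap U=\emptyset$ for $n$ in some infinite set $N$, then each such $\ker\pi_n$ would contain $I_U:=\bigcap_{J'\in\Prim A\setminus U}J'$, giving $I_U\subset\bigcap_{n\in N}\ker\pi_n\subset\ker\pi\subset J$ and hence $J\in\hull(I_U)=\Prim A\setminus U$, contradicting $J\in U$. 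Second countability of $\Prim A$ then produces $J_n$ for every $n$ by a diagonal selection along a countable neighbourhood basis at $J$. Applied to $A=C^{*}(\Sigma(\G))$ with $\pi=\tilde\pi_\varphi$ and $\pi_n=\tilde\pi_{\varphi_n}$, this delivers primitive ideals $\tilde J_n=p_{S_n}^{-1}(J_n)$ with $J_n\in\hull(\ker\pi_n)$ and $(S_n,J_n)\to(S,J)$, proving~(2).

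The main obstacle is the (1)$\Rightarrow$(2) direction, where one must produce $J_n$ for \emph{every} index $n$ rather than along a subsequence. The key point that makes the abstract lemma applicable is that weak$^{*}$ convergence $\tilde\varphi_n\to\tilde\varphi$ is preserved under passing to any infinite subsequence, which is exactly what yields the strong weak-containment hypothesis $\tilde\pi_\varphi\prec\bigoplus_{n\in N}\tilde\pi_{\varphi_n}$ for all infinite $N\subset\N$ required by the contradiction argument.
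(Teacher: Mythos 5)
Your proposal is correct and follows essentially the same route as the paper: translate everything to $C^*(\Sigma(\G))$, prove (1)$\Rightarrow$(2) by combining weak containment along every infinite subsequence with second countability of the primitive spectrum, and prove (2)$\Rightarrow$(1) by reducing to pure states and invoking Lemma~\ref{lem:Fell-convergence}. The only differences are presentational: you package the subsequence/selection step as a standalone lemma about separable C$^*$-algebras and verify the weak-containment input directly from weak$^*$ convergence of the pulled-back states, where the paper instead cites Lemma~\ref{lem:Ind-continuity} applied to $\Sigma(\G)$ and the weak equivalence $\pi_n\sim\bigoplus_{I\in\hull(\ker\pi_n)}\pi_I$.
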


\bp (1)$\Rightarrow$(2) Since $\Sub(\G)^\prim$ is second countable and (1) holds also for any subsequence of~$(S_n)_n$, it suffices to show that for every $J\in\hull(\ker\pi_\varphi)$ and every neighbourhood $U$ of $(S,J)$ in $\Sub(\G)^\prim$, we can find $n$ and $I\in\hull(\ker\pi_n)$ such that $(S_n,I)\in U$.

By the previous lemma applied to $\Sigma(\G)$ instead of $\G$ we have
$$
\Ind^{\Sigma(\G)}_S\pi_\varphi\prec\bigoplus_n\Ind^{\Sigma(\G)}_{S_n}\pi_{\varphi_n}\prec \bigoplus_n\Ind^{\Sigma(\G) }_{S_n}\pi_n.
$$
Hence, for every $J\in\hull(\ker\pi_\varphi)$, we have $\Ind^{\Sigma(\G)}_S\pi_J\prec\oplus_n\Ind^{\Sigma(\G) }_{S_n}\pi_n$. Since $\pi_n$ is weakly equivalent to the direct sum of the representations $\pi_I$, $I\in\hull(\ker\pi_n)$, this implies that the point $(S,J)$ lies in the closure of $\cup_n\{(S_n,I):I\in\hull(\ker\pi_n)\}$ in $\Sub(\G)^\prim$, which is what we need.

\smallskip

(2)$\Rightarrow$(1) We will prove that (2) implies a formally stronger property than (1): for any state~$\psi$ on~$C^*_{\pi_\varphi}(S)$, there exist states~$\psi_n$ on~$C^*_{\pi_n}(S_n)$ such that $\psi_n(g_n)\to\psi(g)$ for all $g\in S$. Since $S$ is countable and every state on $C^*(S)$ can be approximated by finite convex combinations of pure states, it suffices to consider pure states $\psi$.

Consider the primitive ideal $J:=\ker\pi_\psi\in\hull(\ker\pi_\varphi)$. By assumption we can find $J_n\in\hull(\ker\pi_n)$ such that $(S_n,J_n)\to(S,J)$.
Since $C^*(S_n)/J_n$ is a quotient of $C^*_{\pi_n}(S_n)$, an application of Lemma~\ref{lem:Fell-convergence} gives the required states $\psi_n$.
\ep

\bigskip

\section{Approximate Hilbert--Schmidt intertwiners and weak Frobenius reciprocity}\label{sec:Frobenius}

\subsection{Amenability and duality for unitary representations}
There are two statements in representation theory of finite groups that are called Frobenius reciprocity, both are particular cases of the tensor-hom adjunction. A study of their weak analogues for infinite groups was initiated by Fell~\citelist{\cite{MR0155932}\cite{MR0159898}}. We will need a few results in this direction. We will include some known results and arguments in order to make the discussion essentially self-contained.

\smallskip

We begin by discussing a weak analogue of the isomorphism
\begin{equation}\label{eq:Frobenius}
\operatorname{Hom}_\Gamma(\pi\otimes\theta,\rho)\cong \operatorname{Hom}_\Gamma(\theta,\overline\pi\otimes\rho)
\end{equation}
for finite dimensional representations $\pi$. We are particularly interested in the case when $\theta$ is the trivial representation.

Let us first fix our notation. Assume $\Gamma$ is a discrete group. We denote by $\eps$ or $\eps_\Gamma$ the trivial representation of $\Gamma$.  Given a unitary representation $\pi$, whenever convenient we denote by $H_\pi$ the underlying Hilbert space. We use the same letter $\pi$ for the extension of $\pi$ to a representation of~$C^*(\Gamma)$. As in Section~\ref{sec:prelim}, we denote by $\ker\pi\subset C^*(\Gamma)$ the kernel of this extension and by~$C^*_\pi(\Gamma)$ the quotient $C^*(\Gamma)/\ker\pi$. Denote by~$\bar\pi$ the conjugate, or contragredient, representation. Thus, its underlying Hilbert space is the complex conjugate Hilbert space $\bar H_\pi$ and $\bar\pi(g)\bar\xi=\overline{\pi(g)\xi}$.

For Hilbert spaces $H_{1} $ and $H_{2}$, we denote by $\HS(H_{1}, H_{2})$ the space of Hilbert--Schmidt operators $H_1\to H_2$, and we use the notation $\HS(H)$ when $H_{1}=H_{2}=H$. The space $\HS(H_1,H_2)$ is a Hilbert space with inner product $(T,S):=\Tr(S^{*}T)$ and the corresponding norm $\|T\|_2:=\Tr(T^*T)^{1/2}$.

Given two unitary representations $\pi$ and $\rho$, we often identify $\bar H_\pi\otimes H_\rho$ with  $\HS(H_{\pi}, H_{\rho})$, with a vector $\bar\xi\otimes\zeta$ corresponding to the rank-one operator $\theta_{\zeta,\xi}$ defined~by
\begin{equation}\label{eq:theta}
\theta_{\zeta,\xi}\eta:=(\eta,\xi)\zeta.
\end{equation}
In this picture the representation $\bar\pi\otimes\rho$ takes the form
$$
(\bar\pi\otimes\rho)(g)T=\rho(g)T\pi(g)^*\quad\text{for}\quad T\in\HS(H_\pi,H_\rho).
$$

We will use the following standard observation repeatedly.

\begin{lemma}\label{lem:HSformulation}
Let $\pi$ and $\rho$ be nonzero unitary representations of a discrete group $\Gamma$. Then $\eps \prec \bar\pi\otimes\rho$ if and only if there exists a net of operators $T_{i} \in \HS(H_{\pi}, H_{\rho})$ such that $\lVert T_{i} \rVert_{2}=1$ for all $i$ and $\lVert T_{i} \pi(g)-\rho(g) T_{i} \rVert_{2} \to_i 0$ for all $g\in \Gamma$.
\end{lemma}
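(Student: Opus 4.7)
The plan is to reduce the statement to the standard characterization of weak containment of the trivial representation by almost invariant vectors, after passing through the Hilbert--Schmidt identification $\bar H_\pi\otimes H_\rho\cong\HS(H_\pi,H_\rho)$ spelled out just above the lemma. Under this identification the representation $\bar\pi\otimes\rho$ acts by $(\bar\pi\otimes\rho)(g)T=\rho(g)T\pi(g)^*$, so for any $T\in\HS(H_\pi,H_\rho)$ we have, by unitarity of $\pi(g)$ and right-invariance of the Hilbert--Schmidt norm,
\[
\lVert(\bar\pi\otimes\rho)(g)T-T\rVert_2=\lVert\rho(g)T-T\pi(g)\rVert_2=\lVert T\pi(g)-\rho(g)T\rVert_2.
\]
Hence the condition in the lemma is equivalent to the existence of a net of unit vectors in the representation space of $\sigma:=\bar\pi\otimes\rho$ along which $\lVert\sigma(g)T_i-T_i\rVert_2\to 0$ for every $g\in\Gamma$, i.e.\ to the existence of almost invariant unit vectors for $\sigma$.

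It remains to check that for any unitary representation $\sigma$ of $\Gamma$, one has $\eps\prec\sigma$ if and only if $\sigma$ admits a net of almost invariant unit vectors. This is a direct application of Lemma~\ref{lem:Fell} to the irreducible representation $\eps$: since the only state in $\SSS_\eps(C^*(\Gamma))$ is the trivial state $g\mapsto 1$, the condition $\eps\prec\sigma$ is equivalent to saying that the trivial state lies in the weak$^*$ closure of $\SSS_\sigma(C^*(\Gamma))$. If $T_i\in H_\sigma$ are unit vectors with $(\sigma(g)T_i,T_i)\to 1$ for all $g\in\Gamma$, then $\lVert\sigma(g)T_i-T_i\rVert^2=2-2\Re(\sigma(g)T_i,T_i)\to 0$, giving almost invariant vectors; conversely, almost invariant vectors obviously produce states converging weakly$^*$ to the trivial state.

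Nothing in this argument is hard: the only point requiring a bit of care is keeping straight the conjugate-space bookkeeping and the transposition of $\pi(g)$ from the right of $T$ to the left (via $\pi(g)^*$), so that the two formulations $\lVert(\bar\pi\otimes\rho)(g)T-T\rVert_2\to 0$ and $\lVert T\pi(g)-\rho(g)T\rVert_2\to 0$ are genuinely the same. Once this is dispatched, the lemma follows from Lemma~\ref{lem:Fell} applied to the one-dimensional irreducible representation $\eps$.
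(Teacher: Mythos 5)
Your proof is correct and follows essentially the same route as the paper: identify $\bar H_\pi\otimes H_\rho$ with $\HS(H_\pi,H_\rho)$, apply Lemma~\ref{lem:Fell} to the irreducible representation $\eps$ to translate $\eps\prec\bar\pi\otimes\rho$ into the existence of almost invariant unit vectors, and use unitarity of $\pi(g)$ to rewrite $\lVert\rho(g)T\pi(g)^*-T\rVert_2$ as $\lVert T\pi(g)-\rho(g)T\rVert_2$. No gaps.
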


\bp
By Lemma~\ref{lem:Fell}, $\eps \prec \bar\pi\otimes\rho$ if and only if there exists a net of unit vectors $\xi_{i}\in\bar H_\pi\otimes H_\rho$ such that $((\bar\pi\otimes\rho)(g)\xi_i,\xi_i)\to1$, equivalently, $\lVert  (\bar\pi\otimes\rho)(g) \xi_{i} - \xi_{i} \rVert \to 0$ for all $g\in \Gamma$. When we identify $\bar H_\pi\otimes H_\rho$ with $\HS(H_{\pi}, H_{\rho})$, this is the same as saying that there exists a net of operators $T_{i} \in \HS(H_{\pi}, H_{\rho})$ such that $\lVert T_{i} \rVert_{2}=1$ and $\lVert \rho(g)T_{i} \pi(g)^{*}- T_{i} \rVert_{2} \to 0$ for all $g\in \Gamma$. Using that $\lVert \rho(g)T_{i} \pi(g)^{*}- T_{i} \rVert_{2}=\lVert T_{i} \pi(g)- \rho(g)T_{i} \rVert_{2}$, we obtain the lemma.
\ep

A nonzero unitary representation $\pi$ is called \emph{amenable}, if $\eps\prec\bar\pi\otimes\pi$ \cite{MR1047140}. Every nonzero finite dimensional unitary representation $\pi$ is amenable, e.g., because the identity operator on~$H_\pi$ is Hilbert--Schmidt.

If $\Gamma$ is amenable, then every nonzero unitary representation is amenable. A quick way to see this is as follows \cite{MR0998613}. Take a rank one projection $p\in B(H_\pi)$, a finite set $F\subset\Gamma$ and define
$$
T:=\Big(\frac{1}{|F|}\sum_{h\in F}\pi(h)p\pi(h)^*\Big)^{1/2}.
$$
Then by the Powers--St{\o}rmer inequality we have
\begin{multline*}
\|\pi(g)T\pi(g)^*-T\|^2_2 \le \|\pi(g)T^2\pi(g)^*-T^2\|_1 \\
= \frac{1}{|F|}\Big\|\sum_{h\in gF\setminus F}\pi(h)p\pi(h)^*-\sum_{h\in F\setminus gF}\pi(h)p\pi(h)^*\Big\|_1
\le \frac{|gF\Delta F|}{|F|}.
\end{multline*}
Therefore by taking a F{\o}lner net $(F_i)_i$ in $\Gamma$ we obtain a net $(T_i)_i$ in $\HS(H_\pi)$ such that $\|T_i\|_2=1$ and $\|\pi(g)T_i\pi(g)^*-T_i\|_2\to0$ for all $g\in\Gamma$, which by Lemma~\ref{lem:HSformulation} means that $\eps\prec\bar\pi\otimes\pi$.

Similar arguments can be used to prove the following result.

\begin{lemma}[\cite{MR0246999}]\label{lem:greenleaf}
Let $\Gamma$ be an amenable discrete group. Assume we are given an action of~$\Gamma$ on a set~$X$ and consider the corresponding permutation representation $\pi_X$ of $\Gamma$ on~$\ell^2(X)$, so $\pi_X(g)f=f(g^{-1}\cdot)$. Then $\eps\prec\pi_X$.
\end{lemma}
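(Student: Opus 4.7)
The plan is to mimic the Powers--St{\o}rmer/F{\o}lner argument used just above for amenable representations, constructing an explicit net of almost invariant unit vectors in $\ell^2(X)$. By Lemma~\ref{lem:Fell} (or, equivalently, Lemma~\ref{lem:HSformulation} applied with $\pi=\eps$, noting that $\HS(\C,\ell^2(X))=\ell^2(X)$ isometrically), the containment $\eps\prec\pi_X$ is equivalent to the existence of a net of unit vectors $\xi_i\in\ell^2(X)$ with $\lVert \pi_X(g)\xi_i-\xi_i\rVert\to 0$ for every $g\in\Gamma$. We may assume $X\neq\emptyset$ (otherwise $\pi_X=0$ and there is nothing to say).

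Fix any $x_0\in X$. For a finite subset $F\subset\Gamma$, define $\xi_F\in\ell^2(X)$ by
\begin{equation*}
\xi_F(x) := \sqrt{\#\{g\in F:g\cdot x_0=x\}},\qquad x\in X.
\end{equation*}
A direct count gives $\lVert\xi_F\rVert_2^2=\sum_x\#\{g\in F:g\cdot x_0=x\}=|F|$, and a change of variable $g\mapsto h^{-1}g$ shows that $\pi_X(h)\xi_F=\xi_{hF}$. The key estimate, which is the analogue of the Powers--St{\o}rmer inequality used in the discussion of amenability of representations, is the elementary scalar inequality $(\sqrt a-\sqrt b)^2\le|a-b|$ for $a,b\ge0$. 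Applying it pointwise gives
\begin{equation*}
\lVert \xi_{hF}-\xi_F\rVert_2^2 = \sum_{x\in X}(\xi_{hF}(x)-\xi_F(x))^2 \le \sum_{x\in X}|\xi_{hF}(x)^2-\xi_F(x)^2| \le |hF\,\Delta\,F|,
\end{equation*}
where the last inequality follows from writing the signed measure $\sum_{g\in hF}\delta_{g\cdot x_0}-\sum_{g\in F}\delta_{g\cdot x_0}$ as the difference coming from $g\in hF\setminus F$ and $g\in F\setminus hF$ and bounding its total variation.

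Finally, choose a F{\o}lner net $(F_i)_i$ in $\Gamma$ and set $\xi_i:=\xi_{F_i}/\sqrt{|F_i|}$. Then each $\xi_i$ is a unit vector in $\ell^2(X)$ and
\begin{equation*}
\lVert \pi_X(h)\xi_i-\xi_i\rVert_2^2 \le \frac{|hF_i\,\Delta\,F_i|}{|F_i|}\xrightarrow[i]{}0
\end{equation*}
for every $h\in\Gamma$, yielding $\eps\prec\pi_X$. There is no real obstacle in this proof; the only point worth verifying carefully is the total variation estimate in the displayed chain, and the only edge case is $X=\emptyset$, which is excluded at the start.
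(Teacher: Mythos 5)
Your proposal is correct and is essentially the paper's own argument: your vector $\xi_{F_i}/\sqrt{|F_i|}$ is exactly the paper's $f_i=\big(\frac{1}{|F_i|}\sum_{h\in F_i}\delta_{hx_0}\big)^{1/2}$, and both proofs use the scalar inequality $|r^{1/2}-s^{1/2}|^2\le|r-s|$ together with the F{\o}lner property. The extra details you supply (the total variation bound and the $X=\emptyset$ remark) are fine but add nothing beyond the paper's proof.
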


\bp
Fix $x\in X$ and a F{\o}lner net $(F_i)_i$ in $\Gamma$. Define $f_i\in\ell^2(X)$ by
$$
f_i:=\Big(\frac{1}{|F_{i}|}\sum_{h\in F_{i}}\delta_{hx}\Big)^{1/2}.
$$
Using the inequality $\lvert r^{1/2}-s^{1/2}\rvert^{2} \leq |r-s|$ for $r,s \geq 0$, we get $\|\pi_X(g)f_i-f_i\|_2\to0$ for all $g\in\Gamma$. Since $\|f_i\|_2=1$, this proves that $\eps\prec\pi_X$.
\ep

Given two weakly equivalent representations $\pi$ and $\rho$ of a group $\Gamma$, we have $\bar\pi\otimes\pi\sim\bar\pi\otimes\rho$, and hence $\eps\prec\bar\pi\otimes\rho$ if $\pi$ is amenable. We will need the following quantitative version of this.

\begin{lemma}\label{lem:app-intertwiner}
Assume $\Gamma$ is a discrete group and $\varphi$ is a state on $C^*(\Gamma)$ such that the corresponding GNS-representation $\pi_\varphi$ is amenable. Fix $\eps>0$ and a finite subset $F\subset\Gamma$ containing the unit $e\in\Gamma$. Then there exist $\delta>0$ and a finite subset $E\subset\Gamma$ satisfying the following property: if we are given Hilbert spaces $H$ and $H'$,  contractions $L_g\in B(H)$ and $L'_g\in B(H')$ ($g\in F$) and vectors $\xi_h\in H$ and $\xi'_h\in H'$ ($h\in E$) of norm $\le1$ such that
$$
|\varphi(h_2^{-1}gh_1)-(L_{g}\xi_{h_1},\xi_{h_2})|<\delta\quad\text{and}\quad |\varphi(h_2^{-1}gh_1)-(L'_{g}\xi'_{h_1},\xi'_{h_2})|<\delta
$$
for all $g\in F$ and $h_1,h_2\in E$, then there exists a Hilbert--Schmidt operator $T\colon H\to H'$ such that
$$
\|T\|_2=1\quad \text{and}\quad \|TL_g-L'_gT\|_2<\eps\quad\text{for all}\quad g\in F.
$$
\end{lemma}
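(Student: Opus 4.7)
My plan is to transfer an approximate Hilbert--Schmidt intertwiner that exists on the GNS space $H_{\pi_\varphi}$ by the amenability hypothesis to the spaces $H,H'$, via a finite-rank ansatz written in the cyclic-vector basis. Fix $\eps'>0$ (to be chosen small at the end). Since $\pi_\varphi$ is amenable, $\eps_\Gamma\prec\bar\pi_\varphi\otimes\pi_\varphi$, so by Lemma~\ref{lem:HSformulation} there is $T_0\in\HS(H_{\pi_\varphi})$ with $\|T_0\|_2=1$ and $\|T_0\pi_\varphi(g)-\pi_\varphi(g)T_0\|_2<\eps'$ for all $g\in F$. Because $\pi_\varphi(\Gamma)\xi_\varphi$ is total in $H_{\pi_\varphi}$, finite sums of rank-one operators of the form $\theta_{\pi_\varphi(h_2)\xi_\varphi,\pi_\varphi(h_1)\xi_\varphi}$ are HS-dense, so I approximate $T_0$ within $\eps'$ by an operator $\tilde T_0:=\sum_{h_1,h_2\in E_0}c_{h_1,h_2}\theta_{\pi_\varphi(h_2)\xi_\varphi,\pi_\varphi(h_1)\xi_\varphi}$ indexed by a finite $E_0\subset\Gamma$; contractivity of $\pi_\varphi(g)$ then yields $\|\tilde T_0\pi_\varphi(g)-\pi_\varphi(g)\tilde T_0\|_2<3\eps'$ on $F$. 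Set $E:=FE_0\cup F^{-1}E_0$ (which contains $E_0$ since $e\in F$) and $C:=\sum|c_{h_1,h_2}|$; given hypothesis data for $\delta$ to be chosen, define
\begin{equation*}
S:=\sum_{h_1,h_2\in E_0}c_{h_1,h_2}\,\theta_{\xi'_{h_2},\xi_{h_1}}\colon H\to H',
\end{equation*}
which is automatically Hilbert--Schmidt, and put $T:=S/\|S\|_2$.

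Expanding $\|S\|_2^2$ and $\|SL_g-L'_gS\|_2^2$ via the identity $(\theta_{\zeta,\xi},\theta_{\zeta',\xi'})=(\zeta,\zeta')(\xi',\xi)$ yields polynomial expressions in the coefficients $(\xi'_{h_2},\xi'_{h'_2})$, $(L'_g\xi'_{h_2},\xi'_{h'_2})$, $(\xi'_{h_2},L'_g\xi'_{h'_2})$ and their primed $H$-side counterparts, together with two exceptional terms $(L_gL_g^*\xi_{h'_1},\xi_{h_1})$ and $((L'_g)^*L'_g\xi'_{h_2},\xi'_{h'_2})$. The ``benign'' coefficients are within $\delta$ of the $\varphi$-values $\varphi((h'_2)^{-1}h_2)$, $\varphi((h'_2)^{-1}gh_2)$, etc.\ directly from the hypothesis; the bare inner products $(\xi_{h'_1},\xi_{h_1})$ reduce to this case once one observes that $|\varphi(e)-(L_e\xi_h,\xi_h)|<\delta$ combined with $\|L_e\xi_h\|\le\|\xi_h\|\le 1$ forces $\|L_e\xi_h-\xi_h\|^2<2\delta$. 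Consequently, up to an error of order $C^2\sqrt{\delta}$, the expansions of $\|S\|_2^2$ and $\|SL_g-L'_gS\|_2^2$ coincide with the corresponding GNS expansions of $\|\tilde T_0\|_2^2$ and $\|\tilde T_0\pi_\varphi(g)-\pi_\varphi(g)\tilde T_0\|_2^2$.

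The main obstacle is the non-unitarity of $L_g,L'_g$: in the GNS expansion the analogues of $(L_gL_g^*\xi_{h'_1},\xi_{h_1})$ and $((L'_g)^*L'_g\xi'_{h_2},\xi'_{h'_2})$ collapse to $(\xi_{h'_1},\xi_{h_1})$ and $(\xi'_{h_2},\xi'_{h'_2})$, whereas for approximate representations they need not. This is where the enlargement $E\supset F^{-1}E_0\cup FE_0$ pays off. For $g\in F$ and $h_1\in E_0$, the hypothesis at the triple $(g,g^{-1}h_1,h_1)\in F\times E\times E$ gives $|(L_g\xi_{g^{-1}h_1},\xi_{h_1})-\varphi(e)|<\delta$, and Cauchy--Schwarz then forces $\|L_g^*\xi_{h_1}\|^2\ge(1-\delta)^2$ while $\|\xi_{h_1}\|^2\le 1$; hence the positive operator $I-L_gL_g^*$ has diagonal $\le 2\delta$ on each $\xi_{h_1}$, so by its operator Cauchy--Schwarz $|((I-L_gL_g^*)\xi_{h'_1},\xi_{h_1})|\le 2\delta$. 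The same argument, using $FE_0\subset E$ to control $\|L'_g\xi'_{h_2}\|$, handles $(L'_g)^*L'_g$. Assembling these estimates, choosing $\eps':=\eps/(2c)$ for a suitable absolute constant $c$ coming from the expansion, and then $\delta$ small enough that $C^2\sqrt\delta<\eps'^2$, gives $\|S\|_2>1/2$ and $\|SL_g-L'_gS\|_2<c\eps'$ on $F$; the normalization $T=S/\|S\|_2$ then yields $\|T\|_2=1$ and $\|TL_g-L'_gT\|_2<\eps$, completing the proof.
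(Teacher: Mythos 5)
Your proof is correct, and its skeleton is the same as the paper's: produce an approximate Hilbert--Schmidt intertwiner $Q$ (your $T_0$) on the GNS space via Lemma~\ref{lem:HSformulation}, reduce it to a finite expression in the vectors $\pi_\varphi(h)\xi_\varphi$ (the paper compresses by the projection $p_E$ onto $\operatorname{span}\{\pi_\varphi(h)\xi_\varphi : h\in E\}$, you approximate by rank-one combinations $\theta_{\pi_\varphi(h_2)\xi_\varphi,\pi_\varphi(h_1)\xi_\varphi}$ -- essentially the same finitization), and then transfer by substituting $\xi_h,\xi'_h$ for $\pi_\varphi(h)\xi_\varphi$, using $g=e$ to see that $(\xi_{h_1},\xi_{h_2})$ is forced to be near $\varphi(h_2^{-1}h_1)$. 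Where you genuinely diverge is at the only delicate point, the non-unitarity of $L_g,L'_g$: the paper never expands $\|\tilde T L_g\|_2^2$ and $\|L'_g\tilde T\|_2^2$ at all, but instead bounds $\|\tilde T L_g-L'_g\tilde T\|_2^2\le 2\|\tilde T\|_2^2-2\operatorname{Re}(\tilde TL_g,L'_g\tilde T)$ using contractivity, so only the cross term -- whose coefficients $(L_g\xi_{h_2},\xi_{h_1})(\xi'_{h'_1},L'_g\xi'_{h'_2})$ are directly controlled by the hypothesis -- needs to be compared with $(Q\pi(g),\pi(g)Q)$, and $2-2\operatorname{Re}(Q\pi(g),\pi(g)Q)=\|[Q,\pi(g)]\|_2^2$ closes the argument. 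You instead expand everything, confront the terms $(L_gL_g^*\xi_{h'_1},\xi_{h_1})$ and $((L'_g)^*L'_g\xi'_{h_2},\xi'_{h'_2})$, and neutralize them by enlarging $E$ to $FE_0\cup F^{-1}E_0$ so that the hypothesis at the triples $(g,g^{-1}h_1,h_1)$ and $(g,h_2,gh_2)$ pins $\|L_g^*\xi_{h_1}\|,\|L'_g\xi'_{h_2}\|$ near $1$, after which Cauchy--Schwarz for the positive operators $I-L_gL_g^*$ and $I-(L'_g)^*L'_g$ bounds the off-diagonal entries by $2\delta$; this is legitimate since $E$ and $\delta$ may be chosen in advance. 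The paper's inequality buys a shorter estimate and a smaller set $E$ (no enlargement by $F$ is needed); your route is more computational but entirely explicit about where each hypothesis datum enters, and the operator Cauchy--Schwarz step is a nice reusable device. Your constant bookkeeping ($\eps'$, $C$, then $\delta$ with $C^2\sqrt\delta<\eps'^2$) is loose but in the right logical order, so the quantifiers come out correctly.
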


\bp
We may assume that $\eps<1$. Consider the GNS-representation $\pi=\pi_\varphi\colon C^*(\Gamma)\to B(H_\varphi)$ with the corresponding cyclic vector $\xi=\xi_\varphi$. By amenability of $\pi_\varphi$ and Lemma~\ref{lem:HSformulation} we can find $Q \in \HS( H_\varphi)$ such that $\|Q\|_2=1$ and $\|[\pi(g),Q]\|_2<\eps/2$ for all $g\in F$. For a finite subset $E\subset\Gamma$, denote by $p_E$ the projection onto $\operatorname{span}\{\pi(g) \xi  \mid g\in E\}$. Since $p_E\xrightarrow{so}1$ as $E\nearrow\Gamma$, and hence  $\|Q-p_EQp_E\|_2\to0$ and $\lVert [\pi(g),p_EQp_E] \rVert_{2} \leq 2\lVert  p_EQp_E- Q \rVert_{2}+ \lVert [\pi(g),Q] \rVert_{2}\to \lVert [\pi(g),Q] \rVert_{2}$, we can replace $Q$ by $\|p_EQp_E\|_2^{-1}p_EQp_E$ for a sufficiently large $E$ and therefore assume that $Q=p_EQp_E$. Fix an orthonormal basis $(e_k)^n_{k=1}$ in~$p_EH_\varphi$. Let $(q_{kl})_{k,l}$ be the matrix of~$Q|_{p_EH_\varphi}$ in this basis. Write $e_k=\sum_{h\in E}a_{k,h}\pi(h)\xi$ for some constants~$a_{k,h}$.

Assume now that we are given $H$, $H'$, $L_g$, $L'_g$, $\xi_h$, $\xi'_h$ as in the statement of the lemma, with~$\delta$ to be determined later. Consider the operator
$$
\tilde T\colon H\to H',\quad \tilde T:=\sum^n_{k,k'=1}\sum_{h,h'\in E}q_{k'k}a_{k',h'}\bar a_{k,h}\theta_{\xi'_{h'},\xi_h},
$$
where $\theta_{\xi'_{h'},\xi_h}$ is given by~\eqref{eq:theta}. Since $\lVert L_{e} \xi_{h} - \xi_{h} \rVert^{2} \leq 2-2\operatorname{Re}(L_{e} \xi_{h},\xi_{h})$, the vector $L_{e} \xi_{h}$ must be close to $\xi_{h}$ for $h\in E$ when $\delta$ is small, hence $(\xi_{h_1},\xi_{h_2})$ must be close to $\varphi(h_2^{-1}h_1)$ for $h_1,h_2\in E$. Then
$\sum_{h_1, h_2\in E} a_{l,h_1} \bar a_{k, h_2} (\xi_{h_1}, \xi_{h_2})$ is close to
$$
\sum_{h_2, h_1\in E} a_{l,h_1} \bar a_{k, h_2}  \varphi(h_2^{-1}h_1)=( e_{l}, e_{k}).
$$
Similar calculations reveal that $\sum_{h'_1, h'_2\in E} a_{k',h'_1} \bar a_{l', h'_2} (\xi_{h'_1}', \xi_{h'_2}')$ is close to $( e_{k'}, e_{l'} )$ when $\delta$ is small. This implies that
$$
\lVert \tilde T\rVert_{2}^{2} = \sum_{l, l', k, k'=1}^{n}  q_{ k' k} \bar q_{ l' l} \Big(\sum_{h_1, h_2\in E} a_{l,h_2} \bar a_{k, h_1} (\xi_{h_2}, \xi_{h_1})\Big) \Big( \sum_{h_1', h_2'\in E} a_{k',h_1'} \bar a_{l', h_2'} (\xi_{h_1'}', \xi_{h_2'}')\Big)
$$
is close to $\sum_{k, k'=1}^{n}q_{k' k} \bar q_{k' k}=\|Q\|^2_2=1$.

Next, for any $g\in F$, we have
$$
\lVert \tilde T L_{g} - L'_{g} \tilde T \rVert_{2}^{2}\leq \lVert L_{g}\rVert^{2} \lVert \tilde T  \rVert_{2}^{2} + \lVert L'_{g}\rVert^{2} \lVert \tilde T  \rVert_{2}^{2} - 2\operatorname{Re}(\tilde T L_{g}, L'_{g} \tilde T) \leq 2 \lVert \tilde T  \rVert_{2}^{2} - 2\operatorname{Re}(\tilde T L_{g}, L'_{g} \tilde T) \; .
$$
Since $(\theta_{\xi_{h_1'}', \xi_{h_1}} L_{g} , L'_{g} \theta_{\xi_{h_2'}', \xi_{h_2}})= (L_{g}\xi_{h_2},\xi_{h_1})(\xi_{h_1'} ', L'_{g} \xi_{h_2'} ') $, similar calculations to the ones above show that when $\delta$ is small, $(\tilde T L_{g}, L'_{g} \tilde T)$ becomes close to $(Q \pi(g), \pi(g) Q)$, and therefore the expression $2 \lVert \tilde T  \rVert_{2}^{2} - 2\operatorname{Re}(\tilde T L_{g}, L'_{g} \tilde T)$ is close~to
$$
2-2\operatorname{Re}(Q \pi(g), \pi(g)Q)=\lVert Q \pi(g) - \pi(g) Q \rVert_{2}^{2}<\frac{\eps^2}{4}.
$$

It follows that we can choose $\delta$ small enough, depending only on $\eps$ and our choice of $Q$, $E$ and the constants $a_{k,h}$, so that
$$
\lvert 1 -\|\tilde T\|_2 \rvert <\frac{\eps}{2}\quad\text{and}\quad \|\tilde TL_g-L'_g\tilde T\|_2<\frac{\eps}{2}
$$
for all $g\in F$. Then the operator $T:=\tilde T/\|\tilde T\|_2$ has the right properties.
\ep

Before we proceed, let us write out the following known observation.

\begin{lemma} \label{lem:Hulanicki}
Let $\Gamma$ be a discrete group. Assume there exists an amenable unitary representation~$\pi$ of $\Gamma$ that is weakly contained in the left regular representation $\lambda$. Then $\Gamma$ is amenable.
\end{lemma}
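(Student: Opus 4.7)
The plan is to chain three standard facts about weak containment and tensor products, reducing the amenability of $\Gamma$ to the well-known criterion $\eps_\Gamma\prec\lambda$, where $\lambda$ is the left regular representation. By definition, amenability of $\pi$ means $\eps_\Gamma\prec\bar\pi\otimes\pi$, so the goal is to upgrade this to $\eps_\Gamma\prec\lambda$.

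First, I would show that tensoring with a fixed representation preserves weak containment: if $\pi\prec\lambda$, then $\bar\pi\otimes\pi\prec\bar\pi\otimes\lambda$. The cleanest way is to note that a diagonal matrix coefficient of $\bar\pi\otimes\pi$ is the pointwise product of matrix coefficients of $\bar\pi$ and of $\pi$; since states of $\pi$ are weak$^*$-limits of convex combinations of states of $\lambda$, multiplying by a fixed positive definite function of $\bar\pi$ exhibits every diagonal matrix coefficient of $\bar\pi\otimes\pi$ as a pointwise limit of convex combinations of diagonal matrix coefficients of $\bar\pi\otimes\lambda$, which by Lemma~\ref{lem:Fell} gives the required weak containment.

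Second, I would apply Fell's absorption principle: for any unitary representation $\sigma$ of $\Gamma$, the unitary $U\colon H_\sigma\otimes\ell^2(\Gamma)\to H_\sigma\otimes\ell^2(\Gamma)$ given by $U(\xi\otimes\delta_g)=\sigma(g)^{-1}\xi\otimes\delta_g$ intertwines $\sigma\otimes\lambda$ with $1_{H_\sigma}\otimes\lambda$. Hence $\bar\pi\otimes\lambda$ is unitarily equivalent to a multiple of $\lambda$ and, in particular, weakly equivalent to $\lambda$. Combining with the previous step yields
\[
\eps_\Gamma\ \prec\ \bar\pi\otimes\pi\ \prec\ \bar\pi\otimes\lambda\ \sim\ \lambda,
\]
so $\eps_\Gamma\prec\lambda$.

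Finally, I would invoke Hulanicki's theorem: a discrete group $\Gamma$ is amenable if and only if $\eps_\Gamma\prec\lambda$. I do not anticipate a serious obstacle; the main subtlety is the tensor-product step, which is standard but must be justified (weak containment is not generally preserved under all natural constructions). The Fell absorption step and the invocation of Hulanicki's theorem are entirely routine.
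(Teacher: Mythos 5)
Your proof is correct and follows essentially the same route as the paper, which chains $\eps\prec\bar\pi\otimes\pi\prec\bar\lambda\otimes\lambda\sim\lambda$ (Fell absorption) and then invokes Hulanicki's theorem; tensoring $\pi\prec\lambda$ on one side only, as you do, versus on both sides is an immaterial difference. One small caveat: your matrix-coefficient justification of $\bar\pi\otimes\pi\prec\bar\pi\otimes\lambda$ only treats vector states at elementary tensors, and Lemma~\ref{lem:Fell} applies to an \emph{irreducible} weakly contained representation, so it is cleaner either to quote Fell's standard theorem that weak containment passes to tensor products (as the paper implicitly does) or to note that you only need the irreducible representation $\eps_\Gamma$ on the left of the chain.
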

\bp
By assumption we have
$
\eps\prec\bar\pi\otimes\pi\prec\bar\lambda\otimes\lambda\sim\lambda,
$
where the last weak equivalence holds by the Fell absorption principle. Hence $\Gamma$ is amenable by the well-known result of Hulanicki.
\ep

In the terminology of \cite{MR0769602} the following result shows that for discrete groups amenability is equivalent to property (TP2).

\begin{prop}\label{prop:TP2}
For any discrete group $\Gamma$ the following conditions are equivalent:
\begin{enumerate}
  \item $\Gamma$ is amenable;
  \item for all nonzero unitary representations $\pi$ and $\rho$ of $\Gamma$, we have $\eps\prec\bar\pi\otimes\rho$ if and only if $\hull(\ker\pi)\cap\hull(\ker\rho)\ne\emptyset$ in $\Prim C^*(\Gamma)$;
  \item for all irreducible unitary representations $\pi$ and $\rho$ of $\Gamma$, we have $\eps\prec\bar\pi\otimes\rho$ if and only if $\hull(\ker\pi)\cap\hull(\ker\rho)\ne\emptyset$ in $\Prim C^*(\Gamma)$.
\end{enumerate}
\end{prop}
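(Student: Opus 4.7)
To prove Proposition~\ref{prop:TP2}, I would verify (2)$\Rightarrow$(3)$\Rightarrow$(1)$\Rightarrow$(2), since (2)$\Rightarrow$(3) is immediate (one merely restricts to irreducibles).

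For (3)$\Rightarrow$(1), my plan is to exhibit a single irreducible representation $\sigma\prec\lambda$ that is amenable; Lemma~\ref{lem:Hulanicki} will then instantly give amenability of $\Gamma$. To produce $\sigma$, pick any irreducible representation of $C^*_r(\Gamma)$ (which exists since $C^*_r(\Gamma)$ is a nonzero C$^*$-algebra) and pull it back to $C^*(\Gamma)$ along the canonical surjection. The resulting $\sigma$ is irreducible with $\ker\lambda\subset\ker\sigma$, i.e., $\sigma\prec\lambda$. Since $\hull(\ker\sigma)\cap\hull(\ker\sigma)=\{\ker\sigma\}\ne\emptyset$, hypothesis (3) applied with $\pi=\rho=\sigma$ yields $\eps\prec\bar\sigma\otimes\sigma$, which is exactly amenability of~$\sigma$.

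For the reverse direction of (1)$\Rightarrow$(2), suppose $\Gamma$ is amenable and $P\in\hull(\ker\pi)\cap\hull(\ker\rho)$. Let $\sigma$ be irreducible with $\ker\sigma=P$; then $\sigma\prec\pi$ and $\sigma\prec\rho$, and correspondingly $\bar\sigma\prec\bar\pi$ (since $\ker\bar\sigma=\ker\sigma$). My plan is to invoke continuity of the tensor product of representations under weak containment --- if $\sigma_1\prec\sigma_2$ and $\tau_1\prec\tau_2$, then $\sigma_1\otimes\tau_1\prec\sigma_2\otimes\tau_2$ --- which for discrete groups follows from the universal property of $\otimes_{\max}$ combined with pullback along the diagonal $C^*(\Gamma)\to C^*(\Gamma)\otimes_{\max}C^*(\Gamma)$. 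This gives $\bar\sigma\otimes\sigma\prec\bar\pi\otimes\rho$. Since $\Gamma$ is amenable, every nonzero representation of $\Gamma$ is amenable (by the Powers--St\o{}rmer argument recorded after Lemma~\ref{lem:HSformulation}), so $\eps\prec\bar\sigma\otimes\sigma\prec\bar\pi\otimes\rho$.

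The forward direction of (1)$\Rightarrow$(2), showing that $\eps\prec\bar\pi\otimes\rho$ forces $\hull(\ker\pi)\cap\hull(\ker\rho)\ne\emptyset$, is where I expect most of the work to live; notably, this direction does not actually require amenability of $\Gamma$. By Lemma~\ref{lem:HSformulation}, choose a net $(T_i)$ in $\HS(H_\pi,H_\rho)$ with $\|T_i\|_2=1$ and $\|T_i\pi(g)-\rho(g)T_i\|_2\to 0$ for each $g\in\Gamma$. The plan is to form states $\phi_i(a):=\Tr(T_i^*T_i\,\pi(a))$ and $\psi_i(a):=\Tr(T_iT_i^*\,\rho(a))$ on $C^*(\Gamma)$ (well-defined since $T_i^*T_i$ and $T_iT_i^*$ are positive trace-class operators of trace one); a Cauchy--Schwarz bound in $\HS$ gives $|\phi_i(g)-\psi_i(g)|\le\|T_i\|_2\|T_i\pi(g)-\rho(g)T_i\|_2\to 0$ for every $g\in\Gamma$, so $\phi_i-\psi_i\to 0$ weak${}^*$ by density of $\mathrm{span}\,\Gamma$ in $C^*(\Gamma)$ and the uniform bound on states. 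Since $C^*(\Gamma)$ is unital, the state space is weak${}^*$ compact, and I can pass to a subnet so that $\phi_i$ and $\psi_i$ both converge to a common state $\omega$. By construction $\phi_i$ vanishes on $\ker\pi$ and $\psi_i$ on $\ker\rho$, whence the GNS representation $\pi_\omega$ has kernel containing $\ker\pi+\ker\rho$; since $\ker\pi_\omega$ is a proper ideal, it is contained in some primitive ideal, providing the required element of $\hull(\ker\pi)\cap\hull(\ker\rho)$. The main potential obstacle is ensuring a nonzero limit state --- resolved by unitality of $C^*(\Gamma)$ --- and the continuity of tensor products under weak containment, both of which are standard.
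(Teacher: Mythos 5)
Your proposal follows essentially the same route as the paper's proof: (2)$\Rightarrow$(3) is trivial, (3)$\Rightarrow$(1) is obtained by applying the hypothesis to an irreducible representation weakly contained in the regular representation and invoking Lemma~\ref{lem:Hulanicki}, and both halves of (1)$\Rightarrow$(2) are the paper's arguments (amenability of a common irreducible $\tau$ giving $\eps\prec\bar\tau\otimes\tau\prec\bar\pi\otimes\rho$, and the Hilbert--Schmidt/limit-state argument via Lemma~\ref{lem:HSformulation}, which indeed needs no amenability). The only slip is the parenthetical ``$\ker\bar\sigma=\ker\sigma$'' --- the kernel of the conjugate representation is rather the image of $\ker\sigma$ under the canonical conjugate-linear $*$-automorphism of $C^*(\Gamma)$ --- but $\sigma\prec\pi$ immediately gives $\bar\sigma\prec\bar\pi$ anyway, so nothing breaks.
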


\bp
Obviously, (2) implies (3). Since there always exists an irreducible unitary representation weakly contained in the left regular representation, (3) implies (1) by Lemma~\ref{lem:Hulanicki}.

It remains to prove that (1) implies (2). Assume therefore that $\Gamma$ is amenable. Take two nonzero unitary representations $\pi$ and $\rho$ of $\Gamma$. Assume $\hull(\ker\pi)\cap\hull(\ker\rho)\ne\emptyset$, so there exists an irreducible unitary representation $\tau$ weakly contained in both $\pi$ and $\rho$. Then $\tau$ is amenable by amenability of $\Gamma$, hence $\eps\prec \bar\tau\otimes\tau\prec \bar\pi\otimes\rho$.

Conversely, assume $\eps\prec \bar\pi\otimes\rho$. By Lemma~\ref{lem:HSformulation} there exists a net of operators $T_i \in \HS( H_\pi, H_\rho)$ such that $\|T_i\|_2=1$ and $\|T_i\pi(g)-\rho(g)T_i\|_2\to0$ for all $g\in\Gamma$. Consider the states
$$
\varphi_i:=\Tr(\pi(\cdot)T_i^*T_i)=\Tr(T_i\pi(\cdot)T_i^*)\quad\text{and}\quad \psi_i:=\Tr(\rho(\cdot)T_iT^*_i)
$$
on $C^*(\Gamma)$. By passing to a subnet we may assume that we have a weak$^*$ convergence $\varphi_i\to\varphi$ for some state $\varphi$. As $|\varphi_i(g)-\psi_i(g)|\to0$ by the Cauchy--Schwarz inequality, we also have $\psi_i\to\varphi$. Therefore $\varphi$ is a state on $C^*(\Gamma)$ that factors through both $C^*_\pi(\Gamma)$ and $C^*_\rho(\Gamma)$. Then $\hull(\ker\pi_\varphi)\subset\hull(\ker\pi)\cap\hull(\ker\rho)$. Thus, (1) implies (2).
\ep

Note that the proof of the implication ($\eps\prec \bar\pi\otimes\rho\Rightarrow \hull(\ker\pi)\cap\hull(\ker\rho)\ne\emptyset$) works without any amenability assumptions. Therefore the same arguments as in the above proof give the following result.

\begin{cor}\label{cor:Frob1}
Assume $\Gamma$ is a discrete group, $\pi$ and $\rho$ are nonzero unitary representations of~$\Gamma$. Assume also that $\pi$ is amenable and $C^*_\pi(\Gamma)$ is simple. Then $\eps\prec\bar\pi\otimes\rho$ if and only if $\pi\prec\rho$.
\end{cor}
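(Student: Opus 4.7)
The plan is to prove both implications by mimicking the arguments in the proof of Proposition~\ref{prop:TP2} and exploiting the simplicity of $C^*_\pi(\Gamma)$ in the crucial step.

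For the forward direction ($\pi\prec\rho\Rightarrow\eps\prec\bar\pi\otimes\rho$), I would observe that tensoring on the left with~$\bar\pi$ preserves weak containment, so $\bar\pi\otimes\pi\prec\bar\pi\otimes\rho$. Since $\pi$ is amenable by assumption, we have $\eps\prec\bar\pi\otimes\pi$ by definition, and composing these two containments gives the desired conclusion.

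For the reverse direction, I would follow the argument used in the proof of Proposition~\ref{prop:TP2} to produce a common state. Specifically, from $\eps\prec\bar\pi\otimes\rho$, Lemma~\ref{lem:HSformulation} supplies a net of operators $T_i\in\HS(H_\pi,H_\rho)$ with $\|T_i\|_2=1$ and $\|T_i\pi(g)-\rho(g)T_i\|_2\to0$ for all $g\in\Gamma$. Form the two states
\[
\varphi_i:=\Tr(\pi(\cdot)T_i^*T_i),\qquad \psi_i:=\Tr(\rho(\cdot)T_iT_i^*)
\]
on $C^*(\Gamma)$. After passing to a weak$^*$-convergent subnet, $\varphi_i\to\varphi$ for some state $\varphi$ factoring through $C^*_\pi(\Gamma)$. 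The Cauchy--Schwarz estimate $|\varphi_i(g)-\psi_i(g)|\to0$ then forces $\psi_i\to\varphi$ as well, so~$\varphi$ also factors through $C^*_\rho(\Gamma)$. This yields
\[
\hull(\ker\pi_\varphi)\subset\hull(\ker\pi)\cap\hull(\ker\rho).
\]

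The key point, and really the only new input compared to Proposition~\ref{prop:TP2}, is that simplicity of $C^*_\pi(\Gamma)$ makes $\ker\pi$ a maximal ideal of $C^*(\Gamma)$, so $\hull(\ker\pi)=\{\ker\pi\}$ as a single point of $\Prim C^*(\Gamma)$. Combined with the fact that $\hull(\ker\pi_\varphi)$ is nonempty (since $\varphi$ is a nonzero state), the inclusion above forces $\ker\pi\in\hull(\ker\rho)$, i.e.~$\ker\rho\subset\ker\pi$, which is exactly $\pi\prec\rho$. The main subtlety to handle carefully is merely that $\varphi$ is indeed nonzero and that the relevant hulls are interpreted with respect to $C^*(\Gamma)$ rather than quotient algebras, but no further technical obstacle arises.
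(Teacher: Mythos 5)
Your proposal is correct and follows essentially the same route as the paper: the forward direction is the tensoring remark ($\eps\prec\bar\pi\otimes\pi\prec\bar\pi\otimes\rho$ by amenability of $\pi$), and the reverse direction reruns the Hilbert--Schmidt/state argument from Proposition~\ref{prop:TP2} (which needs no amenability of $\Gamma$) and then uses simplicity of $C^*_\pi(\Gamma)$, i.e.\ maximality of $\ker\pi$, to force $\ker\rho\subset\ker\pi_\varphi=\ker\pi$. This is exactly how the paper deduces the corollary.
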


We also get the following result, which for countable groups has been already obtained in~\cite{MR0769602}*{Corollary~2}.

\begin{cor}
For any discrete group $\Gamma$ the following conditions are equivalent:
\begin{enumerate}
  \item $\Gamma$ is amenable and every primitive ideal of $C^*(\Gamma)$ is maximal;
  \item for all irreducible unitary representations $\pi$ and $\rho$ of $\Gamma$, we have $\eps\prec\bar\pi\otimes\rho$ if and only if $\pi\sim\rho$.
\end{enumerate}
\end{cor}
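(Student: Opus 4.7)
The plan is to deduce this corollary from Proposition \ref{prop:TP2} and Lemma \ref{lem:Hulanicki}, which are already in hand. The content is that the equivalence in Proposition \ref{prop:TP2} specialises nicely for irreducibles precisely when $\hull(\ker\pi)$ is always a singleton, i.e.\ precisely when primitive ideals are maximal.

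For the implication (1)$\Rightarrow$(2), I would argue as follows. Since $\Gamma$ is amenable, Proposition \ref{prop:TP2} says that $\eps\prec\bar\pi\otimes\rho$ is equivalent to $\hull(\ker\pi)\cap\hull(\ker\rho)\ne\emptyset$. For $\pi$ and $\rho$ irreducible, $\ker\pi$ and $\ker\rho$ are primitive ideals, and maximality of every primitive ideal forces $\hull(\ker\pi)=\{\ker\pi\}$ and $\hull(\ker\rho)=\{\ker\rho\}$. Thus the hulls meet if and only if $\ker\pi=\ker\rho$, which is precisely $\pi\sim\rho$.

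For (2)$\Rightarrow$(1) I would split the two conclusions. To get amenability of $\Gamma$, pick any irreducible unitary representation $\tau$ weakly contained in the left regular representation $\lambda$ (such $\tau$ exist for any $\Gamma$). Applied with $\pi=\rho=\tau$, hypothesis (2) trivially gives $\eps\prec\bar\tau\otimes\tau$, so $\tau$ is amenable; Lemma \ref{lem:Hulanicki} then yields amenability of $\Gamma$. To get maximality of primitive ideals, suppose $\ker\pi\subset\ker\rho$ for two irreducible representations. Then $\ker\rho\in\hull(\ker\pi)\cap\hull(\ker\rho)$, so by Proposition \ref{prop:TP2} (now available since $\Gamma$ is amenable) we have $\eps\prec\bar\pi\otimes\rho$. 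Hypothesis (2) upgrades this to $\pi\sim\rho$, forcing $\ker\pi=\ker\rho$. Hence no primitive ideal is strictly contained in another, which is the same as saying every primitive ideal is maximal.

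The only thing to double-check is that in the first half of (2)$\Rightarrow$(1) we are legitimately invoking hypothesis (2) before knowing that $\Gamma$ is amenable — but that is fine, since we only use (2) in the trivial case $\pi=\rho$. No real obstacle is expected; the whole argument is a direct bookkeeping of the results assembled earlier in the section.
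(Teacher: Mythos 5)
Your proposal is correct and follows essentially the same route as the paper: the (2)$\Rightarrow$(1) direction is the paper's argument (amenability via an irreducible $\tau\prec\lambda$, which you make more explicit, plus the observation that a non-maximal primitive ideal sits strictly inside another primitive ideal, contradicting (2) through Proposition~\ref{prop:TP2}). The only cosmetic difference is in (1)$\Rightarrow$(2), where the paper cites Corollary~\ref{cor:Frob1} while you invoke Proposition~\ref{prop:TP2} directly and note that maximality makes the hulls singletons; since that corollary is itself an immediate byproduct of Proposition~\ref{prop:TP2}, the two arguments are interchangeable.
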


\bp
The implication (1)$\Rightarrow$(2) follows from the previous corollary. Conversely, assume (2) holds. By Lemma~\ref{lem:Hulanicki} then $\Gamma$ is amenable. Next, assume there exists an irreducible unitary representation $\pi$ such that $\ker\pi$ is not maximal. This implies that there is an irreducible unitary representation $\rho$ such that $\rho\prec\pi$, but $\rho\not\sim\pi$. As $\eps\prec\bar\pi\otimes\rho$ by Proposition~\ref{prop:TP2}(3), this contradicts our assumption (2). Hence (2)$\Rightarrow$(1).
\ep

Note that by a result of Echterhoff~\cite{MR1066810} if $\Gamma$ is FC-hypercentral, then $\Gamma$ is amenable and every primitive ideal of $C^*(\Gamma)$ is maximal, and the converse is true if $\Gamma$ is countable. We will return to this in Section~\ref{ssec:main}.

\subsection{Duality of induction and restriction}

We next turn to induced representations and a weak analogue of the  isomorphism
\begin{equation}\label{eq:Frobenius2}
\operatorname{Hom}_\Gamma(\Ind^\Gamma_S\rho,\pi)\cong\operatorname{Hom}_S(\rho,\Res^\Gamma_S\pi)
\end{equation}
for finite groups, which is what one more often means by Frobenius reciprocity instead of~\eqref{eq:Frobenius}.

First let us recall the following generalization of the Fell absorption principle: if $\Gamma$ is a discrete group, $S\subset \Gamma$ is a subgroup, $\pi$ is a unitary representation of $\Gamma$ and $\rho$ is a unitary representation of $S$, then we have a unitary equivalence
\begin{equation}\label{eq:absorption}
\pi\otimes\Ind^\Gamma_S\rho\cong \Ind^\Gamma_S(\pi|_S\otimes\rho).
\end{equation}
Explicitly, a unitary intertwiner $H_\pi\otimes \Ind^\Gamma_S H_\rho\to \Ind^\Gamma_S(H_\pi\otimes H_\rho)$ is obtained by mapping $\xi\otimes f\in H_\pi\otimes \Ind^\Gamma_S H_\rho$ into the function $(g\mapsto\pi(g)^*\xi\otimes f(g))$.

In particular, if $\Gamma$ is amenable, we can take $\rho=\eps_S$, use that $\eps_\Gamma\prec \Ind^\Gamma_S\eps_S$ by Lemma~\ref{lem:greenleaf} applied to $\Gamma \curvearrowright \Gamma / S$, and conclude that
$$
\pi\prec \Ind^\Gamma_S(\pi|_S).
$$
This is a result of Greenleaf~\cite{MR0246999}*{Theorem~5.1}. The following is a related result that is closer to~\eqref{eq:Frobenius2}.

\begin{prop}\label{prop:Frob2}
Assume $\Gamma$ is an amenable discrete group, $S\subset\Gamma$ is a subgroup, $\pi$ is a nonzero unitary representation of $\Gamma$ and $\rho$ is a nonzero unitary representation of $S$ such that $\rho\prec\pi|_S$. Then $\eps_\Gamma\prec\bar\pi\otimes\Ind^\Gamma_S\rho$.
\end{prop}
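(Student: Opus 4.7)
The plan is to exhibit the chain of weak containments and unitary equivalences
\begin{equation*}
\eps_\Gamma \;\prec\; \Ind^\Gamma_S \eps_S \;\prec\; \Ind^\Gamma_S(\bar\pi|_S\otimes\rho) \;\cong\; \bar\pi\otimes\Ind^\Gamma_S\rho,
\end{equation*}
assembling the conclusion from three ingredients already available in the excerpt: Lemma~\ref{lem:greenleaf} applied to $\Gamma\curvearrowright\Gamma/S$, Proposition~\ref{prop:TP2} applied to the subgroup~$S$, and the generalized Fell absorption principle~\eqref{eq:absorption}.

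The crucial middle link rests on the auxiliary claim $\eps_S \prec \bar\pi|_S\otimes\rho$. Since subgroups of amenable groups are amenable, $S$ itself is amenable, so Proposition~\ref{prop:TP2}(2) applies on~$S$: it suffices to verify that $\hull(\ker\pi|_S)\cap\hull(\ker\rho)\ne\emptyset$ in $\Prim C^*(S)$. But the hypothesis $\rho\prec\pi|_S$ translates to $\ker\pi|_S\subset\ker\rho$, so $\hull(\ker\rho)\subset\hull(\ker\pi|_S)$; as $\rho$ is nonzero this hull is nonempty, and the intersection equals $\hull(\ker\rho)$. Proposition~\ref{prop:TP2} then yields $\eps_S\prec\bar\pi|_S\otimes\rho$. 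Pushing this forward through induction produces the middle link, since the formula for $\ker(\Ind^\Gamma_S\sigma)$ recalled in Section~\ref{ssec:groupoids} is monotone in $\ker\sigma$, so weak containment of representations of~$S$ is preserved by $\Ind^\Gamma_S$.

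The right-hand isomorphism is simply~\eqref{eq:absorption} applied with $\bar\pi$ in place of $\pi$. The leftmost link is Lemma~\ref{lem:greenleaf} applied to the amenable group $\Gamma$ acting on $\Gamma/S$, noting that $\Ind^\Gamma_S\eps_S$ is precisely the quasi-regular representation on $\ell^2(\Gamma/S)$. Concatenating the three links proves the proposition. The argument involves no serious obstacle; the mildly delicate point is invoking Proposition~\ref{prop:TP2} on $S$ rather than on $\Gamma$, for which one uses only the hereditary amenability of subgroups.
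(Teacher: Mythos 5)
Your argument is correct and follows essentially the same route as the paper: $\eps_\Gamma\prec\Ind^\Gamma_S\eps_S$ via Lemma~\ref{lem:greenleaf}, a weak containment of $\eps_S$ in $\bar\pi|_S\otimes\rho$ pushed through induction, and the absorption identity~\eqref{eq:absorption}. The only cosmetic difference is the middle link: the paper obtains $\eps_S\prec\bar\rho\otimes\rho\prec\bar\pi|_S\otimes\rho$ directly from amenability of the representation $\rho$ together with $\bar\rho\prec\bar\pi|_S$, whereas you invoke Proposition~\ref{prop:TP2} on the amenable subgroup $S$ — a slightly heavier tool whose proof is the same amenability argument — so both routes are equally valid.
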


\bp
We can argue as in the proof of \cite{MR0884559}*{Lemma~1.1}. Since $S$ is amenable, we have $\eps_S\prec\bar\rho\otimes\rho$. Using first that $\eps_\Gamma\prec \Ind^\Gamma_S\eps_S$ by Lemma~\ref{lem:greenleaf} and then applying again~\eqref{eq:absorption}, we get
$$
\eps_\Gamma\prec \Ind^\Gamma_S\eps_S\prec \Ind^\Gamma_S(\bar\rho\otimes\rho)\prec \Ind^\Gamma_S(\bar\pi|_S\otimes\rho)\cong\bar\pi\otimes\Ind^\Gamma_S\rho,
$$
which proves the proposition.
\ep

Together with Corollary~\ref{cor:Frob1} this gives the following result.

\begin{cor}\label{cor:Frob2}
In the setting of Proposition~\ref{prop:Frob2} assume in addition that $C^*_\pi(\Gamma)$ is simple. Then $\pi\prec \Ind^\Gamma_S\rho$.
\end{cor}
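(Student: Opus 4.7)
The plan is to chain together Proposition~\ref{prop:Frob2} and Corollary~\ref{cor:Frob1}, which is essentially what the statement is designed for. First I would apply Proposition~\ref{prop:Frob2} directly to the hypotheses $\rho\prec\pi|_S$ to obtain the weak containment
\[
\eps_\Gamma\prec\bar\pi\otimes\Ind^\Gamma_S\rho.
\]

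Next I would observe that since $\Gamma$ is amenable, every nonzero unitary representation of $\Gamma$ is amenable (this was recorded explicitly in the discussion following the definition of amenable representations, using the Powers--St{\o}rmer argument with a F{\o}lner net). In particular $\pi$ is amenable. Together with the standing assumption that $C^*_\pi(\Gamma)$ is simple, this places us exactly in the setting of Corollary~\ref{cor:Frob1}, applied with the second representation taken to be $\Ind^\Gamma_S\rho$ (which is nonzero since $\rho$ is). That corollary then upgrades the containment $\eps_\Gamma\prec\bar\pi\otimes\Ind^\Gamma_S\rho$ into the desired $\pi\prec\Ind^\Gamma_S\rho$.

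There is no real obstacle here: both ingredients have just been established and the proof is a two-line deduction. The only thing to be careful about is verifying the nondegeneracy hypothesis of Corollary~\ref{cor:Frob1} (that both representations involved are nonzero), which is immediate from the assumption that $\pi$ and $\rho$ are nonzero.
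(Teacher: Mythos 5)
Your proof is correct and follows exactly the route the paper intends: the paper derives Corollary~\ref{cor:Frob2} by combining Proposition~\ref{prop:Frob2} with Corollary~\ref{cor:Frob1}, using that amenability of $\Gamma$ makes $\pi$ amenable and that $\Ind^\Gamma_S\rho$ is nonzero. Nothing is missing.
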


The following corollary has been already obtained in~\cite{MR0884559}*{Theorem~1.2} for countable groups by more sophisticated methods.

\begin{cor}\label{cor:Frob3}
For any discrete group $\Gamma$ the following conditions are equivalent:
\begin{enumerate}
  \item $\Gamma$ is amenable and every primitive ideal of $C^*(\Gamma)$ is maximal;
  \item for every subgroup $S\subset\Gamma$ and all irreducible unitary representations $\pi$ of $\Gamma$ and $\rho$ of $S$ such that $\rho\prec\pi|_S$, we have $\pi\prec \Ind^\Gamma_S\rho$.
\end{enumerate}
\end{cor}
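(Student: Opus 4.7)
The plan is to derive (1) $\Rightarrow$ (2) directly from Corollary \ref{cor:Frob2}, and to obtain (2) $\Rightarrow$ (1) by testing condition (2) against two well-chosen pieces of data.

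For (1) $\Rightarrow$ (2), I would fix a subgroup $S\subset\Gamma$ and irreducible unitary representations $\pi$ of $\Gamma$ and $\rho$ of $S$ with $\rho\prec\pi|_S$. Amenability of $\Gamma$ makes $\pi$ an amenable representation by the Powers--St{\o}rmer argument recalled after Lemma \ref{lem:HSformulation}. Since $\ker\pi$ is primitive and every primitive ideal of $C^*(\Gamma)$ is assumed maximal, the quotient $C^*_\pi(\Gamma)$ is simple. Corollary \ref{cor:Frob2} then applies verbatim to give $\pi\prec\Ind^\Gamma_S\rho$.

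For (2) $\Rightarrow$ (1), I would first extract amenability by specializing (2) to $\pi=\eps_\Gamma$, $S=\{e\}$, $\rho=\eps_{\{e\}}$: the hypothesis $\eps_{\{e\}}\prec\eps_\Gamma|_{\{e\}}$ holds trivially, and $\Ind^\Gamma_{\{e\}}\eps_{\{e\}}$ is unitarily equivalent to the left regular representation $\lambda$, so (2) forces $\eps_\Gamma\prec\lambda$, which is the classical characterization of amenability. Then I would establish maximality of primitive ideals by contradiction: if some irreducible $\pi$ had $\ker\pi$ non-maximal, I could pick a primitive ideal $J\supsetneq\ker\pi$ and realize it as $J=\ker\rho$ for some irreducible $\rho$, giving $\rho\prec\pi$ but $\rho\not\sim\pi$. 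Applying (2) with $S=\Gamma$ (so that the hypothesis $\rho\prec\pi|_S$ is automatic) would yield $\pi\prec\Ind^\Gamma_\Gamma\rho\cong\rho$, hence $\ker\pi=\ker\rho$, a contradiction.

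I do not anticipate any real obstacle: both implications are one-step consequences of earlier results once the correct test objects are identified. The only mild subtlety is recognizing that the two extreme specializations $(\pi,S,\rho)=(\eps_\Gamma,\{e\},\eps_{\{e\}})$ and $(\pi,S,\rho)=(\pi,\Gamma,\rho)$ are between them sufficient to recover both halves of~(1).
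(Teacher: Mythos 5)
Your proof is correct and follows essentially the same route as the paper: the implication (1)$\Rightarrow$(2) is exactly the application of Corollary~\ref{cor:Frob2} (the remark on amenability of $\pi$ is already built into that corollary, so it is harmless but redundant), and (2)$\Rightarrow$(1) uses the same two specializations $S=\{e\}$, $\pi=\eps_\Gamma$ and $S=\Gamma$ that the paper uses, the latter phrased as a contradiction instead of the paper's direct statement that $\rho\prec\pi$ forces $\rho\sim\pi$.
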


\bp
The implication (1)$\Rightarrow$(2) follows from the previous corollary. Conversely, assume (2) holds. Taking $S=\{e\}$ and $\pi=\eps_\Gamma$ we see that $\eps_\Gamma$ is weakly contained in the regular representation of $\Gamma$, so $\Gamma$ is amenable. Taking $S=\Gamma$ we also conclude that if two irreducible representations~$\pi$ and~$\rho$ of $\Gamma$ satisfy $\rho\prec\pi$, then $\rho\sim\pi$. This is only possible if every primitive ideal of $C^*(\Gamma)$ is maximal.
\ep

Finally, let us remark that by~\cite{MR0884559}*{Corollary~2.8} we have $(\rho\prec\pi|_S\Leftrightarrow\pi\prec \Ind^\Gamma_S\rho)$ for all subgroups $S$ and all irreducible unitary representations $\pi$ and $\rho$ if and only if $\Gamma$ is an FC-group, that is, every conjugacy class in $\Gamma$ is finite.

\bigskip

\section{Weak containment of induced representations}\label{sec:main}

\subsection{Necessary conditions for weak containment}
We now turn to the central technical result of the paper. In order to formulate it, let us introduce the following notation. Given $x\in\Gu$, a unitary representation $\pi$ of $\Gxx$ and an element $g\in\G_x$, we define a representation~$\pi^g$ of~$\G^{r(g)}_{r(g)}$ on~$H_\pi$~by
$$
\pi^g(h):=\pi(g^{-1}hg).
$$
Recall that $\Ind^\G_{\Gxx}\pi$ and $\Ind^\G_{\G^{r(g)}_{r(g)}}\pi^g$ are unitarily equivalent (see~\cite{CN3}*{Lemma~1.1}).

\begin{thm}\label{thm:main1}
Assume $\G$ is a Hausdorff locally compact \'etale groupoid, $x\in\Gu$, $\Gamma\subset\Gxx$ is a subgroup. For every $g\in\Gamma$, fix an open bisection $W_g$ containing $g$. Assume we are given a unitary representation $\pi$ of $\Gxx$ and an amenable irreducible unitary representation $\rho$ of $\Gamma$ such that $\rho\prec\pi|_\Gamma$. Assume also that we are given a subset $Y\subset\Gu$ and, for every $y\in Y$, a unitary representation $\pi_y$ of $\G^y_y$. Consider the following conditions:
\begin{enumerate}
  \item $\Ind^\G_{\Gxx}\pi\prec\bigoplus_{y\in Y}\Ind^\G_{\G^y_y}\pi_y$;
  \item for every neighbourhood $U$ of $x$, every finite set $F\subset\Gamma$ and every $\eps>0$, there exist $y\in Y$, $\gamma\in\G_y$ and a Hilbert--Schmidt operator $T\colon H_\rho\to H_{\pi_y}$ such that $\|T\|_2=1$, $r(\gamma)\in U$ and, for every $g\in F$,  we have either $W_g\cap\G^{r(\gamma)}_{r(\gamma)}=\emptyset$, or $W_g\cap\G^{r(\gamma)}_{r(\gamma)}=\{h\}$ for some $h$ and $\|T\rho(g)-\pi^{\gamma}_{y}(h)T\|_2<\eps$;
\item there is a state $\varphi$ on $C^*_\rho(\Gamma)$ such that for every neighbourhood $U$ of $x$, every finite set $F\subset\Gamma$ and every $\eps>0$, there exist $y\in Y$, $\gamma\in\G_y$ and a  state $\nu$ on $C^*_{\pi^{\gamma}_{y}}(\G^{r(\gamma)}_{r(\gamma)})$ such that the following properties are satisfied: $r(\gamma)\in U$ and, for every $g\in F$,  we have either $W_g\cap\G^{r(\gamma)}_{r(\gamma)}=\emptyset$, or $W_g\cap\G^{r(\gamma)}_{r(\gamma)}=\{h\}$ for some $h$ and $|\varphi(g)-\nu(h)|<\eps$.
\end{enumerate}
Then $(1)\Rightarrow(2)\Rightarrow(3)$.
\end{thm}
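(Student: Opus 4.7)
Given $T=T_{F,U,\eps}$ as in $(2)$, define the states $\varphi_T(a):=\Tr(T^*T\,\rho(a))$ on $C^*_\rho(\Gamma)$ and $\nu_T(b):=\Tr(TT^*\,\pi^\gamma_y(b))$ on $C^*_{\pi^\gamma_y}(\G^{r(\gamma)}_{r(\gamma)})$, both of norm $\|T\|_2^2=1$. Cyclicity of the trace together with the Cauchy--Schwarz inequality in Hilbert--Schmidt norm yield $\lvert\varphi_T(g)-\nu_T(h_g)\rvert=\lvert\Tr(T^*(T\rho(g)-\pi^\gamma_y(h_g)T))\rvert\le\|T\|_2\,\|T\rho(g)-\pi^\gamma_y(h_g)T\|_2<\eps$ whenever $h_g$ exists. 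Letting $(F,U,\eps)$ run over the natural directed set and passing to a weak-$*$ convergent subnet $\varphi_{T_i}\to\varphi$, the limit $\varphi$ is the state required by $(3)$: for prescribed $(F,U,\eps)$ pick $i$ with $F\subset F_i$, $U_i\subset U$, $\eps_i<\eps/2$ and $\lvert\varphi(g)-\varphi_{T_i}(g)\rvert<\eps/2$ on $F$, and output the triple $(y_i,\gamma_i,\nu_{T_i})$.

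\textbf{For $(1)\Rightarrow(2)$, setup and transfer.} Fix a unit $\eta_\rho\in H_\rho$, set $\varphi(g):=(\rho(g)\eta_\rho,\eta_\rho)$, and note that $\pi_\varphi\cong\rho$ is amenable, so that Lemma~\ref{lem:app-intertwiner} applies with this $\varphi$. Using $\rho\prec\pi|_\Gamma$, choose a unit $\xi\in H_\pi$ with $(\pi(g)\xi,\xi)$ close to $\varphi(g)$ on a prescribed finite subset of $\Gamma$. Embed $\xi$ into $\Ind H_\pi$ via the canonical inclusion supported on $\Gxx$; since $W_g$ is a bisection we have $W_g\cap\Gxx=\{g\}$, so the associated vector state $\tilde\varphi$ of $\Ind^\G_{\Gxx}\pi$ satisfies $\tilde\varphi(f)=f(g)(\pi(g)\xi,\xi)$ for $f\in C_c(W_g)$ with $g\in\Gxx$. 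By $(1)$, Lemma~\ref{lem:Fell}, and Hahn--Banach, $\tilde\varphi$ lies in the weak-$*$ closure of convex combinations $\sum_k\lambda_k\mu^{\zeta_k}_{y_k}$ of vector states of individual $\Ind\pi_{y_k}$'s. Testing additionally against $f_e\in C_c(U)$ with $f_e(x)=1$ and using that $\mu^{\zeta_k}_{y_k}|_{C_0(\Gu)}$ is the $r$-pushforward of the subprobability measure $\gamma'\mapsto\|\zeta_k(\gamma')\|^2$ on $\G_{y_k}/\G^{y_k}_{y_k}$ forces mass to concentrate inside $r^{-1}(U)$. This lets me isolate a single index $k$ and coset representative $\gamma\in\G_{y_k}$ with $r(\gamma)\in U$ and $\|\zeta_k(\gamma)\|>0$; setting $\eta:=\zeta_k(\gamma)/\|\zeta_k(\gamma)\|$ and pairing with $f_g\in C_c(W_g)$, $f_g(g)=1$, the single-coset ``diagonal'' contribution of $\mu^{\zeta_k}_{y_k}(f_g)$ at $\gamma$ yields $(\pi^\gamma_{y_k}(h_g)\eta,\eta)\approx\varphi(g)$ on those $g$ in the finite set with $W_g\cap\G^{r(\gamma)}_{r(\gamma)}\ne\emptyset$.

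\textbf{Main obstacle and conclusion.} The subtle step is the extraction of a single $(y,\gamma,\eta)$: a general $\zeta_k$ also contributes off-diagonal cross-terms between distinct cosets of $\G^{y_k}_{y_k}$ in $\G_{y_k}$ to $\mu^{\zeta_k}_{y_k}(f_g)$ that are a priori comparable to the diagonal single-coset contributions, so a convex combination of single-coset states approximating $\varphi$ need not contain a single close summand. Controlling this requires the observation that $g\in\Gxx$ forces both $r(\gamma')$ and $s(h^{W_g}_{r(\gamma')})$ to remain in a small neighborhood of $x$ when $r(\gamma')$ does, confining all relevant cross-terms to the ``near-$x$'' portion of the coset sum selected by the $f_e$-concentration, together with a compactness argument on the state space selecting a single-coset summand that matches $\varphi$ simultaneously on the prescribed finite set. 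Once state-level approximation is in hand, I invoke Lemma~\ref{lem:app-intertwiner} with unprimed side $(H_\rho,\rho(g),\rho(h)\eta_\rho)$ (giving $\varphi(h_2^{-1}gh_1)$ exactly) and primed side $(H_{\pi_y},\pi^\gamma_y(h_g),\pi^\gamma_y(h_h)\eta)$ (giving $(\pi^\gamma_y(h_{h_2}^{-1}h_gh_{h_1})\eta,\eta)$). For $r(\gamma)$ close enough to $x$ the multiplicativity $h_{h_2}^{-1}h_gh_{h_1}=h_{h_2^{-1}gh_1}$ holds on the relevant finite set of products, because both open bisections $W_{h_2}^{-1}W_gW_{h_1}$ and $W_{h_2^{-1}gh_1}$ contain $h_2^{-1}gh_1$ and \'etaleness of $\G$ forces their unique elements above $r(\gamma)$ to coincide in a small enough neighborhood; the primed matrix coefficient therefore reduces to $(\pi^\gamma_y(h_{g'})\eta,\eta)\approx\varphi(g')$ with $g'=h_2^{-1}gh_1$. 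The lemma then delivers $T\colon H_\rho\to H_{\pi_y}$ with $\|T\|_2=1$ and $\|T\rho(g)-\pi^\gamma_y(h_g)T\|_2<\eps$ for all $g\in F$ with $h_g$ defined, the remaining $g\in F$ being vacuously covered by the ``empty'' clause of $(2)$.
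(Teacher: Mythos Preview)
Your argument for $(2)\Rightarrow(3)$ is essentially the paper's: form the two trace states from $T$, bound their difference by Cauchy--Schwarz, and pass to a weak$^*$-limit along the net. That part is fine.

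For $(1)\Rightarrow(2)$ there is a genuine gap, and it is exactly where you flag the ``main obstacle''. After producing a vector $\zeta_k\in\Ind^\G_{\G^{y_k}_{y_k}}H_{\pi_{y_k}}$ whose matrix coefficients approximate $\tilde\varphi$, you attempt to extract a single coset component $\eta=\zeta_k(\gamma)/\|\zeta_k(\gamma)\|$ with $(\pi^\gamma_{y_k}(h_g)\eta,\eta)\approx\varphi(g)$. This does not follow. The value $\mu^{\zeta_k}_{y_k}(f_g)$ is \emph{not} a convex combination of the single-coset quantities $\|\zeta_k(\gamma)\|^2(\pi^\gamma_{y_k}(h_g)\eta_\gamma,\eta_\gamma)$: for cosets $z$ with $W_g\cap\G^z_z=\emptyset$ the operator $(\Ind\pi_{y_k})(f_g)$ may carry $\HH_z$ into a different $\HH_{z'}$, producing genuine cross-terms; and even where the decomposition is diagonal, the single-coset contributions are only partial positive-definite functions (defined on $\{g:W_g\cap\G^{r(\gamma)}_{r(\gamma)}\ne\emptyset\}$, not on $\Gamma$), so Fell's lemma cannot be invoked to select one of them close to the pure state $\varphi$. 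Your ``compactness argument on the state space'' is not an argument---nothing in the setup provides the needed compactness or the extremality structure to peel off a single summand simultaneously for all $g\in F$.

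The paper's proof reverses the order of the two steps you attempt, and this is the key idea. It first extends $\varphi$ to a \emph{pure} state $\psi$ on $C^*_\pi(\Gxx)$ (so that $\Ind^\G_{\Gxx}\pi_\psi$ is irreducible and Fell's lemma yields a single vector $\eta_j$), and then applies Lemma~\ref{lem:app-intertwiner} with the \emph{induced} representation on the primed side, producing a Hilbert--Schmidt operator
\[
S\colon H_\rho\longrightarrow \Ind^\G_{\G^y_y}H_{\pi_y},\qquad \|S\|_2=1,\quad \|S\rho(g)-(\Ind\pi_y)(f_g)S\|_2\ \text{small}.
\]
Only \emph{after} obtaining $S$ does one decompose $S=\sum_{z\in[y]}S_z$ along the orthogonal splitting $\Ind H_{\pi_y}=\bigoplus_z\HH_z$. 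The point is that for each $z$ with $W_g\cap\G^z_z\ne\emptyset$ the space $\HH_z$ is invariant under $(\Ind\pi_y)(f_g)$, so the Hilbert--Schmidt estimate $\|S\rho(g)-(\Ind\pi_y)(f_g)S\|_2^2$ decomposes as an honest $\ell^2$-sum over such $z$ of $\|S_z\rho(g)-\pi^{r_z}_y(h_{g,z})u_z^{-1}T_z\|_2^2$, and a pigeonhole argument (balancing the mass $\sum_z\|S_z\|_2^2$ concentrated in $U$ against the sum of the bad-coset contributions over $g\in F$) produces a single $z$ with $T:=T_z/\|T_z\|_2$ satisfying~(2). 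This $\ell^2$-orthogonality at the HS-operator level is precisely what is missing at the state/vector level you work with; it is what makes the single-coset extraction go through.
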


Since one-dimensional representations are obviously amenable, this theorem, together with Remark~\ref{rem:strength_main} below, generalize~\cite{CN3}*{Theorem~2.6}. In fact, even when $\pi$ is one-dimensional, we get a stronger result than in~\cite{CN3}, since we make no assumptions on the representations~$\pi_y$.

\bp[Proof of Theorem~\ref{thm:main1}]
(2)$\Rightarrow$(3) Using (2) we can find a net $(y_i)_{i\in I}$ in $Y$, elements $\gamma_i\in\G_{y_i}$ and Hilbert--Schmidt operators $T_i\colon H_\rho\to H_{\pi_{y_i}}$ with $\|T_i\|_2=1$ such that the following properties are satisfied: $r(\gamma_i)\to x$  and, for every $\eps>0$ and $g\in\Gamma$, there is an index $i_0$ such that for all $i\ge i_0$ we have either $W_g\cap\G^{r(\gamma_i)}_{r(\gamma_i)}=\emptyset$, or $W_g\cap\G^{r(\gamma_i)}_{r(\gamma_i)}=\{h\}$ for some $h$ and
$\|T_i\rho(g)-\pi^{\gamma_i}_{y_i}(h)T_i\|_2<\eps$. Then we get~(3) by arguing similarly to the proof of Proposition~\ref{prop:TP2}: by passing to a subnet we may assume that the states $\varphi_i:=\Tr(\rho(\cdot)T^*_iT_i)$ converge weakly$^*$ to a state $\varphi$, and then let $y:=y_i$, $\gamma:=\gamma_i$ and $\nu:=\Tr(\pi^{\gamma_i}_{y_i}(\cdot)T_iT_i^*)$ for $i$ large enough.

\smallskip

(1)$\Rightarrow$(2) Fix an open neighbourhood $U$ of $x$ in $\Gu$, $\eps>0$ and a finite set $F\subset\Gamma\setminus\{e\}$, where $e=x$ is the unit of $\Gamma\subset\Gxx$. We assume that $U$ is small enough so that $U\subset W_e$. We may then assume that $F$ is nonempty, as otherwise there is nothing to prove. For every $g\in\Gamma\setminus\{e\}$, choose a function $f_g\in C_c(W_g)$ such that $0\le f_g(h)\le 1$ for all $h\in W_g$ and $f_g(h)=1$ for all $h$ in some neighbourhood of $g$. By replacing $U$ by a smaller neighbourhood of~$x$ if necessary, we may assume that for all $g\in F$ and $h\in r^{-1}(U)\cap W_g$ we have $f_g(h)=1$. Let us also choose a function $f\in C_c(U)$ such that $0\le f \le 1$ and $f(x)=1$. Put $f_e:=f$. Fix a number $\alpha\in(0,|F|^{-1})$.

As a first step we will prove that there exist a point $y\in Y$ and a Hilbert--Schmidt operator $S\colon H_\rho\to \Ind^\G_{\Gyy} H_{\pi_y}$ such that $\|S\|_2=1$,
\begin{equation}\label{eq:main-S1}
\|S-(\Ind^\G_{\Gyy}\pi_y)(f)S\|_2<1-(|F|\alpha)^{1/2},
\end{equation}
\begin{equation}\label{eq:main-S2}
 \|S\rho(g)-(\Ind^\G_{\Gyy}\pi_y)(f_g)S\|_2<\alpha^{1/2}\eps \quad\text{for all}\quad g\in F.
\end{equation}

For this, fix a unit vector $\zeta\in H_\rho$ and consider the pure state $\varphi:=(\rho(\cdot)\zeta,\zeta)$ on $C^*_\rho(\Gamma)$. Since~$C^*_\rho(\Gamma)$ is a quotient of $C^*_{\pi|_\Gamma}(\Gamma)$ and the latter algebra can be viewed as a subalgebra of~$C^*_\pi(\Gxx)$, we can extend $\varphi$ to a pure state on $C^*_\pi(\Gxx)$. We denote this extension by $\psi$ and view it as a state on $C^*(\Gxx)$. Consider the GNS-triple $(H_\psi,\pi_\psi,\xi_\psi)$ associated with $\psi$. As $\pi_\psi$ is an irreducible representation of~$\Gxx$ weakly contained in $\pi$, the representation $\Ind^\G_{\Gxx}\pi_\psi$ is irreducible and weakly contained in $\bigoplus_{y\in Y}\Ind^\G_{\G^y_y}\pi_y$. We view $H_\psi$ as a subspace of $\Ind^\G_{\Gxx}H_\psi$. By Lemma~\ref{lem:Fell} we can then find a net $(y_j)_{j\in J}$ in $Y$ and unit vectors $\eta_j\in\Ind^\G_{\G^{y_j}_{y_j}} H_{\pi_{y_j}}$ such that if $W$ is an open bisection containing an element $g\in\Gamma$ and $\tilde f_g\in C_c(W)$, then
\begin{equation}\label{eq:conv-main2}
\big((\Ind^\G_{\G^{y_j}_{y_j}}\pi_{y_j})(\tilde f_g)\eta_j,\eta_j\big)\to_j \big((\Ind^\G_{\Gxx}\pi_\psi)(\tilde f_g)\xi_\psi,\xi_\psi\big)=\tilde f_g(g)\psi(g)=\tilde f_g(g)\varphi(g).
\end{equation}

To find $S$ we now want to apply Lemma~\ref{lem:app-intertwiner} to the state~$\varphi$, the set $F\cup\{e\}$, the operators
$$
L_g:=\rho(g)\quad\text{and}\quad L'_g:=(\Ind^\G_{\G^{y_j}_{y_j}}\pi_{y_j})(f_g)\quad (g\in F\cup\{e\}),
$$
and the vectors
$$
\xi_h:=\rho(h)\zeta \quad\text{and}\quad \xi'_h:=(\Ind^\G_{\G^{y_j}_{y_j}}\pi_{y_j})(f_h)\eta_j\quad(h\in\Gamma),
$$
where $j\in J$ is to be determined. Note that we can identify the GNS-representation $\pi_\varphi$ with $\rho$, so $\pi_\varphi$ is amenable.
We claim that the assumptions on~$L'_g$ and~$\xi'_h$ in Lemma~\ref{lem:app-intertwiner} are satisfied for any fixed~$\eps$ and~$j$ large enough. For this we need to show that
\begin{equation*}\label{eq:conv-main4}
\big((\Ind^\G_{\G^{y_j}_{y_j}}\pi_{y_j})(f_{h_2}^**f_{g}*f_{h_1})\eta_j,\eta_j\big)\to_j\varphi(h_2^{-1}gh_1)
\end{equation*}
for all $g\in F\cup\{e\}$ and $h_1,h_2\in\Gamma$. But this follows immediately from~\eqref{eq:conv-main2}, since $f_{h_2}^**f_{g}*f_{h_1}$ is supported on the bisection $W_{h_2}^{-1}W_{g}W_{h_1}$ and takes value~$1$ at $h_2^{-1}gh_1$.

We can therefore apply Lemma~\ref{lem:app-intertwiner} and conclude that there exists $y=y_j$ and a Hilbert--Schmidt operator $S\colon H_\rho\to \Ind^\G_{\Gyy} H_{\pi_y}$, with $\|S\|_2=1$, satisfying conditions~\eqref{eq:main-S1} and~\eqref{eq:main-S2}.

\smallskip

For each $z\in[y]:=r(\G_y)$, denote by $\HH_z\subset\Ind^\G_{\Gyy}H_{\pi_y}$ the subspace of functions supported on $\G^z_y$. Then $\Ind^\G_{\Gyy}H_{\pi_y}=\bigoplus_{z\in[y]}\HH_z$. We can therefore write $S\colon H_\rho\to\Ind^\G_{\Gyy}H_{\pi_y}$ as an $so$-convergent sum $\sum_{z\in[y]}S_z$ of operators $S_z\colon H_\rho\to\HH_z$.

For every $z\in[y]$, fix an element $r_z\in\G^z_y$. Then the map $u_z\colon\HH_z\to H_{\pi_y}$, $\xi\mapsto\xi(r_z)$, is a unitary isomorphism. We put $T_z:=u_zS_z$, so $T_z$ is a Hilbert--Schmidt operator $H_\rho\to H_{\pi_y}$. We claim that there is $z\in [y]\cap U$ such that $\|T_z\|_2>0$ and, for every $g\in F$, we have either $W_g\cap\G^{z}_{z}=\emptyset$, or $W_g\cap\G^{z}_{z}=\{h\}$ for some $h$ and
$$
\|T_z\rho(g)-\pi^{r_z}_{y}(h)T_z\|_2<\eps\|T_z\|_2.
$$
If this is true, then $\gamma:=r_z$ and $T:=T_z/\|T_z\|_2$ have the properties we need.

Assume the claim is not true. As $\|S\|_2=1$,  from~\eqref{eq:main-S1} we get that
$$
\big|1-\|(\Ind^\G_{\Gyy}\pi_y)(f)S\|_2\big|<1-(|F|\alpha)^{1/2}.
$$
We have $(\Ind^\G_{\Gyy}\pi_y)(f)S=\sum_{z\in[y]} f(z)S_z$, hence $\|(\Ind^\G_{\Gyy}\pi_y)(f)S\|_2^2=\sum_{z\in[y]} |f(z)|^2\|S_z\|_2^2$ . It follows that $\sum_{z\in[y]} |f(z)|^2\|S_z\|_2^2>|F|\alpha$, and since $f$ is supported on $U$ and $0\le f\le1$, we get
\begin{equation}\label{eq:main-S3}
\sum_{z\in [y]\cap U}\|S_z\|_2^2>|F|\alpha.
\end{equation}

For every $g\in F$, denote by $Z_g$ the set of points $z\in[y]\cap U$ such that $\|T_z\|_2=\|S_z\|_2>0$, $W_g\cap\G^z_z=\{h_{g,z}\}$ for some~$h_{g,z}$ and
\begin{equation}\label{eq:main-S4}
\|T_z\rho(g)-\pi^{r_z}_{y}(h_{g,z})T_z\|_2\ge\eps\|T_z\|_2.
\end{equation}
By our assumption $\bigcup_{g\in F}Z_g$ contains all points $z\in [y]\cap U$ such that $\|S_z\|_2>0$. Therefore by~\eqref{eq:main-S3} we have
$$
\sum_{g\in F}\sum_{z\in Z_g}\|S_z\|_2^2\ge\sum_{z\in [y]\cap U}\|S_z\|_2^2>|F|\alpha.
$$
It follows that there is $g\in F$ such that
\begin{equation}\label{eq:main-S5}
\sum_{z\in Z_g}\|S_z\|_2^2>\alpha.
\end{equation}

Observe next that for every $z\in Z_g$ the space $\HH_z$ is an invariant subspace for the C$^*$-algebra generated by $(\Ind^\G_{\Gyy}\pi_y)(f_g)$, hence $\bigoplus_{z\in[y]\setminus Z_g}\HH_z$ is an invariant subspace as well. Indeed, from the definition~\eqref{eq:ind-rep} of an induced representation we get, for each $\xi\in\HH_z$ and $z'\in[y]$, that
$$
\big((\Ind^\G_{\Gyy}\pi_y)(f_g)\xi\big)(r_{z'})
=\sum_{h\in \G^{z'}} f_{g}(h) \xi(h^{-1} r_{z'})
=\sum_{h\in \G^{z'}_{z}} f_{g}(h) \xi(h^{-1}r_{z'}).
$$
The last expression is zero for $z'\ne z$, since $f_g$ is supported on $W_g$ and $W_g\cap \G_z=W_g\cap \G^z_z=\{h_{g,z}\}$. A similar computation shows that $\HH_z$ is invariant under $(\Ind^\G_{\Gyy}\pi_y)(f_g)^*$. Moreover, by continuing the above computation for $z'=z$ and recalling that $f_g(h_{g,z})=1$, we get
$$
\big((\Ind^\G_{\Gyy}\pi_y)(f_g)\xi\big)(r_z)=\xi(h_{g,z}^{-1}r_z)=\pi^{r_z}_y(h_{g,z})\xi(r_z).
$$
In other words, for all $z\in Z_g$, we have
\begin{equation}\label{eq:main-S6}
u_z(\Ind^\G_{\Gyy}\pi_y)(f_g)|_{\HH_z}=\pi^{r_z}_y(h_{g,z})u_z.
\end{equation}

Put $S':=\sum_{z\in[y]\setminus Z_g}S_z$. Then the invariance we have established implies that
\begin{multline*}
\|S\rho(g)-(\Ind^\G_{\Gyy}\pi_y)(f_g)S\|_2^2\\=\|S'\rho(g)-(\Ind^\G_{\Gyy}\pi_y)(f_g)S'\|_2^2+\sum_{z\in Z_g}\|S_z\rho(g)-(\Ind^\G_{\Gyy}\pi_y)(f_g)S_z\|_2^2.
\end{multline*}
In view  of~\eqref{eq:main-S6} this gives
$$
\|S\rho(g)-(\Ind^\G_{\Gyy}\pi_y)(f_g)S\|_2^2\ge \sum_{z\in Z_g}\|T_z\rho(g)-\pi^{r_z}_y(h_{g,z})T_z\|_2^2.
$$
By \eqref{eq:main-S4} and \eqref{eq:main-S5} the last expression is not smaller than
$$
\sum_{z\in Z_g}\eps^2\|T_z\|_2^2>\alpha\eps^2.
$$
This contradicts \eqref{eq:main-S2}, and the proof of theorem is complete.
\ep

\begin{remark}\label{rem:strength_main}
Since every $\pi_y$ is weakly equivalent to a direct sum of irreducible representations, essentially the same proof shows that (1) implies the following property that is formally stronger than (2): for every neighbourhood $U$ of $x$, every finite set $F\subset\Gamma$ and every $\eps>0$, there exist $y\in Y$, $\gamma\in\G_y$, an irreducible unitary representation $\theta$ of $\G^y_y$ and a Hilbert--Schmidt operator $T\colon H_\rho\to H_{\theta}$ such that $\|T\|_2=1$, $r(\gamma)\in U$, $\theta\prec\pi_y$ and, for every $g\in F$,  we have either $W_g\cap\G^{r(\gamma)}_{r(\gamma)}=\emptyset$, or $W_g\cap\G^{r(\gamma)}_{r(\gamma)}=\{h\}$ for some $h$ and $\|T\rho(g)-\theta^{\gamma}(h)T\|_2<\eps$. Correspondingly, in~(3) we may in addition require that $\nu$ is a state on $C^*_{\theta^{\gamma}}(\G^{r(\gamma)}_{r(\gamma)})$ for some irreducible $\theta\prec\pi_y$. This could be used to slightly simplify a couple of subsequent arguments at the expense of having to keep track of more representations in the proof of Theorem~\ref{thm:main1}.
\end{remark}

\subsection{Criteria of weak containment}
In general, even if $\pi$ is irreducible and we take $\Gamma=\Gxx$ and $\rho=\pi$ in Theorem~\ref{thm:main1}, condition (3) there is weaker than (1). One of the issues is that $\ker\pi_\varphi$ can be much larger than $\ker\rho$, see Section~\ref{ssec:FC}. But under extra assumptions we do get an equivalence between all three conditions, as follows from the equivalence of~(1) and~(4) in the following theorem.

\begin{thm}\label{thm:main2}
Assume $\G$ is a Hausdorff locally compact \'etale groupoid and $x\in\Gu$ is a unit such that the group $\Gxx$ is amenable. For every $g\in\Gxx$, fix an open bisection $W_g$ containing~$g$. Assume we are given a unitary representation~$\pi$ of~$\Gxx$, a subset $Y\subset\Gu$ and, for every $y\in Y$, a unitary representation $\pi_y$ of $\G^y_y$. Assume also that $(\Gamma_k)_{k\in K}$ is a collection of subgroups of~$\Gxx$ such that every finite subset of $\Gxx$ is contained in some $\Gamma_k$ and, for every $k\in K$, the ideal $\ker (\pi|_{\Gamma_k})\subset C^*(\Gamma_k)$ is an intersection of maximal ideals. Then the following conditions are equivalent:
 \begin{enumerate}
  \item $\Ind^\G_{\Gxx}\pi\prec\bigoplus_{y\in Y}\Ind^\G_{\G^y_y}\pi_y$;
  \item for every $k\in K$ and every irreducible unitary representation $\rho\prec\pi|_{\Gamma_k}$ such that $\ker\rho\subset C^*(\Gamma_k)$ is maximal, there exist a state~$\varphi$ on $C^*_\rho(\Gamma_k)$, a net $(y_i)_{i\in I}$ in $Y$, elements $g_i\in\G_{y_i}$ and states $\nu_i$ on $C^*_{\pi^{g_i}_{y_i}}(\G^{r(g_i)}_{r(g_i)})$ such that the following properties are satisfied: $r(g_i)\to x$  and, for every $\eps>0$ and $g\in\Gamma_k$, there is an index $i_0$ such that for all $i\ge i_0$ we have either $W_g\cap\G^{r(g_i)}_{r(g_i)}=\emptyset$, or $W_g\cap\G^{r(g_i)}_{r(g_i)}=\{h\}$ for some $h$ and $|\varphi(g)-\nu_i(h)|<\eps$;
\item $\pi\prec\bigoplus_{(S,\omega)}\Ind^{\Gxx}_S\pi_\omega$, where the summation is taken over all pairs $(S,\omega)$ such that $S\subset\Gxx$ is a subgroup, $\omega$ is a state on $C^*(S)$ and the following property holds: there exist a net $(y_i)_{i\in I}$ in~$Y$, elements $g_i\in\G_{y_i}$ and states $\omega_i$ on $C^*_{\pi^{g_i}_{y_i}}(\G^{r(g_i)}_{r(g_i)})$ such that $r(g_i)\to x$, $\G^{r(g_i)}_{r(g_i)}\to S$ in $\Sub(\G)$ and, for every $g\in S$, we have $\omega_i(h_{g,i})\to\omega(g)$, where $h_{g,i}$ is the unique element of $W_g\cap \G^{r(g_i)}_{r(g_i)}$ (which exists for $i$ large enough).
\end{enumerate}
If, in addition, the ideal $\ker\pi\subset C^*(\Gxx)$ is maximal, then conditions {\rm (1)--(3)} are equivalent~to
\begin{enumerate}
  \item[(4)] there are a subgroup $S\subset\Gxx$ and a state $\omega$ on $C^*_{\pi|_S}(S)$ such that the following property holds: there exist a net $(y_i)_{i\in I}$ in~$Y$, elements $g_i\in\G_{y_i}$ and states $\omega_i$ on $C^*_{\pi^{g_i}_{y_i}}(\G^{r(g_i)}_{r(g_i)})$ such that $r(g_i)\to x$, $\G^{r(g_i)}_{r(g_i)}\to S$ in $\Sub(\G)$ and, for every $g\in S$, we have $\omega_i(h_{g,i})\to\omega(g)$, where $h_{g,i}$ is the unique element of $W_g\cap \G^{r(g_i)}_{r(g_i)}$.
\end{enumerate}
\end{thm}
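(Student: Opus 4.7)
The plan is to establish the chain $(1)\Rightarrow(2)\Rightarrow(3)\Rightarrow(1)$ and then, under the extra assumption that $\ker\pi$ is maximal, add $(4)$ to the list. The implication $(1)\Rightarrow(2)$ is direct from Theorem~\ref{thm:main1}: fix $k$ and an irreducible $\rho\prec\pi|_{\Gamma_k}$ with $\ker\rho$ maximal; since $\Gamma_k\subset\Gxx$ is amenable, so is $\rho$, and reindexing the ``for every $U,F,\eps$'' assertion in Theorem~\ref{thm:main1}(3) over the directed set of triples $(U,F,\eps)$ produces the state $\varphi$ and net required in $(2)$. For $(3)\Rightarrow(1)$, weak containment is preserved by the additive functor $\Ind^\G_\Gxx$ (through the $\vartheta_S$-description of the induced ideal from Section~\ref{ssec:groupoids}), so induction in stages gives $\Ind^\G_\Gxx\pi\prec\bigoplus_{(S,\omega)}\Ind^\G_S\pi_\omega$; for each such pair, Lemma~\ref{lem:Ind-continuity} applied with $S_i=\G^{r(g_i)}_{r(g_i)}\to S$ and $\omega_i\to\omega$ yields $\Ind^\G_S\pi_\omega\prec\bigoplus_i\Ind^\G_{\G^{r(g_i)}_{r(g_i)}}\pi_{\omega_i}\prec\bigoplus_i\Ind^\G_{\G^{r(g_i)}_{r(g_i)}}\pi^{g_i}_{y_i}\cong\bigoplus_i\Ind^\G_{\G^{y_i}_{y_i}}\pi_{y_i}$, the last being the unitary equivalence recalled before Theorem~\ref{thm:main1}. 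Once $\ker\pi$ is assumed maximal, the implication $(4)\Rightarrow(1)$ follows the same recipe, prefaced by Corollary~\ref{cor:Frob2} which turns $\pi_\omega\prec\pi|_S$ into $\pi\prec\Ind^\Gxx_S\pi_\omega$ thanks to the simplicity of $C^*_\pi(\Gxx)$.

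For $(2)\Rightarrow(3)$, note first that because $C^*_\pi(\Gxx)$ has zero Jacobson radical, $\pi$ is weakly equivalent to the direct sum of all irreducibles $\sigma\prec\pi$, so it suffices to handle a single such $\sigma$. Greenleaf's weak Frobenius reciprocity (derived right after~\eqref{eq:absorption} using Lemma~\ref{lem:greenleaf}) yields $\sigma\prec\Ind^\Gxx_{\Gamma_k}(\sigma|_{\Gamma_k})$. Since $\sigma|_{\Gamma_k}\prec\pi|_{\Gamma_k}$ and $\ker(\pi|_{\Gamma_k})$ is an intersection of maximal ideals, so is $\ker(\sigma|_{\Gamma_k})$, and hence $\sigma|_{\Gamma_k}$ is weakly equivalent to a direct sum of irreducibles $\rho$ of maximal kernel with $\rho\prec\pi|_{\Gamma_k}$. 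For each such $\rho$, hypothesis $(2)$ supplies a state $\varphi$ on the simple algebra $C^*_\rho(\Gamma_k)$, and simplicity forces $\pi_\varphi\sim\rho$, hence $\Ind^\Gxx_{\Gamma_k}\pi_\varphi\sim\Ind^\Gxx_{\Gamma_k}\rho$. The pair $(\Gamma_k,\varphi)$ belongs to the indexing set of $(3)$, so $\sigma\prec\bigoplus\Ind^\Gxx_{\Gamma_k}\pi_\varphi$ as needed.

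The remaining direction, $(1)\Rightarrow(4)$, is the subtle one and is where compactness of $\Sub(\G)$ enters decisively. Pick an irreducible $\rho$ on $\Gxx$ with $\ker\rho=\ker\pi$; it is amenable because $\Gxx$ is. Applying Theorem~\ref{thm:main1} with $\Gamma=\Gxx$ produces a state $\varphi$ on $C^*_\rho(\Gxx)=C^*_\pi(\Gxx)$ and, after reindexing by triples $(U,F,\eps)$, a net $(y_i,g_i,\nu_i)$ with $r(g_i)\to x$. By compactness of $\Sub(\G)$ (noted in Section~\ref{ssec:Fell}) we pass to a subnet along which $\G^{r(g_i)}_{r(g_i)}$ converges to some $S\in\Sub(\G)$; since $r(g_i)\to x$ and elements of $S$ arise as Fell-limits of isotropy elements over $r(g_i)$, a direct check gives $S\subset\Gxx$. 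Finally, set $\omega:=\varphi|_{C^*(S)}$: as $\varphi$ vanishes on $\ker\pi$ and $\ker(\pi|_S)=\ker\pi\cap C^*(S)$, this restriction factors through $C^*_{\pi|_S}(S)$, and the required convergence $\nu_i(h_{g,i})\to\omega(g)$ for $g\in S$ is automatic because any fixed $g\in\Gxx$ is eventually in the finite set $F$ from the indexing triple. The main obstacle is precisely this compactness step: Theorem~\ref{thm:main1} supplies convergence of states on each individual isotropy group but not the existence of a single limit subgroup, so the existence of $S$ has to be manufactured from Fell compactness, after which the care lies in checking that $\varphi|_{C^*(S)}$ genuinely descends to $C^*_{\pi|_S}(S)$.
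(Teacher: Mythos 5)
Your treatment of (1)$\Rightarrow$(2), (3)$\Rightarrow$(1), (1)$\Rightarrow$(4) and (4)$\Rightarrow$(1) matches the paper's proof (Theorem~\ref{thm:main1}, Lemma~\ref{lem:Ind-continuity}, induction in stages, Corollary~\ref{cor:Frob2}, compactness of $\Sub(\G)$ and restriction of $\varphi$ to $C^*(S)$), and those parts are fine. The genuine gap is in (2)$\Rightarrow$(3), at the sentence ``the pair $(\Gamma_k,\varphi)$ belongs to the indexing set of (3)''. Membership in that set requires a net with $\G^{r(g_i)}_{r(g_i)}\to\Gamma_k$ in $\Sub(\G)$, so in particular $W_g\cap\G^{r(g_i)}_{r(g_i)}\ne\emptyset$ eventually for \emph{every} $g\in\Gamma_k$ (and emptiness eventually for $g\in\Gxx\setminus\Gamma_k$). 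Condition (2) guarantees nothing of the sort: it explicitly allows $W_g\cap\G^{r(g_i)}_{r(g_i)}=\emptyset$ for $g\in\Gamma_k$ (for instance, if the nearby isotropy groups are trivial, (2) holds with any $\varphi$, while the only admissible pairs $(S,\omega)$ have $S=\{x\}$). Passing to a subnet along which $\G^{r(g_i)}_{r(g_i)}\to S$ for some subgroup $S\subset\Gxx$, the limit state $\omega$ agrees with $\varphi$ only on $C^*(\Gamma_k\cap S)$, and $\Gamma_k\cap S$ may be a proper subgroup of $\Gamma_k$ while $S$ may contain elements outside $\Gamma_k$; one must then still prove $\rho\prec(\Ind^{\Gxx}_S\pi_\omega)|_{\Gamma_k}$. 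That is precisely the extra work in the paper's proof: it realizes $\Ind^{\Gamma_k}_{\Gamma_k\cap S}$ of the GNS representation of $\varphi|_{C^*(\Gamma_k\cap S)}$ as a subrepresentation of $(\Ind^{\Gxx}_S\pi_\omega)|_{\Gamma_k}$ and applies Corollary~\ref{cor:Frob2} inside $\Gamma_k$ (using maximality of $\ker\rho$), doing all of this per finite set $F\subset\Gxx$ and $\delta>0$ after approximating a vector state of $\pi$ by convex combinations of states through the $C^*_{\rho_l}(\Gamma_k)$. Your opening via Greenleaf ($\sigma\prec\Ind^{\Gxx}_{\Gamma_k}(\sigma|_{\Gamma_k})$ for irreducible $\sigma\prec\pi$) is a legitimate alternative start, but it does not remove this obstruction.

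A secondary slip: from $\sigma|_{\Gamma_k}\prec\pi|_{\Gamma_k}$ you infer that $\ker(\sigma|_{\Gamma_k})$ is again an intersection of maximal ideals. That is not automatic --- an ideal containing an intersection of maximal ideals need not itself be such an intersection (compare the Baumslag--Solitar example in Section~\ref{ssec:main}). Fortunately you only need $\sigma|_{\Gamma_k}\prec\bigoplus_\rho\rho$, where $\rho$ runs over the irreducible representations of $\Gamma_k$ with maximal kernel containing $\ker(\pi|_{\Gamma_k})$; this does hold, since the intersection of those maximal ideals equals $\ker(\pi|_{\Gamma_k})\subset\ker(\sigma|_{\Gamma_k})$ by hypothesis. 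So this point is repairable, but the admissibility of $(\Gamma_k,\varphi)$ is not, and (2)$\Rightarrow$(3) needs the paper's more elaborate argument.
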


\bp
(1)$\Rightarrow$(2) This follows from Theorem~\ref{thm:main1}.

\smallskip

(1)$\Rightarrow$(4) Assuming that $\ker\pi$ is maximal, by Theorem~\ref{thm:main1} we can find $\varphi$, $y_i$, $g_i$ and $\nu_i$ as in~(2), but with $\Gamma_k$ replaced by $\Gxx$ and $\rho$ any irreducible representation with $\ker\rho=\ker\pi$. By passing to a subnet we may assume that $\G^{r(g_i)}_{r(g_i)}\to S$ in $\Sub(\G)$ for some subgroup $S\subset\Gxx$. By Lemma~\ref{lem:subG-convergence} this implies in particular that, for every $g\in S$, we have $W_g\cap \G^{r(g_i)}_{r(g_i)}\ne\emptyset$ for all $i$ large enough. Then we put $\omega:=\varphi|_{C^*(S)}$, where we view $\varphi$ as a state on $C^*(\Gxx)$, and $\omega_i:=\nu_i$. Note that since $\varphi$ factors through $C^*_\rho(\Gxx)$ and $\rho\sim\pi$, we have $\rho|_S\sim\pi|_S$ and therefore $\omega$ factors through~$C^*_{\pi|_S}(S)$.

\smallskip

(4)$\Rightarrow$(1) Assume again that $\ker\pi$ is maximal. Since $\pi_\omega\prec\pi|_S$, by Corollary~\ref{cor:Frob2} we get $\pi\prec\Ind^{\Gxx}_S\pi_\omega$. Hence
$$
\Ind^\G_{\Gxx}\pi\prec\Ind^\G_{\Gxx}\Ind^{\Gxx}_S\pi_\omega\cong\Ind^\G_S\pi_\omega.
$$
On the other hand, if $y_i$, $g_i$ and $\omega_i$ are as in (4), then by Lemma~\ref{lem:Ind-continuity} we have
$$
\Ind^\G_S\pi_\omega\prec\bigoplus_i\Ind^\G_{\G^{r(g_i)}_{r(g_i)}}\pi_{\omega_i}\prec \bigoplus_i\Ind^\G_{\G^{r(g_i)}_{r(g_i)}}\pi^{g_i}_{y_i}\cong \bigoplus_i\Ind^\G_{\G^{y_i}_{y_i}}\pi_{y_i}.
$$
It follows that $\Ind^\G_{\Gxx}\pi\prec\bigoplus_{y\in Y}\Ind^\G_{\G^y_y}\pi_y$.

\smallskip

(3)$\Rightarrow$(1) This is similar to the implication (4)$\Rightarrow$(1): the assumption $\pi\prec\bigoplus_{(S,\omega)}\Ind^{\Gxx}_S\pi_\omega$ together with the unitary equivalence $\Ind^\G_{\Gxx}\big( \bigoplus_{(S,\omega)}\Ind^{\Gxx}_S\pi_\omega \big) \simeq \bigoplus_{(S,\omega)}\Ind^\G_S\pi_\omega$ imply that
$\Ind^\G_{\Gxx}\pi\prec\bigoplus_{(S,\omega)}\Ind^\G_S\pi_\omega$, while properties of $(S,\omega)$ and Lemma~\ref{lem:Ind-continuity} imply that
$\Ind^\G_S\pi_\omega\prec\bigoplus_{y\in Y}\Ind^\G_{\G^y_y}\pi_y$. Hence $\Ind^\G_{\Gxx}\pi\prec\bigoplus_{y\in Y}\Ind^\G_{\G^y_y}\pi_y$.

\smallskip

(2)$\Rightarrow$(3) This is a more elaborate version of the arguments used in the proof of (1)$\Leftrightarrow$(4). Fix a unit vector $\xi\in H_\pi$ and consider the state $(\pi(\cdot)\xi,\xi)$ on $C^*_\pi(\Gxx)$. Fix a finite subset $F\subset\Gxx$. Let $k$ be such that $F\subset\Gamma_k$. Since $\ker (\pi|_{\Gamma_k})$ is an intersection of maximal ideals, we can find irreducible unitary representations $\rho_1,\dots,\rho_m$ of~$\Gamma_k$, states $\psi_l$ on $C^*_{\rho_l}(\Gamma_k)$ and numbers $\alpha_l>0$ such that $\rho_l\prec\pi|_{\Gamma_k}$, the ideals $\ker\rho_l\subset C^*(\Gamma_k)$ are maximal, $\sum_l\alpha_l=1$ and
\begin{equation}\label{eq:psi1}
\Big|(\pi(g)\xi,\xi)-\sum^m_{l=1}\alpha_l\psi_l(g)\Big|<\delta\quad\text{for all}\quad g\in F.
\end{equation}

On the other hand, we have states $\varphi_l$ on $C^*_{\rho_l}(\Gamma_k)$ given by condition (2) of the theorem. As before, we view $\varphi_l$ as a state on $C^*(\Gamma_k)$. We claim that for every index $l$ there are a subgroup $S_l\subset\Gxx$ and a state $\omega_l$ on $C^*(S_l)$ such that $(S_l,\omega_l)$ has the property formulated in (3) and $\varphi_l=\omega_l$ on $C^*(\Gamma_k\cap S_l)$.

Assume the claim is proved. For every $l$, put $\varphi'_l:=\varphi_l|_{C^*(\Gamma_k\cap S_l)}$ and consider the corresponding GNS-representation $\pi_l'$ on $H_l'$. We can identify $H'_l$ with a subspace of $H_{\omega_l}$, namely, with the smallest closed $(\Gamma_k\cap S_l)$-invariant subspace containing the cyclic vector. We can then view $\Ind^{\Gamma_k}_{\Gamma_k\cap S_l}H'_l$ as a subspace of $\Ind^{\Gamma_k}_{\Gamma_k\cap S_l}H_{\omega_l}$, which implies that $\Ind^{\Gamma_k}_{\Gamma_k\cap S_l}\pi_l'$ is a subrepresentation of $\Ind^{\Gamma_k}_{\Gamma_k\cap S_l}\pi_{\omega_{l}}$. Next, for every $\xi \in \Ind^{\Gamma_k}_{\Gamma_k\cap S_l}H_{\omega_l}$, we can define a vector $\tilde{\xi} \in \Ind^{\Gxx}_{S_l}H_{\omega_l}$ by setting $\tilde{\xi}(gs):= \pi_{\omega_{l}}(s)^{*} \xi(g)$ for $g\in \Gamma_{k}$ and $s\in S_{l}$ and $\tilde{\xi}(h):=0$ for $h\notin \Gamma_{k}S_{l}$, and thereby identify $\Ind^{\Gamma_k}_{\Gamma_k\cap S_l}H_{\omega_l}$ with the subspace of $\Ind^{\Gxx}_{S_l}H_{\omega_l}$ consisting of functions that are zero on $\Gxx\setminus \Gamma_{k}S_{l}$. This shows that $\Ind^{\Gamma_k}_{\Gamma_k\cap S_l}\pi_{\omega_{l}}$ is a subrepresentation of $(\Ind^{\Gxx}_{S_l}\pi_{\omega_l})|_{\Gamma_k}$. Therefore $\Ind^{\Gamma_k}_{\Gamma_k\cap S_l}\pi_l'$ is a subrepresentation of $(\Ind^{\Gxx}_{S_l}\pi_{\omega_l})|_{\Gamma_k}$. On the other hand, since the ideal $\ker\rho_l\subset C^*(\Gamma_k)$ is maximal, we have $\ker\pi_{\varphi_l}=\ker\rho_l$. Since $\pi'_l$ is a subrepresentation of $\pi_{\varphi_l}|_{\Gamma_k\cap S_l}$, by Corollary~\ref{cor:Frob2} it follows that $\rho_l\sim\pi_{\varphi_l}\prec\Ind^{\Gamma_k}_{\Gamma_k\cap S_l}\pi_l'$. Therefore $\rho_l\prec (\Ind^{\Gxx}_{S_l}\pi_{\omega_l})|_{\Gamma_k}$.

It follows that we can extend each state $\psi_l$ on $C^*(\Gamma_k)$ to a state on $C^*(\Gxx)$ that vanishes on the kernel of $\Ind^{\Gxx}_{S_l}\pi_{\omega_l}$. In view of~\eqref{eq:psi1} this shows that $\pi$ is weakly contained in the direct sum of the representations $\Ind^{\Gxx}_{S_l}\pi_{\omega_l}$ taken over the entire collection of pairs $(S_l,\pi_{\omega_l})$ that we get by taking all possible $F$ and $\delta$.

\smallskip

It remains to prove the claim. Thus, we assume that we are given a state $\varphi$ on $C^*_\rho(\Gamma_k)$ as in~(2), and we need to show that there are a subgroup $S\subset\Gxx$ and a state $\omega$ on $C^*(S)$ such that $(S,\omega)$ has the property formulated in (3) and $\varphi=\omega$ on $C^*(\Gamma_k\cap S)$. Let $y_i$, $g_i$ and $\nu_i$ be as in (2). By passing to a subnet we may assume that $\G^{r(g_i)}_{r(g_i)}\to S$ for some subgroup $S\subset\Gxx$. For every $g\in S$ and all~$i$ large enough, we have an element $h_{g,i}$ such that $W_g\cap\G^{r(g_i)}_{r(g_i)}=\{h_{g,i}\}$. By passing to a subnet we may assume that for every $g\in S$ the net $(\nu_i(h_{g,i}))_i$ converges to a number $\omega(g)$. The function~$\omega$ on~$S$ is easily seen to be positive definite, so it defines a state on~$C^*(S)$. Obviously, we have $\varphi=\omega$ on~$C^*(\Gamma_k\cap S)$.
\ep

When the groups $\Gamma_k$ are finite, condition (2) can be written in terms of representation theory of finite groups, as follows.

\begin{cor}\label{cor:main-lf}
Assume $\G$ is a Hausdorff locally compact \'etale groupoid and $x\in\Gu$ is a unit such that the group $\Gxx$ is locally finite. Let $(\Gamma_k)_{k\in K}$ be an increasing net of finite subgroups of $\Gxx$ with union $\Gxx$. For every $g\in\Gxx$, fix an open bisection $W_g$ containing $g$.
Assume we are given a unitary representation~$\pi$ of~$\Gxx$, a subset $Y\subset\Gu$ and, for every $y\in Y$, a unitary representation~$\pi_y$ of~$\G^y_y$. Then the following conditions are equivalent:
 \begin{enumerate}
  \item $\Ind^\G_{\Gxx}\pi\prec\bigoplus_{y\in Y}\Ind^\G_{\G^y_y}\pi_y$;
  \item for every $k\in K$ and every irreducible unitary subrepresentation $\rho$ of $\pi|_{\Gamma_k}$, there exist a net $(y_i)_{i\in I}$ in $Y$ and elements $g_i\in\G_{y_i}$ such that $r(g_i)\to x$ and the unitary representations~$\rho|_{\Gamma_{k,i}}$ and $\pi^{g_i}_{y_i}\circ\Phi_i$ are not disjoint, where $\Gamma_{k,i}\subset\Gamma_k$ and $\Phi_i\colon\Gamma_{k,i}\to\G^{r(g_i)}_{r(g_i)}$ are defined by
 $$
 \Gamma_{k,i}:=\{g\in\Gamma_k: W_g\cap \G^{r(g_i)}_{r(g_i)}\ne\emptyset\}\quad\text{and}\quad W_g\cap \G^{r(g_i)}_{r(g_i)}=\{\Phi_i(g)\}\quad(g\in\Gamma_{k,i}).
 $$
 \end{enumerate}
\end{cor}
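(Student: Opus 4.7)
My plan is to derive the corollary from Theorem~\ref{thm:main1} (for the necessity direction) and Theorem~\ref{thm:main2} (for the sufficiency direction), exploiting the fact that for a finite group $\Gamma_k$ the algebra $C^*(\Gamma_k)$ is finite-dimensional. In particular every ideal of $C^*(\Gamma_k)$ is an intersection of maximal ones, every irreducible representation of $\Gamma_k$ is finite-dimensional and hence amenable (taking the identity on the representation space as a normalized Hilbert--Schmidt intertwiner), and weak containment of irreducibles reduces to containment as a subrepresentation. Since $\Gxx$ is locally finite it is amenable, so the hypotheses of Theorem~\ref{thm:main2} are satisfied with the given family $(\Gamma_k)_{k\in K}$.

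For the implication $(1)\Rightarrow(2)$, I fix $k\in K$ and an irreducible subrepresentation $\rho$ of $\pi|_{\Gamma_k}$ and apply Theorem~\ref{thm:main1}$(1)\Rightarrow(2)$ with $\Gamma=\Gamma_k$ and $F=\Gamma_k$, letting $\eps\to 0$ along a shrinking neighborhood basis of $x$. A diagonal argument yields a net of triples $(y_i,\gamma_i,T_i)$ with $\|T_i\|_2=1$, $r(\gamma_i)\to x$ and, for every $g\in\Gamma_k$, either $W_g\cap\G^{r(\gamma_i)}_{r(\gamma_i)}=\emptyset$ or $W_g\cap\G^{r(\gamma_i)}_{r(\gamma_i)}=\{\Phi_i(g)\}$ with $\|T_i\rho(g)-\pi^{\gamma_i}_{y_i}(\Phi_i(g))T_i\|_2\to 0$. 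Passing to a subnet I may assume $\G^{r(\gamma_i)}_{r(\gamma_i)}\to S$ in $\Sub(\G)$ for some subgroup $S\subset\Gxx$; by Lemma~\ref{lem:subG-convergence} the sets $\Gamma_{k,i}$ then stabilize at the subgroup $\Gamma_k\cap S$ of $\Gamma_k$, and continuity of multiplication in the \'etale groupoid, together with the uniqueness of the element of a bisection over a given source, forces $\Phi_i$ to be a group homomorphism on $\Gamma_{k,i}$ for $i$ large. Then the Reynolds-type average
\[
P_i := \frac{1}{|\Gamma_{k,i}|}\sum_{g\in\Gamma_{k,i}}\pi^{\gamma_i}_{y_i}(\Phi_i(g))\,T_i\,\rho(g)^*
\]
is an honest intertwiner from $\rho|_{\Gamma_{k,i}}$ to $\pi^{\gamma_i}_{y_i}\circ\Phi_i$ with $\|P_i-T_i\|_2\to 0$, hence nonzero for $i$ large. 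A nonzero intertwiner between representations of a finite group forces them to share an irreducible summand, which is precisely the non-disjointness asserted in~(2).

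For $(2)\Rightarrow(1)$, I verify condition~(2) of Theorem~\ref{thm:main2}. It suffices to consider an irreducible subrepresentation $\rho$ of $\pi|_{\Gamma_k}$, since for finite $\Gamma_k$ every irreducible has maximal kernel. Condition~(2) of the corollary provides $(y_i,g_i)$ with $r(g_i)\to x$ and $\rho|_{\Gamma_{k,i}}$, $\pi^{g_i}_{y_i}\circ\Phi_i$ non-disjoint. Passing to a subnet I arrange that $\Gamma_{k,i}$ is eventually equal to a fixed subgroup $\Gamma_{k,\infty}\subset\Gamma_k$ and, using that $\rho|_{\Gamma_{k,\infty}}$ has only finitely many irreducible summands, that a common irreducible component $\sigma$ of $\rho|_{\Gamma_{k,\infty}}$ and $\pi^{g_i}_{y_i}\circ\Phi_i$ is independent of~$i$. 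Choosing a unit vector $\xi$ in the $\sigma$-isotypic subspace $V\subset H_\rho$ and, for each~$i$, a $\Gamma_{k,\infty}$-equivariant isometry $U_i\colon V\to H_{\pi_{y_i}}$ (with $\Gamma_{k,\infty}$ acting via $\pi^{g_i}_{y_i}\circ\Phi_i$), I put $\varphi(g):=(\rho(g)\xi,\xi)$ and $\nu_i(h):=(\pi^{g_i}_{y_i}(h)U_i\xi,U_i\xi)$. Then $\nu_i(\Phi_i(g))=\varphi(g)$ exactly for $g\in\Gamma_{k,\infty}$, while for $g\in\Gamma_k\setminus\Gamma_{k,\infty}$ the bisection intersection is eventually empty, so the convergence required in Theorem~\ref{thm:main2}(2) is met and that theorem delivers~(1).

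The most delicate step is the subnet setup in $(1)\Rightarrow(2)$: I have to simultaneously make $\Gamma_{k,i}$ a subgroup of $\Gamma_k$ and $\Phi_i$ a homomorphism on it, so that the finite-group averaging trick produces a genuine intertwiner. This rests on convergence in $\Sub(\G)$ and the local bisection structure of the \'etale groupoid. The rest is bookkeeping plus direct appeals to Theorems~\ref{thm:main1} and~\ref{thm:main2}.
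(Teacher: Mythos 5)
Your proof is correct, and its overall architecture matches the paper's: both directions are reductions to Theorems~\ref{thm:main1} and~\ref{thm:main2}, exploiting that $C^*(\Gamma_k)$ is finite-dimensional, so that irreducible representations of $\Gamma_k$ are amenable with maximal kernels, weak containment of irreducibles is containment, and $\ker(\pi|_{\Gamma_k})$ is an intersection of maximal ideals; the stabilization of $\Gamma_{k,i}$ and multiplicativity of $\Phi_i$ that you get from convergence in $\Sub(\G)$ is exactly the ``easily seen'' remark made after the statement. The one place you genuinely diverge is $(1)\Rightarrow(2)$: the paper goes through the state condition of Theorem~\ref{thm:main2}(2) and detects a common irreducible summand by evaluating the states $\varphi$ and $\nu_i\circ\Phi_i$ on a minimal central projection of $C^*(\tilde\Gamma_k)$, whereas you use the Hilbert--Schmidt condition of Theorem~\ref{thm:main1}(2) directly and average the almost-intertwiner $T_i$ over the finite group $\Gamma_{k,i}$ to produce an honest nonzero intertwiner; both are sound, your averaging is a bit more self-contained, while the paper's route records the state formulation it then reuses verbatim in the converse. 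Your $(2)\Rightarrow(1)$ is essentially the paper's argument (they extend a state of $C^*_{\tilde\rho}(\tilde\Gamma_k)$ through the inclusion $C^*(\tilde\Gamma_k)\subset C^*(\Gamma_k)$ and through $C^*(\Phi_i(\tilde\Gamma_k))$, which is what your vector states achieve concretely), with one small imprecision: a $\Gamma_{k,\infty}$-equivariant isometry on the whole $\sigma$-isotypic subspace $V\subset H_\rho$ need not exist, since the multiplicity of $\sigma$ in $\pi^{g_i}_{y_i}\circ\Phi_i$ may be smaller than in $\rho|_{\Gamma_{k,\infty}}$; take $\xi$ in a single irreducible copy of $\sigma$ and define $U_i$ only on that copy, which is all that the identity $\nu_i(\Phi_i(g))=\varphi(g)$ requires.
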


Note that condition (2) makes sense, since it can be easily seen that the convergence $r(g_i)\to x$ and finiteness of $\Gamma_k$ imply that for all $i$ large enough the set $\Gamma_{k,i}$ is a subgroup of $\Gamma_k$ and $\Phi_i\colon\Gamma_{k,i}\to\G^{r(g_i)}_{r(g_i)}$ is an injective homomorphism.

\bp[Proof of Corollary~\ref{cor:main-lf}]
We want to show that condition~(2) in the corollary is equivalent to condition (2) in Theorem~\ref{thm:main2}. Assume the last condition is satisfied and $\varphi$, $y_i$, $g_i$ and $\nu_i$ are as formulated there. By passing to a subnet we may assume that $\Gamma_{k,i}=\tilde\Gamma_k$ for all $i$ for some subgroup $\tilde\Gamma_k\subset\Gamma_k$. For every $a\in C^*(\tilde\Gamma_k)$ we then have $\nu_i(\Phi_i(a))\to\varphi(a)$, where we extended~$\Phi_i$ to the group algebra of~$\tilde\Gamma_k$ by linearity. In particular, if $z\in C^*(\tilde\Gamma_k)$ is a minimal central projection such that $\varphi(z)\ne0$, then $\nu_i(\Phi_i(z))\ne0$ for all $i$ large enough. Therefore if $\tilde\rho$ is the irreducible representation of $\tilde\Gamma_k$ corresponding to $z$ (so that the kernel of $\tilde{\rho}$ is $(1-z)C^*(\tilde\Gamma_k)$), then~$\tilde\rho$ is a subrepresentation of~$\rho|_{\tilde\Gamma_k}$ and $\pi^{g_i}_{y_i}\circ\Phi_i$ for all $i$ large enough. Hence condition (2) of the corollary is satisfied.

Conversely, assume condition (2) of the corollary is satisfied. Let $\rho\prec \pi|_{\Gamma_{k}}$ be an irreducible unitary representation. By passing to a subnet we may assume that $\Gamma_{k,i}=\tilde\Gamma_k$ for some subgroup $\tilde\Gamma_k\subset\Gamma_k$, $\Phi_i$ is an injective homomorphism and there is an irreducible representation~$\tilde\rho$ of~$\tilde\Gamma_k$ such that it is a subrepresentation of $\rho|_{\tilde\Gamma_k}$ and $\pi^{g_i}_{y_i}\circ\Phi_i$ for all $i$. Take any state~$\psi$ on~$C^*_{\tilde\rho}(\tilde\Gamma_k)$. Since $C^*_{\tilde\rho}(\tilde\Gamma_k)$ is a quotient of $C^*_{\rho|_{\tilde\Gamma_k}}(\tilde\Gamma_k)\subset C^*_\rho(\Gamma_k)$, we can extend it to a state~$\varphi$ on~$C^*_\rho(\Gamma_k)$. Similarly, we can extend the state $\Phi_i(a)\mapsto\psi(a)$ on~$C^*(\Phi_i(\tilde\Gamma_k))\cong C^*(\tilde\Gamma_k)$ to a state~$\nu_i$ on~$C^*_{\pi^{g_i}_{y_i}}(\G^{r(g_i)}_{r(g_i)})$. Then $\nu_i(\Phi_i(g))=\varphi(g)$ for $g\in\tilde\Gamma_k$, while for $g\in\Gamma_k\setminus\tilde\Gamma_k$ we have $W_g\cap\G^{r(g_i)}_{r(g_i)}=\emptyset$. Thus condition~(2) in Theorem~\ref{thm:main2} is satisfied.
\ep

\bigskip

\section{The primitive spectrum of groupoid \texorpdfstring{C$^*$}{C*}-algebras}\label{sec:primitive}

\subsection{Main result}\label{ssec:main}
Under suitable conditions Theorem~\ref{thm:main2} leads to a complete description of $\Prim C^*(\G)$. Let us formalize the class of isotropy groups that we can handle.

\begin{defn}\label{def:M}
Denote by $\M_0$ the class of discrete groups $\Gamma$ such that every primitive ideal in $C^*(\Gamma)$ is an intersection of maximal ideals, and by $\M$ the class of groups $\Gamma$ such that for every finite subset $F\subset \Gamma$ there is a subgroup $S\subset \Gamma$ such that $S$ contains $F$ and belongs to the class~$\M_0$.
\end{defn}

We will discuss in a moment what kind of groups are contained in $\M$, but first let us formulate and prove our main result.

\begin{thm}\label{thm:main-prim}
Assume $\G$ is an amenable second countable Hausdorff locally compact \'etale groupoid such that each isotropy group $\Gxx$ belongs to the class $\M$. Then the induction map $\Ind\colon\Stab(\G)^\prim\to\Prim C^*(\G)$ is surjective and the topology on $\Prim C^*(\G)$ is described as follows. Suppose $\Omega$ is a $\G$-invariant subset of $\Stab(\G)^\prim$ and $(x,J)\in\Stab(\G)^\prim$. Then the following conditions are equivalent:
\begin{enumerate}
  \item $\Ind(x,J)$ lies in the closure of $\Ind\Omega$ in $\Prim C^*(\G)$;
  \item $\pi_J\prec\bigoplus_{(S,I)}\Ind^{\Gxx}_S\pi_I$, where the summation is taken over all points $(S,I)\in\Sub(\G)^\prim$ in the closure of $\Omega$ such that $S\subset\Gxx$.
\end{enumerate}
\end{thm}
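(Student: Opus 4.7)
First, surjectivity of $\Ind$ is immediate from Ionescu--Williams given the standing amenability and second-countability assumptions. The plan for the equivalence (1)$\Leftrightarrow$(2) is to reduce it to the equivalence (1)$\Leftrightarrow$(3) of Theorem~\ref{thm:main2}, applied to the representation $\pi_J$. Setting $Y:=\{y:(y,I)\in\Omega\text{ for some }I\}$ and $\pi_y:=\bigoplus_{I:(y,I)\in\Omega}\pi_I$, the statement $\Ind(x,J)\in\overline{\Ind\Omega}$ unwinds to $\bigcap_{(y,I)\in\Omega}\Ind^{\G}_{\G^y_y}I\subset\Ind^{\G}_{\Gxx}J$, which is exactly $\Ind^{\G}_{\Gxx}\pi_J\prec\bigoplus_{y\in Y}\Ind^{\G}_{\G^y_y}\pi_y$, i.e.\ condition~(1) of Theorem~\ref{thm:main2}. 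The hypotheses of that theorem are easily checked: amenability of $\G$ passes to $\Gxx$, and $\Gxx\in\M$ provides, for any finite $F\subset\Gxx$, a subgroup $\Gamma_k\in\M_0$ containing $F$; then $\ker(\pi_J|_{\Gamma_k})$ is an intersection of primitive ideals (as in any C$^*$-algebra), each of which is an intersection of maximal ideals by definition of $\M_0$, so $\ker(\pi_J|_{\Gamma_k})$ is itself an intersection of maximal ideals.

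It then remains to match condition~(3) of Theorem~\ref{thm:main2} with condition~(2) of the current theorem. For (2)$\Rightarrow$(1), given $(S,I)\in\overline{\Omega}$ with $S\subset\Gxx$ and any pure state $\omega\in\SSS_{\pi_I}(C^*(S))$, Lemma~\ref{lem:Fell-convergence} supplies a sequence $(y_n,J_n)\in\Omega$ with $\G^{y_n}_{y_n}\to S$ in $\Sub(\G)$ and states $\omega_n\in\SSS_{\pi_{J_n}}(C^*(\G^{y_n}_{y_n}))$ whose values on the relevant bisection-points converge to $\omega(g)$; taking $g_n:=y_n$ forces $\pi^{g_n}_{y_n}=\pi_{y_n}$, through which $\omega_n$ automatically factors (since $\ker\pi_{y_n}\subset J_n$). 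This exhibits $(S,\omega)$ as a pair qualifying in Theorem~\ref{thm:main2}(3), and $\pi_\omega\sim\pi_I$, so (2) dominates the required direct sum. For (1)$\Rightarrow$(2), witnesses $(y_n,g_n,\omega_n)$ for a pair $(S,\omega)$ from Theorem~\ref{thm:main2}(3) feed into Lemma~\ref{lem:Fell-convergence2}, producing, for each primitive $K\in\hull(\ker\pi_\omega)$, ideals $K_n\in\hull(\ker\pi^{g_n}_{y_n})$ with $(\G^{r(g_n)}_{r(g_n)},K_n)\to(S,K)$ in $\Sub(\G)^\prim$; combined with the weak equivalence $\pi_\omega\sim\bigoplus_K\pi_K$, this reduces the goal to showing that each limit point $(S,K)$ lies in $\overline{\Omega}$.

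The principal obstacle is this last closure claim. Writing $K_n=(\Ad g_n)K'_n$, the inclusion $\ker\pi^{g_n}_{y_n}\subset K_n$ translates to $\ker\pi_{y_n}=\bigcap_{I:(y_n,I)\in\Omega}I\subset K'_n$, so $K'_n$ lies in the Jacobson closure of $\{I:(y_n,I)\in\Omega\}$ inside the fiber $\Prim C^*(\G^{y_n}_{y_n})$. Since this fiber sits as a closed subspace of $\Sub(\G)^\prim=\Prim C^*(\Sigma(\G))$ with the Jacobson topology intrinsic to $\Prim C^*(\G^{y_n}_{y_n})$, one obtains $(y_n,K'_n)\in\overline{\Omega}$; the $\G$-invariance of $\overline{\Omega}$, which propagates from $\Omega$ via continuity of the $\G$-action on $\Sub(\G)^\prim$, then yields $(r(g_n),K_n)\in\overline{\Omega}$, and passing to the limit in $\Sub(\G)^\prim$ places $(S,K)$ there too. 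The technical care required to reconcile the fibered $C_0(\Sub(\G))$-algebra structure of $\Sub(\G)^\prim$ with the $\G$-action and with Fell's convergence criterion is where the main effort of the proof will sit.
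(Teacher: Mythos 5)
Your proposal is correct and follows essentially the same route as the paper: surjectivity from Ionescu--Williams, reduction to the equivalence (1)$\Leftrightarrow$(3) of Theorem~\ref{thm:main2} with $Y$ the image of $\Omega$ and $\pi_y=\bigoplus_{I:(y,I)\in\Omega}\pi_I$, verification of the hypotheses via the class $\M$, and translation between condition (3) there and condition (2) here by means of Lemmas~\ref{lem:Fell-convergence} and~\ref{lem:Fell-convergence2}. The only place you diverge is in handling the conjugating elements $g_n$: you keep them and then need $\overline\Omega\subset\Sub(\G)^\prim$ to be invariant under the (partial) $\G$-action, which you justify by an asserted continuity of that action --- true, but not established in the paper and the source of the ``main effort'' you anticipate. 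The paper avoids this entirely by using $\G$-invariance of $\Omega$ one step earlier: since $\{(\Ad g_n)(I):(y_n,I)\in\Omega\}=\{I':(r(g_n),I')\in\Omega\}$, one has $\ker\pi^{g_n}_{y_n}=\ker\pi_{r(g_n)}$, so in Theorem~\ref{thm:main2}(3) one may simply replace $(y_n,g_n)$ by $(r(g_n),r(g_n))$; then your ideals $K_n$ lie in $\hull(\ker\pi_{r(g_n)})$, which is the Jacobson closure of the fiber of $\Omega$ over $r(g_n)$ and hence contained in $\overline\Omega$ by your own closed-fiber observation, with no action on $\Sub(\G)^\prim$ needed. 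With that shortcut your argument coincides with the paper's proof, so the remaining technical work you flag is in fact unnecessary.
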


We remind that by $\pi_J$ we mean any fixed irreducible representation with kernel $J$. Therefore condition (2) can be written as $\bigcap_{(S,I)}\Ind^{\Gxx}_SI\subset J$, as was done in the introduction.

\bp
The map $\Ind\colon\Stab(\G)^\prim\to\Prim C^*(\G)$ is surjective by~\cite{IW}. In order to describe the topology we apply Theorem~\ref{thm:main2} to the image $Y$ of $\Omega$ in $\Gu$ and the representations $\pi_y:=\bigoplus_{I:(y,I)\in\Omega}\pi_I$. As in that theorem, fix for every $g\in \Gxx$ an open bisection $W_{g}$ containing $g$. Since the set $\Omega$ is $\G$-invariant and $\G$ is second countable, we then get that (1) is equivalent to
\begin{enumerate}
  \item[(2$'$)] $\pi_J\prec\bigoplus_{(S,\omega)}\Ind^{\Gxx}_S\pi_\omega$, where the summation is taken over all pairs $(S,\omega)$ such that $S\subset\Gxx$ is a subgroup, $\omega$ is a state on $C^*(S)$ and the following property is satisfied: there exist a sequence $(y_n)_n$ in~$Y$ and states~$\omega_n$ on~$C^*_{\pi_{y_n}}(\G^{y_n}_{y_n})$ such that $y_n\to x$, $\G^{y_n}_{y_n}\to S$ in $\Sub(\G)$ and, for every $g\in S$, we have $\omega_n(g_n)\to\omega(g)$, where $g_n$ is the unique element of $W_g\cap \G^{y_n}_{y_n}$.
\end{enumerate}

Now, if $S\subset\Gxx$ and $(S,I)$ lies in the closure of $\Omega$, then by Lemma~\ref{lem:Fell-convergence}, for any $\omega\in \SSS_{\pi_I}(C^*(S))$, the pair $(S,\omega)$ has the property formulated in (2$'$) and $\pi_I\cong\pi_\omega$. It follows that (2) implies~(2$'$). Conversely, assume (2$'$) is satisfied and take $(S,\omega)$ as in (2$'$), with $y_n\in Y$ as formulated there. Then by Lemma~\ref{lem:Fell-convergence2} and density of $\{K:(y_n,K)\in\Omega\}$ in $\hull(\ker\pi_{y_n})$, for every $I\in\hull(\ker\pi_\omega)$ we can find $I_n$ such that $(y_n,I_n)\in\Omega$ and $(\G^{y_n}_{y_n},I_n)\to(S,I)$. Since $\pi_\omega\sim\bigoplus_{I\in\hull(\ker\pi_\omega)}\pi_I$, it follows that (2) is satisfied.
\ep

Let us now discuss the classes $\M_0$ and $\M$. It is known that for any finitely generated group~$\Gamma$ the following conditions are equivalent:

\begin{enumerate}
  \item $\Gamma$ has polynomial growth;
  \item $\Gamma$ is virtually nilpotent;
  \item $\Gamma$ is amenable and every primitive ideal of $C^*(\Gamma)$ is maximal.
\end{enumerate}
The implication (2)$\Rightarrow$(1) is a consequence of a result of Wolf~\cite{MR0248688}*{Theorem~3.2}, while the converse is the famous result of Gromov~\cite{MR0623534}. The implication (2)$\Rightarrow$(3) is a result of Poguntke~\cite{MR0667315}. Finally, if (3) holds, then by an observation of Moore and Rosenberg~\cite{MR0419675}*{pp.~219--220}, the group $\Gamma$ is FC-hypercentral, hence, being also finitely generated, it is virtually nilpotent by~\cite{MR0084498}*{Theorem~2}.  Recall that FC-hypercentrality means that $\Gamma$ has no nontrivial ICC quotients, equivalently, $\Gamma$ coincides with its FC-hypercenter, which is defined as follows. The ascending FC-series $(\Gamma_\alpha)_\alpha$, where $\alpha$ are ordinals, is defined by $\Gamma_0:=\{e\}$, $\Gamma_{\alpha+1}$ consists of the elements of $\Gamma$ that have finite conjugacy classes in $\Gamma/\Gamma_\alpha$, and $\Gamma_{\alpha}:=\bigcup_{\beta<\alpha}\Gamma_\beta$ if $\alpha$ is a limit ordinal. The series $(\Gamma_\alpha)_\alpha$ eventually stabilizes, and $\bigcup_\alpha\Gamma_\alpha$ is called the \emph{FC-hypercenter} of $\Gamma$.

Therefore $\M_0$ includes all finitely generated groups of polynomial growth and hence $\M$ includes all groups of local polynomial growth, that is, such that every finitely generated subgroup has polynomial growth. Thus, Theorem~\ref{thm:main-prim} subsumes Theorem~\ref{thm:A} from the introduction.

By the already mentioned result of Echterhoff~\cite{MR1066810}, if a group $\Gamma$ is FC-hypercentral, then every primitive ideal of $C^*(\Gamma)$ is maximal. Therefore $\M_0$ includes also all FC-hypercentral groups. This does not expand the class $\M$, however, since every finitely generated subgroup of an FC-hypercentral group is itself FC-hypercentral and hence has polynomial growth, as we discussed above.

Nevertheless, as the following two examples show, the classes $\M_0$ and $\M$ are strictly larger than what we can get from groups of polynomial growth and include some groups of exponential growth.

\begin{example}\label{ex:Furstenberg}
Let $S\subset\N$ be a multiplicative submonoid satisfying the following two properties. The first is that $S$ is \emph{nonlacunary}, meaning that $S$ does not consist of powers of one number. The second is that there is a prime $p$ such that none of the elements of $S$ is divisible by~$p$. Put $\Gamma:=S^{-1}S\subset\mathbb Q^\times_+$. Consider the localization $S^{-1}\Z$ of $\Z$ by $S$, which we view as a group under addition. Thus, $S^{-1}\Z$ consists of rational numbers with denominators in $S$. The group $\Gamma$ acts on $S^{-1}\Z$ by multiplication, and we define $G_S:=\Gamma\ltimes S^{-1}\Z$.

As was observed by Cuntz, when $S$ is generated by two multiplicatively independent integers, the structure of $C^*(G_S)$ is relevant to Furstenberg's $\times2\times3$ problem. A detailed study of $C^*(G_S)$ in this case has been recently undertaken by Bruce and Scarparo~\cite{BS}. We are interested only in the primitive spectrum of $C^*(G_S)$. Let us briefly explain how the analysis in \citelist{\cite{Scar}\cite{BS}} extends to general $G_S$ as above.

Denote by $X$ the dual group of $S^{-1}\Z$. Then $C^*(G_S)$ can be identified with $\Gamma\ltimes C(X)$. Since~$S^{-1}\Z$ is an inductive limit of the cyclic groups $s^{-1}\Z$, the compact group $X$ is a projective limit of copies of $\T$ indexed by elements  of $S$. Concretely, $X$ can be identified with the closed subgroup of $\T^S$ consisting of points $z=(z(s))_{s\in S}$ satisfying $z(s)=z(ts)^t$ for all $s,t\in S$. The action of $t\in S\subset \Gamma$ on $X$ is given by $(tz)(s)=z(ts)$ for $s\in S$, with the inverse given by $(t^{-1}z)(s)=z(s)^t$.

Consider the maps $\varphi_s\colon\T\to\T$, $\varphi_s(w):=w^s$, $s\in S$. Then it can be checked similarly to~\cite{BS}*{Proposition~3.1} that there is a bijective correspondence between the closed $\Gamma$-invariant subsets $Y\subset X$ and the closed subsets $F\subset\T$ such that $\varphi_s(F)=F$ for all $s\in S$. Namely, given $Y\subset X$ we define $F_Y:=\{z(1)\mid z\in Y\}$. The inverse map sends $F\subset\T$ into $Y_F:=X\cap F^S\subset\T^S$. Moreover, see again~\cite{BS}*{Proposition~3.1}, a closed invariant subset $Y\subset X$ is finite if and only if $F_Y\subset \T$ is finite, and then $|Y|=|F_Y|$. By a result of Furstenberg~\cite{MR0213508}*{Proposition~IV.2}, see also~\cite{MR1195714}*{Theorem~1.2}, it follows that every proper closed invariant subset of $X$ is finite. In particular, every $\Gamma$-orbit in~$X$ is either finite or dense. Note also that the stabilizer of every point with a dense orbit is trivial, since the action $\Gamma\curvearrowright X$ is faithful.

Every character $z\in X$ such that $z(1)$ is not a root of unity has infinite, hence dense, orbit. On the other hand, for $n\ge1$, consider the set~$F_n$ of roots of unity of order $p^n$. Then $\varphi_s(F_n)=F_n$ for all $s\in S$, since the elements of~$S$ are not divisible by~$p$ by assumption. Consider the corresponding finite closed invariant set $Y_n:=Y_{F_n}$. Then the $\Gamma$-orbit of every element of $Y_n$ is finite, and the union of these orbits for all $n$ is dense in $X$, since $\cup_nF_n$ is dense in $\T$.

Now, every primitive ideal of $C^*(G_S)$ has the form $\ker\Ind^{\Gamma\ltimes X}_{\Gamma_x}\chi$ for a character~$\chi$ of the stabilizer $\Gamma_x$ of $x\in X$. If $x$ has dense orbit, then this kernel is $0$, while if the orbit of~$x$ is finite, then the irreducible representation $\Ind^{\Gamma\ltimes X}_{\Gamma_x}\chi$ is finite dimensional, hence its kernel is a maximal ideal. The intersection $I$ of all these maximal ideals is $0$. Indeed, by the ideal intersection property (see Section~\ref{ssec:IIP}) it suffices to check that $I\cap C(X)=0$, and this is true, since the points with finite orbits are dense in~$X$. See~\cite{BS}*{Theorem~2.4} for a complete description of the Jacobson topology on $\Prim C^*(G_S)$.

Therefore the group $G_S$ belongs to the class $\M_0$. We note in passing that since every ideal of $C^*(G_S)$ is an intersection of maximal ideals of finite codimension, the C$^*$-algebra~$C^*(G_S)$ is strongly RFD in the sense of~\cite{MR4052213}, that is, all of its quotients are RFD.

When the monoid $S$ is finitely generated, the group $G_S$ is finitely generated. It is solvable, but not virtually nilpotent. It follows that $G_S$ has exponential growth by the Milnor--Wolf theorem, which in this case is also easy to see directly. Therefore the class $\M$ is strictly larger than the class of groups of local polynomial growth.
\end{example}

\begin{example}
Since every finite subset of the $ax+b$ group $\Q^\times_+\ltimes\Q$ is contained in a subgroup~$G_S$ from the previous example, we see that $\Q^\times_+\ltimes\Q$ belongs to the class $\M$. The analysis in that example remains essentially the same if we replace $\Gamma=\langle S\rangle\subset\Q^\times_+$ by the group $\langle S, -1\rangle\subset\Q^\times$. It follows then that the $ax+b$ group $\Q^\times\ltimes\Q$ also belongs to the class $\M$.
\end{example}

\begin{example}
Consider the Baumslag--Solitar group $G:=\operatorname{BS}(1,m)$ for some $m\ge2$. For $m=2$, the primitive spectrum of $C^*(G)$ was studied already by Guichardet in~\cite{MR0147925}*{\S3}. We have $G\cong\Z\ltimes\Z[\frac{1}{m}]$, where $\Z$ acts by multiplication by $m$. Therefore $G$ is defined in the same way as in Example~\ref{ex:Furstenberg}, but now for the lacunary monoid $S=\{1,m,m^2,\dots\}$. Similarly to that example, the points with finite orbits with respect to the action of $\Z$ on the dual $X$ of $\Z[\frac{1}{m}]$ are dense, so $C^*(G)$ has maximal ideals of finite codimension whose intersection is $0$. The zero ideal is primitive, which can be shown either by exhibiting a dense orbit or by checking that $G$ is an ICC group. But this time there are primitive ideals that are not intersections of maximal ones.

To give an example, consider the character $\chi$ on $\Z[\frac{1}{m}]$ defined by
$$
\chi(a):=e^{2\pi ia}.
$$
Then it is not difficult to see that the closure of the orbit of $\chi$ in $X$ consists of the orbit itself and the trivial character $\eps$. Consider the primitive ideal $I$ of $C^*(G)=\Z\ltimes C(X)$ defined by~$\chi$, that is, the ideal obtained by inducing the trivial representation of the trivial stabilizer of~$\chi\in X$. Besides~$I$, the primitive ideals that contain $I$ are the ones that are obtained by inducing characters of the stabilizer $\Z$ of $\eps\in X$. The latter ideals contain the functions in $C(X)$ that vanish at~$\eps$, so their intersection is strictly larger than $I$, since $I\cap C(X)$ consists of the functions that vanish on the orbit of $\chi$. Therefore $I$ is not an intersection of maximal ideals.

Since $G$ is finitely generated, we thus see that $G$ does not belong to the class $\M$. Since $G$ embeds into groups from Example~\ref{ex:Furstenberg}, this shows that neither $\M$ nor $\M_0$ is closed under taking subgroups.\ee
\end{example}

Theorem~\ref{thm:main-prim} covers in particular all transformation groupoids $\G=\Gamma\ltimes X$ defined by actions $\Gamma\curvearrowright X$ of countable groups of local polynomial growth on second countable Hausdorff locally compact spaces. It also covers the second countable transformation groupoids defined by amenable actions of groups $\Gamma$ that are relatively hyperbolic with respect to a family~$\mathcal H$ of subgroups of local polynomial growth (see, e.g.,~\cite{MR2922380} for the relevant definitions). Indeed, the stabilizers of such actions are amenable, while by the strong Tits alternative~\cite{MR1313451}*{Theorems~2T} every amenable subgroup of $\Gamma$ is either virtually cyclic or conjugate to a subgroup of some $H\in\mathcal H$, hence it is of local polynomial growth. For example, $\Gamma$ can be the free product of two countable groups of local polynomial growth or the free product of a hyperbolic group with a countable group of local polynomial growth. In this case the claim about amenable subgroups follows already from the Kurosh subgroup theorem and the fact that any amenable subgroup of a hyperbolic group is virtually cyclic~\cite{GlH}*{Th\'eor\`eme~8.37}. Since the Kurosh subgroup theorem is true for free products of arbitrary collections of groups, we can in fact take as $\Gamma$ the free products of countable collections of countable groups that are either of local polynomial growth or hyperbolic.

\subsection{Groupoids with FC-hypercentral isotropy} \label{ssec:FC}

So far we have used only the equivalence of (1) and (3) in Theorem~\ref{thm:main2}. Under stronger assumptions we can apply the equivalence of (1) and (4) there to get the following result, which is Theorem~\ref{thm:B} from the introduction.

\begin{thm}\label{thm:main-prim-fc}
Assume $\G$ is an amenable second countable Hausdorff locally compact \'etale groupoid such that each isotropy group $\Gxx$ is FC-hypercentral. Then the map $\Ind\colon\Stab(\G)^\prim\to\Prim C^*(\G)$ is surjective and the topology on $\Prim C^*(\G)$ is described as follows. Suppose $\Omega$ is a $\G$-invariant subset of $\Stab(\G)^\prim$ and $(x,J)\in\Stab(\G)^\prim$. Then the following conditions are equivalent:
\begin{enumerate}
  \item $\Ind(x,J)$ lies in the closure of $\Ind\Omega$ in $\Prim C^*(\G)$;
  \item the closure of $\Omega$ in $\Sub(\G)^\prim$ contains a point $(S,I)$ such that $S\subset\Gxx$ and $\pi_I\prec\pi_J|_S$.
\end{enumerate}
\end{thm}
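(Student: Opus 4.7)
The plan is to bootstrap from Theorem~\ref{thm:main-prim} using the stronger equivalence (1)$\Leftrightarrow$(4) in Theorem~\ref{thm:main2}. The hypothesis that every $\Gxx$ is FC-hypercentral delivers two essential facts through Echterhoff~\cite{MR1066810}: $\Gxx$ is amenable, and every primitive ideal of $C^*(\Gxx)$ is maximal. In particular $\Gxx\in\M_0$, so the surjectivity of $\Ind$ and the description in Theorem~\ref{thm:main-prim} are available, and $C^*_{\pi_J}(\Gxx)$ is simple.

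For (2)$\Rightarrow$(1), suppose $(S,I)$ lies in the closure of $\Omega$ in $\Sub(\G)^\prim$ with $S\subset\Gxx$ and $\pi_I\prec\pi_J|_S$. Since $\Gxx$ is amenable and $C^*_{\pi_J}(\Gxx)$ is simple, Corollary~\ref{cor:Frob2} yields $\pi_J\prec\Ind^{\Gxx}_S\pi_I$, which is a fortiori weakly contained in the direct sum appearing in condition (2) of Theorem~\ref{thm:main-prim}. That theorem then gives~(1).

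For (1)$\Rightarrow$(2), set $Y$ to be the image of $\Omega$ in $\Gu$ and $\pi_y:=\bigoplus_{K:(y,K)\in\Omega}\pi_K$, as in the proof of Theorem~\ref{thm:main-prim}; then (1) translates into $\Ind^\G_{\Gxx}\pi_J\prec\bigoplus_{y\in Y}\Ind^\G_{\G^y_y}\pi_y$. Maximality of $\ker\pi_J$ allows us to invoke Theorem~\ref{thm:main2}(4), producing a subgroup $S\subset\Gxx$, a state $\omega$ on $C^*_{\pi_J|_S}(S)$ and nets $y_i\in Y$, $g_i\in\G_{y_i}$, $\omega_i$ on $C^*_{\pi^{g_i}_{y_i}}(\G^{r(g_i)}_{r(g_i)})$ with $r(g_i)\to x$, $\G^{r(g_i)}_{r(g_i)}\to S$ in $\Sub(\G)$ and $\omega_i(h_{g,i})\to\omega(g)$ for all $g\in S$. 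Pick any primitive ideal $I\supset\ker\pi_\omega$ in $C^*(S)$; since $\omega$ factors through $\pi_J|_S$ we have $\ker(\pi_J|_S)\subset I$, so $\pi_I\prec\pi_J|_S$. To place $(S,I)$ in the closure of $\Omega$ inside $\Sub(\G)^\prim$, write $z_i:=r(g_i)$ and observe that $\G$-invariance of $\Omega$ implies $\ker\pi^{g_i}_{y_i}=\bigcap_{K:(z_i,K)\in\Omega}K$, so $\hull(\ker\pi^{g_i}_{y_i})$ coincides with the closure of $\{K:(z_i,K)\in\Omega\}$ in $\Prim C^*(\G^{z_i}_{z_i})$. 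Lemma~\ref{lem:Fell-convergence2}, applied with these data, yields $I_i\in\hull(\ker\pi^{g_i}_{y_i})$ with $(\G^{z_i}_{z_i},I_i)\to(S,I)$ in $\Sub(\G)^\prim$; each $(z_i,I_i)$ then already lies in the closure of $\Omega$, so the limit does too.

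The main obstacle is not analysis but bookkeeping: one must verify that the ideals $I_i$ provided by Lemma~\ref{lem:Fell-convergence2} genuinely live in the closure of $\Omega$ rather than merely in some larger hull inside $\Stab(\G)^\prim$, and this is exactly what the $\G$-invariance-driven identification of $\hull(\ker\pi^{g_i}_{y_i})$ accomplishes. The substantive inputs are Theorem~\ref{thm:main2}(4) and the weak Frobenius reciprocity in Corollary~\ref{cor:Frob2}, both already in hand.
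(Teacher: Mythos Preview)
Your argument is correct and follows essentially the same route as the paper's proof: both directions hinge on the equivalence (1)$\Leftrightarrow$(4) in Theorem~\ref{thm:main2} (available because $\ker\pi_J$ is maximal by~\cite{MR1066810}), with Lemma~\ref{lem:Fell-convergence2} converting the state-convergence data into convergence in $\Sub(\G)^\prim$, and Corollary~\ref{cor:Frob2} supplying the weak Frobenius step. The paper streamlines slightly by using second countability and $\G$-invariance up front to replace the nets $(y_i,g_i)$ by sequences $(y_n)$ in $Y$ with $g_n=y_n$, which lets it invoke Lemma~\ref{lem:Fell-convergence2} in its stated sequential form; you should either do the same or note that you are implicitly passing to a sequence before applying that lemma.
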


\bp
The map $\Ind\colon\Stab(\G)^\prim\to\Prim C^*(\G)$ is surjective by~\cite{IW}. In order to describe the topology we use that the ideal $J\subset C^*(\Gxx)$ is maximal by~\cite{MR1066810} and apply Theorem~\ref{thm:main2} to the image~$Y$ of~$\Omega$ in~$\Gu$ and the representations $\pi_y:=\bigoplus_{I:(y,I)\in\Omega}\pi_I$. Since the set $\Omega$ is $\G$-invariant and $\G$ is second countable, we then get that (1) is equivalent to
\begin{enumerate}
  \item[(2$'$)] there are a subgroup $S\subset\Gxx$ and a state $\omega$ on $C^*_{\pi_J|_S}(S)$ such that the following property holds: there exist a sequence $(y_n)_n$ in~$Y$ and states $\omega_n$ on $C^*_{\pi_{y_n}}(\G^{y_n}_{y_n})$ such that $y_n\to x$, $\G^{y_n}_{y_n}\to S$ in $\Sub(\G)$ and, for every $g\in S$, we have $\omega_n(g_n)\to\omega(g)$, where $g_n$ is the unique element of $W_g\cap \G^{y_n}_{y_n}$.
\end{enumerate}

Now, if $(S,I)$ is as in (2), then by Lemma~\ref{lem:Fell-convergence}, for any $\omega\in \SSS_{\pi_I}(C^*(S))$, the pair $(S,\omega)$ has the property formulated in (2$'$), so (2) implies~(2$'$). Conversely, assume (2$'$) is satisfied. Take any $I\in\hull(\ker\pi_\omega)$. Then by Lemma~\ref{lem:Fell-convergence2} and density of $\{K:(y_n,K)\in\Omega\}$ in $\hull(\ker\pi_{y_n})$ we can find $I_n$ such that $(y_n,I_n)\in\Omega$ and $(\G^{y_n}_{y_n},I_n)\to(S,I)$. As $\pi_I\prec\pi_\omega\prec\pi_J|_S$, we conclude that~(2) is satisfied.
\ep

When the isotropy groups are abelian, this result is equivalent to~\cite{CN3}*{Corollary~2.8}. We will return to this in Section~\ref{ssec:questions}.

\begin{remark}
The class of FC-hypercentral groups is the largest class of isotropy groups for which the Jacobson topology can be described as in Theorem~\ref{thm:main-prim-fc}. More precisely, for any class of groups that contains an amenable countable group $\Gamma$ that is not FC-hypercentral, there are groupoids with isotropy groups in this class such that the above description of the topology does not work. The group $\Gamma$ itself is an example of such a groupoid. Indeed, as we discussed in Section~\ref{ssec:main}, by an observation of Moore and Rosenberg~\cite{MR0419675}, there exists a nonmaximal primitive ideal~$J$ in~$C^*(\Gamma)$. If $I\in\Prim C^*(\Gamma)$ is such that $J\subsetneq I$, then condition (2) in Theorem~\ref{thm:main-prim-fc} is satisfied for $\Omega:=\{I\}$, but $J=\Ind J\notin\overline{\Ind\Omega}=\bar\Omega$.\ee
\end{remark}

Similarly to our discussion at the end of Section~\ref{ssec:main}, Theorem~\ref{thm:main-prim-fc} covers in particular all second countable transformation groupoids $\G=\Gamma\ltimes X$ defined by actions $\Gamma\curvearrowright X$ of FC-hypercentral groups, as well as the second countable transformation groupoids defined by amenable actions of groups that are relatively hyperbolic with respect to a family of FC-hypercentral subgroups. The latter class includes all lattices in connected semisimple Lie groups of real rank one: such lattices are known to be relatively hyperbolic with respect to a family of virtually nilpotent subgroups (see, for example, \cite{HH}*{Theorem~10.1} and \cite{MR2154349}*{Theorem~0.1}).

\smallskip

Theorem~\ref{thm:main-prim-fc} also covers all second countable Hausdorff locally compact \'etale grou\-poids~$\G$ such that $C^*(\G)$ is of type I. This follows from the classical result of Thoma~\cite{MR0160118} and the characterization of such groupoids obtained in~\citelist{\cite{MR0335681}\cite{MR2359724}\cite{MR3835454}}. In fact, the latter papers deal with not necessarily \'etale groupoids, while in the \'etale case the characterization takes the following simple definitive form.

\begin{prop}\label{prop:type-I}
For any second countable Hausdorff locally compact \'etale groupoid $\G$, the C$^*$-algebra $C^*(\G)$ is of type I if and only if every $\G$-orbit $[x]$ in $\Gu$ is discrete and every isotropy group $\Gxx$ is virtually abelian. Moreover, any groupoid $\G$ with these properties is amenable.
\end{prop}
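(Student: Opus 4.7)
We plan to reduce the equivalence to the characterization of type~I groupoid C$^*$-algebras in the cited literature and establish amenability separately. From~\cite{MR3835454} (refining~\cite{MR0335681,MR2359724}), a second countable Hausdorff locally compact groupoid $\G$ has $C^*(\G)$ of type~I if and only if every $\G$-orbit in $\Gu$ is locally closed and every isotropy C$^*$-algebra $C^*(\Gxx)$ is of type~I; by Thoma's theorem~\cite{MR0160118}, the latter condition is equivalent to each $\Gxx$ being virtually abelian. Thus the two remaining tasks will be: (i)~to show that, for étale $\G$, ``every orbit locally closed'' is equivalent to ``every orbit discrete,'' and (ii)~to establish amenability under these hypotheses.

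For~(i), we observe first that étaleness together with second countability forces each source fibre $\G_x$ to be countable and discrete, so $[x]=r(\G_x)$ is at most countable. If $[x]$ is locally closed, then $[x]$ is a countable locally compact Hausdorff space in the subspace topology, and hence contains an isolated point by the Baire category theorem; transitivity of $\G|_{[x]}$, realized concretely by open bisections that yield local homeomorphisms between neighbourhoods in $[x]$, then propagates isolatedness to the entire orbit, so $[x]$ is discrete. Conversely, $\Gu$ is metrizable (hence first countable) by second countability and local compactness, so assuming $[x]$ discrete we can show that each $y\in[x]$ is isolated in $\overline{[x]}$: if not, there would be a sequence in $\overline{[x]}\setminus\{y\}$ converging to $y$, and a diagonalization---picking, for each such term lying in $\overline{[x]}\setminus[x]$, a nearby point of $[x]\setminus\{y\}$, which exists because every point of $\overline{[x]}\setminus[x]$ is a limit of points in $[x]$ distinct from the isolated $y$---would produce a sequence in $[x]\setminus\{y\}$ converging to $y$, contradicting discreteness. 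Hence $[x]$ is open in $\overline{[x]}$, i.e.\ locally closed.

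For~(ii), once all orbits are locally closed and all isotropy groups are amenable (virtually abelian groups being amenable), amenability of $\G$ will follow from the permanence properties in~\cite{MR1799683}: one decomposes $\Gu$ into an increasing chain of closed $\G$-invariant subsets whose successive differences are discrete unions of orbits; on each such stratum $\G$ restricts to a transitive étale groupoid Morita equivalent to the amenable isotropy group $\Gxx$, and amenability is then lifted through extensions and inductive limits.

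The main obstacle we anticipate is~(ii): the amenability argument, while standard in spirit, requires some care to assemble precisely from the permanence results in~\cite{MR1799683}, especially when $\Gu/\G$ is uncountable, so that the transfinite stratification by orbit closures is handled correctly. The topological argument in~(i) is routine once first countability is available, and the characterization in the cited papers then immediately yields the equivalence.
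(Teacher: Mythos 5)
Your treatment of the equivalence is essentially the paper's route: the paper likewise combines Thoma's theorem with the Clark/van Wyk characterization (plus Ramsay's Mackey--Glimm dichotomy) in the ``only if'' direction, and your countability/Baire/homogeneity argument identifying locally closed orbits with discrete ones for \'etale $\G$ is a correct and pleasant substitute for the appeal to \cite{MR1081649}. Two caveats. First, the unconditional ``if and only if'' you quote is not quite what the cited papers state: Clark's theorem carries an amenability hypothesis on the isotropy groups, and \cite{MR3835454} removes it only for the conclusion that the orbit space is $T_0$. So to get ``$C^*(\G)$ of type I $\Rightarrow$ $C^*(\Gxx)$ of type I'' without circularity (you only learn amenability of $\Gxx$ \emph{after} Thoma) you should argue directly, as the paper does: for any representation $\pi$ of $\Gxx$, the projection onto the fibre of $\Ind^\G_{\Gxx}H_\pi$ over $x$ lies in $(\Ind^\G_{\Gxx}\pi)(C^*(\G))''$ because the orbit is countable, and compressing by it exhibits $\pi(C^*(\Gxx))''$ as a corner of a type I von Neumann algebra. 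Second, your ``discrete $\Rightarrow$ locally closed'' step needs no sequences: if $U$ is open with $U\cap[x]=\{y\}$, then $U\cap\overline{[x]}\subset\overline{U\cap[x]}=\{y\}$, so $[x]$ is open in its closure. Note also that the paper proves the ``if'' direction of the proposition by a short self-contained argument (an irreducible representation of $C^*(\G)$ lives over a single discrete orbit and is induced from an irreducible, hence finite-dimensional, representation of the virtually abelian $\Gxx$, so its image contains a finite-rank projection), which you could adopt instead of invoking Clark's converse.

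The genuine gap is your item (ii). The stratification plan is both unnecessary and, as described, does not work: a stratum that is a union of several orbits is not a transitive groupoid, so it is not equivalent to a single isotropy group; beyond that you would need the existence of a (possibly transfinite) chain of open invariant sets whose successive differences have discrete orbit space, a proof that an \'etale groupoid with relatively closed orbits and amenable isotropy is amenable, and permanence of amenability along transfinite composition series---none of which is supplied, and you yourself flag this as the main obstacle. The paper disposes of amenability in one line: once the ``if'' direction is proved, the hypotheses imply that $C^*(\G)$ is of type I, type I C$^*$-algebras are nuclear, and nuclearity of $C^*(\G)$ forces amenability of the \'etale groupoid $\G$ (\cite{MR2391387}*{Theorem~5.6.18} or \cite{BM}*{Theorem~4.11}). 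You should replace (ii) by this observation.
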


\bp
Assume first that $C^*(\G)$ is a C$^*$-algebra of type I. For $x\in\Gu$ and any unitary representation $\pi$ of $\Gxx$, the von Neumann algebra $\pi(C^*(\Gxx))''$ is a reduction of $(\Ind^\G_{\Gxx}\pi)(C^*(\G))''$, hence it is of type I. By~\cite{MR0160118} it follows that $\Gxx$ is virtually abelian. In particular, the isotropy groups are amenable, and then by~\cite{MR2359724}*{Theorem~1.4} it follows that the orbit space $\G\backslash\Gu$ must be~$T_0$ as a consequence of continuity of the embedding $\G\backslash\Gu\hookrightarrow \Prim C^*(\G)$, $[x]\mapsto\ker\Ind^\G_{\Gxx}\eps_x$, where~$\eps_x$ denotes the trivial representation of~$\Gxx$, cf.~Lemmas~\ref{lem:Ind-continuity} and~\ref{lem:greenleaf}. By~\cite{MR1081649}*{Theorem~2.1} this means that for every $x\in\Gu$ we get a homeomorphism $\G_x/\Gxx\to[x]$, $g\Gxx\mapsto r(g)$, so $[x]$ is discrete.

Conversely, assume every orbit in $\Gu$ is discrete and every isotropy group is virtually abelian. Then $C^*(\G)$ is of type I by~\cite{MR2359724}*{Theorem~1.4}, but our \'etale case allows for the following shorter argument. Consider an irreducible representation $\pi\colon C^*(\G)\to B(H)$. The restriction of~$\pi$ to~$C_0(\Gu)$ is described by an ergodic quasi-invariant probability measure~$\mu$ on~$\Gu$ and a multiplicity $m\in\N\cup\{+\infty\}$. Since the $\G$-orbits in $\Gu$ are locally closed, by~\cite{MR1081649}*{Theorem~2.1} the measure~$\mu$ must be concentrated on one orbit $[x]$. Then $\pi$ gives rise to a unitary representation~$\rho$ of~$\Gxx$ on~$H_x:=\pi(f)H$ and $\pi$ is unitarily equivalent to $\Ind^\G_{\Gxx}\rho$, where $f\in C_0(\Gu)$ is any function such that $f(x)=1$ and $f(y)=0$ for $y\in[x]\setminus\{x\}$. The representation $\rho$ must be irreducible, and since $\Gxx$ is virtually abelian, it follows that $H_x$ is finite dimensional. Thus, $\pi(C^*(\G))$ contains the nonzero finite rank projection~$\pi(f)$, proving that $C^*(\G)$ is of type~I.

Finally, if $C^*(\G)$ is of type I, then $C^*(\G)$ is nuclear, hence the groupoid $\G$ is amenable, see~\cite{MR2391387}*{Theorem~5.6.18} or~\cite{BM}*{Theorem~4.11}.
\ep

Note that surjectivity of $\Ind\colon\Stab(\G)^\prim\to\Prim C^*(\G)$ in the type I case is basically shown in the above proof and therefore does not depend on~\cite{IW}.

\subsection{Transformation groupoids with locally finite stabilizers}\label{ssec:main-lf}
In Corollary~\ref{cor:main-lf} we saw that when the isotropy groups are locally finite, a criterion for weak containment can be given in terms of representation theory of finite groups. For transformation groupoids this criterion takes a particularly nice form, which leads to the following result.

\begin{thm}\label{thm:main-prim-lf}
Assume $\Gamma\curvearrowright X$ is an amenable action of a countable group on a second countable Hausdorff locally compact space $X$ such that each stabilizer $\Gamma_x$ is locally finite. Then the map $\Ind\colon\Stab(\Gamma\ltimes X)^\prim\to\Prim (\Gamma\ltimes C_0(X))$ is surjective and the topology on $\Prim(\Gamma\ltimes C_0(X))$ is described as follows. Suppose $\Omega$ is a $\Gamma$-invariant subset of $\Stab(\Gamma\ltimes X)^\prim$ and $(x,J)\in\Stab(\Gamma\ltimes X)^\prim$. Let $(S_k)_{k=1}^{\infty}$ be an increasing sequence of finite subgroups of $\Gamma_x$ such that $\cup_kS_k=\Gamma_x$. Then the following conditions are equivalent:
\begin{enumerate}
  \item $\Ind(x,J)$ lies in the closure of $\Ind\Omega$ in $\Prim(\Gamma\ltimes C_0(X))$;
  \item for every $k\ge1$ and every irreducible unitary subrepresentation $\rho$ of $\pi_J|_{S_k}$, there exists a sequence of elements $(y_n,J_n)\in\Omega$ such that $y_n\to x$ and, for every $n$, the unitary representations~$\rho|_{S_k\cap\Gamma_{y_n}}$ and $\pi_{J_n}|_{S_k\cap\Gamma_{y_n}}$ are not disjoint.
\end{enumerate}
\end{thm}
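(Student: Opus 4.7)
The plan is to deduce the theorem from Corollary~\ref{cor:main-lf} applied to $\G=\Gamma\ltimes X$, taking $Y$ to be the projection of $\Omega$ to $X$, setting $\pi_y:=\bigoplus_{I:(y,I)\in\Omega}\pi_I$, and using the sequence $(S_k)_k$ as the collection $(\Gamma_k)_k$ of finite subgroups of $\Gxx=\Gamma_x$. Surjectivity of $\Ind$ is immediate from \cite{IW}, since $\G$ is amenable and second countable.

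First I would translate condition~(1) into a weak containment statement. The closure of $\Ind\Omega$ in $\Prim C^*(\G)$ equals $\hull\big(\bigcap_{(y,I)\in\Omega}\ker\Ind^\G_{\Gamma_y}\pi_I\big)$, which coincides with $\hull\big(\ker\bigoplus_{y\in Y}\Ind^\G_{\Gamma_y}\pi_y\big)$ because induction commutes with direct sums up to unitary equivalence. Hence (1) amounts to $\Ind^\G_{\Gamma_x}\pi_J\prec\bigoplus_{y\in Y}\Ind^\G_{\Gamma_y}\pi_y$, which is exactly condition~(1) of Corollary~\ref{cor:main-lf}.

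Next I would simplify condition~(2) of that corollary using the transformation groupoid structure. For $g\in\Gamma_x$ one can choose the open bisection $W_g$ to be $\{g\}\times U_g$ for some open neighbourhood $U_g$ of $x$ in $X$. For $y$ near $x$, the intersection $W_g\cap\G^y_y$ is nonempty precisely when $g\in\Gamma_y$, in which case it equals $\{(g,y)\}$. Therefore the subgroup $\Gamma_{k,i}$ appearing in Corollary~\ref{cor:main-lf} is $S_k\cap\Gamma_{r(g_i)}$ and $\Phi_i$ is the natural inclusion. Writing $g_i=(h_i,y_i)$ with $y_i\in Y$ and $y'_i:=r(g_i)=h_iy_i$, a direct computation gives $\pi^{g_i}_{y_i}(g)=\pi_{y_i}(h_i^{-1}gh_i)$ for $g\in\Gamma_{y'_i}$. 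By $\Gamma$-invariance of $\Omega$, the set $Y$ is $\Gamma$-invariant, and $I\mapsto\Ad(h_i)(I)$ is a bijection between $\{I:(y_i,I)\in\Omega\}$ and $\{J:(y'_i,J)\in\Omega\}$, which yields a unitary equivalence $\pi_{y'_i}\simeq\pi_{y_i}\circ\Ad(h_i^{-1})$ and hence $\pi^{g_i}_{y_i}\circ\Phi_i\simeq\pi_{y'_i}|_{S_k\cap\Gamma_{y'_i}}$. After relabelling $y_n:=y'_i$ and passing from nets to sequences using second countability, condition~(2) of Corollary~\ref{cor:main-lf} becomes: for every $k\ge1$ and every irreducible $\rho\prec\pi_J|_{S_k}$, there is a sequence $(y_n)$ in $Y$ with $y_n\to x$ such that $\rho|_{S_k\cap\Gamma_{y_n}}$ and $\pi_{y_n}|_{S_k\cap\Gamma_{y_n}}$ are not disjoint.

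Finally, since $S_k\cap\Gamma_{y_n}$ is finite, the decomposition $\pi_{y_n}|_{S_k\cap\Gamma_{y_n}}=\bigoplus_{I:(y_n,I)\in\Omega}\pi_I|_{S_k\cap\Gamma_{y_n}}$ into representations of a finite group shows that nondisjointness from $\rho|_{S_k\cap\Gamma_{y_n}}$ is equivalent to the existence of some $J_n$ with $(y_n,J_n)\in\Omega$ such that $\pi_{J_n}|_{S_k\cap\Gamma_{y_n}}$ is not disjoint from $\rho|_{S_k\cap\Gamma_{y_n}}$. This is precisely condition~(2) of the theorem. The main technical point will be the identification $\pi^{g_i}_{y_i}\circ\Phi_i\simeq\pi_{y'_i}|_{S_k\cap\Gamma_{y'_i}}$ in the previous paragraph: one must verify carefully that $\Gamma$-invariance of $\Omega$ genuinely intertwines the ``twisted'' representation $\pi^{g_i}_{y_i}$ at $y_i$ with the natural representation $\pi_{y'_i}$ at the shifted base point, so that the auxiliary elements $g_i$ from Corollary~\ref{cor:main-lf} can be absorbed into a new sequence of points in $Y$ converging to $x$.
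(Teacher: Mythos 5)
Your proposal is correct and follows the paper's own route: the paper proves this theorem precisely by citing \cite{IW} for surjectivity and Corollary~\ref{cor:main-lf} for the topology, and your write-up supplies exactly the routine translation (bisections of the form $\{g\}\times U_g$, absorption of the twisting element $g_i$ via $\Gamma$-invariance of $\Omega$, and splitting the direct sum $\pi_{y_n}$ into its summands $\pi_{J_n}$) that the paper leaves implicit. The one point you flag as delicate is harmless, since non-disjointness from a finite-dimensional irreducible of a finite group is a weak-containment condition and thus depends only on kernels, so weak equivalence of $\pi_{y_i}\circ\Ad(h_i^{-1})$ and $\pi_{y_i'}$ suffices.
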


\bp
Surjectivity of $\Ind$ follows again from~\cite{IW}, while the description of the topology follows from Corollary~\ref{cor:main-lf}.
\ep

\begin{remark}
Theorem~\ref{thm:main-prim-lf} implies that condition (2) there is independent of the choice of~$(S_k)_k$. It is instructive to see how this works. So assume $(S_k)_k$ is fixed, condition (2) is satisfied for this particular choice and take a finite subgroup $S\subset\Gxx$ and an irreducible representation $\eta\le\pi_J|_S$, that is, $\eta$ is equivalent to a subrepresentation of $\pi_J|_{S}$. Let $k$ be large enough so that $S\subset S_k$. By decomposing $\pi_J|_{S_k}$ into irreducible representations we can find an irreducible representation $\rho\le\pi_J|_{S_k}$ such that $\eta\le\rho|_S$. Without loss of generality we may assume that $\eta$ is literally a subrepresentation of $\rho|_S$. Let $(y_n,J_n)$ be the points given by~(2) for $\rho\le\pi_J|_{S_k}$. Take a nonzero intertwiner $T_n\in\operatorname{Mor}(\rho|_{S_k\cap\Gamma_{y_n}},\pi_{J_n}|_{S_k\cap\Gamma_{y_n}})$ and a nonzero vector $\xi\in H_\eta\subset H_\rho$. By irreducibility of $\rho$ we can find $s_n\in S_k$ such that $T_n\rho(s_n^{-1})\xi\ne0$. Then $T_n\rho(s_n^{-1})|_{H_\eta}$ is a nonzero intertwiner between $\eta|_{S\cap s_n\Gamma_{y_n}s_n^{-1}}$ and $\pi_{J_n}^{s_n}|_{S\cap s_n\Gamma_{y_n}s_n^{-1}}$. Since $y_n\to x$ and the group $S_k$ is finite and stabilizes~$x$, we also have $s_ny_n\to x$. Therefore we see that (2) holds for $(S,\eta)$ in place of~$(S_k,\rho)$, with $(y_n,J_n)$ replaced by $s_n(y_n,J_n)=(s_ny_n,(\Ad s_n)(J_n))$. \ee
\end{remark}

If every stabilizer $\Gamma_x$ is finite or, more generally, of type I, then $\Prim C^*(\Gamma_x)$ can be identified with the set $\widehat\Gamma_x$ of equivalence classes of irreducible unitary representations of $\Gamma_x$. Then instead of $\Stab(\Gamma\ltimes X)^\prim$ we can consider the space $\Stab(\Gamma\ltimes X)\dach$ consisting of the pairs $(x,[\rho])$, where~$[\rho]$ is the unitary equivalence class of an irreducible unitary representation $\rho$ of $\Gamma_x$. We note that in \cite{EE} this space is denoted simply by $\Stab(X)\dach$.

To formulate the next result, let us recall the following notion. Given a topological space~$Y$, its $T_0$-ization, or the Kolmogorov quotient, $Y^\sim$ is defined by identifying points of $Y$ that have identical closures. It is easy to show (see, e.g., \cite{CN3}*{Corollary~1.6}) that if a group $\Gamma$ acts by homeomorphisms on a topological space $Y$, then $(\Gamma\backslash Y)^\sim$ is the quotient of $Y$ such that two points $y_1$ and $y_2$ have identical images if and only if $\overline{\Gamma y_1}=\overline{\Gamma y_2}$. In other words, $(\Gamma\backslash Y)^\sim$ is the space of quasi-orbits of the action.

\begin{cor}\label{cor:EE}
Assume $\Gamma\curvearrowright X$ is an amenable action of a countable group on a second countable Hausdorff locally compact space $X$ such that, for every $x\in X$, the stabilizer $\Gamma_x$ is finite and $\Gamma_y\subset\Gamma_x$ for all $y$ in a neighbourhood $U_x$ of $x$. Then there is a topology on $\Stab(\Gamma\ltimes X)\dach$ with a base of open neighbourhoods of $(x,[\rho])$ consisting of the sets
$$
\{(y,[\eta])\in\Stab(\Gamma\ltimes X)\dach: y\in U,\ \eta\le\rho|_{\Gamma_y}\},
$$
where $U$ runs through a base of open neighbourhoods of $x$ contained in $U_x$, and the induction map defines a homeomorphism
$(\Gamma\backslash\Stab(\Gamma\ltimes X)\dach)^\sim\cong\Prim (\Gamma\ltimes C_0(X))$.
\end{cor}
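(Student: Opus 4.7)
The plan is to apply Theorem~\ref{thm:main-prim-lf}, observing that the local-inclusion hypothesis $\Gamma_y\subset\Gamma_x$ dramatically simplifies its condition~(2), and then to use this simplified form to simultaneously verify that the proposed sets are a basis and that the induced map on the quasi-orbit space is a homeomorphism.

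First I would simplify condition~(2) of Theorem~\ref{thm:main-prim-lf}. Since $\Gamma_x$ is finite, one can take $S_k:=\Gamma_x$ for all $k$ large, and~$\pi_J$ is a finite dimensional irreducible representation of~$\Gamma_x$, which we call $\rho$. The only irreducible subrepresentation of $\rho|_{\Gamma_x}=\rho$ is $\rho$ itself. For any sequence $y_n\to x$, eventually $y_n\in U_x$, hence $\Gamma_x\cap\Gamma_{y_n}=\Gamma_{y_n}$. Nondisjointness of $\rho|_{\Gamma_{y_n}}$ and $\pi_{J_n}|_{\Gamma_{y_n}}=\pi_{J_n}$, combined with irreducibility of~$\pi_{J_n}$, becomes $\pi_{J_n}\le\rho|_{\Gamma_{y_n}}$. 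So condition~(2) reads: there exists $(y_n,[\eta_n])\in\Omega$ with $y_n\to x$ and $\eta_n\le\rho|_{\Gamma_{y_n}}$.

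Next I would verify that the proposed collection forms a basis. Suppose $(y,[\eta])$ lies in the intersection of basic neighbourhoods of $(x_i,[\rho_i])$ defined by $U_i\subset U_{x_i}$ for $i=1,2$. Choose an open $V$ with $y\in V\subset U_1\cap U_2\cap U_y$. I claim the basic neighbourhood of $(y,[\eta])$ defined by~$V$ is contained in both. Indeed, if $(z,[\sigma])$ lies in it, then $z\in V\subset U_y$, so $\Gamma_z\subset\Gamma_y$, and from $\eta\le\rho_i|_{\Gamma_y}$ we obtain $\eta|_{\Gamma_z}\le\rho_i|_{\Gamma_z}$; since $\sigma\le\eta|_{\Gamma_z}$, we conclude $\sigma\le\rho_i|_{\Gamma_z}$, giving the inclusion. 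The same computation also shows a key property: closure of an arbitrary $\Omega\subset\Stab(\Gamma\ltimes X)\dach$ in the proposed topology consists exactly of those $(x,[\rho])$ such that, for every neighbourhood $U\subset U_x$ of $x$, there is $(y,[\eta])\in\Omega$ with $y\in U$ and $\eta\le\rho|_{\Gamma_y}$; second countability of~$X$ lets us rephrase this sequentially, matching the simplified condition~(2).

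Finally, by the Ionescu--Williams theorem the induction map $\Ind\colon\Stab(\Gamma\ltimes X)\dach\to\Prim(\Gamma\ltimes C_0(X))$ is surjective, and it is constant on $\Gamma$-orbits, so it factors through the orbit space, and further through the $T_0$-ization since $\Prim$ is~$T_0$. The two previous paragraphs together give the closure correspondence: for $\Gamma$-invariant $\Omega\subset\Stab(\Gamma\ltimes X)\dach$, a point $(x,[\rho])$ lies in the closure of~$\Omega$ in the proposed topology if and only if $\Ind(x,[\rho])\in\overline{\Ind\Omega}$. Applying this with $\Omega$ a single $\Gamma$-orbit shows that $\Ind(x_1,[\rho_1])=\Ind(x_2,[\rho_2])$ if and only if the two points have the same quasi-orbit, so the induced map on $(\Gamma\backslash\Stab(\Gamma\ltimes X)\dach)^\sim$ is a bijection. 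Since closed sets in the quotient correspond to $\Gamma$-invariant closed sets upstairs, the closure correspondence upgrades this to a homeomorphism.

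The only subtle point is checking that the topology on $\Stab(\Gamma\ltimes X)\dach$ really is characterized sequentially in the way the simplified condition~(2) requires; this uses second countability of~$X$ in an essential way to extract a countable base of basic neighbourhoods at each point.
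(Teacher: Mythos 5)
Your proposal is correct and follows essentially the same route as the paper: one checks that the proposed sets form a base (the paper cites \cite{EE}*{Lemma~4.3 and Remark~4.4} for this), observes that under the hypothesis $\Gamma_y\subset\Gamma_x$ near $x$ condition~(2) of Theorem~\ref{thm:main-prim-lf} becomes exactly the statement that $(x,[\rho])$ lies in the closure of $\Omega$ in this topology, and then passes to the quasi-orbit space (you re-derive what the paper gets from \cite{CN3}*{Lemma~1.4}). The only point you leave implicit is that $\Gamma$ acts by homeomorphisms on $\Stab(\Gamma\ltimes X)\dach$, which is needed when you identify closed sets in the quotient with closed invariant sets upstairs, but this is the same ``straightforward'' verification the paper also only asserts.
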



\bp
It is not difficult to check that the topology on $\Stab(\Gamma\ltimes X)\dach$ is well-defined, see \cite{EE}*{Lemma~4.3 and Remark~4.4}. It is also straightforward to check that the action of $\Gamma$ on $\Stab(\Gamma\ltimes X)\dach$ is continuous. Identifying $\Stab(\Gamma\ltimes X)\dach$ with $\Stab(\Gamma\ltimes X)^\prim$, we then see from Theorem~\ref{thm:main-prim-lf} that the map $\Ind\colon\Stab(\Gamma\ltimes X)\dach\to \Prim (\Gamma\ltimes C_0(X))$ is surjective and, for any $\Gamma$-invariant set $\Omega\subset \Stab(\Gamma\ltimes X)\dach$ and any $(x,[\rho])\in\Stab(\Gamma\ltimes X)\dach$, we have $\Ind(x,[\rho])\in\overline{\Ind\Omega}$ if and only if $(x,[\rho])\in\bar\Omega$. This implies that the map $\Ind\colon\Stab(\Gamma\ltimes X)\dach\to \Prim (\Gamma\ltimes C_0(X))$ is continuous, and then by \cite{CN3}*{Lemma~1.4} this map induces a homeomorphism $(\Gamma\backslash\Stab(\Gamma\ltimes X)\dach)^\sim\cong\Prim (\Gamma\ltimes C_0(X))$.
\ep

If we are given a proper action $\Gamma\curvearrowright X$, then this corollary applies,
$$
(\Gamma\backslash\Stab(\Gamma\ltimes X)\dach)^\sim=\Gamma\backslash\Stab(\Gamma\ltimes X)\dach,
$$
and we recover \cite{EE}*{Theorem~4.6} for actions of discrete groups. To be more precise, formally we also need countability of $\Gamma$ and second countability of~$X$, but we used these assumptions only to talk about sequences instead of nets and to guarantee that $\Ind\colon\Stab(\Gamma\ltimes X)^\prim\to\Prim (\Gamma\ltimes C_0(X))$ is surjective, while it is known that for proper actions the induction map is always surjective, see~\cite{EE}.

\subsection{Further results and open questions}\label{ssec:questions}

The results of Sections~\ref{ssec:main}--\ref{ssec:main-lf} lead to several questions.

\begin{question}
What is the class of isotropy groups for which an analogue of Theorem~\ref{thm:main-prim} holds?
\end{question}

Of course, it is possible to conjecture that Theorem~\ref{thm:main-prim} remains true for all amenable second countable groupoids, but we do not feel that there is sufficient evidence for doing this. Note that the implication (2)$\Rightarrow$(1) always holds, so the question is when (1)$\Rightarrow$(2) is true.

\begin{question}
Is there an algebraic characterization of the classes $\M_0$ and $\M$? Is there at least a characterization of amenable finitely generated groups in the class $\M_0$?
\end{question}

As for Theorem~\ref{thm:main-prim-fc}, as we discussed in Section~\ref{ssec:FC}, it does not hold beyond the FC-hypercentral case. But we can replace condition (2) there by the following one, as in~\cite{MR0409720}*{Conjecture~2}: the closure of $\Omega$ in $\Sub(\G)^\prim$ contains a point $(S,I)$ such that $S\subset\Gxx$ and $\pi_J\prec\Ind^{\Gxx}_S\pi_I$. At this point we do not have any examples showing that this is not possible and our weaker condition~(2) in Theorem~\ref{thm:main-prim} is indeed needed. It seems plausible that such examples can be obtained using the groupoids $S^\fin_\infty\ltimes K^\N$ discussed below.

\begin{question}\label{que:topology}
Is there a topology on $\Stab(\G)^\prim$ such that the maps $\Ind\colon \Stab(\G)^\prim\to\Prim C^*(\G)$ and $\Stab(\G)^\prim\to\Gu$, $(x,J)\mapsto x$, and the action $\G\curvearrowright\Stab(\G)^\prim$ are continuous and, under the assumptions of Theorem~\ref{thm:main-prim}, \ref{thm:main-prim-fc} or \ref{thm:main-prim-lf}, we get a homeomorphism
$(\G\backslash\Stab(\G)^\prim)^\sim\cong\Prim C^*(\G)$?
\end{question}

When $\G$ has abelian isotropy, such a topology is defined in \citelist{\cite{MR4395600}\cite{CN3}}. Namely, for such~$\G$ we can view $\Stab(\G)^\prim=\Stab(\G)\dach$ as a set of pairs $(x,\chi)$, where $x\in\Gu$ and $\chi\colon\Gxx\to\T$ is a character. Fix a point $(x,\chi)$ and, for every $g\in\Gxx$, choose an open bisection~$W_g$ containing~$g$. Then a base of open neighbourhoods of $(x,\chi)$ in $\Stab(\G)\dach$ is given by the sets $\UU_x^\chi(U,\eps,(W_g)_{g\in F})$ defined as follows, where $\eps>0$, $F\subset\Gxx$ is a finite set and $U$ is an open neighbourhood of $x$ in $\Gu$ such that $U\subset\bigcap_{g\in F}r(W_g)$: the set $\UU_x^\chi(U,\eps,(W_g)_{g\in F})$ consists of the points $(y,\eta)$ such that $y\in U$ and for every $g\in F$ we have either  $W_g\cap\G^y_y=\emptyset$, or $W_g\cap\G^y_y=\{h\}$ for some $h$ and $|\chi(g)-\eta(h)|<\eps$.

We can also identify $\Sub(\G)^\prim$ with the set $\Sub(\G)\dach$ of pairs $(S,\chi)$, where $S\subset\IsoG$ is a subgroup and $\chi\colon S\to\T$ is a character. The topologies on $\Stab(\G)\dach$ and $\Sub(\G)\dach$ are related as follows.

\begin{lemma}
Assume $\G$ is a Hausdorff locally compact \'etale groupoid with abelian isotropy groups. Suppose $\Omega\subset\Stab(\G)\dach$ and $(x,\chi)\in\Stab(\G)\dach$. Then the following conditions are equivalent:
\begin{enumerate}
  \item the closure of $\Omega$ in $\Sub(\G)\dach$ contains a point $(S,\eta)$ such that $S\subset\Gxx$ and $\eta=\chi|_S$;
  \item the closure of $\Omega$ in $\Stab(\G)\dach$ contains $(x,\chi)$.
\end{enumerate}
\end{lemma}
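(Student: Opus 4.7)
The plan is to reduce both conditions to convergence of nets of the same data and to observe that, in the abelian case, the topology on $\Sub(\G)\dach$ described via Lemma~\ref{lem:Fell-convergence} simplifies so that it essentially coincides near $(x,\chi)$ with the topology on $\Stab(\G)\dach$ restricted to isotropy-based points.

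First I would make explicit how convergence works in $\Sub(\G)\dach$ under the abelian assumption. For a character $\eta_i$ on $S_i$, the primitive ideal $\ker\eta_i$ is maximal and $C^*(S_i)/\ker\eta_i=\C$, so the unique state on this quotient is $\eta_i$ itself. Consequently, by Remark~\ref{rem:Fell-continuity} (and the observation that having a unique candidate state removes the need to pass to subsubnets on the state side), a net $(S_i,\eta_i)$ converges to $(S,\eta)$ in $\Sub(\G)\dach$ if and only if $S_i\to S$ in $\Sub(\G)$ and $\eta_i(g_i)\to\eta(g)$ for every $g\in S$, where $g_i$ is the (eventually unique) element of $W_g\cap S_i$ provided by Lemma~\ref{lem:subG-convergence}.

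For (1)$\Rightarrow$(2), I would start from a net $(y_i,\eta_i)$ in $\Omega$ with $(\G^{y_i}_{y_i},\eta_i)\to(S,\chi|_S)$ in $\Sub(\G)\dach$. By the simplified criterion just stated, $\G^{y_i}_{y_i}\to S$ and $\eta_i(g_i)\to\chi(g)$ for every $g\in S$. Given a basic neighbourhood $\UU_x^\chi(U,\eps,(W_g)_{g\in F})$ of $(x,\chi)$, Lemma~\ref{lem:subG-convergence} gives $y_i\to x$ and dichotomizes each $g\in F$ as either $g\in S$, in which case $|\chi(g)-\eta_i(g_i)|<\eps$ eventually, or $g\notin S$, in which case $W_g\cap\G^{y_i}_{y_i}=\emptyset$ eventually. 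Thus $(y_i,\eta_i)$ lies in $\UU_x^\chi(U,\eps,(W_g)_{g\in F})$ eventually, proving (2).

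For the converse (2)$\Rightarrow$(1), take a net $(y_i,\eta_i)$ in $\Omega$ converging to $(x,\chi)$ in $\Stab(\G)\dach$. Using compactness of the Fell space $\Sub(\G)$, pass to a subnet so that $\G^{y_i}_{y_i}\to S$ in $\Sub(\G)$ for some $S$. Since $r(\G^{y_i}_{y_i})=y_i\to x$, condition (i) of Lemma~\ref{lem:subG-convergence} forces $r(S)=x$ and $S\subset\Gxx$ (in particular $S$ is nonempty, so we avoid the stray point $\emptyset$ in the compactification of $\Sub(\G)$). For each $g\in S$, testing the convergence of $(y_i,\eta_i)$ to $(x,\chi)$ against the basic neighbourhoods $\UU_x^\chi(U,\eps,(W_g))$ yields $\eta_i(g_i)\to\chi(g)$, where $g_i$ is the eventual unique element of $W_g\cap\G^{y_i}_{y_i}$. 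By the simplified convergence criterion for $\Sub(\G)\dach$, this means $(\G^{y_i}_{y_i},\eta_i)\to(S,\chi|_S)$, establishing (1).

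The only mild subtlety is the first step: verifying that the Fell-topology criterion collapses to the naive pointwise convergence of characters in the abelian case and handling the net/subnet gymnastics of Remark~\ref{rem:Fell-continuity}. Everything else is a straightforward unpacking of the basic-neighbourhood definitions together with Lemma~\ref{lem:subG-convergence}.
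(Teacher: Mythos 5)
Your proof is correct and follows essentially the same route as the paper's: both directions are reduced, via Lemma~\ref{lem:subG-convergence} together with Lemma~\ref{lem:Fell-convergence}/Remark~\ref{rem:Fell-continuity} (which in the abelian case collapse to pointwise convergence of characters along the bisections), to a net/subnet argument in which the isotropy groups of a convergent net are made to converge in $\Sub(\G)$ to a subgroup $S\subset\Gxx$ and the characters to $\chi|_S$. You simply spell out the details that the paper compresses into citations of its earlier convergence lemmas and of the proof of \cite{CN3}*{Lemma~2.3}.
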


\bp
The implication (1)$\Rightarrow$(2) follows from the description of convergence in $\Sub(\G)\dach$ given by Lemma~\ref{lem:subG-convergence} and Lemma~\ref{lem:Fell-convergence} in the second countable case and Remark~\ref{rem:Fell-continuity} in general. For the opposite implication, assume $\Omega\ni(x_i,\chi_i)\to(x,\chi)$ in $\Stab(\G)\dach$. For every $g\in\Gxx$, choose an open bisection~$W_g$ containing~$g$. Then it is not difficult to show that by passing to a subnet we may assume that $\G^{x_i}_{x_i}\to S$ for some subgroup $S\subset\Gxx$ and $\chi_i(g_i)\to\chi(g)$ for all $g\in S$, where $g_i$ is the unique element of $W_g\cap\G^{x_i}_{x_i}$ (which exists for $i$ large enough), see the proof of~\cite{CN3}*{Lemma~2.3}. But this means that $(\G^{x_i}_{x_i},\chi_i)\to(S,\chi|_S)$ in $\Sub(\G)\dach$, so (1) holds with $\eta:=\chi|_S$.
\ep

This lemma makes it clear that for groupoids with abelian isotropy groups Theorem~\ref{thm:main-prim-fc} is equivalent to~\cite{CN3}*{Corollary~2.8}.

\smallskip

For more general groupoids, in view of the definition of the sets $\UU_x^\chi(U,\eps,(W_g)_{g\in F})$ and condition (2) in Theorem~\ref{thm:main1}, a natural attempt to define a topology on $\Stab(\G)^\prim$ is to take as a neighbourhood of $(x,J)$ the points $(y,I)$ such that $y$ is close to $x$ and there exists a Hilbert--Schmidt operator $T\colon H_{\pi_J}\to H_{\pi_I}$ satisfying $\|T\|_2=1$ and
$$
\|T\pi_J(g)-\pi_I(h)T\|_2<\eps
$$
for all $g$ lying in a finite set $F\subset\Gxx$ such that $W_g\cap\G^y_y=\{h\}$ for some $h$. But we quickly get into trouble when we start taking intersections of such neighbourhoods for different points, since the composition of Hilbert--Schmidt operators of norm one can be zero. An attempt to use condition~(3) in Theorem~\ref{thm:main1} leads to a similar problem, since it only says that there is a state $\varphi$ on $C^*(\Gxx)$ with a particular property rather than describes a property of an intrinsically defined collection of states. We are therefore inclined to believe that Question~\ref{que:topology} has a negative answer and the desired topology on $\Stab(\G)^\prim$ exists only under some strong additional assumptions as, for example, in~\cite{CN3}*{Corollary~2.8}, Corollary~\ref{cor:EE} or Theorem~\ref{thm:Glimm} below. In particular, it doesn't seem like such a topology exists even for general transformation groupoids with finite stabilizers.

The assumption $\Gamma_y\subset\Gamma_x$ in Corollary~\ref{cor:EE} can be interpreted  as semicontinuity of the stabilizer groups. The situation becomes even better, and neither of our results is needed to describe $\Prim C^*(\G)$, when the isotropy groups $\Gxx$ depend continuously on $x$. In order to see this, let us start with the following observation.

\begin{lemma} \label{lem:iso-continuity}
For any Hausdorff locally compact \'etale groupoid $\G$, the following conditions are equivalent:
\begin{enumerate}
\item the subset $\Stab(\G)^\prim\subset\Sub(\G)^\prim$ is closed;
\item the subset $\Stab(\G)\subset\Sub(\G)$ is closed;
\item the map $\Gu\to\Sub(\G)$, $x\mapsto\Gxx$, is continuous.
\end{enumerate}
\end{lemma}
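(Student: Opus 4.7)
The plan is to reduce all three equivalences to continuity or closedness statements about natural maps between the three spaces $\Gu$, $\Sub(\G)$ and $\Sub(\G)^\prim$. First I would record two continuity facts that are essentially bookkeeping consequences of the Fell topology: the ``unit'' map $r\colon\Sub(\G)\to\Gu$ sending $S\subset\G^x_x$ to $x$ is continuous (by Lemma~\ref{lem:subG-convergence}, since $S_i\to S$ forces $r(S_i)\to r(S)$ via the neighbourhood $W_{r(S)}$ of the unit), and the projection $p\colon\Sub(\G)^\prim\to\Sub(\G)$, $(S,J)\mapsto S$, is continuous (immediate from Lemma~\ref{lem:Fell-convergence} and Remark~\ref{rem:Fell-continuity}). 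Note also that the candidate map $s\colon x\mapsto\Gxx$ is a set-theoretic section of $r$ with image exactly $\Stab(\G)$, and $\Stab(\G)^\prim=p^{-1}(\Stab(\G))$.

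For (3)$\Rightarrow$(2), if $\Gxx_i\to T$ in $\Sub(\G)$, then $x_i=r(\Gxx_i)\to y:=r(T)$ in $\Gu$ by continuity of $r$; applying the assumed continuity of $s$ gives $\Gxx_i\to\G^y_y$, and Hausdorffness of $\Sub(\G)$ forces $T=\G^y_y\in\Stab(\G)$. For (2)$\Rightarrow$(3), take $x_i\to x$ in $\Gu$; since $\Cl(\G)$ is compact and $\Sub(\G)\cup\{\emptyset\}=\overline{\Sub(\G)}$, any subnet of $(\Gxx_i)$ has a further subnet converging to some $T$ in this closure. Because $x_i\in\Gxx_i$ and $x_i\to x$, the Fell criterion puts $x\in T$, so $T\in\Sub(\G)$; then closedness of $\Stab(\G)$ forces $T=\G^{y}_{y}$, and evaluating $r$ gives $y=\lim x_i=x$, hence $T=\Gxx$. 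As every subnet subsubverges to $\Gxx$, the whole net converges to $\Gxx$, proving continuity of $s$.

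For (2)$\Rightarrow$(1), continuity of $p$ together with $\Stab(\G)^\prim=p^{-1}(\Stab(\G))$ gives the result immediately. For (1)$\Rightarrow$(2), the plan is to lift a convergent net in $\Stab(\G)$ to a convergent net in $\Stab(\G)^\prim$ using the canonical choice given by the trivial character. Given $\Gxx_i\to S$ in $\Sub(\G)$, let $\chi_i$ and $\chi$ denote the trivial characters on $\Gxx_i$ and $S$, with kernels $J_i\in\Prim C^*(\Gxx_i)$ and $J\in\Prim C^*(S)$ (both maximal, hence primitive). Then $(\Gxx_i,J_i)\in\Stab(\G)^\prim$, and for any $g\in S$, writing $g_i$ for the unique element of $W_g\cap\Gxx_i$ (which exists eventually by Lemma~\ref{lem:subG-convergence}), one has $\chi_i(g_i)=1=\chi(g)$. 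By Remark~\ref{rem:Fell-continuity} this yields $(\Gxx_i,J_i)\to(S,J)$ in $\Sub(\G)^\prim$, and closedness of $\Stab(\G)^\prim$ forces $(S,J)\in\Stab(\G)^\prim$, i.e.\ $S\in\Stab(\G)$.

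The main obstacle is essentially bookkeeping: one must be careful in the non-second-countable case that the lifting argument uses Remark~\ref{rem:Fell-continuity} rather than Lemma~\ref{lem:Fell-convergence}, but because the trivial character is a canonical choice producing a constant sequence of values $1$, the subnet clause of Remark~\ref{rem:Fell-continuity} is satisfied automatically. No groupoid-specific results beyond the description of the Fell topology (Lemmas~\ref{lem:Sigma-basis}, \ref{lem:subG-convergence}, \ref{lem:Fell-convergence}) and compactness of $\Cl(\G)$ are needed.
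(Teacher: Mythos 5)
Your argument is correct and follows essentially the same route as the paper: the equivalence $(1)\Leftrightarrow(2)$ via continuity of the section $S\mapsto(S,\ker\eps_S)$ (your trivial-character lifting) and of the projection $(S,I)\mapsto S$, and $(2)\Leftrightarrow(3)$ via compactness of $\Cl(\G)$ together with continuity of $S\mapsto r(S)$. The only cosmetic difference is that you verify the continuity of the trivial-character section directly on the given net via Remark~\ref{rem:Fell-continuity}, which is exactly what the paper's one-line citation amounts to.
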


\bp
The implication (1)$\Rightarrow$(2) follows, for example, from continuity of the map $\Sub(\G)\to\Sub(\G)^\prim$, $S\mapsto (S,\ker\eps_S)$. The opposite implication follows from continuity of the map $ \Sub(\G)^\prim\to\Sub(\G)$, $(S,I)\mapsto S$.

Assume (2) holds. In order to prove that then (3) holds, by compactness of $\Cl(\G)$ it suffices to show that if $x_i\to x$ in $\Gu$ and $\G^{x_i}_{x_i}\to S$ in $\Cl(\G)$ for some closed subset $S\subset\G$, then $S=\Gxx$. But this is immediate, since $S$ must be a subgroup of $\Gxx$ and hence $S=\Gxx$ by assumption~(2). The opposite implication (3)$\Rightarrow$(2) follows from continuity of the map $\Sub(\G)\to\Gu$, $S\mapsto r(S)$.
\ep

Lemma~\ref{lem:subG-convergence} implies the following description of points of continuity of the map $x\mapsto\Gxx$.

\begin{lemma}\label{lem:gxx-continuity}
For any Hausdorff locally compact \'etale groupoid $\G$, the map $\Gu\to\Sub(\G)$, $y\mapsto\G^y_y$, is continuous at a point $x\in\Gu$ if and only if $\Gxx=\Iso_x$, where $\Iso\subset\G$ is the interior of the isotropy bundle $\IsoG\subset\G$.
\end{lemma}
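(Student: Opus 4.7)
The plan is to reduce continuity at $x$ to a concrete condition via Lemma~\ref{lem:subG-convergence} and then verify each implication directly. Since $r(\G^y_y)=\{y\}$, the map $y\mapsto\G^y_y$ is continuous at $x$ if and only if, for every net $y_i\to x$ in $\Gu$ and every $g\in\Gxx$, one has $W_g\cap\G^{y_i}_{y_i}\ne\emptyset$ for all $i$ large enough (the choice of open bisections $W_g\ni g$ being immaterial since the topology on $\Sub(\G)$ is intrinsic).

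For the direction $\Gxx=\Iso_x\Rightarrow$ continuity, I would fix $g\in\Gxx=\Iso\cap\Gxx$ and, using that $\Iso$ is open, shrink $W_g$ to an open bisection contained in $\IsoG$; call this $V$. Since $\G$ is étale, $r|_V$ is a homeomorphism onto the open set $r(V)\ni x$. For $y_i\in r(V)$, let $h_i\in V$ be the unique preimage under $r$; then $V\subset\IsoG$ gives $s(h_i)=r(h_i)=y_i$, hence $h_i\in W_g\cap\G^{y_i}_{y_i}$, as required.

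For the converse, I would suppose $g\in\Gxx\setminus\Iso$ and pick any open bisection $W_g\ni g$. Because $\Gu$ is Hausdorff the diagonal in $\Gu\times\Gu$ is closed, so $\IsoG=(s,r)^{-1}(\Delta)$ is closed in $\G$; hence $W_g\setminus\IsoG$ is open in $\G$. The point $g$ must lie in its closure, for otherwise $g$ would admit a $\G$-open neighbourhood inside $\IsoG$, contradicting $g\notin\Iso$. Choose a net (or, in the first countable case, sequence) $h_i\in W_g\setminus\IsoG$ with $h_i\to g$, and set $y_i:=s(h_i)\to s(g)=x$. Now the bisection property kicks in: since $s|_{W_g}$ is injective, $h_i$ is the unique element of $W_g$ with source $y_i$; but $r(h_i)\ne s(h_i)=y_i$, so $W_g\cap\G^{y_i}_{y_i}=\emptyset$ for every $i$. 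This violates the criterion above, so continuity fails at $x$.

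The main obstacle, modest as it is, lies in the converse: one needs the conjunction of Hausdorffness (to ensure $\IsoG$ is closed so that failure of being in $\Iso$ actually forces approximation from $\G\setminus\IsoG$) with the injectivity of $s$ on a bisection (to convert the presence of a single non-isotropy element in $W_g$ over $y_i$ into the full statement $W_g\cap\G^{y_i}_{y_i}=\emptyset$). Once these two ingredients are in place, the bisection-based criterion of Lemma~\ref{lem:subG-convergence} makes both directions essentially formal.
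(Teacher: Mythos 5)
Your proof is correct and follows the same route the paper intends: the paper states the lemma as an immediate consequence of Lemma~\ref{lem:subG-convergence}, and you simply spell out the two implications via that convergence criterion, using an open bisection inside $\operatorname{Iso}(\mathcal G)^{\circ}$ for one direction and approximation of $g\notin\operatorname{Iso}(\mathcal G)^{\circ}$ by non-isotropy elements of $W_g$ (plus injectivity of $s$ on the bisection) for the other. The only cosmetic remark is that closedness of $\IsoG$ (hence Hausdorffness) is not actually needed for your closure argument, since $g\notin\overline{W_g\setminus\IsoG}$ already yields an open neighbourhood of $g$ inside $\IsoG$.
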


\begin{cor}[{cf.~\cite{GR}*{Lemma~1.1}}]
If $\G$ is second countable or, more generally, it can be covered by countably many open bisections, then the set of points of discontinuity of the map $\Gu\to\Sub(\G)$, $x\mapsto\Gxx$, is meager in $\Gu$.
\end{cor}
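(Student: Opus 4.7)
The plan is to apply Lemma~\ref{lem:gxx-continuity}, which identifies the set of discontinuities of $x\mapsto\Gxx$ as
$$
D := \{x\in\Gu : \Iso_x\subsetneq\Gxx\},
$$
since the inclusion $\Iso_x\subset\Gxx$ always holds. Fix a countable cover of $\G$ by open bisections $(W_n)_{n\in\N}$, provided by the hypothesis. If $x\in D$, any witness $g\in\Gxx\setminus\Iso$ lies in some $W_n$, so $D=\bigcup_n D_n$ where $D_n$ consists of those $x$ for which $W_n\cap\Gxx$ is a singleton $\{g\}$ with $g\notin\Iso$. It therefore suffices to show each $D_n$ is nowhere dense in $\Gu$.

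For this, fix $n$, put $U_n:=r(W_n)$ (open in $\Gu$), and let $\sigma_n:=s\circ(r|_{W_n})^{-1}\colon U_n\to\Gu$ be the associated partial homeomorphism. A point $x\in U_n$ satisfies $W_n\cap\Gxx\ne\emptyset$ if and only if $\sigma_n(x)=x$, so $F_n:=\{x\in U_n:\sigma_n(x)=x\}$ is closed in $U_n$ by Hausdorffness of $\Gu$, and via the homeomorphism $r|_{W_n}$ it corresponds to $W_n\cap\IsoG$.

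Next I would observe that for $g\in W_n\cap\IsoG$ one has $g\in\Iso$ if and only if $r(g)\in\operatorname{int}_{U_n}(F_n)$. Indeed, since $W_n$ is open in $\G$, the interior of $W_n\cap\IsoG$ taken in $\G$ agrees with its interior taken in $W_n$; transporting through the homeomorphism $r|_{W_n}\colon W_n\to U_n$ sends this set onto $\operatorname{int}_{U_n}(F_n)$. Combining,
$$
D_n = F_n\setminus\operatorname{int}_{U_n}(F_n)=\partial_{U_n}F_n,
$$
the boundary, taken inside $U_n$, of a closed subset of $U_n$.

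Such a boundary has empty interior in $U_n$, and since $U_n$ is open in $\Gu$ a standard argument promotes this to empty interior in $\Gu$ (any nonempty open $V\subset\overline{D_n}^{\Gu}$ would have to meet $U_n$, contradicting nowhere-denseness of $D_n$ in $U_n$, since $D_n\subset U_n$ forces $V\setminus U_n$ to lie in $\partial U_n$, which has empty interior). Hence each $D_n$ is nowhere dense in $\Gu$, and $D=\bigcup_n D_n$ is meager. The proof is essentially mechanical; the only mild subtlety is the identification of $W_n\cap\Iso$ with $\operatorname{int}_{U_n}(F_n)$, which relies crucially on $W_n$ being an \emph{open} bisection so that interiors inside $\G$ and inside $W_n$ coincide.
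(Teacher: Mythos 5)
Your proof is correct, but it takes a different route from the paper: the paper's own proof is a one-line citation, combining the preceding Lemma~\ref{lem:gxx-continuity} (continuity at $x$ is equivalent to $\Gxx=\Iso_x$) with Lemma~5.2 of~\cite{CN2}, which asserts directly that $\{x\in\Gu:\Gxx\ne\Iso_x\}$ is meager. You instead reprove that fact from scratch: using the countable cover by open bisections $(W_n)_n$ you write the discontinuity set as a union of the sets $D_n$, identify each $W_n\cap\IsoG$ with the fixed-point set $F_n$ of the partial homeomorphism $s\circ(r|_{W_n})^{-1}$ (closed in $r(W_n)$ by Hausdorffness of $\Gu$), observe that openness of $W_n$ makes interiors in $\G$ and in $W_n$ agree so that $D_n$ is exactly the relative boundary of $F_n$, and conclude that each $D_n$ is nowhere dense; your steps, including the promotion of nowhere-density from the open set $U_n$ to $\Gu$, are all valid (the last point is the standard fact that a set nowhere dense in an open subspace is nowhere dense in the ambient space, even if your phrasing via $\partial U_n$ is a bit roundabout). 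What the paper's approach buys is brevity and consistency with its earlier work; what yours buys is a self-contained, elementary argument that makes explicit where the hypothesis of a countable bisection cover and the Hausdorffness of $\Gu$ enter, essentially reconstructing the content of the cited lemma in the \'etale setting.
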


\bp
By \cite{CN2}*{Lemma~5.2} the set of points $x$ such that $\Gxx\ne\Iso_x$ is meager.
\ep

The next theorem essentially goes back to Glimm~\cite{MR0146297}*{Theorem~2.1}, a more general result has been proved by Goehle~\cite{MR2966476}*{Theorem~3.5}. Since the arguments in our \'etale case are particularly transparent and short, we include a complete proof.

\begin{thm}\label{thm:Glimm}
Assume $\G$ is a Hausdorff locally compact \'etale groupoid such that the map $\Gu\to\Sub(\G)$, $x\mapsto\Gxx$, is continuous. Then $\Stab(\G)^\prim$ is a closed subset of $\Sub(\G)^\prim$. Let us equip it with the relative topology. Then the induction map defines a homeomorphism of $(\G\backslash\Stab(\G)^\prim)^\sim$ onto $\Ind(\Stab(\G)^\prim)\subset\Prim C^*(\G)$.
\end{thm}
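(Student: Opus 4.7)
The first assertion follows immediately from the continuity assumption via Lemma~\ref{lem:iso-continuity}: $\Stab(\G)\subset\Sub(\G)$ is closed, and hence its preimage $\Stab(\G)^\prim$ under the continuous projection $\Sub(\G)^\prim\to\Sub(\G)$, $(S,I)\mapsto S$, is closed in $\Sub(\G)^\prim$ as well.

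For the homeomorphism, $\Ind$ is constant on $\G$-orbits by \cite{CN3}*{Lemma~1.1}, so it factors through the $T_0$-ization $(\G\backslash\Stab(\G)^\prim)^\sim$. By a standard quasi-orbit-space characterization (cf.~\cite{CN3}*{Lemma~1.4}), the induced map is a homeomorphism onto $\Ind(\Stab(\G)^\prim)$ provided that, for every $\G$-invariant $\Omega\subset\Stab(\G)^\prim$ and every $(x,J)\in\Stab(\G)^\prim$, one has $(x,J)\in\overline\Omega$ if and only if $\Ind(x,J)\in\overline{\Ind\Omega}$. The forward direction of this equivalence is just continuity of $\Ind$, which I would establish as follows: if $(x_i,J_i)\to(x,J)$ in $\Stab(\G)^\prim$, then continuity of $y\mapsto\G^y_y$ gives $\G^{x_i}_{x_i}\to\Gxx$ in $\Sub(\G)$; the convergence criterion for $\Sub(\G)^\prim$ (Lemma~\ref{lem:Fell-convergence}, Remark~\ref{rem:Fell-continuity}) supplies, for any $\varphi\in\SSS_{\pi_J}(C^*(\Gxx))$ and along a cofinal subnet, states $\varphi_{i_j}\in\SSS_{\pi_{J_{i_j}}}(C^*(\G^{x_{i_j}}_{x_{i_j}}))$ satisfying $\varphi_{i_j}(g_{i_j})\to\varphi(g)$ for every $g\in\Gxx$; and Lemma~\ref{lem:Ind-continuity} then yields $\Ind^\G_{\Gxx}\pi_\varphi\prec\bigoplus_j\Ind^\G_{\G^{x_{i_j}}_{x_{i_j}}}\pi_{\varphi_{i_j}}$, whence $\Ind(x,J)\supset\bigcap_j\Ind(x_{i_j},J_{i_j})$ as $\varphi$ varies.

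The reverse direction is the main obstacle. Assume $\Ind^\G_{\Gxx}\pi_J\prec\bigoplus_{(y,K)\in\Omega}\Ind^\G_{\G^y_y}\pi_K$. Restricting to $C_0(\Gu)$, whose spectrum in an induced representation is the orbit closure of the base point, yields $x\in\overline{r(\Omega)}$, and hence a net $(y_i,K_i)\in\Omega$ with $y_i\to x$ and $\G^{y_i}_{y_i}\to\Gxx$ in $\Sub(\G)$ (by continuity of the isotropy). To conclude $(x,J)\in\overline\Omega$, by Lemma~\ref{lem:Fell-convergence2} it suffices to produce, for each state $\varphi\in\SSS_{\pi_J}(C^*(\Gxx))$, approximating states on $C^*(\G^{y_i}_{y_i})$ lying in the hulls $\hull(\ker\pi_{K_i})$, Fell-compatible with $\varphi$ along the local bisections $W_g$. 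To construct such states I would apply Theorem~\ref{thm:main1} to each pair $(\Gamma,\rho)$ consisting of an amenable subgroup $\Gamma\subset\Gxx$ and an irreducible representation $\rho\prec\pi_J|_\Gamma$; the canonical identification $W_g\cap\G^{y_i}_{y_i}\leftrightarrow g$ furnished by continuous isotropy allows the local data from these separate applications to be diagonalized along a single subnet, and a weak-$^*$ compactness argument extracts the required limiting state.

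The crux of this last step, and the main obstacle, is arranging that the states produced in this manner collectively exhaust $\SSS_{\pi_J}(C^*(\Gxx))$ rather than only a proper subclass corresponding to amenable subrepresentations of $\pi_J$, so that no enlargement of $J$ slips in through the limit. Continuous isotropy is precisely what enables this: it guarantees that the bisections $W_g$ trivialize the isotropy identifications uniformly in $i$, so that states constructed separately for different witnessing pairs $(\Gamma,\rho)$ can be coupled along a common subnet and compared directly as states on the fixed algebra $C^*(\Gxx)$ via a weak-$^*$ limit.
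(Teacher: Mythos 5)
Your first two steps are fine: closedness of $\Stab(\G)^\prim$ is indeed immediate from Lemma~\ref{lem:iso-continuity}, and continuity of $\Ind$ on $\Stab(\G)^\prim$ follows from Lemma~\ref{lem:Ind-continuity} together with Lemma~\ref{lem:Fell-convergence}/Remark~\ref{rem:Fell-continuity}, exactly as in the paper. The problem is the reverse implication, and the route you propose for it cannot work in the stated generality. Theorem~\ref{thm:main1} only applies to pairs $(\Gamma,\rho)$ with $\rho$ an \emph{amenable} irreducible representation weakly contained in $\pi_J|_\Gamma$, and even then it only delivers \emph{some} state on $C^*_\rho(\Gamma)$; to run the argument back (from such states to $(x,J)\in\overline\Omega$ without enlarging $J$) the paper needs strong extra hypotheses -- amenability of $\Gxx$ plus maximality or intersection-of-maximal-ideals conditions on the restricted kernels (Theorem~\ref{thm:main2}, classes $\M_0$, $\M$, FC-hypercentrality). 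Theorem~\ref{thm:Glimm} assumes none of this: the isotropy groups are arbitrary, need not be amenable, and their primitive ideals need not be maximal. The obstacle you yourself flag at the end -- that the states produced exhaust only a subclass of $\SSS_{\pi_J}(C^*(\Gxx))$ tied to amenable subrepresentations -- is not a technicality to be finessed by "coupling along a common subnet"; it is precisely where this strategy collapses (Remark~4.7 of the paper shows the simple description already fails for an amenable non-FC-hypercentral group, so no soft limit argument of this kind can be hypothesis-free).

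The idea the paper uses, and which your proposal is missing, is purely structural and exploits continuity of the isotropy in a different way. By Lemma~\ref{lem:gxx-continuity}, continuity forces $\IsoG$ to be \emph{clopen} in $\G$, and the reduction $\Sigma(\G)|_{\Stab(\G)}$ is topologically isomorphic to $\IsoG$; since $C_c(\IsoG)\subset C_c(\G)$ is a $*$-subalgebra, one obtains a $*$-homomorphism $\rho\colon C^*(\Sigma(\G))\to C^*(\IsoG)\to C^*(\G)$. For any unit $z$ and representation $\pi$ of $\G^z_z$ one has the decomposition
$$
(\Ind^{\G}_{\G^z_z}\pi)\circ\rho\cong\bigoplus_{g\in\G_z/\G^z_z}\Ind^{\Sigma(\G)}_{\G^{r(g)}_{r(g)}}\pi^g .
$$
Composing the hypothesis $\Ind^{\G}_{\Gxx}\pi_J\prec\bigoplus_{(y,I)\in\Omega}\Ind^{\G}_{\G^y_y}\pi_I$ with $\rho$ and using that $\Ind^{\Sigma(\G)}_{\Gxx}\pi_J$ is a subrepresentation of $(\Ind^{\G}_{\Gxx}\pi_J)\circ\rho$, while $\bigoplus_{(y,I)\in\Omega}(\Ind^{\G}_{\G^y_y}\pi_I)\circ\rho$ is quasi-equivalent to $\bigoplus_{(y,I)\in\Omega}\Ind^{\Sigma(\G)}_{\G^y_y}\pi_I$ by $\G$-invariance of $\Omega$, one gets the required weak containment over $C^*(\Sigma(\G))$, i.e.\ $(x,J)\in\overline\Omega$ in $\Stab(\G)^\prim$, with no amenability and no approximate Hilbert--Schmidt machinery at all. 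Your proposal, by contrast, would at best prove a weaker statement under additional hypotheses on the isotropy groups, and as written it leaves the decisive step unestablished.
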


\bp
By our assumptions and Lemma~\ref{lem:iso-continuity}, the subsets $\Stab(\G)\subset\Sub(\G)$ and $\Stab(\G)^\prim\subset\Sub(\G)^\prim$ are closed. Moreover, $\IsoG\subset\G$ is a clopen subset by Lemma~\ref{lem:gxx-continuity} and we have a topological isomorphism of the reduction $\Sigma(\G)|_{\Stab(\G)}$ of $\Sigma(\G)$ onto $\IsoG$ that maps $(S,g)$ into~$g$. This gives rise to a surjective homomorphism $C^*(\Sigma(\G))\to C^*(\IsoG)$, which can be viewed as a homomorphism $\rho\colon C^*(\Sigma(\G))\to C^*(\G)$.

The map $\Ind\colon \Stab(\G)^\prim\to\Prim C^*(\G)$ is continuous by Lemma~\ref{lem:Ind-continuity} and Lemma~\ref{lem:Fell-convergence} in the second countable case and Remark~\ref{rem:Fell-continuity} in general. Hence all we have to show to prove the theorem is that if $\Omega\subset\Stab(\G)^\prim$ is a $\G$-invariant set, $(x,J)\in\Stab(\G)^\prim$ and $\Ind(x,J)\in\overline{\Ind\Omega}$, then $(x,J)$ lies in the closure of $\Omega$ in $\Stab(\G)^\prim$, see~\cite{CN3}*{Lemma~1.4}. In other words, we need to show that if $\Ind^{\G}_{\Gxx}\pi_J\prec\bigoplus_{(y,I)\in\Omega}\Ind^{\G}_{\G^y_y}\pi_I$, then $\Ind^{\Sigma(\G)}_{\Gxx}\pi_J\prec\bigoplus_{(y,I)\in\Omega}\Ind^{\Sigma(\G)}_{\G^y_y}\pi_I$. But this becomes obvious once we compose the representations $\Ind^{\G}_{\Gxx}\pi_J$ and $\Ind^{\G}_{\G^y_y}\pi_I$ with $\rho$ and observe that, given a unit $z\in\Gu$ and a unitary representation $\pi$ of $\G^z_z$, we have a unitary equivalence
$$
(\Ind^{\G}_{\G^z_z}\pi)\circ\rho\cong\bigoplus_{g\in\G_z/\G^z_z}\Ind^{\Sigma(\G)}_{\G^{r(g)}_{r(g)}}\pi^g
$$
mapping $f\in \Ind^{\G}_{\G^z_z}H_\pi$ supported on $g\G^z_z$ into $f(g)\in H_{\pi^g}=H_\pi$. Indeed, the observation implies that $\Ind^{\Sigma(\G)}_{\Gxx}\pi_J$ is a subrepresentation of $(\Ind^{\G}_{\Gxx}\pi_J)\circ\rho$, while $\bigoplus_{(y,I)\in\Omega}(\Ind^{\G}_{\G^y_y}\pi_I)\circ\rho$ is quasi-equivalent to $\bigoplus_{(y,I)\in\Omega}\Ind^{\Sigma(\G)}_{\G^y_y}\pi_I$ by $\G$-invariance of $\Omega$. Hence $\Ind^{\Sigma(\G)}_{\Gxx}\pi_J\prec\bigoplus_{(y,I)\in\Omega}\Ind^{\Sigma(\G)}_{\G^y_y}\pi_I$.
\ep

\begin{cor}\label{cor:Glimm}
If $\G$ is in addition second countable and amenable, then we get a homeomorphism $(\G\backslash\Stab(\G)^\prim)^\sim\cong\Prim C^*(\G)$.
\end{cor}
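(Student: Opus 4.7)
The plan is very short: the corollary is essentially immediate from Theorem~\ref{thm:Glimm} together with surjectivity of the induction map. The only thing that Theorem~\ref{thm:Glimm} does not give us in the general (non-second-countable, non-amenable) setting is that $\Ind(\Stab(\G)^\prim)$ exhausts all of $\Prim C^*(\G)$; once we add second countability and amenability, this is precisely the content of the Ionescu--Williams theorem \cite{IW}, already recalled in Section~\ref{ssec:groupoids}.

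So the proof would proceed as follows. First, I would invoke Theorem~\ref{thm:Glimm}, which requires exactly the continuity of $x \mapsto \Gxx$ that is inherited from the statement. This gives a homeomorphism
\[
(\G\backslash\Stab(\G)^\prim)^\sim \;\xrightarrow{\ \cong\ }\; \Ind(\Stab(\G)^\prim) \;\subset\; \Prim C^*(\G).
\]
Second, since $\G$ is assumed to be second countable and amenable, the Effros--Hahn-type result of Ionescu and Williams applies and tells us that the induction map $\Ind\colon\Stab(\G)^\prim\to\Prim C^*(\G)$ is surjective, i.e.\ $\Ind(\Stab(\G)^\prim)=\Prim C^*(\G)$. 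Combining the two steps gives the homeomorphism claimed in the corollary.

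There is no real obstacle here: the heavy lifting (continuity of $\Ind$, injectivity on the quasi-orbit space, and closedness of $\Stab(\G)^\prim$ in $\Sub(\G)^\prim$ under the continuity hypothesis) was already carried out in Theorem~\ref{thm:Glimm}, and surjectivity is quoted as a black-box input from \cite{IW}. The only thing worth a remark is that $\G$ being second countable is what lets us apply Ionescu--Williams as stated; amenability is what ensures that \emph{every} primitive ideal of $C^*(\G)$ arises via induction from some isotropy group, so that the homeomorphism in Theorem~\ref{thm:Glimm} extends from the image $\Ind(\Stab(\G)^\prim)$ to all of $\Prim C^*(\G)$.
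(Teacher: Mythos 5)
Your proposal is correct and is exactly the intended argument: the paper leaves the corollary without a separate proof because it follows immediately by combining Theorem~\ref{thm:Glimm} (homeomorphism onto $\Ind(\Stab(\G)^\prim)$) with the surjectivity of the induction map for second countable amenable groupoids from~\cite{IW}, which is precisely what you do. Nothing is missing.
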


\begin{example}
Consider the group $S^\fin_\infty$ of all permutations of $\N$ with finite support. Take a compact metric space $K$. Then $S^\fin_\infty$ acts by permutations on $X:=K^\N$. If $K$ is finite, then whenever $g\in S^\fin_\infty$ stabilizes a point $x\in X$, it also stabilizes all points close to $x$. Therefore in this case the isotropy groups depend continuously on $x$ and Corollary~\ref{cor:Glimm} describes the topology on $\Prim (S^\fin_\infty\ltimes C(X))$.

On the other hand, if $K$ has no isolated points, then the points with trivial stabilizers are dense in $X$. The points with nontrivial stabilizers are also dense in $X$, and each of them is then a point of discontinuity of the map $x\mapsto (S^\fin_\infty)_x$. In this case Corollary~\ref{cor:Glimm} no longer applies and the topology on $\Prim (S^\fin_\infty\ltimes C(X))$ is described by Theorem~\ref{thm:main-prim} or~\ref{thm:main-prim-lf}.

Moreover, in this case it is easy to see that if we do equip $\Stab(S^\fin_\infty\ltimes X)^\prim\subset\Sub(S^\fin_\infty\ltimes X)^\prim$ with the relative topology, then the continuous map $\Ind^\sim\colon(S^\fin_\infty\backslash\Stab(S^\fin_\infty\ltimes X)^\prim)^\sim\to\Prim (S^\fin_\infty\ltimes C(X))$ defined by the induction map $\Ind$ is not injective. Indeed, take any point $x\in X$ with dense orbit and trivial stabilizer, that is, a point such that its coordinates $x(n)\in K$ are all different and form a dense subset of $K$. Take also a point $y$ such that $y(1)=y(2)$ and the coordinates $y(n)$ for $n\ge2$ are all different and form a dense subset of $K$. The stabilizer of~$y$ is the symmetric group~$S_2\subset S^\fin_\infty$. The orbit of $y$ is dense in $X$, moreover, for every $z\in X$ we can find $g_n\in S^\fin_\infty$ such that $g_ny\to z$ and $g_nS_2g_n^{-1}\to\{e\}$ in $\Sub(S^\fin_\infty)$. As follows, for example, from the easy implication (2)$\Rightarrow$(1) of Theorem~\ref{thm:main-prim}, we then have that $\Ind(x,0)=\Ind(y,J)=0$ for either of the two primitive ideals~$J$ of $C^*(S_2)$. At the same time the orbit closures of~$(x,0)$ and~$(y,J)$ in $\Stab(S^\fin_\infty\ltimes X)^\prim$ are different, since there is no way of approximating the nontrivial stabilizer of $y$ by the trivial stabilizers of points on the orbit of~$x$. \ee
\end{example}

Finishing this subsection, we note that it should be clear that the proofs of Theorems~\ref{thm:main-prim} and~\ref{thm:main-prim-fc} work for general groupoids $\G$ and points $(x,J)$ such that assumptions of Theorem~\ref{thm:main2} are fulfilled. For future reference we formulate this explicitly as follows.

\begin{thm}
Assume $\G$ is a Hausdorff locally compact \'etale groupoid and $x\in\Gu$ is a unit such that the group $\Gxx$ is amenable. Assume $J\in\Prim C^*(\Gxx)$ is a primitive ideal with the following property: every finite subset of $\Gxx$ is contained in a subgroup $\Gamma\subset\Gxx$ such that $\Res^{\Gxx}_\Gamma J$ is an intersection of maximal ideals in $C^*(\Gamma)$. Then, for any $\G$-invariant subset $\Omega\subset\Stab(\G)^\prim$, the following conditions are equivalent:
\begin{enumerate}
\item $\Ind(x,J)$ lies in the closure of $\Ind\Omega$ in $\Prim C^*(\G)$;
\item $\bigcap_{(S,I)}\Ind^{\Gxx}_S I\subset J$, where the intersection is taken over all points $(S,I)\in\Sub(\G)^\prim$ in the closure of $\Omega$ such that $S\subset\Gxx$.
\end{enumerate}
If $J$ itself is a maximal ideal in $C^*(\Gxx)$, then conditions {\rm (1)--(2)} are equivalent~to
\begin{enumerate}
\item[(3)] the closure of $\Omega$ in $\Sub(\G)^\prim$ contains a point $(S,I)$ such that $S\subset\Gxx$ and $\Res^{\Gxx}_S J\subset I$.
\end{enumerate}
\end{thm}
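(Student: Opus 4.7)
The plan is to deduce this from Theorem~\ref{thm:main2} by imitating the proofs of Theorems~\ref{thm:main-prim} and~\ref{thm:main-prim-fc}, since the hypotheses on $\Gxx$ and $J$ are precisely what those arguments require of the isotropy at $x$. Set $\pi := \pi_J$, let $Y := \{y \in \Gu : (y,I) \in \Omega \text{ for some } I\}$, and define $\pi_y := \bigoplus_{I : (y,I) \in \Omega} \pi_I$ for $y \in Y$. Take as $(\Gamma_k)_{k\in K}$ the family of all subgroups $\Gamma \subset \Gxx$ for which $\Res^{\Gxx}_\Gamma J$ is an intersection of maximal ideals; by assumption this family exhausts the finite subsets of $\Gxx$. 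With these choices, condition~(1) of Theorem~\ref{thm:main2} becomes $\Ind^\G_{\Gxx}\pi_J \prec \bigoplus_{y \in Y} \Ind^\G_{\G^y_y}\pi_y$, which is precisely the statement that $\Ind(x,J) \in \overline{\Ind\Omega}$, using the weak equivalence $\pi_y \sim \bigoplus_{I:(y,I)\in\Omega}\pi_I$ together with the definition of the Jacobson topology.

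For the equivalence $(1)\Leftrightarrow(2)$, I would invoke the equivalence $(1)\Leftrightarrow(3)$ of Theorem~\ref{thm:main2}. That condition produces pairs $(S,\omega)$ with $S \subset \Gxx$ and a state $\omega$ on $C^*(S)$ arising as a limit of states $\omega_i$ on $C^*_{\pi^{g_i}_{y_i}}(\G^{r(g_i)}_{r(g_i)})$ along a net with $r(g_i) \to x$ and $\G^{r(g_i)}_{r(g_i)} \to S$ in $\Sub(\G)$. Using $\G$-invariance of $\Omega$ to replace $(y_i, I')$ by $g_i\cdot(y_i, I') \in \Omega$, and running the argument of Lemma~\ref{lem:Fell-convergence2}(1)$\Rightarrow$(2) with nets instead of sequences (this is the net analogue of Lemma~\ref{lem:Fell-convergence2}, obtained from Remark~\ref{rem:Fell-continuity} and Lemma~\ref{lem:Ind-continuity} applied to $\Sigma(\G)$), one shows that for every $I \in \hull(\ker\pi_\omega)$ the point $(S,I)$ lies in the closure of $\Omega$ in $\Sub(\G)^\prim$. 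Conversely, any $(S,I) \in \overline{\Omega}$ with $S \subset \Gxx$ gives such a pair by picking any $\omega \in \SSS_{\pi_I}(C^*(S))$. Translating Theorem~\ref{thm:main2}(3) through this correspondence gives exactly $\bigcap_{(S,I)} \Ind^{\Gxx}_S I \subset J$, i.e.~condition~(2).

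For the equivalence $(1)\Leftrightarrow(3)$ under the additional assumption that $J$ is maximal, I would use the equivalence $(1)\Leftrightarrow(4)$ of Theorem~\ref{thm:main2} in place of $(1)\Leftrightarrow(3)$. The crucial extra information furnished by (4) is that the single pair $(S,\omega)$ has $\omega$ factoring through $C^*_{\pi_J|_S}(S)$, which is the same as $\ker\pi_\omega \supset \Res^{\Gxx}_S J$. Applying the same net-version of Lemma~\ref{lem:Fell-convergence2}, for any $I \in \hull(\ker\pi_\omega)$ we obtain $(S,I) \in \overline{\Omega}$ with $\Res^{\Gxx}_S J \subset I$, giving condition~(3). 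For the converse, any $(S,I)$ satisfying (3) yields a pair $(S,\omega)$ meeting the demands of~(4) by taking $\omega \in \SSS_{\pi_I}(C^*(S))$, since then $\omega$ factors through $C^*_{\pi_I}(S)$, which is in turn a quotient of $C^*_{\pi_J|_S}(S)$.

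The main obstacle is purely bookkeeping: since second countability is not assumed, one must work throughout with nets, and in particular verify that the $\G$-invariance of $\Omega$ allows the net $(y_i, I_i')$ to be moved to $(r(g_i), (\Ad g_i)(I_i'))$ in such a way that the convergence $(\G^{r(g_i)}_{r(g_i)}, (\Ad g_i)(I_i')) \to (S,I)$ in $\Sub(\G)^\prim$ can be extracted. No new ideas beyond those in Sections~\ref{sec:main} and~\ref{ssec:main} are required, just a careful net-theoretic re-run of the arguments that in the second countable case were phrased sequentially.
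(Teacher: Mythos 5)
Your proposal is correct and is essentially the paper's own argument: the paper gives no separate proof of this theorem, stating only that the proofs of Theorems~\ref{thm:main-prim} and~\ref{thm:main-prim-fc} carry over once the hypotheses of Theorem~\ref{thm:main2} hold, which is exactly what you execute via the equivalences (1)$\Leftrightarrow$(3) and (1)$\Leftrightarrow$(4) there, Lemma~\ref{lem:Ind-continuity}, Remark~\ref{rem:Fell-continuity}, and $\G$-invariance of $\Omega$. The net-theoretic bookkeeping you flag (replacing Lemma~\ref{lem:Fell-convergence2} by its net analogue, since only closure membership in $\Sub(\G)^\prim$ is needed) is precisely the adjustment the paper leaves implicit.
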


The equivalence of (1) and (3) describes in particular the topology on the subset of $\Prim C^*(\G)$ obtained by inducing finite dimensional irreducible representations of amenable isotropy groups.

\subsection{The ideal intersection property}\label{ssec:IIP}
A Hausdorff locally compact \'etale groupoid $\G$ is said to have the \emph{ideal intersection property} if for every nonzero ideal $I\subset C^*_r(\G)$ we have $C_0(\Gu)\cap I\ne 0$. One says that it has the \emph{residual intersection property} if the reduced groupoid $\G_X$ has the ideal intersection property for every closed invariant subset $X\subset\Gu$. A  characterization of these properties in terms of the action of $\G$ on its isotropy bundle has been recently obtained in~\cite{KKLRU}. The case of amenable groupoids, which we are mainly interested in, has been known for quite some time. The goal of this subsection is to explain how the analysis of the ideal structure of groupoid C$^*$-algebras based on the ideal intersection property compares with the approach taken in this paper, along the way clarifying and correcting some statements in the literature.

\smallskip

Assume $\G$ is a Hausdorff locally compact \'etale groupoid. Since this is the only subsection where we explicitly use the reduced C$^*$-algebra of $\G$, let us briefly recall that $C^*_r(\G)$ is defined as the completion of $C_c(\G)$ with respect to the norm
$$
\|f\|_r:=\sup_{x\in\Gu}\|\rho_x(f)\|,
$$
where $\rho_x:=\Ind^\G_{\{x\}}\epsilon_x$ and $\epsilon_x$ is the trivial representation of $\{x\}$. Equivalently, $\rho_x$ can be defined as $\Ind^\G_{\Gxx}\lambda_x$, where $\lambda_x$ is the regular representation of~$\Gxx$. We will omit the subscript $r$ for the norm when it is clear that we work with $C^*_r(\G)$. If $\G$ is amenable, then $C_r^*(\G)=C^*(\G)$.

A groupoid~$\G$ is called \emph{inner exact}, if the sequence
$$
0\to C^*_r(\G_{X^c})\to C^*_r(\G)\to C^*_r(\G_X)\to0
$$
is exact  for every closed invariant subset $X\subset\Gu$. If $\G$ is amenable, then it is inner exact.

A  groupoid $\G$ is called \emph{effective} if $\IsoG^\circ=\Gu$, and it is called \emph{strongly effective} if $\G_X$ is effective for every closed invariant subset $X\subset\Gu$. If $\G$ is second countable or, more generally, it can be covered by countably many open bisections, then effectiveness is equivalent to topological principality, that is, to density of units $x\in\Gu$ with trivial isotropy groups in $\Gu$ (see, e.g., \cite{CN2}*{Lemma~5.2}).

\smallskip

Under suitable amenability assumptions the ideal intersection property is equivalent to effectiveness. One possible rigorous formulation is as follows.

\begin{prop}\label{prop:IIP}
Assume $\G$ is a Hausdorff locally compact \'etale groupoid. Consider the following conditions:
\begin{enumerate}
  \item $\G$ is effective;
  \item $\G$ has the ideal intersection property.
\end{enumerate}
Then $(1)\Rightarrow(2)$. If the set of units $x$ with amenable isotropy groups $\Gxx$ is dense in $\Gu$, then $(2)\Rightarrow(1)$.
\end{prop}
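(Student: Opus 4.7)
For $(1) \Rightarrow (2)$, my tool is the canonical conditional expectation $E\colon C^*_r(\G) \to C_0(\Gu)$ given on $C_c(\G)$ by restriction to $\Gu$. It is faithful on positive elements since $\ev_x \circ E = (\rho_x(\cdot)\delta_x,\delta_x)$ for every $x \in \Gu$ and $\bigoplus_x \rho_x$ is faithful on $C^*_r(\G)$. Given a nonzero ideal $I \subset C^*_r(\G)$ and a positive $a \in I$ with $a \ne 0$, faithfulness yields $E(a) \neq 0$, so I would try to transport some multiple of $E(a)$ back into $I$. The standard device is the following approximation lemma, whose proof requires effectiveness: for every self-adjoint $a \in C_c(\G)$, every open $U \subset \Gu$ where $|E(a)|$ is bounded below, and every $\eps > 0$, there is $h \in C_c(U)^+$ with $\|h\|\le 1$, $hE(a)h \neq 0$, and $\|hah - hE(a)h\| < \eps$ in $C^*_r(\G)$. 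Its proof covers $\supp(a) \setminus \Gu$ by finitely many open bisections $W_i$ with $\overline{W_i} \cap \Gu = \emptyset$ and uses $\IsoG^\circ = \Gu$ together with Hausdorffness to arrange that bilateral compression by a suitable $h$ kills the $W_i$-pieces in operator norm. Once this is established, approximating an arbitrary positive element of $I$ by elements of $C_c(\G)$ and taking limits of $hah \in I$ yields a nonzero element of $I \cap C_0(\Gu)$.

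For the converse $(2) \Rightarrow (1)$, I argue by contrapositive and construct an explicit ideal. Since $\G$ is not effective, there is an open bisection $W \subset \IsoG \setminus \Gu$ (using Hausdorffness to separate a point of $\IsoG^\circ \setminus \Gu$ from $\Gu$). Choose $f \in C_c(W)^+$ with $f \not\equiv 0$ and define $\tilde f \in C_0(\Gu)$ by $\tilde f(y) := f(h_y)$ for $y \in r(W)$, extended by zero, where $h_y$ is the unique element of $W \cap \G^y_y$. For each $x \in \Gu$ with amenable isotropy $\Gxx$, Hulanicki's theorem gives that the trivial representation of $\Gxx$ is weakly contained in $\lambda_x$, so the induced representation $\tilde\rho_x := \Ind^\G_{\Gxx}(\text{trivial})$ is weakly contained in $\rho_x$ and therefore factors through $C^*_r(\G)$. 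A direct computation on $\ell^2([x])$ shows that both $\tilde\rho_x(f)$ and $\tilde\rho_x(\tilde f)$ act as multiplication by the same function $y \mapsto f(h_y)$ (zero outside $[x] \cap r(W)$), hence $f - \tilde f$ lies in $J := \bigcap_{x : \Gxx \text{ amenable}} \ker \tilde\rho_x$. Nonvanishing of $f - \tilde f$ in $C^*_r(\G)$ follows by evaluating at any $x_0 \in r(W)$ with $f(h_{x_0}) > 0$, giving $\rho_{x_0}(f - \tilde f)\delta_{x_0} = f(h_{x_0})(\delta_{h_{x_0}} - \delta_{x_0}) \neq 0$. Finally, $J \cap C_0(\Gu) = 0$ because the amenable-isotropy assumption makes $\bigcup_{x : \Gxx \text{ amenable}} [x]$ dense in $\Gu$, so any $\phi \in J \cap C_0(\Gu)$ vanishes on a dense set. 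The ideal $J$ contradicts the ideal intersection property.

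The main obstacle is the approximation lemma in $(1) \Rightarrow (2)$: producing a single cut-off $h$ whose bilateral compression simultaneously annihilates every off-diagonal bisection piece of $a$ while preserving $hE(a)h$ needs the full strength of $\IsoG^\circ = \Gu$ and is the standard nontrivial content of this direction. The converse is by comparison transparent once one identifies the pair $(f, \tilde f)$ and uses Hulanicki's theorem to make $\tilde\rho_x$ reduced; its only delicate point is the passage from non-effectiveness to the existence of a bisection $W$ contained in $\IsoG \setminus \Gu$, which is where the Hausdorff hypothesis is used again.
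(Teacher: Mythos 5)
Your proof of (2)$\Rightarrow$(1) is essentially the paper's argument: the same pair $(f,\tilde f)$ (the paper's $f_0$), the same induced representations $\Ind^{\G}_{\Gxx}\eps_x$ made to factor through $C^*_r(\G)$ by amenability of the isotropy, the same evaluation $\rho_{x_0}(f-\tilde f)\delta_{x_0}\ne0$, and the same density argument; your computations here are correct (the paper generates an ideal by $f-f_0$ rather than intersecting kernels, which is immaterial). For (1)$\Rightarrow$(2) you take a genuinely different, equally valid route. The paper argues with states: assuming $C_0(\Gu)\cap I=0$, it extends $\ev_x$ to a state $\varphi_x$ on $C^*_r(\G)$ vanishing on $I$ and invokes \cite{CN2}*{Proposition~1.11} (states extending $\ev_x$ factor through the isotropy fiber at $x$) to get $\varphi_x(f)=f(x)$ for all $x$ outside the sets $X_i=\{x\in r(W_i):W_i\cap\Gxx\ne\emptyset\}$, then contradicts $\|E(a)\|=1$ via the faithful expectation. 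You instead use the classical cut-down lemma in the style of Renault/Exel/Sims--Szab\'o--Williams: compress a $C_c$-approximant $f$ of $a\in I$ by $h\in C_c(\Gu)^+$ supported near a point avoiding $\bigcup_i X_i$. Both proofs rest on exactly the same consequence of effectiveness (each $X_i$ is nowhere dense, so good points are dense) together with faithfulness of $E$; the paper's version avoids any bisection-by-bisection operator-norm estimates at the price of citing the isotropy-fiber description of state extensions, while yours is self-contained and standard.

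Two points to tighten. First, in the Hausdorff \'etale case your compression lemma can be arranged with exact equality: choosing a good point $x_0$ and a small neighbourhood $V$ with $r^{-1}(V)\cap s^{-1}(V)\cap\supp(f-E(f))=\emptyset$ (possible since $\supp(f-E(f))$ is compact and $W_i\cap\G^{x_0}_{x_0}=\emptyset$), any $h\in C_c(V)^+$ gives $hfh=hE(f)h$, so no $\eps$-estimate is needed there. Second, the concluding step should not be phrased as ``taking limits of $hah\in I$'': the cut-offs $h$ depend on $\eps$ and these elements need not converge to anything nonzero. The standard finish is: if $C_0(\Gu)\cap I=0$, the quotient map $q\colon C^*_r(\G)\to C^*_r(\G)/I$ is isometric on $C_0(\Gu)$, while $\|q(hE(f)h)\|=\|q(h(f-a)h)\|\le\|f-a\|$ because $hah\in I$; combined with $\|hE(f)h\|\ge\|E(a)\|-2\eps$ this is a contradiction for small $\eps$. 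With that adjustment your argument is complete.
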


This has been proved with various degrees of generality in~\cite{Rbook}*{Proposition~II.4.6}, \cite{MR1088230}*{Theorem~4.1}, \cite{MR1258035}*{Theorems~1 and~2}, \cite{MR2745642}*{Theorem~4.4}, \cite{SSW}*{Theorems~10.2.7 and ~10.3.3}. Since formally~\cites{Rbook,MR2745642,SSW} rely at some point on second countability and \cites{MR1088230,MR1258035} deal only with transformation groupoids, we will give a short proof along the lines of~\cite{MR1258035} for the reader's convenience. A part of this proof will be needed later.

\bp[Proof of Proposition~\ref{prop:IIP}]
Assume $\G$ is effective, but there is a nonzero ideal $I\subset C^*_r(\G)$ such that $C_0(\Gu)\cap I=0$. Take any nonzero positive element $a\in I$. Since the canonical conditional expectation $E\colon C^*_r(\G)\to C_0(\Gu)$ extending the restriction map $C_c(\G)\to C_c(\Gu)$ is faithful, by rescaling $a$ we may assume that $\|E(a)\|=1$. Let $f\in C_c(\G)$ be such that
\begin{equation}\label{eq:approx}
\|a-f\|<\frac{1}{2}.
\end{equation}

We can find finitely many open bisections $W_1,\dots,W_n$ of $\G$ contained in $\G\setminus\Gu$ such that $\supp f\subset\Gu\cup W_1\cup\dots\cup W_n$. For every $i$, the set
$$
X_i:=\{x\in r(W_i): W_i\cap\Gxx\ne\emptyset\}
$$
is closed in the open set $r(W_i)$ and has empty interior by effectiveness of $\G$, hence it is nowhere dense in $\Gu$. It follows that the set $X:=\Gu\setminus\bigcup^n_{i=1}X_i$ is residual, in particular dense, in $\Gu$.

For $x\in X$, let $\ev_x\colon C_0(\Gu)\to\C$ be the state of evaluation at $x$. Consider the state on the C$^*$-subalgebra $C_0(\Gu)+I\subset C^*_r(\G)$ defined as the composition
$$
C_0(\Gu)+I\to(C_0(\Gu)+I)/I\cong C_0(\Gu)/(C_0(\Gu)\cap I)=C_0(\Gu)\xrightarrow{\ev_x}\C,
$$
and then extend it to a state $\varphi_x$ on $C^*_r(\G)$. By construction $\varphi_x$ extends $\ev_x$ and vanishes on~$I$. By~\cite{CN2}*{Proposition~1.11}, any state on $C^*_r(\G)$ extending~$\ev_x$ is the composition of a state on a completion~$C^*_e(\Gxx)$ of the group algebra of~$\Gxx$ with a completely positive contraction $C_r^*(\G)\to C^*_e(\Gxx)$ that extends the restriction map $C_c(\G)\to\C\Gxx$. Since $f|_{\Gxx}$ is supported on~$\{x\}$, it follows that $\varphi_x(f)=f(x)$. As $\varphi_x(a)=0$, inequality~\eqref{eq:approx} implies then that $|f(x)|<1/2$. Since $X$ is dense in $\Gu$, we conclude that $\|f|_{\Gu}\|_\infty\le1/2$, that is, $\|E(f)\|\le1/2$. On the other hand, since~$E$ is a contraction and $\|E(a)\|=1$, we get from~\eqref{eq:approx} that $\|E(f)\|>1/2$. This contradiction proves the implication (1)$\Rightarrow$(2).

\smallskip

For the opposite implication, assume the set $X$ of units with amenable isotropy groups is dense in $\Gu$ and $\G$ is not effective. Then there is an open bisection $W$ of $\G$ contained in $\IsoG\setminus\Gu$. Take any nonzero function $f\in C_c(W)$. Define a function $f_0\in C_c(r(W))$ by $f_0(r(g)):=f(g)$ for $g\in W$. A simple computation shows that $f-f_0$ is contained in the kernel of every representation~$\Ind^{\G}_{\Gxx}\eps_x$, where $\eps_x$ denotes the trivial representation of~$\Gxx$. The representations $\Ind^{\G}_{\Gxx}\eps_x$ for $x\in X$ factor through~$C^*_r(\G)$, since $\eps_x\prec\lambda_x$ by amenability of~$\Gxx$, and $C_0(\Gu)\cap\bigcap_{x\in X}\ker \Ind^{\G}_{\Gxx}\eps_x=0$ by density of $X$. It follows that the nonzero ideal $I\subset C^*_r(\G)$ generated by~$f-f_0$ has the property $C_0(\Gu)\cap I=0$.
\ep

The following result connects effectiveness and the ideal intersection property to properties of the induction map.

\begin{prop}\label{prop:IIP2}
Assume $\G$ is an amenable Hausdorff locally compact \'etale groupoid. Then the following conditions are equivalent:
\begin{enumerate}
  \item $\G$ is strongly effective;
  \item $\G$ has the residual intersection property;
  \item the induction map $\Ind\colon \Stab(\G)^\prim\to\Prim C^*(\G)$ factors through $\Gu$, that is, $\Ind(x,J)$ depends only on $x$, thus giving a map $\Ind_0\colon\Gu \to\Prim C^*(\G)$.
\end{enumerate}
Moreover, if these conditions are satisfied, then $\Ind_0$ defines a homeomorphism $\Ind_0^\sim$ of $(\G\backslash\Gu)^\sim$ onto $\Ind_0(\Gu)\subset\Prim C^*(\G)$ and, for every closed subset $\Omega\subset\Prim C^*(\G)$, the set $\Ind_0(\Gu)\cap\Omega$ is dense in~$\Omega$.
\end{prop}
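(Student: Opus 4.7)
The plan is to prove $(1) \Leftrightarrow (2)$ by applying Proposition~\ref{prop:IIP} to every reduction $\G_X$, and then to close the cycle $(2) \Rightarrow (3) \Rightarrow (1)$ by an ideal-intersection argument in one direction and a concrete separation argument in the other. Since $\G$ is amenable, every reduction $\G_X$ by a closed invariant $X \subset \Gu$ is amenable as well, hence its isotropy groups $(\G_X)^x_x = \Gxx$ are amenable. So Proposition~\ref{prop:IIP} gives that $\G_X$ is effective if and only if it has the ideal intersection property, and letting $X$ range over all closed invariant subsets yields $(1) \Leftrightarrow (2)$.

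For $(2) \Rightarrow (3)$ I will exploit that the residual intersection property forces every ideal $I \subset C^*(\G)$ to be determined by its intersection with $C_0(\Gu)$. More precisely, $I \cap C_0(\Gu) = C_0(U)$ for an open invariant $U \subset \Gu$; the ideal $K$ generated in $C^*(\G)$ by $C_0(U)$ is the kernel of the quotient map $C^*(\G) \to C^*(\G_{\Gu \setminus U})$, so $K \subseteq I$, and then $I/K$ is an ideal of $C^*(\G_{\Gu \setminus U})$ meeting $C_0(\Gu \setminus U)$ trivially, forcing $I = K$ by the ideal intersection property for $\G_{\Gu \setminus U}$. A direct computation shows that the restriction of $\Ind^\G_{\Gxx}\pi_J$ to $C_0(\Gu)$ acts by pointwise multiplication on functions restricted to the orbit $[x]$, giving $\Ind(x,J) \cap C_0(\Gu) = C_0(\Gu \setminus \overline{[x]})$ independently of $J$. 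Thus $\Ind(x,J)$ depends only on $x$.

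The main obstacle is the direction $(3) \Rightarrow (1)$, where I must translate failure of strong effectiveness for some $\G_X$ into dependence of $\Ind(x,J)$ on $J$. Assuming $\G_X$ is not effective, I choose an open (in $\G_X$) bisection $W \subset \IsoG_X \setminus X$ and an element $g \in W$, and set $x := r(g) = s(g)$. Since $g \ne e$ in the discrete group $\Gxx$, semisimplicity of $C^*(\Gxx)$ provides an irreducible unitary representation $\pi$ of $\Gxx$ with $\pi(g) \ne I$. Mimicking the calculation in the proof of the implication $(2) \Rightarrow (1)$ of Proposition~\ref{prop:IIP} inside $\G_X$, I pick $f \in C_c(W)$ with $f(g) \ne 0$ and define $f_0 \in C_c(r(W))$ by $f_0(r(h)) := f(h)$. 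Writing $k^{-1}h = h(h^{-1}k^{-1}h)$ for $h \in \G_x$ and the unique $k \in W \cap \G^{r(h)}$, one finds $(\Ind^{\G_X}_{\Gxx}\eps_x)(f-f_0) = 0$, whereas at the unit $h = x$,
$$
\big((\Ind^{\G_X}_{\Gxx}\pi)(f-f_0)\xi\big)(x) = f(g)(\pi(g) - I)\xi(x),
$$
which is nonzero for a suitable $\xi \in \Ind H_\pi$. Since both induced representations factor $\Ind^\G_{\Gxx}\eps_x$ and $\Ind^\G_{\Gxx}\pi$ through the quotient $C^*(\G) \twoheadrightarrow C^*(\G_X)$, lifting $f - f_0$ to $C^*(\G)$ separates $\Ind(x,\ker\eps_x)$ from $\Ind(x,\ker\pi)$, contradicting~(3).

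Finally, assuming the equivalent conditions hold, surjectivity of $\Ind$ from~\cite{IW} combined with $(3)$ gives that $\Ind_0$ is surjective, making the density statement $\Ind_0(\Gu) \cap \Omega = \Omega$ automatic. Continuity of $\Ind_0$ follows from Lemma~\ref{lem:Ind-continuity} applied to the trivial character of the trivial subgroup $\{x\}$, and the second paragraph shows $\Ind_0(x) = \Ind_0(y)$ iff $\overline{[x]} = \overline{[y]}$, so $\Ind_0$ factors through a continuous injection $\Ind_0^\sim \colon (\G \backslash \Gu)^\sim \to \Prim C^*(\G)$. To see $\Ind_0^\sim$ is a homeomorphism onto its image I will check it is closed: a closed subset of $(\G\backslash\Gu)^\sim$ corresponds to a closed invariant $F \subset \Gu$, and if $\Ind_0(y)$ lies in the closure of $\Ind_0(F)$ then $\ker\rho_y \supset \bigcap_{x \in F}\ker\rho_x$; intersecting with $C_0(\Gu)$ yields $C_0(\Gu \setminus \overline{[y]}) \supseteq C_0(\Gu \setminus F)$, hence $\overline{[y]} \subseteq F$ and $y \in F$.
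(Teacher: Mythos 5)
Your three equivalences and the homeomorphism statement are proved correctly and essentially along the paper's own route: $(1)\Leftrightarrow(2)$ via Proposition~\ref{prop:IIP} applied to reductions, $(2)\Rightarrow(3)$ by showing every ideal is recovered from its intersection with $C_0(\Gu)$ (the paper phrases this as the lattice isomorphism $I\mapsto U(I)$, using inner exactness, which you use implicitly when you identify the ideal generated by $C_0(U)$ with the kernel of $C^*(\G)\to C^*(\G_{\Gu\setminus U})$ -- fine under amenability), and $(3)\Rightarrow(1)$ by the same $f-f_0$ construction; your variant of choosing $g$ first and then an irreducible $\pi$ of $\Gxx$ with $\pi(g)\ne 1$, evaluating at the unit, is a harmless reshuffling of the paper's choice of $\pi$ with $\pi((f-f_0)|_{\Gxx})\ne0$. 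Likewise your closedness argument for $\Ind_0^\sim$ (intersecting with $C_0(\Gu)$ to get $\overline{[y]}\subset F$) matches the paper's use of \cite{CN3}*{Lemma~1.4}.

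There is, however, a genuine gap in the last step. You deduce the density of $\Ind_0(\Gu)\cap\Omega$ in $\Omega$ from surjectivity of $\Ind$ via \cite{IW}, but that theorem requires $\G$ to be \emph{second countable}, and the proposition is stated for arbitrary amenable Hausdorff locally compact \'etale groupoids; indeed, one point of this statement (and of Remark~\ref{rem:4nonAm}) is precisely that it does not take surjectivity of the induction map as an input. Moreover, the claim to be proved is only density of $\Ind_0(\Gu)\cap\Omega$ in $\Omega$, not equality, and it can be obtained without any surjectivity: writing $\Omega=\hull(C^*(\G_U))$ for an open invariant $U$, inner exactness (automatic from amenability) lets you identify $\Omega$ with $\Prim C^*(\G_{U^c})$, the points $\Ind_0(x)$ for $x\in U^c$ correspond to $\ker\rho_x$ computed in $C^*(\G_{U^c})$ (using $\Ind_0(x)=\ker\rho_x$, which follows from your second paragraph), and $\bigcap_{x\in U^c}\ker\rho_x=0$ in $C^*(\G_{U^c})$ because the regular representations define $C^*_r(\G_{U^c})=C^*(\G_{U^c})$ by amenability. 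A related small point: your continuity claim for $\Ind_0$ via Lemma~\ref{lem:Ind-continuity} applied to the trivial subgroup really gives continuity of $x\mapsto\ker\rho_x$, so you should state explicitly that $\Ind_0(x)=\ker\rho_x$ (which does follow from your $(2)\Rightarrow(3)$ argument) before invoking it.
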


\bp
The equivalence of (1) and (2) follows from Proposition~\ref{prop:IIP}.

\smallskip

Assume property (3) holds, but $\G$ is not strongly effective. To arrive at a contradiction we may pass to a closed invariant subset of $\Gu$ and assume that $\G$ is not effective. Construct functions~$f$ and~$f_0$ as in the proof of the implication (2)$\Rightarrow$(1) in the previous proposition. Take $x\in\Gu$ such that $f_0(x)\ne0$ and choose an irreducible unitary representation $\pi$ of $\Gxx$ such that $\pi((f-f_0)|_{\Gxx})\ne0$. Then $f-f_0$ lies in the kernel of $\Ind^{\G}_{\Gxx}\eps_x$, but not in the kernel of $\Ind^\G_{\Gxx}\pi$, since on the subspace $H_\pi\subset\Ind H_\pi$ the operator $(\Ind^\G_{\Gxx}\pi)(f-f_0)$ acts as $\pi((f-f_0)|_{\Gxx})$. This contradicts (3).

\smallskip

Assume now that (2) holds. For every ideal $I\subset C^*(\G)$, denote by $U(I)\subset\Gu$ the open (possibly empty) invariant set such that $C_0(\Gu)\cap I=C_0(U(I))$. The residual intersection property and inner exactness of $\G$ imply that the map $I\mapsto U(I)$ defines an isomorphism of the lattice of ideals in $C^*(\G)$ onto the lattice of open invariant subsets of~$\Gu$, with the inverse map sending an invariant open set $U$ to the ideal $C^*(\G_U)$ generated by $C_0(U)$, see~\cite{Rbook}*{Section~II.4}, \cite{BL}*{Section~3} or~\cite{CN2}*{Remark~4.6}. Since for every $(x,J)\in\Stab(\G)^\prim$ we have $U(\Ind(x,J))=\Gu\setminus\overline{[x]}$, it follows in particular that $\Ind(x,J)$ depends only on $x$, namely, $\Ind(x,J)=C^*(\G_{\overline{[x]}^c})$.

\smallskip

It remains to prove the last part of the proposition. Assume therefore that conditions (1)--(3) are satisfied. Since $U(\ker\rho_x)=\Gu\setminus\overline{[x]}$ for $\rho_x=\Ind^\G_{\{x\}}\epsilon_x$, we have $\Ind_0(x)=\ker\rho_x$. Lemma~\ref{lem:Ind-continuity} implies that the map $x\mapsto\ker\rho_x$ is continuous, hence it induces a continuous map $\Ind_0^\sim\colon (\G\backslash\Gu)^\sim\to\Prim C^*(\G)$.

Next, assume $Y\subset\Gu$ is an invariant subset and $\Ind_0(x)\in\overline{\Ind_0(Y)}$. Then
$$
\Gu\setminus\overline{[x]}=U(\Ind_0(x))\supset U\Big(\bigcap_{y\in Y}\Ind_0(y)\Big)=\Gu\setminus\bar Y,
$$
that is, $x\in\bar Y$. By~\cite{CN3}*{Lemma~1.4} it follows that $\Ind_0^\sim$ is a homeomorphism onto its image.

Finally, we need to show that $\Ind_0(\Gu)\cap\Omega$ is dense in~$\Omega$ for every closed subset $\Omega\subset\Prim C^*(\G)$. Since $\Omega$ is the hull of an ideal $C^*(\G_U)$, by using inner exactness of $\G$ we can pass to~$\G_{U^c}$ and therefore assume that $\Omega=\Prim C^*(\G)$. Then we need to check that $\bigcap_{x\in\Gu}\Ind_0(x)=0$. Since $\Ind_0(x)=\ker\rho_x$ and the representations $\rho_x$ define the reduced C$^*$-algebra of $\G$, amenability of $\G$ implies that $\bigcap_{x\in\Gu}\Ind_0(x)=0$.
\ep

\begin{remark} \label{rem:4nonAm}
A large part of the above proof works under weaker assumptions and shows the following: if an \'etale groupoid $\G$ is inner exact and has the residual intersection property, then we have a well-defined continuous map $\Ind_0\colon\Gu\to\Prim C^*_r(\G)$, $\Ind_0(x):=C^*_r(\G_{\overline{[x]}^c})$, this map defines a homeomorphism of $(\G\backslash\Gu)^\sim$ onto $\Ind_0(\Gu)\subset\Prim C^*_r(\G)$ and, for every closed subset $\Omega\subset\Prim C^*_r(\G)$, the set $\Ind_0(\Gu)\cap\Omega$ is dense in~$\Omega$. This corrects \cite{BL}*{Theorem~3.18}, whose proof works only under some countability assumptions. Similarly, Corollary~\ref{cor:BL} below remains true in the form of a homeomorphism $(\G\backslash\Gu)^\sim\cong\Prim C^*_r(\G)$ if instead of amenability and strong effectiveness we assume that the groupoid is inner exact and has the residual intersection property, cf.~\cite{BL}*{Corollary~3.19}. \ee
\end{remark}

As we used in the proof of Proposition~\ref{prop:IIP2}, the lattice of ideals in $C^*(\G)$ is isomorphic to the lattice of open invariant subsets of~$\Gu$. The latter lattice is, in turn, isomorphic to the lattice of open subsets of $(\G\backslash\Gu)^\sim$ (see~\cite{CN3}*{Corollary~1.5}). The last part of the proposition says a bit more and under suitable countability assumptions gives a complete description of $\Prim C^*(\G)$.

\begin{cor}[{cf.~\cite{EH}*{Corollary~5.16},\cite{Rbook}*{Proposition~II.4.6},\cite{SW}*{Lemma~4.6}}]\label{cor:BL}
Assume $\G$ is a strongly effective amenable Hausdorff locally compact \'etale groupoid with a second countable unit space~$\Gu$. Then the map
$
\Ind_0^\sim\colon (\G\backslash\Gu)^\sim\to\Prim C^*(\G)
$
is a homeomorphism.
\end{cor}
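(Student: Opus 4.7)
By Proposition~\ref{prop:IIP2}, $\Ind_0^\sim$ is already a homeomorphism of $(\G\backslash\Gu)^\sim$ onto its image in $\Prim C^*(\G)$, so the whole burden is to show that $\Ind_0\colon\Gu\to\Prim C^*(\G)$ is surjective. My plan is to fix $P\in\Prim C^*(\G)$ and produce $x\in\Gu$ with $\Ind_0(x)=P$ by a Baire category argument carried out inside the closed invariant subset $Z:=\Gu\setminus U(P)$.

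First I would exploit the lattice isomorphism $I\leftrightarrow U(I)$ between ideals of $C^*(\G)$ and open $\G$-invariant subsets of $\Gu$ (from the proof of Proposition~\ref{prop:IIP2}, with inverse $U\mapsto C^*(\G_U)$). Composed with $I\mapsto\hull(I)$, this gives a bijection between closed $\G$-invariant subsets of $\Gu$ and closed subsets of $\Prim C^*(\G)$ which, using $(I_1\cap I_2)\cap C_0(\Gu)=C_0(U(I_1)\cap U(I_2))$ and primeness of primitive ideals, sends finite unions to finite unions. Hence $P=C^*(\G_{\Gu\setminus Z})$, and the irreducibility of $\{P\}^-$ translates into: $Z$ is not the union of two proper closed $\G$-invariant subsets. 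Noting that the closure of a $\G$-invariant subset of $\Gu$ is again $\G$-invariant (a direct check using open bisections of $\G$), this in turn forces every nonempty $\G$-invariant open subset of $Z$ to be dense in $Z$.

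Next comes the Baire step. As a closed subspace of the second countable locally compact Hausdorff space $\Gu$, the space $Z$ is itself second countable, locally compact, Hausdorff, and hence Baire. Choose a countable basis $(U_n)_{n\ge1}$ for the topology on $Z$ consisting of nonempty sets, and write $U_n=W_n\cap Z$ with $W_n\subset\Gu$ open. The saturations $V_n:=r(s^{-1}(W_n))$ are open and $\G$-invariant by \'etaleness, and since $W_n\subset V_n$ we have $V_n\cap Z\supset U_n\ne\emptyset$; so each $V_n\cap Z$ is a nonempty $\G$-invariant open subset of $Z$, hence dense in $Z$ by the previous paragraph. Baire then produces $x\in\bigcap_n(V_n\cap Z)$; unpacking $x\in V_n$ gives some $g\in\G_x$ with $r(g)\in W_n\cap Z=U_n$, so $[x]\cap U_n\ne\emptyset$ for every $n$. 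Therefore $\overline{[x]}=Z$ and
$$
\Ind_0(x)=C^*(\G_{\overline{[x]}^c})=C^*(\G_{\Gu\setminus Z})=P.
$$

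The step I expect to be most delicate is the translation in the second paragraph: irreducibility of $\{P\}^-$ in the Jacobson topology a priori only yields the invariant-irreducibility of $Z$, and one must additionally verify that closures of $\G$-invariant subsets are $\G$-invariant in order to upgrade this to the "every nonempty invariant open is dense" statement that the Baire argument consumes. Once that observation is in place, the rest of the argument runs smoothly, and it is worth noting that second countability of $\G$ itself is never used, only that of the unit space $\Gu$.
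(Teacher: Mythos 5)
Your argument is correct, and its skeleton is the same as the paper's: reduce via Proposition~\ref{prop:IIP2} to surjectivity of $\Ind_0$, identify the relevant closed invariant set (your $Z=\Gu\setminus U(P)$ is exactly the paper's $\Ind_0^{-1}(\hull(P))$), show it is irreducible with respect to closed invariant subsets, and find a dense orbit in it. The differences are in the two intermediate steps. For irreducibility, the paper uses the density of $\Ind_0(\Gu)$ in every closed subset of $\Prim C^*(\G)$ (the last assertion of Proposition~\ref{prop:IIP2}) to pull irreducibility of $\hull(P)$ back along $\Ind_0$; you instead bypass that density statement and argue through the lattice isomorphism $I\leftrightarrow U(I)$ together with primeness of $P$, which is equally valid under the corollary's hypotheses (the lattice isomorphism is available exactly because amenability gives inner exactness and strong effectiveness gives the residual intersection property). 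For the dense orbit, the paper simply cites \cite{CN2}*{Lemma~6.2}, whereas you reprove it inline by saturating a countable basis and applying Baire in the closed subspace $Z$; your saturation-and-Baire argument is sound (openness of $r$ for \'etale groupoids gives openness of the saturations, and invariance of closures of invariant sets upgrades invariant-irreducibility to density of nonempty invariant open subsets of $Z$). So your proof is a self-contained variant: it trades the paper's reliance on the density-of-image clause and the cited orbit lemma for a direct ideal-lattice/primeness translation and an explicit Baire argument, at the cost of being somewhat longer; your closing observation that only second countability of $\Gu$ (not of $\G$) is used matches the paper's hypotheses. The only cosmetic omission is the remark that $Z\ne\emptyset$ because $P$ is a proper ideal, which is needed before invoking Baire.
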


\bp
We need only to check that $\Ind_0$ is surjective. Take a point $I\in\Prim C^*(\G)$. Then its closure $\hull(I)$ is an irreducible closed set, meaning that it is not the union of two proper closed subsets. By the density property of the image of $\Ind_0$, the preimage $X:=\Ind_0^{-1}(\hull(I))$ is an irreducible closed invariant subset of $\Gu$, meaning now that it is not the union of two proper closed invariant subsets. Then $X=\overline{[x]}$ for some $x\in\Gu$ by second countability of~$\Gu$, see, e.g.,~\cite{CN2}*{Lemma~6.2}. It follows that $\Ind_0(x)$ is dense in $\hull(I)$, hence $I=\Ind_0(x)$.
\ep

We have so far used in this subsection an approach to the ideal structure of $C^*(\G)$ inspired by the literature on the ideal intersection property. This approach is largely independent of other results of the paper and, from our perspective, has two main advantages. First, it is known to be applicable beyond the amenable case (see Remark~\ref{rem:4nonAm} and \cite{Rbook,MR2775364,BL,KKLRU}), and second, it does not need surjectivity of $\Ind\colon\Stab(\G)^\prim\to\Prim C^*(\G)$ as a starting point.

Let us now argue that in the second countable case, when we do know that the induction map $\Ind\colon\Stab(\G)^\prim\to\Prim C^*(\G)$ is surjective for amenable groupoids~\cite{IW}, Corollary~\ref{cor:BL} can easily be deduced from the results of the present paper. Indeed, we need to show that if we are given a $\G$-invariant set $\Omega\subset\Stab(\G)^\prim$ and a point $(x,J)\in\Stab(\G)^\prim$, then $\Ind(x,J)\in\overline{\Ind \Omega}$ if and only if $x\in\bar Y$, where $Y$ is the image of $\Omega$ under the projection $\Stab(\G)^\prim\to\Gu$.
By looking at the intersections of ideals with $C_0(\Gu)$, we immediately get the ``only if''  part. Conversely, assume $x\in\bar Y$. Since the set of units $y\in\bar Y$ with trivial isotropy groups is dense in~$\bar Y$, we can find $y_n\in Y$ such that $y_n\to x$ and $\G^{y_n}_{y_n}\to\{x\}$ in $\Sub(\G)$. Take any $J_n\in\Prim C^*(\G^{y_n}_{y_n})$ such that $(y_n,J_n)\in\Omega$. Then $(\G^{y_n}_{y_n},J_n)\to(\{x\},0)$ in $\Sub(\G)^\prim$. This shows that condition~(2) in Theorem~\ref{thm:main-prim} is satisfied. Since the implication (2)$\Rightarrow$(1) in this theorem relies only on Lemma~\ref{lem:Ind-continuity} and does not need any assumptions on the isotropy groups, we conclude that $\Ind(x,J)\in\overline{\Ind \Omega}$.

We note in passing that, with minor modifications in the non-second-countable case, this argument proves also the implication (1)$\Rightarrow$(3) in Proposition~\ref{prop:IIP2} independently of the ideal intersection property.

\bigskip

\section{The primitive spectrum of \texorpdfstring{$\SL_3(\Z)\ltimes C_0(\SL_3(\R)/U_3(\R))$}{SL3}}\label{sec:SL3}

\subsection{Transformation groupoids defined by unipotent flows}\label{ssec:unipotent}

A large class of groupoids to which results of Section~\ref{sec:primitive} apply can be obtained by considering group actions on homogeneous spaces of locally compact groups. Namely, assume $G$ is a second countable locally compact group, $U\subset G$ is a closed amenable subgroup and $\Gamma\subset G$ is a discrete subgroup. The action $\Gamma\curvearrowright G/U$ by left translations is amenable, see \cite{MR1799683}*{Theorem 2.2.17 and Example 2.2.16}. The stabilizer of $gU\in G/U$ is $\Gamma\cap gUg^{-1}$. If these stabilizers happen to be in the class $\M$ introduced in Definition~\ref{def:M}, FC-hypercentral or locally finite, then the primitive spectrum of $\Gamma\ltimes C_0(G/U)$ is described by Theorem~\ref{thm:main-prim},~\ref{thm:main-prim-fc} or~\ref{thm:main-prim-lf}.

For this to happen we may impose additional conditions on $\Gamma$. For example, as we discussed in Sections~\ref{ssec:main} and~\ref{ssec:FC}, we may require $\Gamma$ to be relatively hyperbolic with respect to a family of subgroups that are either FC-hypercentral or of local polynomial growth. Such groups $\Gamma$ include free groups and lattices in semisimple Lie groups of real rank one. We note in passing that every connected Lie group $G$ such that $G/\operatorname{Rad}G$ is noncompact contains discrete free subgroups, since $\operatorname{PSL}_2(\R)$ is a subquotient of $G$ (because every noncompact real semisimple Lie algebra contains a copy of $\mathfrak{sl}_2(\R)$) and $\operatorname{PSL}_2(\Z)$ has plenty of free subgroups.

Another possibility is to take arbitrary discrete $\Gamma\subset G$ and nilpotent~$U$. Then the stabilizers are nilpotent as well and the primitive spectrum is described by Theorem~\ref{thm:main-prim-fc}. For example, we can take $G=\SL_n(\R)$ and $U$ to be the group $U_n(\R)$ of unipotent upper triangular matrices, that is, upper triangular matrices with ones on the diagonal.

For $n=2$ we have $U_2(\R)\cong\R$, so in this case, for any discrete subgroup $\Gamma\subset\SL_2(\R)$, the stabilizers of the action $\Gamma\curvearrowright G/U$ are either trivial or isomorphic to~$\Z$, and we can apply \cite{CN3}*{Corollary~2.8}. Note that neither of our results is needed if the action $\Gamma\curvearrowright G/U$ happens to be minimal, as then $\Gamma\ltimes C_0(G/U)$ is simple. By a classical theorem of Hedlund~\cite{MR1545946}, see also~\cite{MR0393339}, this is in particular the case when $\Gamma\subset\SL_2(\R)$ is a uniform lattice. We remind that a lattice is called uniform or cocompact, if $\Gamma\backslash G$ is compact.

Therefore the smallest $n$ for which we can get noncommutative stabilizers for an action $\Gamma\curvearrowright \SL_n(\R)/U_n(\R)$ is $n=3$. Our goal is to give a complete description of the primitive spectrum of $\SL_3(\Z)\ltimes C_0(\SL_3(\R)/U_3(\R))$. Along the way we will obtain everything that is needed to describe $\Prim(\SL_2(\Z)\ltimes C_0(\SL_2(\R)/U_2(\R)))$ as well, but we leave it to the interested reader to formulate and prove the final result in this case.

\smallskip

The key result making our analysis possible is the following fundamental theorem of Ratner.

\begin{thm}[{\cite{MR1106945}*{Theorem A}}]\label{thm:ratner}
Assume $G$ is a connected Lie group, $\Gamma\subset G$ is a lattice and $U\subset G$ is an $\Ad$-unipotent subgroup. Then, for every $g\in G$, the closure of the $U$-orbit of $\Gamma g$ in $\Gamma\backslash G$ has the form $\Gamma gH$, where $H\subset G$ is a closed subgroup such that $U\subset H$ and $g^{-1}\Gamma g\cap H$ is a lattice in $H$.
\end{thm}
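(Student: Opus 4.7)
The statement is Ratner's celebrated orbit closure theorem for unipotent flows, so any proposal I write is really an outline of Ratner's strategy \cite{MR1106945}. The plan is the by-now standard two-step path: first prove measure rigidity (every $U$-invariant ergodic probability measure on $\Gamma\backslash G$ is algebraic, i.e.\ the normalized Haar measure on a closed orbit $\Gamma g H$ of some closed subgroup $H\supset U$ with $g^{-1}\Gamma g\cap H$ a lattice in $H$), and then bootstrap topological rigidity from measure rigidity using a non-divergence estimate.

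For measure rigidity, the starting point is the exponential/polynomial structure of $\Ad$-unipotent flows. Given a $U$-invariant ergodic measure $\mu$, I would pick two nearby generic points $x$ and $xg$ and analyze how their $U$-orbits diverge: because the action of $U$ on $\mathfrak g$ is unipotent, $\Ad(u(t))g$ depends polynomially on $t$, and the direction of fastest divergence lies in the centralizer of $U$ relative to some subgroup. By choosing matching times via Birkhoff-type arguments, one shows that $\mu$ is invariant under translations in this extra direction; iterating this ``shearing/$H$-principle'' enlarges the stabilizer of $\mu$ until it is a full closed subgroup $H$ whose orbit through a $\mu$-generic point carries the measure. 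The careful execution of this argument, together with a separate treatment of the non-generic fibers (handled via the theory of minimal invariant sets and Ratner's Q-property), is the main obstacle and occupies the bulk of Ratner's original papers.

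To pass from the measure statement to the orbit closure statement, I would consider, for a fixed $g\in G$, the family of Cesaro averages
\begin{equation*}
\mu_T := \frac{1}{T}\int_0^T \delta_{\Gamma g\, u(t)}\, dt
\end{equation*}
along a one-parameter $\Ad$-unipotent subgroup $u(t)\subset U$ (reducing to the one-parameter case by a routine argument on generators of $U$). The Dani--Margulis non-divergence theorem guarantees that no mass escapes to infinity, so any weak-$*$ limit $\mu$ is a $U$-invariant probability measure. The measure classification then gives that $\mu$ is supported on a single closed orbit $\Gamma g' H$. A further argument, using uniform distribution of generic orbits in such closed orbits and the fact that $\Gamma g$ is recurrent to a neighborhood of $\mathrm{supp}\,\mu$, identifies the orbit closure of $\Gamma g$ itself with a translate $\Gamma g H$, and the closedness of this orbit forces $g^{-1}\Gamma g\cap H$ to be a lattice in $H$.

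The hard part, as noted, is measure rigidity: the polynomial shearing argument is delicate because one must control the entropy and the direction of divergence simultaneously, and one must also rule out intermediate ``non-algebraic'' behavior by an inductive analysis on the complexity of possible stabilizer subgroups. Everything after measure rigidity -- the passage to orbit closures via non-divergence and equidistribution -- is comparatively formal, and it is in this final step that the lattice hypothesis on $\Gamma$ enters in an essential way (to guarantee non-divergence of averages and hence tightness of the family $\{\mu_T\}$).
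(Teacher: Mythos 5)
There is nothing in the paper to compare against here: Theorem~\ref{thm:ratner} is quoted verbatim from Ratner's work \cite{MR1106945} and the paper supplies no proof of its own, so the only meaningful benchmark is Ratner's original argument. Your outline is a faithful summary of exactly that route (measure classification, Dani--Margulis non-divergence, then passage to orbit closures and equidistribution), so in that sense you are on the ``same approach'' as the cited source.

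That said, as a proof it is only a roadmap, and two of the places you wave at are genuinely nontrivial. First, measure rigidity itself, which you acknowledge, is the bulk of Ratner's several long papers; nothing in your sketch could be expanded into it within this format. Second, the final step is not ``comparatively formal'': a weak-$*$ limit of the Ces\`aro averages $\mu_T$ need not be ergodic, so one must pass through the ergodic decomposition and then use the Dani--Margulis linearization technique to show that the limit cannot give positive mass to the singular set of smaller invariant homogeneous pieces; only then does one get equidistribution of the orbit of $\Gamma g$ in a single closed orbit $\Gamma g H$ based at the \emph{same} point $g$, with $g^{-1}\Gamma g\cap H$ a lattice. Also, the reduction from a general $\Ad$-unipotent subgroup $U$ to a one-parameter subgroup is not entirely routine (Ratner treats non-connected, e.g.\ cyclic, unipotent subgroups by separate arguments), although for the application in this paper ($U=U_3(\R)$, connected) the one-parameter reduction would indeed suffice. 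So: correct strategy, consistent with the citation, but with the essential content delegated to the literature rather than proved.
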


Recall that a subgroup $U\subset G$ is called \emph{$\Ad$-unipotent}, if for every $g\in U$ the transformation $\Ad g\colon\g\to\g$ is unipotent, that is, $\Ad g-\operatorname{id}$ is nilpotent.

We note that since the maps $G\to \Gamma\backslash G$ and $G\to G/U$ are open, we have one-to-one correspondences between the closed $U$-invariant subsets of $\Gamma\backslash G$, the closed $\Gamma\times U$-invariant subsets of~$G$, where $\Gamma$ acts by left translations and $U$ by right translations, and the closed $\Gamma$-invariant subsets of $G/U$. Therefore Ratner's theorem describes equally well the closures of $\Gamma$-orbits in~$G/U$ and the closures of $\Gamma\times U$-orbits in~$G$. As a result we do not have to worry about the meaning of expressions like~$\overline{\Gamma g U}$: this meaning should be clear from the context, but even if the reader is in doubt, all possible interpretations give equivalent statements.

\subsection{Quasi-orbits of the action \texorpdfstring{$\SL_2(\Z)\curvearrowright\R^2\times\T^2$}{SL2}}\label{ssec:SL2-R2-T2}
Our first goal is to understand the quasi-orbit space for the diagonal action of $\SL_2(\Z)$ on $\R^2\times\T^2$, where the action on the second factor is defined by identifying $\T^2$ with $\R^2/\Z^2$. We remind that by the quasi-orbit of a point $x\in X$ for an action $\Gamma\curvearrowright X$ one means the equivalence class of $x$ with respect to the following equivalence relation on $X$:
$$
x\sim y\quad\text{iff}\quad \overline{\Gamma x}=\overline{\Gamma y}.
$$
The space of quasi-orbits is therefore a quotient of $X$, and we equip it with the quotient topology.

\smallskip

We start by recording the following reformulation of a classical result of Hedlund~\cite{MR1545946}, see also~\cite{MR0744294}, which implies Theorem~\ref{thm:ratner} for $G=\SL_2(\R)$, $U=U_2(\R)$ and nonuniform lattices $\Gamma\subset G$ by identifying $G/U$ with $\R^2\setminus\{0\}$.

\begin{lemma}\label{lem:Hedlund}
Assume $\Gamma\subset\SL_2(\R)$ is a nonuniform lattice. Then, for any $v\in\R^2\setminus\{0\}$, either the stabilizer of $v$ in $\Gamma$ is trivial and then the $\Gamma$-orbit of $v$ is dense in $\R^2$, or the stabilizer is nontrivial and then the orbit is closed in $\R^2\setminus\{0\}$.
\end{lemma}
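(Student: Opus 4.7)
My plan is to identify $\R^2\setminus\{0\}$ with the homogeneous space $G/U$, where $G=\SL_2(\R)$ and $U=U_2(\R)$, via the $G$-equivariant diffeomorphism $gU\mapsto g e_1$ with $e_1=(1,0)^T$; this works because $U$ is exactly the stabilizer of $e_1$ in $G$. Writing $v=ge_1$, the stabilizer of $v$ in $\Gamma$ equals $\Gamma\cap gUg^{-1}=g(g^{-1}\Gamma g\cap U)g^{-1}$, so it is trivial precisely when $g^{-1}\Gamma g\cap U=\{e\}$. The key input will be Theorem~\ref{thm:ratner}, which describes $\overline{\Gamma g U}$ in $G$ (equivalently, the closure of $\Gamma v$ in $\R^2\setminus\{0\}$ after projecting to $G/U$) as $\Gamma g H$ for a closed subgroup $H\supset U$ with $g^{-1}\Gamma g\cap H$ a lattice in $H$.

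For the easier direction, suppose the $\Gamma$-stabilizer of $v$ is nontrivial. Then $g^{-1}\Gamma g\cap U$ is a nontrivial discrete subgroup of $U\cong\R$, hence cocompact. The orbit map $U\to \Gamma\backslash G$, $u\mapsto \Gamma gu$, therefore descends to a continuous map from the compact quotient $U/(g^{-1}\Gamma g\cap U)$, so the image $\Gamma gU$ is compact in the Hausdorff space $\Gamma\backslash G$, and hence closed. Pulling back to $G$ and passing to the open quotient $G/U\cong \R^2\setminus\{0\}$, we conclude that $\Gamma v$ is closed in $\R^2\setminus\{0\}$. This direction does not use Ratner's theorem at all.

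For the other direction, suppose $g^{-1}\Gamma g\cap U=\{e\}$ and invoke Theorem~\ref{thm:ratner} to get $\overline{\Gamma g U}=\Gamma g H$; I need to show $H=G$. The connected subgroups of $\SL_2(\R)$ containing $U$ correspond to the Lie subalgebras of $\mathfrak{sl}_2(\R)$ containing the unipotent line, which are exhausted by $\mathfrak{u}$, the Borel $\mathfrak{b}$, and $\mathfrak{sl}_2(\R)$ itself; thus $H^0\in\{U,B_0,\SL_2(\R)\}$, where $B_0$ is the identity component of the Borel $B$. The Borel is non-unimodular (conjugation by $\mathrm{diag}(a,a^{-1})$ acts on $U\cong\R$ by the factor $a^2$), so $H^0=B_0$ would force $H$ itself to be non-unimodular, precluding the existence of a lattice in $H$. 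If $H^0=U$, then $H\subset N_G(U)=B$, and writing $H=U\cdot F$ with $F=H\cap A$ for the diagonal subgroup $A$, unimodularity of $H$ constrains $F$ to act on $U$ by measure-preserving automorphisms, which forces $F\subset\{\pm I\}$; hence $H\in\{U,\{\pm I\}\cdot U\}$. But in both subcases any lattice in $H$ must intersect $U$ in a cocompact subgroup of $\R$ (the projection to $\R$ of a lattice in $\Z/2\times\R$ is a nontrivial discrete subgroup, and this projection differs from the intersection $L\cap U$ by at most the finite kernel), contradicting triviality of $g^{-1}\Gamma g\cap U$. Therefore $H=G$, and $\Gamma v$ is dense in $G/U=\R^2\setminus\{0\}$, hence in $\R^2$.

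The main obstacle will be the algebraic classification of the intermediate subgroups $H$ produced by Ratner's theorem and the elimination of the non-$G$ possibilities via unimodularity. Although this is elementary in rank one (it reduces to listing Lie subalgebras and a one-line computation of a modular function), it is the essential bridge between the dynamical statement of Theorem~\ref{thm:ratner} and the sharp dichotomy asserted by the lemma.
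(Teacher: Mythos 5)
Your argument is correct, but it takes the opposite logical route from the paper: the paper gives no proof of this lemma at all, citing it as a classical result of Hedlund (and Dani--Smillie) and in fact remarking that the lemma \emph{implies} Theorem~\ref{thm:ratner} in the special case $G=\SL_2(\R)$, $U=U_2(\R)$, whereas you deduce the lemma \emph{from} the general Ratner theorem. That is not circular, since Theorem~\ref{thm:ratner} is quoted as an independent result, but it does use a much deeper theorem than the classical horocycle-flow proofs; what it buys you is a short, self-contained derivation whose only inputs are the classification of closed subgroups $H$ with $U\subset H\subset\SL_2(\R)$ (via the subalgebras of $\mathfrak{sl}_2(\R)$ containing the nilpotent line) and the fact that a group admitting a lattice is unimodular. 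This is exactly the strategy the paper itself uses one step later, in the proof of Lemma~\ref{lem:SL2-1}(2), for the bigger group $\R^2\rtimes\SL_2(\R)$, so your proof is very much in the paper's spirit. The individual steps check out: the identification $G/U\cong\R^2\setminus\{0\}$ via $gU\mapsto ge_1$; the closed-orbit direction (a nontrivial intersection $g^{-1}\Gamma g\cap U$ is cocompact in $U\cong\R$, so $\Gamma gU$ is compact in $\Gamma\backslash G$, hence closed, and closedness passes to $G/U$ because the quotient map is open and the set is $U$-saturated); the exclusion of $H^0=B_0$ by non-unimodularity (the modular function of $H$ restricts to that of its open subgroup $B_0$); and, when $H^0=U$, the decomposition $H=(H\cap A)U$ (legitimate because $U\subset H$, so the $A$-part of any $h\in H\subset B$ lies in $H$), the unimodularity constraint forcing $H\subset\{\pm I\}U$, and the observation that any lattice in $U$ or $\{\pm I\}\times U$ meets $U$ nontrivially (square an element with nonzero $\R$-component), contradicting triviality of the stabilizer. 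One could quibble that your proof never uses nonuniformity of $\Gamma$ (for uniform lattices the stabilizers are automatically trivial and you recover Hedlund's minimality), but that only means you prove slightly more than stated.
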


The following result is probably known to experts, cf.~\cite{MR2060024}*{Section~2.6}, \cite{MR2165547}.

\begin{lemma}\label{lem:SL2-1}
Let $p\colon \R^2\to\T^2$ be the quotient map. Assume we are given vectors $v\in\R^2\setminus\R\cdot\Q^2$ and $u\in\R^2$. Then
\begin{enumerate}
\item if $u-tv\in\Q^2$ for some $t\in\R$, then the closure of the $\SL_2(\Z)$-orbit of $(v,p(u))$ in $\R^2\times\T^2$ consists of all points $(v',z'p(tv'))$ such that $z'\in\T^2$ has the same order as $p(u-tv)$;
\item if $u-tv\notin\Q^2$ for all $t\in\R$, then the $\SL_2(\Z)$-orbit of $(v,p(u))$ is dense in $\R^2\times\T^2$.
\end{enumerate}
\end{lemma}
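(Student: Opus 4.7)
The plan is to use the principal congruence subgroups for Case~(1) and Ratner's theorem for Case~(2).

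For Case~(1), set $w:=u-tv\in\Q^2$ and let $n$ be the order of $p(w)\in\T^2$. Since $v\notin\R\cdot\Q^2$, the stabilizer of $v$ in $\SL_2(\Z)$ is trivial, hence also trivial in the principal congruence subgroup $\Gamma(n)$. Applying Lemma~\ref{lem:Hedlund} to the nonuniform lattice $\Gamma(n)$ gives density of its orbit of $v$ in $\R^2$; since $\Gamma(n)$ fixes the $n$-torsion of $\T^2$ pointwise, the $\Gamma(n)$-orbit closure of $(v,p(u))$ equals $\{(v',p(tv')+p(w)):v'\in\R^2\}$. Unioning over the finite quotient $\SL_2(\Z)/\Gamma(n)$ and invoking transitivity of $\SL_2(\Z/n)$ on primitive vectors in $(\Z/n)^2$ (i.e., on elements of $\T^2$ of order exactly $n$) yields the claimed description.

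For Case~(2), I apply Theorem~\ref{thm:ratner} to $G':=\SL_2(\R)\ltimes\R^2$, its nonuniform lattice $\Gamma':=\SL_2(\Z)\ltimes\Z^2$, and the $\Ad$-unipotent subgroup $U:=U_2(\R)\subset G'$. Under the identification $G'/U\cong(\R^2\setminus\{0\})\times\R^2$ via $(g,w)U\mapsto(ge_1,w)$, the $\Gamma'$-action becomes $(h,m)\cdot(v_1,v_2)=(hv_1,hv_2+m)$, so density of the $\SL_2(\Z)$-orbit of $(v,p(u))$ in $\R^2\times\T^2$ is equivalent to density of the $\Gamma'$-orbit of $(v,u)$ in $G'/U$. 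Via the correspondence between closed $\Gamma'$-invariant subsets of $G'/U$ and closed $U$-invariant subsets of $\Gamma'\backslash G'$, Ratner gives that this orbit closure is $\Gamma' g_0 H/U$ for some closed $H\supset U$ with $g_0^{-1}\Gamma' g_0\cap H$ a lattice in $H$, where $g_0:=(g_1,u)$ and $g_1e_1=v$. A direct computation yields $g_0^{-1}(g,n)g_0=(g_1^{-1}gg_1,g_1^{-1}(n+(g-I)u))$.

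It remains to show $H=G'$. Let $H_1\subset\SL_2(\R)$ be the projection of $H$ and $V:=H\cap\R^2$. Since the projection of $g_0^{-1}\Gamma' g_0\cap H$ to $H_1$ must itself be a lattice in $H_1$, the set $\{g\in\SL_2(\Z):g_1^{-1}gg_1\in H_1\}$ must have $\Z$-rank at least $\dim H_1$. For $H_1=U_2$ this set is trivial (since $v\notin\R\cdot\Q^2$ excludes nontrivial unipotent stabilizers of $v$), and for $H_1=B$ it has rank at most one (realised only when $v$ is a quadratic eigenvector of a hyperbolic element of $\SL_2(\Z)$), both insufficient. Hence $H_1=\SL_2(\R)$, so $V$ is $\SL_2(\R)$-invariant, giving $V\in\{0,\R^2\}$. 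The case $V=0$ yields, via $H^1(\SL_2(\R),\R^2)=0$, that $H=\{(g,gw_0-w_0):g\in\SL_2(\R)\}$ for some $w_0\in\R^2$; the inclusion $U\subset H$ forces $w_0\in\R e_1$, so $g_1w_0=cv$ for some $c\in\R$, and the lattice condition becomes $\{g\in\SL_2(\Z):(g-I)(u-cv)\in\Z^2\}$ of finite index in $\SL_2(\Z)$, equivalent to $u-cv\in\Q^2$ and thus ruled out by hypothesis. Therefore $V=\R^2$ and $H=G'$, giving density.

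The main subtlety is the rank count in the $H_1=B$ case when $v$ sits on a quadratic axis of some hyperbolic element of $\SL_2(\Z)$: here $\Stab_{\SL_2(\Z)}(\R v)$ contributes rank one, exactly one short of what a lattice in $B$ demands, and this shortfall needs to be verified carefully against any possible contribution from the translation factor $V$.
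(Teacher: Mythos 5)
Your part (1) is correct and is essentially the paper's argument in congruence-subgroup clothing: the paper applies Lemma~\ref{lem:Hedlund} to the finite-index stabilizer of a fixed torsion point and proves transitivity of $\SL_2(\Z)$ on the order-$n$ elements of $\T^2$ directly, while you use $\Gamma(n)$ and quote the transitivity as standard; both packagings are fine. Likewise your endgame for part (2) in the case $H\cap\R^2=0$ (Whitehead's lemma, $w_0\in\R e_1$, and the observation that finite index of $\{\gamma\in\SL_2(\Z):(\gamma-I)(u-cv)\in\Z^2\}$ forces $u-cv\in\Q^2$) is correct, and is arguably cleaner than the paper's contradiction via the non-closedness of the intersection of the orbit closure with $\R^2$.

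The gap is in your reduction to $H_1=\SL_2(\R)$. The assertion that ``the projection of $g_0^{-1}\Gamma'g_0\cap H$ to $H_1$ must itself be a lattice in $H_1$'' is unjustified, and it is false as a general principle: for a closed normal subgroup $V=H\cap\R^2$ of $H$, the image of a lattice $\Lambda\subset H$ in $H/V\cong H_1$ is a lattice only when $\Lambda\cap V$ is a lattice in $V$, and in general it need not even be discrete (already for $H\cong\R^2$ and $V$ a line irrational with respect to $\Lambda$ -- and groups such as $U_2(\R)\times\R e_1$ do occur among the closed subgroups containing $U$ that your argument must exclude). Since at this point you know nothing about $V$, the rank count on the projected group cannot be run; your own closing paragraph concedes that in the $H_1=B$ case the count comes out ``one short'' and has to be weighed against a possible contribution of $V$, which is precisely the unfinished step. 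In addition, the case list $H_1\in\{U_2,B,\SL_2(\R)\}$ is incomplete (e.g.\ $U_2(\R)\times\{\pm1\}$, or $U_2(\R)$ extended by a discrete diagonal subgroup, are not addressed). The paper avoids all of this: it projects to $\SL_2(\Z)\backslash\SL_2(\R)$, uses properness of $\Gamma'\backslash G'\to\SL_2(\Z)\backslash\SL_2(\R)$ to see that the image of the closed set $\Gamma'g_0H$ is closed, and invokes Lemma~\ref{lem:Hedlund} (this is where $v\notin\R\cdot\Q^2$ enters) to get that the $U$-orbit downstairs is dense, forcing $\SL_2(\Z)g_1H_1=\SL_2(\Z)\backslash\SL_2(\R)$ and hence $H_1=\SL_2(\R)$ by a countable-union/Baire argument; only then does the dichotomy $V\in\{0,\R^2\}$ and the cocycle analysis begin. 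You need either that argument or a direct exclusion of all intermediate $H$ (e.g.\ via unimodularity of groups containing lattices, plus the trivial-stabilizer fact for the small cases) in place of the projection claim.
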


Note that the number $t$ in (1) is uniquely determined, since $v\notin\R\cdot\Q^2$.

\bp
(1) It is easy to see that the set of points $(v',z'p(tv'))$ as in the formulation is closed and contains the $\SL_2(\Z)$-orbit of $(v,p(u))$, so we just have to show that the orbit is dense in it.

Let $n$ be the order of $p(u-tv)$ in $\T^2$. Let us show first that $\SL_2(\Z)$ acts transitively on the elements of $\T^2$ of order $n$.  For this observe that the $\SL_2(\Z)$-orbit of every such element has a point of the form $(z,1)$, since $\Q^2=\SL_2(\Z)\begin{pmatrix}\Q \\ 0 \end{pmatrix}$. As $z=e^{2\pi ia/n}$ for some $a$ with $\gcd(a,n)=1$, we then have $(z,1)=\gamma(e^{2\pi i/n},1)$ for any $\gamma\in\SL_2(\Z) $ of the form $\gamma=\begin{pmatrix}
                          a & * \\
                          n & *
                        \end{pmatrix}$. This proves the claimed transitivity.

Now, consider the element $z:=p(u-tv)$. As we have just shown, if $z'\in\T^2$ is another element of order $n$, then we can find $\gamma\in\SL_2(\Z)$ such that $z'=\gamma z$. Consider the stabilizer $\Gamma\subset\SL_2(\Z)$ of $z'$. It is a finite index subgroup of $\SL_2(\Z)$, hence a lattice in $\SL_2(\R)$. By Lemma~\ref{lem:Hedlund} and the assumption $v\notin\R\cdot\Q^2$, the $\Gamma$-orbit of $\gamma v$ is dense in $\R^2$. Hence
$$
\overline{\SL_2(\Z)(v,p(u))}\supset\overline{\Gamma\gamma(v,zp(tv))}=\overline{\Gamma(\gamma v,z'p(t\gamma v))}=\{(v',z'p(tv')):v'\in\R^2\}.
$$
As $z'$ was an arbitrary element of order $n$, this finishes the proof of (1).

\smallskip

(2) Consider the group $G:=\R^2\rtimes \SL_2(\R)$ and the lattice $\Gamma:=\Z^2\rtimes\SL_2(\Z)$ in it. We view $U:=U_2(\R)$ as a subgroup of $G$. Let $g\in\SL_2(\R)$ be such that $g\begin{pmatrix}1 \\ 0\end{pmatrix}=v$. Then we need to show that the $U$-orbit of $\Gamma(u,g)$ in $\Gamma\backslash G$ is dense. By Theorem~\ref{thm:ratner} the closure of the orbit has the form $\Gamma(u,g)H$ for a closed subgroup~$H$ with properties as stated there, and we want to show that $H=G$.

Since the group $H$ contains a lattice, it is unimodular, hence so is its quotient $H_0$ by $H\cap\R^2$. We view $H_0$ as a Lie subgroup of $\SL_2(\R)$. Since $H_0$ contains $U$ and is unimodular, there are very few options for it: $H_0$ is either $U$, $U\times\{\pm1\}$ or $\SL_2(\R)$. The image of $\Gamma (g,h)H$ in $\SL_2(\Z)\backslash\SL_2(\R)$ is closed by properness of the map $\Gamma\backslash G\to \SL_2(\Z)\backslash\SL_2(\R)$. It follows that the closure of the $U$-orbit of $\SL_2(\Z)g\in \SL_2(\Z)\backslash\SL_2(\R)$ coincides with $\SL_2(\Z)gH_0$. Since we already know that this closure coincides with $\SL_2(\Z)\backslash\SL_2(\R)$, it follows that $H_0=\SL_2(\R)$.

Then $H\cap\R^2$ is an $\SL_2(\R)$-invariant subgroup of $\R^2$, hence it is either $\R^2$ or $\{0\}$. In the first case we get $H=G$ and we are done. Assume therefore that the intersection $H\cap\R^2$ is trivial. Then the quotient map $G\to \SL_2(\R)$ defines an isomorphism $H\cong\SL_2(\R)$, hence the Lie algebra of $H$ has the form $\mathfrak h=\{(\varphi(X),X): X\in\mathfrak{sl}_2(\R)\}$ for a $1$-cocycle $\varphi\colon\mathfrak{sl}_2(\R)\to\R^2$. By Whitehead's lemma we have $\varphi(X)=Xw$ for some $w\in\R^2$. Hence
$$
H=\{(hw-w,h):h\in\SL_2(\R)\}.
$$
Since $H\supset U$, we must have $w=t\begin{pmatrix}1 \\ 0\end{pmatrix}$ for some $t\in\R$.

Therefore the closure of $\Gamma (u,g)U$ in $G$ is equal to
$$
\Gamma(u,g)H=\{(\gamma u+\gamma ghw-\gamma gw+a,\gamma gh):\gamma\in\SL_2(\Z),\ h\in\SL_2(\R),\ a\in\Z^2\}.
$$
In particular, we get
\begin{align*}
\overline{\Gamma (u,g)U}\cap \R^2&=\{\gamma u+w-\gamma gw +a:\gamma\in\SL_2(\Z),\ a\in\Z^2\}\\
&=\{\gamma (u-tv)+w +a:\gamma\in\SL_2(\Z),\ a\in\Z^2\},
\end{align*}
where we used that $gw=tv$. But the last set is certainly not closed in $\R^2$, since $u-tv\not\in\Q^2$ by assumption and hence, as can be easily seen, the $\SL_2(\Z)$-orbit of $p(u-tv)$ is not closed in $\T^2$. This contradiction finishes the proof of the lemma.
\ep

Denote by $\QQ(\R^2\times\T^2)$ the quasi-orbit space for the action of $\SL_2(\Z)$ on $\R^2\times\T^2$. We denote by the same letter $\QQ$ the quotient map $\R^2\times\T^2\to \QQ(\R^2\times\T^2)$. The quotient map is open and as a result we have a bijective correspondence between the $\SL_2(\Z)$-invariant closed subsets of~$\R^2\times\T^2$ and the closed subsets of $\QQ(\R^2\times\T^2)$, see~\cite{CN3}*{Corollary~1.5}.

\begin{lemma}\label{lem:SL2-2}
Assume we are given a vector $v\in\R^2\setminus\R\cdot\Q^2$, a sequence $(z_n)_n$ of elements of finite order in $\T^2$ and a sequence $(t_n)_n$ of real numbers. Then
\begin{enumerate}
  \item if $\ord z_n=m$ for all $n$ and $t_n\to t\in\R$ as $n\to+\infty$, then the cluster points of the sequence $(\QQ(v,z_np(t_nv)))_n$ are the points $\QQ(v',z'p(tv'))$ such that $z'\in\T^2$ has order~$m$;
  \item if either $\ord z_n\to+\infty$ or $|t_n|\to+\infty$ as $n\to+\infty$, then every point of $\QQ(\R^2\times\T^2)$ is a cluster point of the sequence $(\QQ(v,z_np(t_nv)))_n$.
\end{enumerate}
\end{lemma}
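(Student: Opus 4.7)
The plan is to exploit the fact that the quotient map $\QQ\colon\R^2\times\T^2\to\QQ(\R^2\times\T^2)$ is open. This implies that, writing $x_n := (v,z_n p(t_n v))$, the set of cluster points of $(\QQ(x_n))_n$ equals $\QQ(K)$, where
$$
K := \bigcap_{N\ge 1}\overline{\bigcup_{n\ge N}\overline{\SL_2(\Z)x_n}}
$$
is a closed $\SL_2(\Z)$-invariant subset of $\R^2\times\T^2$. By Lemma~\ref{lem:SL2-1}(1), applicable since $v\notin\R\cdot\Q^2$ and $z_n$ has finite order, the orbit closures are
$$
\overline{\SL_2(\Z)x_n} = D_n := \{(v',z'p(t_n v')) : v'\in\R^2,\ \ord z' = m_n\},\qquad m_n := \ord z_n.
$$
The task reduces to computing $K$ in each of the two cases.

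For part (1), with $m_n=m$ constant and $t_n\to t$, I expect $K = D_t$, where $D_t := \{(v',z'p(tv')): v'\in\R^2,\ \ord z'=m\}$. The inclusion $D_t\subseteq K$ is immediate: for any $(v',z'p(tv'))\in D_t$ the sequence $(v',z'p(t_n v'))\in D_n$ converges to $(v',z'p(tv'))$, so the point lies in $\overline{\bigcup_{n\ge N}D_n}$ for every $N$. For the opposite inclusion I would take a convergent sequence $y_k\in D_{n_k}$ with $n_k\to\infty$, write $y_k = (v'_k,z'_kp(t_{n_k}v'_k))$ with $z'_k$ of order exactly $m$, and exploit the \emph{finiteness} of $E_m := \{z\in\T^2:\ord z=m\}$ to pass to a subsequence along which $z'_k$ is constantly equal to some $z'_0$. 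Convergence of $y_k$ then forces $v'_k\to v''$, and continuity of $(t,v')\mapsto z'_0 p(tv')$ gives $y_k\to(v'',z'_0 p(tv''))\in D_t$.

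For part (2), I would show $K=\R^2\times\T^2$ by proving that every $(w_1,w_2)\in\R^2\times\T^2$ admits, for each $\epsilon>0$ and each $N\ge 1$, a point of some $D_n$ with $n\ge N$ at distance less than $\epsilon$. If $\ord z_n=m_n\to\infty$, take $v':=w_1$ and approximate the target $w_2-p(t_nw_1)\in\T^2$ by some $z'\in E_{m_n}$: this is possible because $|E_{m_n}|\ge\phi(m_n)^2\to\infty$ and, more importantly, the mesh of $E_{m_n}$ in $\T^2$ tends to $0$ as $m_n\to\infty$. If $|t_n|\to\infty$, after passing to a subsequence I may assume $m_n$ is constantly equal to some $m$, and I fix any $z'\in E_m$; picking a representative $\beta_n\in\R^2$ of $w_2-z'-p(t_nw_1)\in\T^2$ with $|\beta_n|\le\sqrt{2}$ and setting $s_n := t_n^{-1}\beta_n$, for $|t_n|$ large enough one has $|s_n|<\epsilon$, and the point $(w_1+s_n,\,z'p(t_n(w_1+s_n))) = (w_1+s_n,w_2)$ then lies in $D_n$ at distance less than $\epsilon$ from $(w_1,w_2)$.

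The main obstacle is the upper bound $K\subseteq D_t$ in part~(1): a priori the closure of $\bigcup_{n\ge N}D_n$ could exceed $D_t$, and one must verify that the finiteness of $E_m$ (allowing $z'_k$ to be made constant along a subsequence) together with continuity of the evaluation map at a fixed $z'_0$ are exactly what rule this out. The assumption that $m$ be constant is essential here, as part~(2) shows: once $m_n\to\infty$ or $|t_n|\to\infty$ the set $K$ jumps to the whole space, precisely because the finiteness constraint that kept it small disappears. Part~(2) is an elementary density argument in $\T^2$ but the two sub-cases must be handled separately, since the approximation mechanisms (varying $z'$ in an increasingly dense finite set versus varying $v'$ slightly and using that $t_n$-scaled neighbourhoods of $0$ fill up $\T^2$) are of quite different flavors.
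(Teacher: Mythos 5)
Your argument is correct, and in substance it uses the same ingredients as the paper's proof: both rest on the orbit-closure description of Lemma~\ref{lem:SL2-1}(1), on the density in $\T^2$ of elements of large finite order, and on a $t_n^{-1}$-perturbation of the vector when $|t_n|\to+\infty$. The difference is one of packaging: you compute the closed invariant set $K=\bigcap_N\overline{\bigcup_{n\ge N}\overline{\SL_2(\Z)x_n}}$ upstairs in $\R^2\times\T^2$, where $x_n=(v,z_np(t_nv))$, and use openness of the quasi-orbit map to identify the cluster set with $\QQ(K)$; this in particular makes the half of (1) that the paper dismisses as ``easy to see'' (every cluster point has the stated form) explicit via finiteness of $E_m=\{z\in\T^2:\ord z=m\}$, whereas the paper argues downstairs, replacing $z_n$ by equivalent $z_n'$ of the same order and exhibiting convergence of quasi-orbits to a dense family of points. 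Two small points to tighten. First, your assertion that the mesh of $E_m$ in $\T^2$ tends to $0$ as $m\to\infty$ is precisely the nontrivial fact about gaps between consecutive primitive roots of unity of the same order that the paper invokes with a citation (Hausman--Shapiro); it is true, but it should be cited or justified rather than stated as if obvious. Second, in the sub-case $|t_n|\to+\infty$ you cannot always pass to a subsequence along which $\ord z_n$ is constant: if $\ord z_n\to+\infty$ no such subsequence exists, but then the first sub-case already applies; otherwise extract a constant subsequence and note that cluster points of a subsequence are cluster points of the original sequence, which is what your argument implicitly uses.
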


\bp
(1) It is easy to see that every cluster point of $(\QQ(v,z_np(t_nv)))_n$ must be of the form $\QQ(v',z'p(tv'))$, with $\ord z'=m$. The fact that every such point $\QQ(v',z'p(tv'))$ is indeed a cluster point follows from Lemma~\ref{lem:SL2-1}(1), since this lemma implies that already the closure of $\{\QQ(v,zp(tv))\}$, where $z\in\T^2$ is an element of order $m$, contains all such points.

\smallskip

(2) Assume first that $\ord z_n\to+\infty$. Then, for any $z\in\T^2$, we can find $z_n'\in\T^2$ such that $\ord z_n'=\ord z_n$ and $z_n'p(t_nv)\to z$, since the distance between two consecutive primitive roots of unity of the same order goes to zero when the order goes to infinity (see, e.g.,~\cite{MR0330079} for a much stronger result). As $\QQ(v,z_np(t_nv))=\QQ(v,z_n'p(t_nv))$ by Lemma~\ref{lem:SL2-1}(1), it follows that $\QQ(v,z_np(t_nv))\to\QQ(v,z)$. As the set of points $\QQ(v,z)$, $z\in\T^2$, is dense in $\QQ(\R^2\times\T^2)$ by the same lemma, we are done.

Assume next that $|t_n|\to+\infty$. By passing to a subsequence we may assume that the sequence~$(z_n)_n$ is convergent and $p(t_nv)\to z$ for some $z\in\T^2$. It is then enough to show that every point $\QQ(v,z')$, $z'\in\T^2$, is a cluster point of $(\QQ(v,p(t_nv)))_n$. Pick $u,u'\in\R^2$ such that $p(u)=z$, $p(u')=z'$. Define $v_n:=v+t_n^{-1}(u'-u)$. Then $(v_n,p(t_nv_n))\to(v,z')$. Since $\QQ(v_n,p(t_nv_n))$ lies in the closure of $\{\QQ(v,p(t_nv))\}$ by Lemma~\ref{lem:SL2-1}(1), it follows that $\QQ(v,p(t_nv))\to\QQ(v,z')$.
\ep

Next we consider points in $\QQ(\R\cdot\Q^2\times\T^2)$. We need the following elementary observation, which will also be useful later.

\begin{lemma} \label{lem:modd}
For any $s,t\in\R^\times$ and any odd integer $m$, there exist $\gamma \in\SL_2(\Z)$ and $u\in U_{2}(\R)$ such that
$$
\gamma
\begin{pmatrix}
s & 0 \\
0 & t
\end{pmatrix} u
=
\begin{pmatrix}
ms & 0 \\
2s & t/m
\end{pmatrix} \; .
$$
\end{lemma}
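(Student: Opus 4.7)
The plan is to parametrize both sides and read off equations for the unknown entries. Write $\gamma = \begin{pmatrix} a & b \\ c & d \end{pmatrix}$ with $ad-bc=1$ and $u = \begin{pmatrix} 1 & x \\ 0 & 1 \end{pmatrix}$ with $x\in\R$. A direct computation gives
$$
\gamma \begin{pmatrix} s & 0 \\ 0 & t \end{pmatrix} u \;=\; \begin{pmatrix} as & axs+bt \\ cs & cxs+dt \end{pmatrix}.
$$
Matching with the target matrix $\begin{pmatrix} ms & 0 \\ 2s & t/m \end{pmatrix}$ and using that $s,t\neq 0$ immediately forces $a=m$ and $c=2$, together with the two scalar conditions $mxs+bt=0$ and $2xs+dt=t/m$.

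The first main check is compatibility: the two equations for $x$ must give the same value. From them one reads $x = -bt/(ms)$ and $x = t(1-dm)/(2ms)$, which coincide exactly when $md-2b=1$, i.e., precisely when $\det\gamma=1$. So once we locate integers $b,d$ with $md-2b=1$, both the $\SL_2(\Z)$-condition and the compatibility of the $x$-equations are automatic, and we can take $x:=-bt/(ms)$.

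The final step is to solve $md-2b=1$ in integers, and this is where the hypothesis that $m$ is odd enters: since $\gcd(m,2)=1$, such integers exist. A concrete choice is $b=(m-1)/2$ and $d=1$, yielding
$$
\gamma=\begin{pmatrix} m & (m-1)/2 \\ 2 & 1 \end{pmatrix}, \qquad x=-\frac{(m-1)t}{2ms},
$$
after which one verifies $\det\gamma = m - (m-1) = 1$ and substitutes back to confirm the matrix identity. There is no real obstacle here: the only nontrivial input is the solvability of $md-2b=1$ in $\Z$, which is exactly the content of the oddness assumption.
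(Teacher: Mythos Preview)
Your proof is correct and essentially identical to the paper's: writing $m=2n+1$, the paper simply states the matrices $\gamma=\begin{pmatrix} m & n \\ 2 & 1 \end{pmatrix}$ and $u=\begin{pmatrix} 1 & -nt/ms \\ 0 & 1 \end{pmatrix}$, which are exactly your choice $b=(m-1)/2$, $d=1$, $x=-bt/(ms)$. You have merely supplied the derivation behind what the paper presents as a direct verification.
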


\begin{proof}
If $m=2n+1$, we can take
$\displaystyle \gamma=\begin{pmatrix}
         m & n \\
         2 & 1
       \end{pmatrix}$, $\displaystyle u=\begin{pmatrix}
         1 & -nt/ms \\
         0 & 1
       \end{pmatrix}$.
\end{proof}


\begin{lemma}\label{lem:SL2-3}
Assume we are given a sequence of vectors $v_n=s_n e_1\in\R^2\setminus\{0\}$, where $e_1=\begin{pmatrix}1\\ 0\end{pmatrix}$, and a sequence of points $z_n=(a_n,b_n)\in\T^2$. Then the following properties hold.
\begin{enumerate}
  \item If $s_n\to+\infty$, then $(\QQ(v_n,z_n))_n$ has no cluster points in $\QQ(\R^2\times\T^2)$.
  \item If $s_n\to s\in(0,+\infty)$, then the cluster points of $(\QQ(v_n,z_n))_n$ are contained in $\QQ(s\Z^2\times\T^2)$.
  \item If $s_n\to0$ and $\ord b_n\to+\infty$, then every point of $\QQ(\R^2\times\T^2)$ is a cluster point of~$(\QQ(v_n,z_n))_n$.
  \item Assume $s_n\to0$, $\ord b_n=m\in \N$ for all $n$ and $a_n\to a\in\T$. Then
\begin{enumerate}
  \item if $\ord a=+\infty$ or $\frac{|a_n-a|}{s_n}\to+\infty$, then every point of $\QQ(\R^2\times\T^2)$ is a cluster point of~$(\QQ(v_n,z_n))_n$;
  \item if $\ord a=k\in\N$ and $\frac{a_n-a}{s_n}\to 2\pi it a$ ($t\in\R$), then the cluster points of~$(\QQ(v_n,z_n))_n$ are the points $\QQ(v',z'p(tv'))$ with $\ord z'=\operatorname{lcm}(k,m)$.
\end{enumerate}
\end{enumerate}
\end{lemma}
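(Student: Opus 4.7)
The guiding principle is that $\QQ(v,z)$ is a cluster point of $(\QQ(v_n,z_n))_n$ precisely when $(v,z)\in\bigcap_k\overline{\bigcup_{n\geq k}\SL_2(\Z)(v_n,z_n)}$, and the cluster set is closed downward in the orbit-closure partial order. Since the stabilizer of $e_1$ in $\SL_2(\Z)$ is $U_2(\Z)$, the $\SL_2(\Z)$-orbit of $v_n=s_n e_1$ in $\R^2$ is $s_n\Z^2_\prim$, with minimum nonzero norm $|s_n|$. Parts (1) and (2) follow immediately: in (1), $s_n\to+\infty$ forces these discrete orbits to leave every compact set; in (2), the first coordinates accumulate only in $s\Z^2_\prim\subset s\Z^2$, so cluster points lie in $\QQ(s\Z^2\times\T^2)$.

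For part (3) my plan is to reduce to producing a single generic cluster point. By Lemma~\ref{lem:SL2-1}(2), the orbit of any $(v,p(u))$ with $v\in\R^2\setminus\R\cdot\Q^2$ and $u-tv\notin\Q^2$ for all $t\in\R$ is dense in $\R^2\times\T^2$; combined with the downward-closure property this will force the cluster set to equal all of $\QQ(\R^2\times\T^2)$. Fix such a $v$. For each large $n$, choose a primitive $(p_n,r_n)\in\Z^2$ with $\|s_n(p_n,r_n)-v\|<1/n$, possible because $s_n\to 0$ makes primitive lattice vectors abundant near $v/s_n$. The residual freedom in $\gamma_n=\bigl(\begin{smallmatrix}p_n&q_n\\r_n&t_n\end{smallmatrix}\bigr)$ shifts $\gamma_n z_n$ by $kb_n(p_n,r_n)\in\T^2$, a cyclic subgroup of order $\ord b_n\to\infty$; varying $(p_n,r_n)$ across distinct primitives in the admissible ball of radius $O(1/s_n)$ rotates this cyclic subgroup. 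A counting/equidistribution argument (using the $\approx(\eps/s_n)^2$ primitive directions together with the $\ord b_n$ shifts along each) will show the union becomes $\eps$-dense in $\T^2$ for large $n$, so diagonal extraction yields $\gamma_n z_n\to p(u)$ for some $u$ avoiding the countable set of lines $\{u:u-tv\in\Q^2\}$. Part (4)(a) follows with the same template, the density on the $\T^2$-side now coming from $a_n$: when $\ord a=\infty$, Weyl equidistribution makes $p_na_n\bmod 1$ dense as $p_n\to\infty$; when $|a_n-a|/s_n\to\infty$, a unit shift in $p_n$ changes $p_na_n$ by $a_n$, and the total variation $\eps|a_n-a|/s_n\to\infty$ over the admissible range again gives density.

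For part (4)(b) the key tool will be the Taylor expansion $a_n=a+s_nc_n$ with $c_n\to 2\pi ita$ (additively, $a_n-a=s_n(t+o(1))$). For $\gamma_nv_n\to v'$ one has $s_np_n\to v_1'$, so $p_na_n=p_na+(s_np_n)c_n\to p_\infty a+tv_1'\bmod 1$, where $p_\infty\in\Z/k\Z$ is a subsequential limit of $p_n\bmod k$ (which exists since $p_na$ takes only $k$ distinct values). After extracting a subsequence so that $b_n\to b_\infty$ of order $m$ and $q_n\bmod m\to q_\infty$, $r_n\bmod k\to r_\infty$, $t_n\bmod m\to t_\infty'$, the limit of $\gamma_nz_n$ takes the form $z'+p(tv')$ with $z'=(p_\infty a+q_\infty b_\infty,\ r_\infty a+t_\infty'b_\infty)\in\T^2$. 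A prime-by-prime computation modulo each $\ell\mid\operatorname{lcm}(k,m)$ will show that primitivity of $(p_n,r_n)$ combined with the determinant condition $p_nt_n-q_nr_n=1$ (passed to residues modulo $\operatorname{lcm}(k,m)$) forces $\ord z'=\operatorname{lcm}(k,m)$ exactly, since $\ell$ dividing both coordinates of $z'$ would force either $\ell\mid\gcd(p_\infty,r_\infty)$ (contradicting primitivity) or $\ell\mid\gcd(q_\infty,t_\infty')$ (contradicting $\det\equiv 1\pmod\ell$). This, together with Lemma~\ref{lem:SL2-1}(1), gives the ``$\subset$'' direction. For ``$\supset$'', given any target $z'\in\T^2$ of order $\operatorname{lcm}(k,m)$, I lift the required residues $(p_\infty,r_\infty)\bmod k$ and $(q_\infty,t_\infty')\bmod m$ back to actual $(p_n,q_n,r_n,t_n)$ with $s_n(p_n,r_n)\to v'$ and $p_nt_n-q_nr_n=1$, using a Chinese-remainder-style construction (Lemma~\ref{lem:modd} may help when adjusting auxiliary odd factors).

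The main obstacle is the density assertion in part (3): one must combine the cyclic cosets $\Z\cdot b_n(p_n,r_n)$ for different primitive directions $(p_n,r_n)$ so that their union covers $\T^2$ densely, despite having no a priori control on the relative rates of $s_n\to 0$ and $\ord b_n\to\infty$, and despite each single cyclic coset being confined to a one-dimensional subtorus. A secondary subtlety in (4)(b) is the algebraic verification that the determinant and primitivity conditions together \emph{force} $\ord z'=\operatorname{lcm}(k,m)$; this requires splitting into cases depending on whether a prime $\ell$ divides $k$, $m$, or both, and checking that degeneracies in the corresponding residues are ruled out by $\det\equiv 1$.
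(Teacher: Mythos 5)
Your parts (1) and (2) are fine, and your outline of (4b) is essentially workable (though heavier than necessary: since $\gamma_n\in\SL_2(\Z)$ acts on $\T^2$ by automorphisms, $\gamma_n(a,b_n)$ has order exactly $\operatorname{lcm}(k,m)$, and the set of elements of that exact order is finite, hence closed — this replaces your prime-by-prime analysis of residues and determinants; likewise the ``$\supset$'' direction does not need a Chinese-remainder lifting of prescribed residues: it suffices to produce one cluster point $\QQ(v,p(tv)z')$ with $v\notin\R\cdot\Q^2$ and then invoke Lemma~\ref{lem:SL2-1}(1) together with the fact that the cluster set is closed and saturated).

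The genuine gap is in part (3), and it propagates to (4a). The entire content of (3) is the density claim that you defer to ``a counting/equidistribution argument'' and yourself flag as the main obstacle; as stated it is not a proof, and naive counting does not obviously succeed: each admissible $\gamma_n$ confines $\gamma_n z_n$ to a single coset of a cyclic group of order $\ord b_n$ lying along the direction $(p_n,r_n)\bmod\ord b_n$, the base points of these cosets involve the completely uncontrolled $a_n$, and when $\ord b_n$ grows much faster than $1/s_n$ the available directions modulo $\ord b_n$ occupy only a short range, so it is unclear that the union of these one-dimensional cosets becomes $\eps$-dense in $\T^2$. The paper avoids this entirely: it fixes $\gamma_n$ with $\gamma_n v_n\to v\notin\R\cdot\Q^2$, exploits that $\QQ(v_n,z_n)=\QQ(v_n,z_nw_n)$ for $w_n$ in the subgroup $T_n$ of order-$\ord b_n$ elements $(c,1)$, passes to a limit $\gamma_nT_n\to T$ in $\Sub(\T^2)$, shows $T$ is infinite via the annihilator convergence $(\gamma_nT_n)^\perp\to T^\perp$, and then in the problematic case $T\ne\T^2$ (your ``confined to a line'' scenario) does \emph{not} try to densify: it finds $t_k\to+\infty$ with $p(t_kv)\in zT$ and concludes by Lemma~\ref{lem:SL2-2}(2). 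Your plan has no substitute for this step. Similarly, in (4a) the case $\ord a=k$, $|a_n-a|/s_n\to+\infty$ requires more than ``total variation tends to infinity'': the paper needs a two-stage choice of matrices (via Lemma~\ref{lem:modd}, with auxiliary even integers $k_n$ chosen so that $k_n(q_n-q)$ converges and $k_ns_n\to0$) to land on points $\QQ(se_1,p(ste_1)z')$ for all $s,t$, and then again Lemma~\ref{lem:SL2-2}(2). Until the density mechanism in (3)/(4a) is actually supplied, the proposal does not prove the lemma.
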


\bp (1) and (2) are straightforward.

\smallskip

(3) Let us show first that for every $v\in\R^2$ there exist elements $\gamma_n\in\SL_2(\Z)$ such that $\gamma_n v_n\to v$. It is enough to consider $v\in s\Z^2$ for all $s>0$, and since $\SL_2(\Z)$ acts transitively on the primitive elements of the lattice $s\Z^2$, it suffices to take $v=se_1$. Choose odd integers~$m_n$ such that $m_ns_n\to s$. Then by Lemma~\ref{lem:modd} we can find $\gamma_n\in\SL_2(\Z)$ such that
$\gamma_n v_n=\begin{pmatrix}m_ns_n\\ 2s_n\end{pmatrix}\to se_1$.

Now, fix $v\in\R^2\setminus\R\cdot\Q^2$. Let $\gamma_n\in\SL_2(\Z)$ be such that $\gamma_nv_n\to v$. By passing to a subsequence we may assume that $\gamma_n z_n\to z$ for some $z\in\T^2$. Pick $u\in\R^2$ such that $p(u)=z$. Consider the subgroup $T_n\subset\T^2$ of elements of the form~$(c,1)$ with $c^{\ord b_n}=1$, where we use the convention that if $\ord b_n=+\infty$, then $c\in\T$ can be arbitrary. Since the matrices $\begin{pmatrix}
1 & k \\
0 & 1
\end{pmatrix}$, $k\in\Z$, stabilize~$v_n$ and map $z_n$ into $(a_n b_n^k,b_n)$, we have $\QQ(v_n,z_n)=\QQ(v_n,z_nw_n)$ for all $w_n\in T_n$.

By passing to a subsequence we may assume that $\gamma_nT_n\to T$ in $\Sub(\T^2)$ for some closed subgroup $T\subset\T^2$. Then, for every $w\in T$, we can find $w_n\in T_n$ such that $\gamma_nw_n\to w$, hence $\QQ(v_n,z_n)=\QQ(v_n,z_nw_n)=\QQ(\gamma_nv_n,\gamma_n(z_nw_n))\to\QQ(v,zw)$. It follows that if $T=\T^2$, then (3) is proved, since by Lemma~\ref{lem:SL2-1} the closure of the set of points $\QQ(v,z')$, $z'\in\T^2$, is the entire space $\QQ(\R^2\times\T^2)$. Assume therefore that $T\ne\T^2$.

We claim that the group $T$ is infinite. By \cite{CN3}*{Lemma~3.3} we have $(\gamma_nT_n)^\perp\to T^\perp$ in~$\Sub(\Z^2)$. As the group $T^\perp$ is finitely generated, this implies that $T^\perp\subset(\gamma_nT_n)^\perp$ for all $n$ large enough, hence $\gamma_nT_n\subset T$, and the claim follows, since $|T_n|\to+\infty$ by our assumption that $\ord b_n\to+\infty$.

As $T\ne\T^2$, we conclude that the connected component of the identity in $T$ is a circle in $\T^2$, so it has the form $T^\circ=p(\R u')$ for some $u'\in\Z^2$. Since $v\in\R^2\setminus\R\cdot\Q^2$, for every $u''\in\Z^2$ there is a unique $t\in\R$ such that $tv\in u+u''+\R u'$, which implies $p(tv)\in p(u+\R u')=zT^\circ\subset zT$. By varying $u''$ we can then find a sequence of real numbers $t_k$ such that $t_k\to+\infty$ and $p(t_kv)\in zT$ for all $k$. Thus, the set of cluster points of $(\QQ(v_n,z_n))_n$ contains the points $\QQ(v,p(t_kv))$. Since the latter points form a dense subset of $\QQ(\R^2\times\T^2)$ by Lemma~\ref{lem:SL2-2}(2), this finishes the proof of~(3).

\smallskip

(4a) Assume first that $\ord a=+\infty$. As in part (3), for every $s>0$ we can find $\gamma_n\in\SL_2(\Z)$ such that $\gamma_n v_n=\begin{pmatrix}m_ns_n\\ 2s_n\end{pmatrix}$ and $m_ns_n\to s$. Then $\gamma_n z_n=(a_n^{m_n},a_n^2)\gamma_n(1,b_n)$. It follows that for every $s>0$ the sequence $(\QQ(v_n,z_n))_n$ has a cluster point of the form $\QQ(se_1,(*,a^2)z')$ for some~$z'\in\T^2$ of order~$m$. As $\ord a^2=+\infty$, the set of such points is dense in $\QQ(\R^2\times\T^2)$ by (3), and we are done.

Assume next that $\ord a=k\in\N$ and $\frac{|a_n-a|}{s_n}\to+\infty$. Choose $q\in\Q$ such that $a=e^{2\pi i q}$. Let $q_n\in\R$ be such that $e^{2\pi iq_n}=a_n$ and $q_n\to q$. As before, for every $s>0$ we can find $\gamma_n\in\SL_2(\Z)$ such that $\gamma_n v_n=\begin{pmatrix}m_ns_n\\ 2s_n\end{pmatrix}$, $m_n$ is odd and $m_ns_n\to s$. By passing to a subsequence we may assume that $a_n^{m_n}\to e^{2\pi isr}$ for some $r\in\R$. For every $t\in\R$ we can find even integers~$k_n$ such that $k_n(q_n-q)\to s(t-r)$. As $\frac{|q_n-q|}{s_n}\to+\infty$, we have $k_ns_n\to0$.

Let $\gamma_n'\in\SL_2(\Z)$ be such that $\gamma'_n v_n=\begin{pmatrix}(m_n+k_n)s_n\\ 2s_n\end{pmatrix}\to se_1$. Note that
$$
a_n^{m_n+k_n}=a_n^{m_n}e^{2\pi i k_n(q_n-q)}e^{2\pi ik_n q}=a_n^{m_n}e^{2\pi i k_n(q_n-q)}a^{k_n},
$$
and therefore
$$
\gamma'_n z_n=(a_n^{m_n}e^{2\pi i k_n(q_n-q)},1)(a^{k_n},a_n^2)\gamma'_n(1,b_n).
$$
The first factor in the expression above converges to $p(ste_1)$. Recalling that $\ord a=k$ and $\ord b_n=m$, we conclude that
$(\QQ(v_n,z_n))_n$ has a cluster point of the form $\QQ(se_1,p(ste_1)z')$, where $z'$ is a $km$-th root of unity. Since this is true for all $s>0$ and $t\in\R$, it follows that for all $v\in\R^2$ and $t\in\R$ the sequence $(\QQ(v_n,z_n))_n$ has a cluster point of the form $\QQ(v,p(tv)z')$, where~$z'$ is a $km$-th root of unity. Since the set of such points is dense in $\QQ(\R^2\times\T^2)$ by Lemma~\ref{lem:SL2-2}(2), this finishes the proof of (4a).

\smallskip

(4b) As above, choose $q\in\Q$ and $q_n\in\R$ such that $a=e^{2\pi i q}$, $a_n=e^{2\pi iq_n}$ and $q_n\to q$. We can then write
$$
z_n=(e^{2\pi i (q_n-q)},1)(e^{2\pi i q},b_n)=p\Big(\frac{q_n-q}{s_n}v_n\Big)(a,b_n).
$$
As $\frac{q_n-q}{s_n}\to t$ by assumption, we see from this that every cluster point of $(\QQ(v_n,z_n))_n$ must be of the form $\QQ(v,p(tv)z')$ with $z'$ of order $\operatorname{lcm}(k,m)$. For every $v\in\R^2$ we indeed have a cluster point of this form, since, as we already used in the proofs of (3) and (4a), we can find $\gamma_n\in\SL_2(\Z)$ such that $\gamma_nv_n\to v$. But once we get one such cluster point with $v\in\R^2\setminus\Q\cdot\R^2$, we get all points of this form as cluster points by Lemma~\ref{lem:SL2-1}(1).
\ep

\begin{remark}
Since the assumptions on sequences in Lemmas~\ref{lem:SL2-2} and~\ref{lem:SL2-3}(3)--(4b) are also satisfied by every subsequence, the cluster points there are actually limits of the respective sequences. \ee
\end{remark}

We remark that in Lemma~\ref{lem:SL2-3}(2) it is not difficult to obtain a complete description of the cluster points, but we omitted it, since we will not need it. It is also easy to describe the cluster points of the sequences $(\QQ(0,z_n))_n$, but we didn't do this for the same reason. Modulo these small omissions, Lemmas~\ref{lem:SL2-2} and~\ref{lem:SL2-3} allow one, in principle, to completely understand the closures of subsets in $\QQ(\R^2\times\T^2)$, since every sequence in $\QQ\big((\R^2\setminus\{0\})\times\T^2\big)$ has a subsequence as in these lemmas.

\subsection{{Quasi-orbits of the action \texorpdfstring{$\SL_3(\Z)\curvearrowright\SL_3(\R)/U_3(\R)$}{SL3}}}\label{ssec:quasi-orbit-SL3}

Throughout this subsection we assume that
$$
G=\SL_3(\R),\quad U=U_3(\R),\quad\Gamma=\SL_3(\Z).
$$
Define the following subgroups of $G$:
$$
P=P_{0}:
= \left\{
\begin{pmatrix}
* & * & * \\
0 & * & * \\
0 & 0 & *
\end{pmatrix} \right \} \quad , \quad
P_{1}
:= \left\{
\begin{pmatrix}
* & * & * \\
* & * & * \\
0 & 0 & *
\end{pmatrix} \right \}  \quad , \quad
P_{2}
:= \left\{
\begin{pmatrix}
* & * & * \\
0 & * & * \\
0 & * & *
\end{pmatrix} \right\},
$$
$$
Q_{1}
:= \left\{
\begin{pmatrix}
* & * & * \\
* & * & * \\
0 & 0 & 1
\end{pmatrix} \right \}  \quad , \quad
Q_{2}
:= \left\{
\begin{pmatrix}
1 & * & * \\
0 & * & * \\
0 & * & *
\end{pmatrix} \right\} \; .
$$
Then $P_{1}=N(Q_{1})$, $P_{2}=N(Q_{2})$ and $P_{0}=N(U)$, where $N(H)$ denotes the normalizer of a subgroup $H$. Denote by $P_{1,+}\subset P_1$, resp. $P_{2,+}\subset P_2$, the subgroup of elements $g$ such that $g_{33}>0$, resp. $g_{11}>0$.
Denote by $P_+\subset P$ the subgroup of matrices with positive elements on the diagonal. We will also sometimes write $Q_0$ for $U$ and denote by $P_n(\Q)\subset\SL_n(\Q)$ and $P_n(\R)\subset\SL_n(\R)$ the subgroups of upper triangular matrices, so in particular $P_{3}(\mathbb{R})=P$.

\begin{lemma}\label{lem:SLQ-Z}
For every $n\ge2$, we have $\SL_n(\Q)=\SL_n(\Z)P_n(\Q)$.
\end{lemma}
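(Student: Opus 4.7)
The plan is to induct on $n$, using the slight strengthening that every $g \in \operatorname{GL}_n(\Q)$ can be written as $\gamma b$ with $\gamma \in \SL_n(\Z)$ and $b \in \operatorname{GL}_n(\Q)$ upper triangular. The lemma will then follow at once, since for $g \in \SL_n(\Q)$ one automatically has $\det b = 1$, forcing $b \in P_n(\Q)$.

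The base case $n = 1$ is trivial. For the inductive step, I would take $g \in \operatorname{GL}_n(\Q)$, let $v = g e_1 \in \Q^n \setminus \{0\}$ be its first column, and clear denominators by writing $v = m^{-1} w$ where $m \in \N$ and $w \in \Z^n$ is primitive (meaning the gcd of its entries is one). Since any primitive vector of $\Z^n$ can be completed to a basis of $\Z^n$ of determinant one, there exists $\gamma_0 \in \SL_n(\Z)$ with $\gamma_0 e_1 = w$, and hence
$$
\gamma_0^{-1} g = \begin{pmatrix} 1/m & c^{T} \\ 0 & A \end{pmatrix}
$$
for some $c \in \Q^{n-1}$ and some $A \in \operatorname{GL}_{n-1}(\Q)$. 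By the induction hypothesis there is $\gamma_1 \in \SL_{n-1}(\Z)$ such that $\gamma_1^{-1} A$ is upper triangular; setting $\gamma := \gamma_0 \cdot \diag(1, \gamma_1) \in \SL_n(\Z)$ then makes $\gamma^{-1} g$ upper triangular, which completes the step.

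I do not anticipate any genuine obstacle here. The only point requiring care is precisely the passage to the $\operatorname{GL}$-version of the claim: the lower-right block $A$ has $\det A = m$ and therefore need not lie in $\SL_{n-1}(\Q)$, so one cannot directly invoke the lemma on $A$, and it is the slightly strengthened inductive hypothesis that rescues the argument. The sole algebraic input is the completion of a primitive vector of $\Z^n$ to a unimodular basis, which is a standard fact about the standard lattice and needs no sophistication beyond elementary lattice theory.
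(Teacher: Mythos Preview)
Your proof is correct and follows essentially the same approach as the paper: induct on $n$, use transitivity of $\SL_n(\Z)$ on primitive vectors of $\Z^n$ to clear the first column, then reduce to the lower-right $(n-1)\times(n-1)$ block. You are in fact slightly more careful than the paper in explicitly formulating the inductive hypothesis at the $\operatorname{GL}$ level, since the lower-right block need not have determinant one; the paper leaves this point implicit.
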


\bp
Take $g\in \SL_n(\Q)$. Since $\SL_n(\Z)$ acts transitively on the primitive elements of $\Z^n$, we can find $\gamma\in\SL_n(\Z)$ such that the first column of $\gamma g$ has only one nonzero entry $(\gamma g)_{11}$. This proves the lemma for $n=2$ and for $n\ge3$ reduces it to $n-1$, as we now have to deal only with the right lower $(n-1)\times(n-1)$ block of~$\gamma g$.
\ep

\begin{lemma} \label{lem:DM}
The orbit closures for the action $\Gamma\curvearrowright G/U$ are described as follows:
\begin{enumerate}
\item $\overline{\Gamma  gU}=\Gamma  gU$ for $g \in \Gamma P$;
\item $\overline{\Gamma  gU}=\Gamma  gQ_{1}$ for $g\in \Gamma P_{1} \setminus \Gamma P$;
\item $\overline{\Gamma  gU}=\Gamma  gQ_{2}$ for $g\in \Gamma P_{2} \setminus \Gamma P$;
\item $\overline{\Gamma  g U}=G$ for $g\in G\setminus (\Gamma  P_{1} \cup \Gamma  P_{2})$.
\end{enumerate}
\end{lemma}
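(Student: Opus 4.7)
I would combine Ratner's theorem (Theorem~\ref{thm:ratner}) with the structure of the rational parabolic subgroups of $\SL_3$ and the decomposition $\SL_3(\Q)=\SL_3(\Z)P_3(\Q)$ from Lemma~\ref{lem:SLQ-Z}. First I would handle (1) without Ratner: writing $g=\gamma p$ with $\gamma\in\Gamma$ and $p\in P$ and using $P=N_G(U)$, one has $g^{-1}\Gamma g\cap U=p^{-1}(\Gamma\cap U)p=p^{-1}U_3(\Z)p$, which is a cocompact lattice in the Heisenberg group $U$. The $U$-orbit of $\Gamma g$ in $\Gamma\backslash G$ is therefore a continuous image of the compact space $U/(g^{-1}\Gamma g\cap U)$, hence closed, which gives $\overline{\Gamma gU}=\Gamma gU$. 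The identical computation with $(U,P)$ replaced by $(Q_i,P_i)$, using $P_i=N_G(Q_i)$ and the fact that $\Gamma\cap Q_1=\SL_2(\Z)\ltimes\Z^2$ and $\Gamma\cap Q_2$ are cocompact lattices in $Q_1$ and $Q_2$, shows that $\Gamma gQ_i$ is closed whenever $g\in\Gamma P_i$, which yields $\overline{\Gamma gU}\subseteq\Gamma gQ_i$ in cases (2)--(3).

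For the remaining direction in cases (2)--(4) I would apply Ratner's theorem to obtain a closed subgroup $H\supseteq U$ with $g^{-1}\Gamma g\cap H$ a lattice in $H$ and $\overline{\Gamma gU}=\Gamma gH$. The argument then proceeds by two ingredients: (a) a classification of the possible $H$, and (b) a lattice criterion pinning $H$ down from $g$. For (a), the $U$-orbit being dense in $\Gamma gH$ implies $H$ is generated by its $\Ad$-unipotent one-parameter subgroups, a standard refinement of Ratner's theorem. Inspecting the subalgebras of $\mathfrak{sl}_3(\R)$ containing the strict upper-triangular subalgebra $\mathfrak{u}$ and spanned by nilpotents together with their brackets (using the $A_2$ root system) shows these are exactly $U$, $Q_1$, $Q_2$ and $G$. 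The only other closed unimodular subalgebra in that range, the solvable $4$-dimensional $\mathfrak{u}\oplus\R(E_{11}-2E_{22}+E_{33})$, contains a semisimple generator and hence does not qualify.

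For (b), I would prove that for $H\in\{U,Q_1,Q_2\}$ with $N_G(U)=P$, $N_G(Q_1)=P_1$ and $N_G(Q_2)=P_2$, the intersection $g^{-1}\Gamma g\cap H$ is a lattice in $H$ if and only if $g\in\Gamma N_G(H)$. The ``if'' direction is the computation of the first paragraph. For ``only if'', any lattice in $gHg^{-1}$ is Zariski dense, so $gHg^{-1}$ is defined over $\Q$, i.e.\ a $\Q$-form of $H$ in $\SL_3$. Since $\SL_3(\Q)$ acts transitively on the rational flags, the rational $2$-planes and the rational lines in $\Q^3$ (the $\Q$-points of $G/P$, $G/P_1$ and $G/P_2$), there exists $\gamma_0\in\SL_3(\Q)$ with $gHg^{-1}=\gamma_0H\gamma_0^{-1}$, so $\gamma_0^{-1}g\in N_G(H)$. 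Lemma~\ref{lem:SLQ-Z} then gives $\gamma_0\in\SL_3(\Z)P_3(\Q)\subseteq\Gamma\cdot N_G(H)$, whence $g\in\Gamma N_G(H)$.

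The lemma now follows by elimination. In case (2), $g\in\Gamma P_1\setminus\Gamma P$ rules out $H=U$ via the criterion, while the inclusion $\overline{\Gamma gU}\subseteq\Gamma gQ_1$ together with the classification forces $H=Q_1$, giving $\overline{\Gamma gU}=\Gamma gQ_1$. Case (3) is symmetric. In case (4), $g\notin\Gamma P_1\cup\Gamma P_2\supseteq\Gamma P$, so the criterion excludes $H\in\{U,Q_1,Q_2\}$ and the classification leaves only $H=G$. I expect the main obstacle to be the $\Q$-rationality step in (b): verifying that the Zariski closure of a $\Gamma$-lattice in $gHg^{-1}$ is $\Q$-defined, and that $\Q$-forms of $U$, $Q_1$ and $Q_2$ in $\SL_3$ are all $\SL_3(\Q)$-conjugate to the standard ones, since this is precisely where the arithmetic structure of $\Gamma=\SL_3(\Z)$ is needed through Lemma~\ref{lem:SLQ-Z}.
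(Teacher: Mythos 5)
There are two genuine gaps. First, your closedness argument in cases (2)--(3) rests on the claim that $\Gamma\cap Q_1\cong\SL_2(\Z)\ltimes\Z^2$ is a \emph{cocompact} lattice in $Q_1\cong\SL_2(\R)\ltimes\R^2$; this is false, since $\SL_2(\Z)$ is not cocompact in $\SL_2(\R)$. Hence $\Gamma gQ_1$ is not a continuous image of a compact space, and the "identical computation" only works for $H=U$, where $H_3(\Z)$ genuinely is cocompact in the Heisenberg group. The conclusion that $\Gamma gQ_i$ is closed for $g\in\Gamma P_i$ is still true, but it needs a different justification: either the general fact that $\Gamma H$ is closed in $G$ whenever $\Gamma\cap H$ is a lattice in $H$ (Raghunathan), or the elementary argument the paper uses, showing that along a convergent sequence $\gamma_n gU\to hU$ the relevant integer rows/columns of $\gamma_n$ stabilize, so that eventually $\gamma_m^{-1}\gamma_n\in\Gamma\cap Q_i$ (cf.\ Lemma~\ref{lem:cornerconv}). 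Second, your classification step (a) invokes a "refinement" of Ratner's theorem asserting that $H$ is generated by its $\Ad$-unipotent one-parameter subgroups. Theorem~\ref{thm:ratner} as stated gives no such thing (not even connectedness of $H$), and without it your inspection of subalgebras does not exclude the unimodular solvable group $U\rtimes\exp\big(\R\,\diag(1,-2,1)\big)$, nor disconnected groups sandwiched between the listed ones. This is exactly the point the paper does not reprove but imports from Dani--Margulis \cite{DM}*{Theorem~1.2}, which says that for $\SL_3(\R)$ and $U=U_3(\R)$ the only possible $H$ are $U$, $Q_1$, $Q_2$ and $G$; to keep your route self-contained you would have to prove or explicitly cite a statement of that strength.

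Apart from this, your architecture genuinely differs from the paper's and would be viable once repaired. The paper pins down which $g$ go with which $H$ by combining \cite{DM}*{Proposition~1.4} (two points with the same $H$ differ by the commensurator $\SL_3(\Q)$ acting through $P_i$) with Lemma~\ref{lem:SLQ-Z} and explicit representatives, and proves the density inclusion $\Gamma gQ_2\subset\overline{\Gamma gU}$ by Hedlund's theorem applied to the lower $2\times2$ block; you instead propose a lattice criterion via Zariski density and $\Q$-rationality (transitivity of $\SL_3(\Q)$ on rational flags plus Lemma~\ref{lem:SLQ-Z}) and get the density inclusion for free from Ratner plus elimination. That is a legitimate alternative, but note that (i) Zariski density of a lattice in $gQ_ig^{-1}\cong\SL_2(\R)\ltimes\R^2$ requires a Borel-density-type theorem beyond the semisimple case (available, but not free), and (ii) in case (2) the exclusion of $H=Q_2$ from $\Gamma gQ_2=\overline{\Gamma gU}\subset\Gamma gQ_1$ still needs a short Baire-category/dimension argument (or a proof that $\Gamma P_1\cap\Gamma P_2=\Gamma P$), which you did not indicate.
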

\bp
By Ratner's theorem, for every $g\in G$, we have $\overline{\Gamma  g U}=\Gamma g H$ with $H$ as in Theorem~\ref{thm:ratner}. As was shown by Dani and Margulis even earlier, in~\cite{DM}*{Theorem~1.2}, for $G=\SL_3(\R)$ the only choices for~$H$ are in fact $U$, $Q_1$, $Q_2$ and $G$. Moreover, by~\cite{DM}*{Proposition~1.4}, if we have one point $g\in G$ with $\overline{\Gamma  gU}=\Gamma g Q_i$ for some $0\le i\le 2$ (where $Q_0=U$), then any other $g'\in G$ with $\overline{\Gamma  g'U}=\Gamma g' Q_i$ must lie in $\SL_3(\Q)gP_i$. We remark that here 
we implicitly use the well-known fact that the commensurator of~$\Gamma$ in~$G$ is~$\SL_3(\Q)$.

Consider first $g=e$. Assume $\gamma_nU\to hU$ in $G/U$ for some $\gamma_n\in\Gamma$ and $h\in G$. Then the first column of $\gamma_n$ converges to that of $h$ and hence eventually stabilizes. Therefore, if $m$ and $n$ are large enough, then $\gamma_{m}^{-1}\gamma_n\in \Gamma\cap Q_2$. But then from the convergence $\gamma_{m}^{-1}\gamma_n U\to \gamma_{m}^{-1}hU$ as $n\to\infty$ we similarly see that the coefficients $(\gamma_{m}^{-1}\gamma_n)_{22}$ and $(\gamma_{m}^{-1}\gamma_n)_{32}$ must stabilize. It follows that if $k$ and $n$ are large enough, then $\gamma_k^{-1}\gamma_n=(\gamma_{m}^{-1}\gamma_k)^{-1}\gamma_{m}^{-1}\gamma_n\in\Gamma\cap U$. Hence $\gamma_nU=\gamma_kU$ and therefore $hU=\gamma_k U$, so the $\Gamma$-orbit of $U$ in $G/U$ is closed. As~$P$ normalizes $U$, by acting by~$P$ on the right we conclude that the same is true for the orbit of $gU$ for all $g\in P$, hence for all $g\in\Gamma P$. By the results of~\cite{DM} we discussed above, any other point $gU$ with a closed orbit must have $g\in\SL_3(\Q)P$. As $\SL_3(\Q)P=\Gamma P$ by Lemma~\ref{lem:SLQ-Z}, we conclude that the elements $g\in\Gamma P$ are exactly the ones for which $\overline{\Gamma g U}=\Gamma g U$.

Next, take $g\in P_{2,+}\setminus\Gamma P$. Consider the $2\times 2$ matrix $g':=g_{11}^{-1/2}(g_{ij})_{i,j=2}^3\in\SL_2(\R)$. Then $g'\notin\SL_2(\Z)P_2(\R)=\SL_2(\Q)P_2(\R)$. By Lemma~\ref{lem:Hedlund} it follows that $\overline{\SL_2(\Z)g' U_2(\R)}=\SL_2(\R)$. This implies that $\overline{\Gamma g U}\supset \Gamma g Q_2$. On the other hand, for the same reasons as in the previous paragraph, if $\gamma_ngU\to hU$ in $G/U$ for some $\gamma_n\in\Gamma$ and $h\in G$, then for all $m$ and $n$ large enough we have $\gamma_{m}^{-1}\gamma_n\in \Gamma\cap Q_2$ and therefore $\gamma_ngU\in \gamma_mQ_2gU=\gamma_m gQ_2$, as $g$ normalizes~$Q_2$. Since $gQ_2\subset G$ is closed, it follows that $hU\in \gamma_m gQ_2$ in $G/U$. Thus, $\Gamma g Q_2$ is closed and hence $\overline{\Gamma g U}=\Gamma g Q_2$. Then the same is true for all $g\in\Gamma (P_{2,+}\setminus\Gamma P)=\Gamma P_2\setminus\Gamma P$. By the results of~\cite{DM} we discussed above, for any fixed $g\in \Gamma P_2\setminus\Gamma P$, any other point $g'U$ with $\overline{\Gamma g' U}=\Gamma g' Q_2$ must have $g'\in\SL_3(\Q)gP_2=\SL_3(\Q)P_2$. As $\SL_3(\Q)P_2=\Gamma P_2$ by Lemma~\ref{lem:SLQ-Z}, we conclude that the elements $g\in\Gamma P_2\setminus\Gamma P$ are exactly the ones for which $\overline{\Gamma g U}=\Gamma g Q_2$.

Similar arguments show that the elements $g\in\Gamma P_1\setminus\Gamma P$ are exactly the ones for which $\overline{\Gamma g U}=\Gamma g Q_1$. We note only that it is convenient to pass to inverses and rather prove that $\overline{Ug^{-1}\Gamma}=Q_1 g^{-1}\Gamma$, since then if $Ug^{-1}\gamma_n\to Uh$ in $U\backslash G$ for some $\gamma_n\in\Gamma$ and $h\in G$, we see that the bottom rows of $\gamma_n$ stabilize.

Finally, if $g\in G\setminus (\Gamma  P_{1} \cup \Gamma  P_{2})$ and  $\overline{\Gamma  g U}=\Gamma g H$, then the only remaining option for $H$ is $H=G$.
\ep

Later we will need another version of the stabilization property used in the above proof.

\begin{lemma}\label{lem:cornerconv}
Assume $g$ and $g_n$ are elements of $P_{i,+}$ ($i\in\{1,2\}$) and $\gamma_n$ are elements of $\Gamma$ such that $\gamma_ng_nU\to gU$ in $G/U $ and  $(g_n)_{33}^{-1}>\delta$, if $i=1$, and $(g_n)_{11}>\delta$, if $i=2$, for all $n$ and some $\delta>0$. Then $\gamma_n\in Q_i$ for all $n$ large enough, $(g_n)_{33}\to g_{33}$, if $i=1$, and $(g_n)_{11}\to g_{11}$, if $i=2$.
\end{lemma}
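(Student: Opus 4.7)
Since the quotient map $G\to G/U$ is a principal $U$-bundle and therefore admits local sections, the convergence $\gamma_n g_n U\to g U$ can be lifted to $G$: there exist $u_n\in U$ with $\gamma_n g_n u_n\to g$ in $G$. The plan is then to exploit the rigidity of the ``pivotal'' row/column (the third row for $P_1$, the first column for $P_2$) together with integrality of $\gamma_n$ and the uniform bound.

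Consider first the case $i=1$. Pass to inverses, so $u_n^{-1} g_n^{-1}\gamma_n^{-1}\to g^{-1}$ in $G$. Because $g_n^{-1}\in P_{1,+}$, its third row is $(0,0,(g_n)_{33}^{-1})$, and a direct computation using the fact that $u_n^{-1}\in U$ is upper triangular unipotent shows that left multiplication by $u_n^{-1}$ leaves the third row unchanged. Consequently the third row of $u_n^{-1}g_n^{-1}\gamma_n^{-1}$ equals $(g_n)_{33}^{-1}$ times the third row of $\gamma_n^{-1}$, and this vector converges to $(0,0,g_{33}^{-1})$.

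Writing the third row of $\gamma_n^{-1}$ as $(a_n,b_n,c_n)\in\Z^3$, we obtain $(g_n)_{33}^{-1}a_n\to 0$ and $(g_n)_{33}^{-1}b_n\to 0$. Since $(g_n)_{33}^{-1}>\delta$, the quantities $a_n$ and $b_n$ tend to zero while remaining integers, so $a_n=b_n=0$ for $n$ large. Then $\gamma_n^{-1}\in\SL_3(\Z)\cap P_1$ forces $c_n\in\{\pm1\}$; the positivity of both $g_{33}$ and $(g_n)_{33}$ together with $(g_n)_{33}^{-1}c_n\to g_{33}^{-1}$ rules out $c_n=-1$, giving $c_n=1$ for $n$ large, i.e.\ $\gamma_n^{-1}\in Q_1$, and hence $\gamma_n\in Q_1$. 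Finally $(g_n)_{33}^{-1}\to g_{33}^{-1}$, so $(g_n)_{33}\to g_{33}$.

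The case $i=2$ is completely analogous, now working directly (without inverting) with the first column. Since $u_n\in U$ fixes $e_1$, the first column of $g_n u_n$ equals that of $g_n$, which for $g_n\in P_{2,+}$ is $((g_n)_{11},0,0)^T$. Hence the first column of $\gamma_n g_n u_n$ is $(g_n)_{11}$ times the first column of $\gamma_n$, and this converges to $(g_{11},0,0)^T$. Using $(g_n)_{11}>\delta$, integrality of the entries of $\gamma_n$, and positivity of $g_{11}$ and $(g_n)_{11}$, the same argument as above yields $(\gamma_n)_{21}=(\gamma_n)_{31}=0$ and $(\gamma_n)_{11}=1$ for $n$ large, i.e.\ $\gamma_n\in Q_2$, and $(g_n)_{11}\to g_{11}$. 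There is no real obstacle here; the only point requiring mild care is ensuring that the sign of the leading diagonal entry is pinned down, which is exactly where the $P_{i,+}$ hypothesis is used.
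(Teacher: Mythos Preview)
Your proof is correct and follows essentially the same approach as the paper: for $i=2$ one tracks the first column (which is preserved by right multiplication by $U$) and for $i=1$ one passes to inverses and tracks the bottom row, using integrality of $\gamma_n$, the uniform lower bound, and the positivity coming from $P_{i,+}$ to pin down the relevant entries. The paper presents this more tersely (without explicitly invoking local sections), but the substance is identical.
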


\bp
For $i=2$, the convergence $\gamma_ng_nU\to gU$ implies that the first column of $\gamma_n$ multiplied by $(g_n)_{11}$ converges to the first column of $g$. As $(g_n)_{11}>\delta$, $g_{21}=g_{31}=0$ and $g_{11}>0$, this is possible only if $(\gamma_n)_{21}=(\gamma_n)_{31}=0$ and $(\gamma_{n})_{11}>0$ for $n$ large enough, hence $\gamma_n\in\Gamma\cap Q_2$. Then we must have $(g_n)_{11}\to g_{11}$. The case $i=1$ is similar, but this time we use that $Ug_n^{-1}\gamma_n^{-1}\to Ug^{-1}$ in $U\backslash G$ and look at the bottom rows of matrices.
\ep

Using Lemma~\ref{lem:DM} we can describe the quasi-orbits of $\Gamma\curvearrowright G/U$. But first we need the following lemma.

\begin{lemma}\label{lem:P1-P2-spaces}
We have the following properties.
\begin{enumerate}
  \item The $P_{1,+}$-homogeneous space $P_{1,+}/U$ can be identified with $(\R^2\setminus\{0\})\times(0,+\infty)$. An element $g\in P_{1,+}$ lies in $\Gamma P$ if and only if $\begin{pmatrix}
         g_{11} \\
         g_{21}
       \end{pmatrix}\in\R\cdot\Q^2$.
  \item The $P_{2,+}$-homogeneous space $P_{2,+}/U$ can be identified with $(0,+\infty)\times(\R^2\setminus\{0\})$. An element $g\in P_{2,+}$ lies in $\Gamma P$ if and only if $\begin{pmatrix}
         g_{22} \\
         g_{32}
       \end{pmatrix}\in\R\cdot\Q^2$.
\end{enumerate}
\end{lemma}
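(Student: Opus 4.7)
The plan is essentially a routine computation in two steps: first, an explicit parametrization of $P_{i,+}/U$ via block matrices, and second, a flag-theoretic reformulation of the condition $g \in \Gamma P$.

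For part (1), I would write $g \in P_1$ in block form
$$g = \begin{pmatrix} A & v \\ 0 & c \end{pmatrix}, \qquad A \in \GL_2(\R),\ \det A = 1/c,\ v\in\R^2,\ c=g_{33},$$
and correspondingly $u \in U$ as $u = \begin{pmatrix} U_2 & w \\ 0 & 1 \end{pmatrix}$ with $U_2 \in U_2(\R)$. Then $gu = \bigl(\begin{smallmatrix} A U_2 & Aw + v \\ 0 & c \end{smallmatrix}\bigr)$, which makes it clear that $c=g_{33}$ and the first column $(g_{11}, g_{21})^T$ of $A$ are invariant under the right $U$-action (and $v$ is moved through the full orbit $A\R^2=\R^2$). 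Conversely, with $g_{33}$ fixed, two elements of $\{A\in\GL_2(\R):\det A = 1/g_{33}\}$ differ on the right by an element of $U_2(\R)$ if and only if they share the same first column (which must be nonzero). This shows the map $gU\mapsto ((g_{11},g_{21}),g_{33})$ is a bijection from $P_{1,+}/U$ onto $(\R^2\setminus\{0\})\times(0,+\infty)$. Part (2) is completely analogous using the block decomposition $g = \bigl(\begin{smallmatrix} g_{11} & w^T \\ 0 & B \end{smallmatrix}\bigr)$; right multiplication by $U$ multiplies $B$ on the right by $U_2(\R)$, fixing $g_{11}$ and the first column $(g_{22},g_{32})^T$ of $B$.

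For the characterization of $g \in \Gamma P$, I would use Lemma~\ref{lem:SLQ-Z} to rewrite $\Gamma P = \SL_3(\Q) P$, and then observe that $\SL_3(\Q) P$ is precisely the set of $g\in G$ such that the flag $(gR_1 \subset gR_2)$ is rational, where $R_k := \R e_1+\cdots+\R e_k$. Indeed, if $g = \gamma p$ with $\gamma\in\SL_3(\Q)$ and $p\in P$, then $p R_k = R_k$, so $gR_k = \gamma R_k$ is a rational subspace; conversely, if both $gR_1$ and $gR_2$ are rational, pick $\gamma \in \SL_3(\Q)$ carrying the standard flag to $(gR_1\subset gR_2)$, whence $\gamma^{-1}g \in P$.

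Now for $g \in P_{1,+}$ the plane $gR_2 = R_2$ is automatic (the third row is $(0,0,g_{33})$), so the only condition is that the line $gR_1 = \R(g_{11},g_{21},0)^T$ be rational, i.e.\ $(g_{11},g_{21}) \in \R\cdot\Q^2$. For $g \in P_{2,+}$ the line $gR_1 = \R e_1 = R_1$ is automatic, and $gR_2 = \R e_1 + \R(g_{12},g_{22},g_{32})^T$ is rational iff $(g_{22},g_{32}) \in \R\cdot\Q^2$. This yields both characterizations. There is no real obstacle here; the only point worth being careful about is the flag interpretation of $\SL_3(\Q)P$, which relies on Lemma~\ref{lem:SLQ-Z} and transitivity of $\SL_3(\Q)$ on rational flags.
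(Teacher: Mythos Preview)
Your proof is correct. The identification of $P_{i,+}/U$ via block computation is essentially the same as the paper's orbit--stabilizer argument (the paper defines an action of $P_{1,+}$ on $\R^2\times\R$ and identifies $U$ as the stabilizer of $\bigl(\begin{smallmatrix}1\\0\end{smallmatrix},1\bigr)$, which amounts to your coset calculation).

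For the characterization of $g\in\Gamma P$, you take a genuinely different route. The paper argues directly inside $\Gamma$: it observes that $\Gamma P=\Gamma P_+$, that for $g\in P_{1,+}$ any factorization $g=\gamma h$ with $h\in P_+$ forces $\gamma\in\Gamma\cap Q_1$, and then computes the orbit $(\Gamma\cap Q_1)P_+\,x_0=(\R\cdot\Q^2)\times(0,+\infty)$. Your argument instead passes through $\Gamma P=\SL_3(\Q)P$ (Lemma~\ref{lem:SLQ-Z}) and interprets membership in $\SL_3(\Q)P$ as rationality of the flag $gR_1\subset gR_2$, reducing the question for $g\in P_{i,+}$ to rationality of a single line. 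Your approach is more conceptual and treats both cases $i=1,2$ uniformly; the paper's is more hands-on but avoids invoking transitivity of $\SL_3(\Q)$ on rational flags. Both are short and either would serve.
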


\bp
We define an action of $P_{1,+}$ on $\R^3=\R^2\times\R$ by letting
$$
g(v,t):=\Big(\begin{pmatrix}
           g_{11} & g_{12} \\
           g_{21} & g_{22}
         \end{pmatrix}v,g_{33}t\Big).
$$
It is easy to see that the $P_{1,+}$-orbit of the point $x_0:=\big(\begin{pmatrix} 1 \\ 0 \end{pmatrix},1\big)$ is $(\R^2\setminus\{0\})\times(0,+\infty)$ and its stabilizer is $U$. Hence we can identify $P_{1,+}/U$ with $(\R^2\setminus\{0\})\times(0,+\infty)$.

Next, take $g\in P_{1,+}$. Observe that $\Gamma P=\Gamma P_+$ and if $g=\gamma h$ for some $\gamma\in\Gamma$ and $h\in P_+$, then $\gamma\in\Gamma\cap P_{1,+}=\Gamma\cap Q_1$. It follows that $g\in\Gamma P$ if and only if $gx_0\in(\Gamma\cap Q_1)P_+x_0$. Since $(\Gamma\cap Q_1)P_+x_0=(\R\cdot\Q^2)\times(0,+\infty)$, this finishes the proof of~(1).

Part (2) is proved similarly by considering the action of $P_{2,+}$ on $\R\times\R^2$ defined by
$$
g(t,v):=\Big(g_{11}t,\begin{pmatrix}
           g_{22} & g_{23} \\
           g_{32} & g_{33}
         \end{pmatrix}v\Big).
$$
\ep

For $\theta\in\R\setminus\Q$ and $r,s,t>0$, define the following elements of $G$:
\begin{equation}\label{eq:A-matrices}
A(s,t):=\begin{pmatrix}
s & 0 & 0 \\
0 & s^{-1}t & 0 \\
0 & 0 & t^{-1}
\end{pmatrix},\quad
A_1(r,\theta):=\begin{pmatrix}
1 & 0 & 0 \\
\theta & r & 0 \\
0 & 0 & r^{-1}
\end{pmatrix},\quad
A_2(r,\theta):=\begin{pmatrix}
r & 0 & 0 \\
0 & 1 & 0 \\
0 & \theta & r^{-1}
\end{pmatrix}.
\end{equation}

\begin{lemma}\label{lem:representatives}
As a set, the quasi-orbit space $\QQ(G/U)$ for the action $\Gamma\curvearrowright G/U$ can be identified with the disjoint union of the sets $P_+/U$, $P_{1,+}/Q_1$, $P_{2,+}/Q_2$ and $\{0\}$. As representatives of the quasi-orbits parameterized by $P_+/U$, $P_{1,+}/Q_1$ and $P_{2,+}/Q_2$ we can take the points $A(s,t)U$, $A_1(r,\theta)U$ and $A_2(r,\theta)U$, resp., where $r,s,t>0$ and $\theta\in\R\setminus\Q$ is fixed. For each $r>0$ and $i=1,2$, the quasi-orbit of $A_i(r,\theta)U$ is independent of the choice of $\theta\in\R\setminus\Q$.
\end{lemma}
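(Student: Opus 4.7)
The plan is to combine Lemma~\ref{lem:DM} (which classifies orbit closures into four types) with direct coset computations to identify the quasi-orbit space with the claimed disjoint union. By Lemma~\ref{lem:DM}, every orbit closure is one of $\Gamma gU$, $\Gamma gQ_1$, $\Gamma gQ_2$, or all of $G$, depending on whether $g$ lies in $\Gamma P$, $\Gamma P_1\setminus\Gamma P$, $\Gamma P_2\setminus\Gamma P$, or $G\setminus(\Gamma P_1\cup\Gamma P_2)$. All orbits in the last case share the closure $G$, and thus fuse into a single quasi-orbit corresponding to the singleton $\{0\}$. For the first three types, the quasi-orbit of $gU$ is completely determined by the double coset $\Gamma g H$ where $H$ is $U$, $Q_1$, or $Q_2$ respectively, so the task reduces to computing $\Gamma\backslash\Gamma P/U$, $\Gamma\backslash\Gamma P_1/Q_1$, and $\Gamma\backslash\Gamma P_2/Q_2$.

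The key observation driving all three computations is the same: for $i\in\{0,1,2\}$ (with $P_0:=P$ and $Q_0:=U$), one has (a)~$\Gamma P_i=\Gamma P_{i,+}$, because $\Gamma$ contains the diagonal matrices $\operatorname{diag}(\pm1,\pm1,\pm1)$ of determinant $1$ needed to correct the signs on the diagonal; (b)~$\Gamma\cap P_{i,+}\subset Q_i$, because for such an integer matrix the relevant diagonal entries are positive integers whose product equals the determinant of the complementary block (or $1$), forcing them to equal $1$; and (c)~$Q_i$ is normal in $P_{i,+}$, since $P_i=N(Q_i)$ by construction. Given (a), (b), (c), if $g,g'\in P_{i,+}$ satisfy $\Gamma g Q_i=\Gamma g' Q_i$, then $g'=\gamma g q$ with $\gamma\in\Gamma$, $q\in Q_i$, and $\gamma=g'q^{-1}g^{-1}\in\Gamma\cap P_{i,+}\subset Q_i$, whence $g' Q_i=\gamma gQ_i \cdot q = g(g^{-1}\gamma g)Q_i=gQ_i$ by normality. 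Thus the natural map $P_{i,+}/Q_i\to \Gamma\backslash\Gamma P_i/Q_i$ is a bijection, giving the type~0, 1, 2 pieces of the decomposition.

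It remains to exhibit the representatives. For type~0, every class in $P_+/U$ decomposes uniquely as a diagonal part times an element of $U$, giving the unique representative $A(s,t)U$ with $s,t>0$. For type~$i\in\{1,2\}$, the homomorphism $P_{i,+}\to\R^*_+$ sending $g$ to $g_{33}^{-1}$ (resp.\ $g_{11}$) has kernel $Q_i$, so $P_{i,+}/Q_i\cong(0,+\infty)$ and the element $A_i(r,\theta)$ represents the parameter $r$; the coset $A_i(r,\theta)Q_i$ is manifestly independent of $\theta$ since varying $\theta$ only alters an off-diagonal entry lying in $Q_i$. The restriction $\theta\in\R\setminus\Q$ is imposed so that, by Lemma~\ref{lem:P1-P2-spaces}, $A_i(r,\theta)\notin\Gamma P$ and the orbit is genuinely of type~$i$ rather than degenerating to a closed (type~0) orbit; this is the only subtle point, since without it the representative could accidentally sit in the type~0 stratum. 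Assembling the three strata with the single type~G point $\{0\}$ yields the claimed identification of $\QQ(G/U)$.
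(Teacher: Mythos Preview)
Your proof is correct and follows essentially the same route as the paper: classify orbit closures via Lemma~\ref{lem:DM}, then identify each stratum with a coset space using $\Gamma P_i=\Gamma P_{i,+}$, the inclusion $\Gamma\cap P_{i,+}\subset Q_i$, and normality of $Q_i$ in $P_{i,+}$. Your presentation is slightly more streamlined in that you package the three cases into the uniform observations (a), (b), (c); the paper treats them sequentially and identifies $P_{1,+}/Q_1$ via the determinant of the upper-left $2\times2$ block rather than via the homomorphism $g\mapsto g_{33}^{-1}$, but these are the same since $\det g=1$. You also correctly flag and handle the one genuine subtlety, namely that each coset in $P_{i,+}/Q_i$ must admit a representative outside $\Gamma P$ so that the bijection $P_{i,+}/Q_i\cong\Gamma\backslash\Gamma P_i/Q_i$ really parameterizes the type-$i$ quasi-orbits and not an a priori larger set; both you and the paper resolve this by taking $\theta$ irrational and invoking Lemma~\ref{lem:P1-P2-spaces}.
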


\bp
By Lemma~\ref{lem:DM} we have orbit closures of four types. We deal separately with each of them. Every $\Gamma$-orbit in $\Gamma P$ can be represented by an element of $P_+$. Given two such elements $g,h\in P_+$, the $\Gamma$-orbits of $gU$ and $hU$ are closed. If these orbits coincide, then $g\in\gamma hU$ for some $\gamma\in\Gamma$. Then $\gamma\in \Gamma\cap P_+=\Gamma\cap U$ and therefore $g\in h(h^{-1}\gamma h)U=hU$, since~$P_+$ normalizes~$U$. Therefore the quasi-orbits of the first type are parameterized by $P_+/U$. It is immediate that the matrices $A(s,t)$ ($s,t>0$) give representatives of the cosets in $P_+/U$.

Similarly, the quasi-orbits of $gU$ for $g\in\Gamma P_1\setminus\Gamma P$ are parameterized by $(P_{1,+}\setminus\Gamma P)Q_1/Q_1$.
Since $\begin{pmatrix} 1 & 0 \\ \theta & r \end{pmatrix}\SL_2(\R)$ coincides with the set of $2\times2$ matrices of determinant $r$, the elements $A_1(r,\theta)$, for all $r>0$ and any fixed $\theta$, are representatives of the cosets in $P_{1,+}/Q_1$ and, for each $r>0$, the coset $A_1(r,\theta)Q_1$ is independent of the choice of $\theta$. By Lemma~\ref{lem:P1-P2-spaces}(1) we have $A_1(r,\theta)\notin\Gamma P$ if $\theta\in\R\setminus\Q$. Thus, every coset in $P_{1,+}/Q_1$ can be represented by an element of $P_{1,+}\setminus\Gamma P$, that is, we have $(P_{1,+}\setminus\Gamma P)Q_1=P_{1,+}$. Therefore the quasi-orbits of $gU$ for $g\in\Gamma P_1\setminus\Gamma P$ are parameterized by $P_{1,+}/Q_1$ and represented by the points $A_1(r,\theta)U$.

The case of quasi-orbits of $gU$ for $g\in\Gamma P_2\setminus\Gamma P$ is similar. Finally, all elements $g\in G\setminus(\Gamma P_1\cup\Gamma P_2)$ define the same quasi-orbit, which we denote by~$0$.
\ep

We now want to describe the topology on the quasi-orbit space $\QQ(G/U)$. It is clear that the point $0\in\QQ(G/U)$ is dense. To deal with the rest, denote by $\QQ_0(s,t)$, $\QQ_1(r)$ and $\QQ_2(r)$ the quasi-orbits of $A(s,t)U$, $A_1(r,\theta)U$ and $A_2(r,\theta)U$, resp. Let us first observe the following.

\begin{lemma} \label{lem:q1-q2-q0}
For every $r>0$, we have
$$
\overline{\{\QQ_1(r)\}}=\overline{\{\QQ_0(s,r):s>0\}}\quad\text{and}\quad
\overline{\{\QQ_2(r)\}}=\overline{\{\QQ_0(r,t):t>0\}}.
$$
\end{lemma}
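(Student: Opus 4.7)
The plan is to translate both equalities into equivalent statements about closed $\Gamma$-invariant subsets of $G/U$, using the order-isomorphism between closed subsets of $\QQ(G/U)$ and closed $\Gamma$-invariant subsets of $G/U$ coming from openness of the quotient map (cf.~\cite{CN3}*{Corollary~1.5}). For the first identity, this reduces to proving
$$
\overline{\bigcup_{s>0}\Gamma A(s,r)U}=\overline{\Gamma A_1(r,\theta)U}
$$
in $G/U$, and by Lemma~\ref{lem:DM}(2) the right-hand side equals the closed set $\Gamma A_1(r,\theta)Q_1$.

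First I would establish the inclusion $\subset$ by a direct matrix calculation:
$$
A_1(r,\theta)^{-1}A(s,r)=\begin{pmatrix} s & 0 & 0 \\ -s\theta/r & s^{-1} & 0 \\ 0 & 0 & 1 \end{pmatrix}\in Q_1
$$
for every $s>0$, so $A(s,r)\in A_1(r,\theta)Q_1$ and hence $\bigcup_{s>0}\Gamma A(s,r)U\subset\Gamma A_1(r,\theta)Q_1$.

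For the reverse inclusion, since $\Gamma A_1(r,\theta)Q_1$ is the closure of the single orbit $\Gamma A_1(r,\theta)U$, it suffices to approximate the coset $A_1(r,\theta)U\in G/U$ by elements of the form $\gamma_n A(s_n,r)U$. My plan is to choose coprime integer pairs $(a_n,c_n)$ with $a_n\to+\infty$ and $c_n/a_n\to\theta$ — available because $\theta\in\R\setminus\Q$ — complete them to matrices $\begin{pmatrix}a_n & b_n \\ c_n & d_n\end{pmatrix}\in\SL_2(\Z)$, embed these into $\Gamma$ as $\gamma_n:=\begin{pmatrix}a_n & b_n & 0 \\ c_n & d_n & 0 \\ 0 & 0 & 1\end{pmatrix}$, and set $s_n:=1/a_n$. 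The first column of $\gamma_n A(s_n,r)$ then converges to $(1,\theta,0)^\top$ and the third is constant $(0,0,r^{-1})^\top$, while the potentially divergent second column $(ra_nb_n, ra_nd_n, 0)^\top$ becomes $(0,r,0)^\top$ after a right $U$-correction that subtracts $ra_nb_n$ times the first column, using the $\SL_2$-determinant identity $a_nd_n-b_nc_n=1$. This yields the convergence $\gamma_n A(s_n,r)U\to A_1(r,\theta)U$ in $G/U$.

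The second equality is handled by the symmetric argument: one verifies that $A_2(r,\theta)^{-1}A(r,t)\in Q_2$ for every $t>0$, and approximates $A_2(r,\theta)U$ by $\gamma_n A(r,t_n)U$ using the embedding of $\SL_2(\Z)$ into the lower-right $2\times 2$ block of $\Gamma$. The main technical point throughout is that the right $U$-correction is what tames the unbounded off-diagonal column of $\gamma_n A(s_n,r)$ (resp.\ of $\gamma_n A(r,t_n)$); once this observation is in place the limit is forced by the $\SL_2$-determinant relation.
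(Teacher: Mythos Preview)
Your proof is correct and follows essentially the same two-step structure as the paper's: one inclusion comes from the observation that $A(s,r)\in A_1(r,\theta)Q_1$, and the other from approximating $A_1(r,\theta)U$ by points $\gamma_n A(s_n,r)U$ with $\gamma_n\in\diag(\SL_2(\Z),1)$. The only difference is cosmetic: where you build the approximating sequence by hand from coprime pairs $(a_n,c_n)$ with $c_n/a_n\to\theta$, the paper simply invokes the density of $\SL_2(\Z)P_2(\R)=\SL_2(\Q)P_2(\R)$ in $\SL_2(\R)$, which amounts to the same rational approximation.
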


\bp
Let us explain the first equality, the second is similar. The closure $\Gamma A_1(r,\theta)Q_1$ of the $\Gamma$-orbit of $A_1(r,\theta)U$ contains the points $A(s,r)U$ for all $s>0$. This proves one inclusion. The opposite inclusion follows from the fact $\SL_2(\Z)P_2(\R)=\SL_2(\Q)P_2(\R)$ is dense in $\SL_2(\R)$, since it implies that we can find $\gamma_n\in\SL_2(\Z)$ and $s_n>0$ such that $\operatorname{diag}(\gamma_n,1)A(s_n,r)U\to A_1(r,\theta) U$.
\ep

We first consider convergence in $\QQ(G/U)$ for sequences of elements of the form $\QQ_0(s,t)$.

\begin{lemma}\label{lem:SL3-convergence-1}
Let $(s_{n})_n$ and $(t_{n})_n$ be sequences in $(0,+\infty)$. Then $\QQ_0(s_n,t_n)\to0$ in $\QQ(G/U)$ if and only if $s_n\to0$ and $t_n\to0$. If $(s_n,t_n)\to(s_0,t_0)\in[0,+\infty]^2\setminus\{(0,0)\}$, then
\begin{enumerate}
    \item $\QQ_0(s_n,t_n)\to\QQ_1(r)$ if and only if $s_0=0$ and $t_0=r$;
    \item $\QQ_0(s_n,t_n)\to \QQ_2(r)$ if and only if $s_0=r$ and $t_0=0$;
    \item $\QQ_0(s_n,t_n)\to\QQ_0(s,t)$ if and only if $s_0\in\{0,s\}$ and $t_0\in\{0,t\}$.
\end{enumerate}
\end{lemma}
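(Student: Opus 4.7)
The plan is to use the standard equivalence $\QQ(g_nU)\to\QQ(yU)$ in $\QQ(G/U)$ iff $yU\in\overline{\bigcup_{n\geq N}\Gamma g_nU}$ for every $N$, and to pass to subsequences so that $(s_n,t_n)\to(s_0,t_0)$ in $[0,+\infty]^2$, reducing the problem to a case analysis on~$(s_0,t_0)$.

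For the necessary conditions I will use two $\Gamma$-equivariant continuous maps on $G/U$: the first column $\pi_1\colon gU\mapsto ge_1\in\R^3\setminus\{0\}$, and the wedge product $\pi_2\colon gU\mapsto ge_1\wedge ge_2\in\Lambda^2\R^3\setminus\{0\}\cong(\R^3)^*\setminus\{0\}$, with $\Gamma=\SL_3(\Z)$ acting on $(\R^3)^*$ by inverse-transpose. Both descend to continuous maps of quasi-orbit spaces. Noting that $\pi_1(A(s,t)U)=se_1$, $\pi_2(A(s,t)U)=te_3^*$, that $\pi_1(A_1(r,\theta)U)=e_1+\theta e_2$ has dense $\SL_3(\Z)$-orbit in $\R^3$ for irrational $\theta$ while $\pi_2(A_1(r,\theta)U)=re_3^*$, and symmetrically for $A_2$, the essential input is a three-dimensional analog of Lemma~\ref{lem:SL2-3}(1)--(2): for $s_n\to s_0\in[0,+\infty]$, one has $\QQ(s_ne_1)\to\QQ(se_1)$ in $\QQ(\R^3\setminus\{0\})$ iff $s_0\in\{0,s\}$, and $\QQ(s_ne_1)$ converges to the generic (dense) quasi-orbit iff $s_0=0$. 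This follows by the same argument as the $\SL_2(\Z)$-version, using that $\SL_3(\Z)\cdot se_1=s\cdot\Z^3_{\mathrm{prim}}$ is closed discrete while irrational-direction orbits are dense. Applying through $\pi_1$ and $\pi_2$ delivers all the ``only if'' conclusions in (1), (2), (3) at once.

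For the sufficient conditions, case $(s,t)$ in (3) is immediate by continuity. For cases with one vanishing coordinate, my main tool is Lemma~\ref{lem:modd}. For $(s_0,t_0)=(0,t)$, $t>0$, applying it to the upper-left $2\times 2$ block of $A(s_n,t_n)$ (extending trivially on the last row and column) with odd $m_n$ such that $m_ns_n\to\alpha$ produces $\gamma_n\in\Gamma$ and $u_n\in U$ with $\gamma_nA(s_n,t_n)u_n\to A(\alpha,t)$, so $\QQ_0(s_n,t_n)\to\QQ_0(\alpha,t)$ for every $\alpha>0$. Taking the closure over $\alpha$ and invoking Lemma~\ref{lem:q1-q2-q0} then yields $\QQ_0(s_n,t_n)\to\QQ_1(t)$, covering (2a). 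Case $(s,0)$ is handled symmetrically via the lower-right $2\times 2$ block.

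The main technical obstacle will be case (1), $(s_0,t_0)=(0,0)$, where I must verify $\overline{\bigcup_{n\geq N}\Gamma A(s_n,t_n)U}=G/U$ for every $N$. My plan is to iterate Lemma~\ref{lem:modd} twice, rescaling both the upper-left and lower-right $2\times 2$ blocks with carefully chosen odd integers $m_n,m_n'$; the hard part will be the bookkeeping needed to verify that the off-diagonal entries of the resulting $3\times 3$ matrix behave correctly. After passing to a subsequence on which $s_n/t_n\to c\in[0,+\infty]$ and choosing the order of the two rescalings according to~$c$, the resulting lower-triangular limit matrix lies in $P_1\cup P_2$ --- diagonal (hence in $P$) when $c\in\{0,+\infty\}$, with a single nonzero subdiagonal entry when $c\in(0,+\infty)$. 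Varying the rescaling parameters $\alpha,\beta>0$, the $\Gamma$-translates of these limit matrices sweep out $\Gamma P_1/U\cup\Gamma P_2/U$, which is dense in $G/U$ by density of rational directions in $\R^3\setminus\{0\}$ and in $(\R^3)^*\setminus\{0\}$ (via the equivariant maps $\pi_1$ and $\pi_2$). Hence the closure equals~$G/U$, confirming~(1).
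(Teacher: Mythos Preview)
Your proof is correct and differs from the paper's in two places.

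For the necessary conditions the paper argues directly via Lemma~\ref{lem:cornerconv}: if $\gamma_nA(s_n,t_n)U\to A(s,t)U$ with $s_n\ge\delta>0$ then eventually $\gamma_n\in Q_2$ and $s_n\to s$, and dually for $t_n$ via $Q_1$. Your factoring through the equivariant maps $\pi_1,\pi_2$ is more conceptual and reduces everything to the elementary observation that $s_n\Z^3_{\mathrm{prim}}$ can accumulate on a given point of $\R^3\setminus\{0\}$ only if $s_n\to 0$, or $s_n\to s$ when the point lies in $s\Z^3_{\mathrm{prim}}$. Note that the density of irrational-direction $\SL_3(\Z)$-orbits in $\R^3$, which you invoke, is in fact not needed --- only the closed-discrete half of your dichotomy is used.

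For the case $(s_0,t_0)=(0,0)$, both arguments iterate Lemma~\ref{lem:modd}, but when $s_n^{-1}t_n\to r\in(0,+\infty)$ the paper switches to embedding $\SL_2$ in the $(1,3)$-corners so as to land directly on a matrix outside $\Gamma P_1\cup\Gamma P_2$. Your uniform double iteration in the $(1,2)$- and $(2,3)$-blocks instead lands only in $P_1$ (or $P_2$, with the other order), after which you need the extra density step. One caveat on that step: your limit matrices $L_{\alpha,\beta}$ form a two-parameter family in the three-dimensional $P_{1,+}/U$, so their $\Gamma$-translates do not literally fill $\Gamma P_1/U$. What makes the argument work is that for $\alpha/(\beta c_0)\notin\Q$ the quasi-orbit of $L_{\alpha,\beta}U$ is $\QQ_1(\beta)$, and then Lemma~\ref{lem:q1-q2-q0} puts every $\QQ_0(s',\beta)$ in the closure, which suffices.
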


\bp
Assume that $\QQ_0(s_{n}, t_{n})\to 0$, but $s_{n}\not\to0$. Then we can pass to a subsequence and assume that $s_{n}>\delta>0$ for all $n$. Since $\QQ_0(s_{n}, t_{n})\to 0$, for any $s,t>0$ we should have elements $\gamma_{n} \in \Gamma$ such that $\gamma_{n} A(s_n,t_n)U \to A(s, t)U$. By Lemma \ref{lem:cornerconv} it follows that $s_n\to s$. As $s>0$ can be arbitrary, this is a contradiction. Hence $s_n\to0$. In a similar way one shows that $t_n\to0$.

Assume now that $s_{n} \to 0$ and $t_{n} \to 0$. We want to show that $\QQ_0(s_{n}, t_{n})\to0$. By passing to subsequences we may assume that one of the following is true: (i) $s_{n}^{-1}t_{n} \to +\infty$, (ii) $s_{n}^{-1}t_{n} \to 0$ or (iii) $s_{n}^{-1}t_{n} \to r \in (0, +\infty)$.

In the case (i), take $s,t >0$ and pick odd numbers $k_{n}, m_{n}$ such that $m_{n}s_{n} \to s$ and $k_{n} t_{n} \to t$. By Lemma \ref{lem:modd}, for every $n$, there are elements $\gamma\in\SL_2(\Z)$ and $u\in U_2(\R)$ such that
$$
\begin{pmatrix}
  \gamma & \begin{matrix}
             0 \\
             0
           \end{matrix}\\
  \begin{matrix}
    0 & 0
  \end{matrix} & 1
\end{pmatrix}
\begin{pmatrix}
s_{n} & 0 & 0 \\
0 & s_{n}^{-1} t_{n} & 0 \\
0 & 0 & t_{n}^{-1}
\end{pmatrix}
\begin{pmatrix}
  u & \begin{matrix}
             0 \\
             0
           \end{matrix}\\
  \begin{matrix}
    0 & 0
  \end{matrix} & 1
\end{pmatrix}
=
\begin{pmatrix}
m_{n}s_{n} & 0 & 0 \\
2s_{n}  & \frac{s_{n}^{-1} t_{n}}{m_{n}} & 0 \\
0 & 0 & t_{n}^{-1}
\end{pmatrix}.
$$
Applying Lemma \ref{lem:modd} again, we can find $\gamma'=\begin{pmatrix}
k_{n} & c \\
2 & d
\end{pmatrix}\in\SL_2(\Z)$ and $u'\in U_2(\R)$ such that
$$
\gamma'
\begin{pmatrix}
 \frac{s_{n}^{-1} t_{n}}{m_{n}} & 0 \\
  0 & t_{n}^{-1}
\end{pmatrix} u'
=
\begin{pmatrix}
 \frac{k_ns_{n}^{-1} t_{n}}{m_{n}} & 0 \\
  \frac{2s_{n}^{-1} t_{n}}{m_{n}} & \frac{t_{n}^{-1}}{k_{n}}
\end{pmatrix}.
$$
Then
$$
\begin{pmatrix}
1 & 0 & 0 \\
0 &  k_{n} & c \\
0 &  2 & d
\end{pmatrix}
\begin{pmatrix}
m_{n}s_{n} & 0 & 0 \\
2s_{n}  & \frac{s_{n}^{-1} t_{n}}{m_{n}} & 0 \\
0 & 0 & t_{n}^{-1}
\end{pmatrix}\begin{pmatrix}
               1 & \begin{matrix}
                     0 & 0
                   \end{matrix}  \\
               \begin{matrix}
                 0 \\
                 0
               \end{matrix} & u'
             \end{pmatrix}
=
\begin{pmatrix}
m_{n}s_{n} & 0 & 0 \\
2k_{n}s_{n}  & \frac{k_n t_{n}}{m_{n}s_{n}} & 0 \\
4s_{n} & \frac{2 t_{n}}{m_{n}s_{n}} & \frac{1}{k_{n}t_{n}}
\end{pmatrix}.
$$
The last expression converges to $A(s,t)$ by our assumptions and the choice of $k_n$ and $m_n$; note in particular that $k_{n} s_{n} = k_{n}t_{n}(t_{n}^{-1} s_{n} ) \to 0$. Hence $\QQ_0(s_n,t_n)\to\QQ_0(s,t)$. Since the elements $A(s,t)$ represent the $\Gamma$-orbits in $\Gamma P/U$ and, by Lemma~\ref{lem:SLQ-Z}, $\Gamma P=\SL_{3}(\Q)P$ is dense in $G=\SL_3(\R)$, it follows that the sequence $(\QQ_0(s_n,t_n))_n$ converges to every point in $\QQ(G/U)$.

Case (ii) is similar and even slightly easier. We take $s,t >0$ and pick odd numbers $k_{n}, m_{n}$ such that $m_{n}s_{n} \to s$ and $k_{n} t_{n} \to t$. For every $n$, using Lemma \ref{lem:modd} we find $\gamma \in \SL_2(\Z)$ and $u\in U_2(\R)$ such that
$$
\begin{pmatrix}
  1 & \begin{matrix}
        0 & 0
      \end{matrix} \\
  \begin{matrix}
    0 \\
    0
  \end{matrix} & \gamma
\end{pmatrix}
\begin{pmatrix}
s_{n} & 0 & 0 \\
0 & s_{n}^{-1} t_{n} & 0 \\
0 & 0 & t_{n}^{-1}
\end{pmatrix}
\begin{pmatrix}
  1 & \begin{matrix}
        0 & 0
      \end{matrix} \\
  \begin{matrix}
    0 \\
    0
  \end{matrix} & u
\end{pmatrix}
=
\begin{pmatrix}
s_{n} & 0 & 0 \\
0  & k_{n}s_{n}^{-1} t_{n} & 0 \\
0 & 2s_{n}^{-1} t_{n} & \frac{1}{k_{n} t_{n}}
\end{pmatrix} \; .
$$
Using Lemma \ref{lem:modd} one more time we find $\gamma'\in\SL_2(\Z)$ and $u' \in U_{2}(\R)$ such that
$$
\gamma'
\begin{pmatrix}
 s_{n} & 0 \\
  0 & k_{n}s_{n}^{-1}t_{n}
\end{pmatrix} u'
=
\begin{pmatrix}
 m_{n} s_{n} & 0 \\
  2s_{n} & \frac{k_{n}t_{n}}{s_{n}m_{n}}
\end{pmatrix}.
$$
Then
$$
\begin{pmatrix}
  \gamma' & \begin{matrix}
             0 \\
             0
           \end{matrix}\\
  \begin{matrix}
    0 & 0
  \end{matrix} & 1
\end{pmatrix}
\begin{pmatrix}
s_{n} & 0 & 0 \\
0  & k_{n}s_{n}^{-1} t_{n} & 0 \\
0 & 2s_{n}^{-1} t_{n} & \frac{1}{k_{n} t_{n}}
\end{pmatrix}\begin{pmatrix}
  u' & \begin{matrix}
             0 \\
             0
           \end{matrix}\\
  \begin{matrix}
    0 & 0
  \end{matrix} & 1
\end{pmatrix}
=
\begin{pmatrix}
m_{n}s_{n} & 0 & 0 \\
2s_{n}  & \frac{k_{n} t_{n}}{m_{n} s_{n}} & 0 \\
0 & \frac{2t_{n}}{s_n} & \frac{1}{k_{n} t_{n}}.
\end{pmatrix}
$$
The last expression converges to $A(s,t)$ and, as in the case (i), this leads to the conclusion that $\QQ_0(s_n,t_n)\to0$.

In the case (iii), fix $\theta\in\R\setminus\Q$. Consider the matrices
$$
h_n:=s_n^{-1/2}t_n^{1/2}\begin{pmatrix}
 s_n & 0 \\
  0 & t_n^{-1}
 \end{pmatrix}=\begin{pmatrix}
 (s_nt_n)^{1/2} & 0 \\
  0 & (s_nt_n)^{-1/2}
 \end{pmatrix}, \quad h:=r^{1/2}\begin{pmatrix}
1 & 0 \\
\theta & r^{-1}.
\end{pmatrix}
$$
As $s_nt_n\to0$, by Lemma~\ref{lem:SL2-3}(3) we can find $\gamma_n\in\SL_2(\Z)$ such that $\gamma_nh_nU_2(\R)\to hU_2(\R)$ in $\SL_2(\R)/U_2(\R)$. This implies that
$\gamma_n'A(s_n,t_n)U\to gU$, where
$$
\gamma_n':=\begin{pmatrix}
(\gamma_n)_{11} & 0  & (\gamma_n)_{12} \\
0  & 1 & 0 \\
(\gamma_n)_{21} & 0 & (\gamma_n)_{22}
\end{pmatrix},\quad
g:=\begin{pmatrix}
1 & 0 & 0 \\
0  & r & 0 \\
\theta & 0 & r^{-1}
\end{pmatrix}.
$$
To finish the proof of the convergence $\QQ_0(s_n,t_n)\to0$ it suffices to show that the quasi-orbit of~$gU$ is $0\in\QQ(G/U)$, that is, $g\not\in\Gamma P_1\cup\Gamma P_2$. For this observe that for every element $h\in\Gamma P_2$ the first column of $h$ lies in $\R\cdot\Z^3$, while for every $h\in\Gamma P_1$ the last row of $h^{-1}$ lies in $\R\cdot\Z^3$. Since~$g$ does not have either of these properties, we are done.

\smallskip

For the rest of the proof assume that $(s_n,t_n)\to(s_0,t_0)\in[0,+\infty]^2\setminus\{(0,0)\}$.

\smallskip

(3) Assume first that $s_0=0$ and $t_0=t$. By Lemma~\ref{lem:SL2-3}(3) we can find elements $\gamma_{n} \in \SL_{2}(\mathbb{R})$ and $u_{n}\in U_{2}(\R)$ such that
$$
\gamma_{n}
\begin{pmatrix}
s_{n}t_{n}^{-1/2} & 0 \\
0 & s_{n}^{-1}t_{n}^{1/2}
\end{pmatrix}
u_{n}
\to
t^{-1/2}\begin{pmatrix}
s & 0 \\
0 & s^{-1}t
\end{pmatrix}.
$$
Then $\diag(\gamma_n,1)A(s_n,t_n)\diag(u_n,1)\to A(s,t)$, hence $\QQ_0(s_n,t_n)\to\QQ_0(s,t)$. Similarly, if $s_0=s$ and $t_0=0$, then $\QQ_0(s_n,t_n)\to\QQ_0(s,t)$. It is also obvious that if $s_0=s$ and $t_0=t$, then $\QQ_0(s_n,t_n)\to\QQ_0(s,t)$.

Conversely, assume $\QQ_0(s_n,t_n)\to\QQ_0(s,t)$. If $s_0\ne0$, then $s_0=s$ by Lemma \ref{lem:cornerconv}. Similarly, if $t_0\ne0$, then $t_0=t$.

\smallskip

(1) By Lemma~\ref{lem:q1-q2-q0} we have $\QQ_0(s_n,t_n)\to\QQ_1(r)$ if and only if $\QQ_0(s_n,t_n)\to\QQ_0(s,r)$ for all $s>0$. By (3) and the assumption $(s_0,t_0)\ne(0,0)$, this happens if and only if $s_0=0$ and $t_0=r$. (2) is similar.
\ep

Next we consider sequences of elements of the form $\QQ_1(r)$.

\begin{lemma} \label{lem:Q1}
Let $(r_{n})_{n}$ be a sequence in $(0,+\infty)$. Then $\QQ_1(r_n)\to0$ in $\QQ(G/U)$ if and only if $r_n\to0$. If $(r_{n})_{n}$ is bounded away from zero, then
\begin{enumerate}
\item $(\mathcal{Q}_{1}(r_{n}))_n$ has no cluster points of the form $\mathcal{Q}_{2}(r)$;
\item $\mathcal{Q}_{1}(r_{n}) \to \mathcal{Q}_{1}(r)$ if and only if $r_{n} \to r$;
\item $\mathcal{Q}_{1}(r_{n}) \to \QQ_0(s,t)$ if and only if $r_{n} \to t$.
\end{enumerate}
\end{lemma}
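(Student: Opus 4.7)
The backward directions of the initial equivalence and of~(3), together with the backward direction of~(2), will all rest on Lemma~\ref{lem:q1-q2-q0}: since $A(s,r_n)U\in\overline{\Gamma A_1(r_n,\theta)U}$ for every $s>0$, any $\Gamma$-invariant open $V\subset G/U$ containing $A(s,r_n)U$ must intersect $\Gamma A_1(r_n,\theta)U$ and, by $\Gamma$-invariance, contain $A_1(r_n,\theta)U$ itself. With this observation in hand, I will deduce ``$r_n\to 0\Rightarrow\QQ_1(r_n)\to 0$'' and ``$r_n\to t\Rightarrow\QQ_1(r_n)\to\QQ_0(s,t)$'' directly from Lemma~\ref{lem:SL3-convergence-1} applied to the sequences $\QQ_0(s,r_n)$, and the backward direction of~(2) will follow at once from the continuity $A_1(r_n,\theta)\to A_1(r,\theta)$ in $G$.

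For the forward directions of~(2) and~(3), assuming $r_n>\delta$, the plan is to use metrizability of $G/U$ (it is a second countable locally compact Hausdorff space) to produce, via a standard diagonal argument, $\gamma_n\in\Gamma$ and $u_n\in U$ such that $\gamma_n A_1(r_n,\theta)u_n$ converges in $G$ to the chosen representative $A_1(r,\theta)$ or $A(s,t)$ of the limit quasi-orbit. Since both target matrices lie in $P_{1,+}$ and $(A_1(r_n,\theta))_{33}^{-1}=r_n>\delta$, Lemma~\ref{lem:cornerconv} with $i=1$ will yield $r_n^{-1}\to r^{-1}$, respectively $r_n^{-1}\to t^{-1}$, proving both forward directions at once.

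The main obstacle is part~(1). My strategy will be to suppose for contradiction that $\QQ_1(r_{n_k})\to\QQ_2(r)$, produce $\gamma_k A_1(r_{n_k},\theta)u_k\to A_2(r,\theta)$ in $G$ as above, and then invert: $u_k^{-1}A_1(r_{n_k},\theta)^{-1}\gamma_k^{-1}\to A_2(r,\theta)^{-1}$. Because $u_k^{-1}\in U$ preserves bottom rows and the bottom row of $A_1(r_{n_k},\theta)^{-1}$ is $(0,0,r_{n_k})$, the bottom row of the left-hand side equals $r_{n_k}(\gamma_k^{-1})_{3,\ast}$; matching with the bottom row $(0,-r\theta,r)$ of $A_2(r,\theta)^{-1}$ will give
\[
r_{n_k}(\gamma_k^{-1})_{31}\to 0,\qquad r_{n_k}(\gamma_k^{-1})_{32}\to -r\theta,\qquad r_{n_k}(\gamma_k^{-1})_{33}\to r.
\]
The integrality of $(\gamma_k^{-1})_{3j}\in\Z$ together with $r_{n_k}>\delta$ will force these three integer sequences to be bounded and, on a further subsequence, constant, say $0,c,d\in\Z$. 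Ruling out $d=0$ (which would yield $r=0$) and $c=0$ (which would yield $r\theta=0$), the last two relations will give $r_{n_k}\to r/d=-r\theta/c$, hence $-\theta=c/d\in\Q$, contradicting the irrationality of $\theta$. This arithmetic elimination, hinging on $\theta\notin\Q$ together with the fact that $r_n$ stays away from zero, is the only delicate point of the proof.

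The forward direction of the initial equivalence will then be an immediate consequence of part~(1): if $\QQ_1(r_n)\to 0$ but $r_n\not\to 0$, extract a subsequence with $r_{n_k}>\delta$; since the point $0$ is dense in $\QQ(G/U)$, convergence to $0$ is equivalent to convergence to every point of $\QQ(G/U)$, and in particular forces $\QQ_1(r_{n_k})\to\QQ_2(1)$, contradicting part~(1) applied to the subsequence.
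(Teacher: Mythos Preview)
Your proof is correct. The paper's argument is considerably shorter: from Lemma~\ref{lem:q1-q2-q0} it extracts the two-sided equivalence
\[
\QQ_1(r_n)\to x\ \Longleftrightarrow\ \exists\,s_n>0:\ \QQ_0(s_n,r_n)\to x,
\]
and then every assertion of the lemma---including~(1)---is read off directly from Lemma~\ref{lem:SL3-convergence-1}. You use only one direction of this equivalence (for the backward implications) and instead treat the forward directions independently: Lemma~\ref{lem:cornerconv} for~(2)--(3), and for~(1) an explicit bottom-row computation in $G$ that pins down three integer sequences and forces $\theta\in\Q$. The paper's route is more economical and keeps all the casework in one place (Lemma~\ref{lem:SL3-convergence-1}); your treatment of~(1) is more self-contained, bypassing that lemma entirely and making the role of the irrationality of~$\theta$ explicit at this step rather than inherited from earlier arguments. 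Your derivation of the initial forward implication via the density of~$0$ (so that $\QQ_1(r_{n_k})\to 0$ forces $\QQ_1(r_{n_k})\to\QQ_2(1)$) is a nice shortcut that the paper does not spell out.
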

\bp
By Lemma~\ref{lem:q1-q2-q0} we have $\QQ_1(r_n)\to x$ for some $x\in\QQ(G/U)$ if and only if there exist $s_n>0$ such that $\QQ_0(s_n,r_n)\to x$. Using this we see that all statements of the lemma are consequences of Lemma~\ref{lem:SL3-convergence-1}.
\ep

Similarly we get the following result.

\begin{lemma}\label{lem:SL3-convergence-3}
Let $(r_{n})_{n}$ be a sequence in $(0,+\infty)$. Then $\QQ_2(r_n)\to0$ in $\QQ(G/U)$ if and only if $r_n\to0$. If $(r_{n})_{n}$ is bounded away from zero, then
\begin{enumerate}
\item $(\mathcal{Q}_{2}(r_{n}))_n$ has no cluster points of the form $\mathcal{Q}_{1}(r)$;
\item $\mathcal{Q}_{2}(r_{n}) \to \mathcal{Q}_{2}(r)$ if and only if $r_{n} \to r$;
\item $\mathcal{Q}_{2}(r_{n}) \to \QQ_0(s,t)$ if and only if $r_{n} \to s$.
\end{enumerate}
\end{lemma}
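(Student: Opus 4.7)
The plan is to mirror the proof of Lemma~\ref{lem:Q1} verbatim, exploiting the symmetry between the two types of quasi-orbits $\QQ_1(r)$ and $\QQ_2(r)$ that is already encoded in Lemma~\ref{lem:q1-q2-q0}. The key observation is that by Lemma~\ref{lem:q1-q2-q0} we have $\overline{\{\QQ_2(r_n)\}}=\overline{\{\QQ_0(r_n,t):t>0\}}$, and therefore $\QQ_2(r_n)\to x$ in $\QQ(G/U)$ if and only if there exists a sequence $(t_n)_n$ in $(0,+\infty)$ (depending on $x$) such that $\QQ_0(r_n,t_n)\to x$. Once this reformulation is in place, every statement of the lemma reduces to the corresponding statement of Lemma~\ref{lem:SL3-convergence-1} applied to the sequences $(r_n,t_n)$, with the first and second coordinates swapped relative to the proof of Lemma~\ref{lem:Q1}.

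Concretely, for the first assertion, $\QQ_2(r_n)\to0$ holds if and only if we can find $(t_n)_n$ such that $\QQ_0(r_n,t_n)\to0$; by the first part of Lemma~\ref{lem:SL3-convergence-1} this means $r_n\to0$ and $t_n\to0$, so the condition is equivalent to $r_n\to0$ (just take $t_n:=r_n$, say). Now assume $(r_n)_n$ is bounded away from zero. For (1), if $\QQ_2(r_n)\to\QQ_1(r)$, then after passing to a subsequence we would have $\QQ_0(r_n,t_n)\to\QQ_1(r)$ for some $(t_n)_n$, which by Lemma~\ref{lem:SL3-convergence-1}(1) forces $r_n\to0$, a contradiction. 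For (2), $\QQ_2(r_n)\to\QQ_2(r)$ is equivalent to $\QQ_0(r_n,t_n)\to\QQ_0(r,t)$ for every $t>0$ (for some choice of $(t_n)_n$ possibly depending on $t$), and Lemma~\ref{lem:SL3-convergence-1}(3) together with $r_n\not\to0$ then forces $r_n\to r$; conversely, if $r_n\to r$, choosing $t_n:=t$ gives the required convergence. For (3), $\QQ_2(r_n)\to\QQ_0(s,t)$ is equivalent to the existence of $(t_n)_n$ with $\QQ_0(r_n,t_n)\to\QQ_0(s,t)$, which by Lemma~\ref{lem:SL3-convergence-1}(3) and $r_n\not\to0$ is equivalent to $r_n\to s$ (choosing $t_n\to0$ in the converse direction).

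There is essentially no obstacle here: the entire proof is a straightforward transposition of the proof of Lemma~\ref{lem:Q1}, and the heavy lifting has already been done in Lemma~\ref{lem:SL3-convergence-1} and Lemma~\ref{lem:q1-q2-q0}. The only thing worth double-checking is that, in the converse directions of (2) and (3), the sequences $(t_n)_n$ we choose are compatible with all statements of Lemma~\ref{lem:SL3-convergence-1} (in particular that taking $t_n$ constant or $t_n\to0$ indeed yields convergence to the prescribed target, which is immediate from part~(3) of that lemma since $r_n\to s$ with $s>0$ puts us in the case $s_0=s\neq 0$).
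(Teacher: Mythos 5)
Your proposal is correct and is exactly the paper's argument: the paper proves this lemma by the remark ``similarly'' after Lemma~\ref{lem:Q1}, whose proof is precisely the reduction you describe — use Lemma~\ref{lem:q1-q2-q0} to rephrase $\QQ_2(r_n)\to x$ as the existence of $t_n>0$ with $\QQ_0(r_n,t_n)\to x$, and then quote Lemma~\ref{lem:SL3-convergence-1}. Your extra care with the quantifier over $t$ in part (2) and with passing to convergent subsequences in $[0,+\infty]^2$ is sound and fills in the details the paper leaves implicit.
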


\subsection{Stabilizer groups}
We continue to denote $\SL_3(\R)$, $\SL_3(\Z)$ and $U_3(\R)$ by $G$, $\Gamma$ and $U$.
We need to compute the stabilizers for the action $\Gamma\curvearrowright G/U$. Consider the following subgroups of~$\Gamma$:
\begin{equation}\label{eq:Gamma-groups}
H_3(\Z):=
\begin{pmatrix}
1 & \mathbb{Z} & \mathbb{Z} \\
0 & 1 & \mathbb{Z} \\
0 & 0 & 1
\end{pmatrix}, \quad
\Gamma_{1}: =
\begin{pmatrix}
1 & 0 & \mathbb{Z} \\
0 & 1 & \mathbb{Z} \\
0 & 0 & 1
\end{pmatrix}, \quad
\Gamma_{2} :=
\begin{pmatrix}
1 &  \mathbb{Z} & \mathbb{Z} \\
0 & 1 & 0 \\
0 & 0 & 1
\end{pmatrix}.
\end{equation}
Thus, $H_3(\Z)=\SL_3(\Z)\cap U_3(\R)$ is the discrete Heisenberg group.

\begin{lemma}\label{lem:stabilizers}
We have:
\begin{enumerate}
  \item if $g\in P_+$, then the stabilizer of $gU$ is $H_3(\Z)$;
  \item if $g\in P_{i,+}\setminus\Gamma P$ ($i\in\{1,2\}$), then the stabilizer of $gU$ is $\Gamma_i$;
  \item if $g\in G\setminus(\Gamma P_1\cup\Gamma P_2)$, then the stabilizer of $gU$ is trivial.
\end{enumerate}
\end{lemma}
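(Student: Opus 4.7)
My proof plan is to compute $\Gamma \cap gUg^{-1}$ directly in each of the three regimes, exploiting that progressively more normalizer structure is available as we pass from (3) to (1). Part (1) is immediate from $P = N(U)$: for $g \in P_+ \subset P$ we have $gUg^{-1} = U$, so the stabilizer is $\Gamma \cap U = H_3(\Z)$.

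For part (2) I focus on $i=2$; the case $i=1$ is handled identically after passing to transposes or inverses, exactly as in the proof of Lemma~\ref{lem:DM}. The inclusions $U \subset Q_2$ and $g \in P_{2,+} \subset N(Q_2)$ give $gUg^{-1} \subset Q_2$, so the stabilizer lies inside $\Gamma \cap Q_2$. Restricting the condition $g^{-1}\gamma g \in U$ to the bottom-right $2\times 2$ block shows that the $\SL_2(\Z)$-block $\gamma'$ of $\gamma$ must fix the vector $v := (g_{22}, g_{32})^T$ under the linear $\SL_2(\R)$-action on $\R^2$. Now I combine Lemma~\ref{lem:P1-P2-spaces}(2), which identifies the hypothesis $g \in P_{2,+} \setminus \Gamma P$ with $v \notin \R\cdot\Q^2$, and Hedlund's Lemma~\ref{lem:Hedlund}, which forces the $\SL_2(\Z)$-stabilizer of such a $v$ to be trivial. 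Together these give $\gamma' = I$, so $\gamma \in \Gamma_2$. The reverse inclusion $\Gamma_2 \subset \Gamma \cap gUg^{-1}$ is a routine verification: for $\gamma \in \Gamma_2$ one solves $\gamma g = gu$ for $u \in U$ explicitly, using $g_{11} > 0$ and $(g_{22}, g_{32}) \ne 0$.

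Part (3) is the substantive case and I plan to phrase it in terms of invariant flags. Any $\gamma$ in the stabilizer of $gU$ is unipotent (being conjugate into $U$) and fixes the flag $V_1 \subset V_2$ in $\R^3$ with $V_1 := g\langle e_1\rangle$ and $V_2 := g\langle e_1, e_2\rangle$. I will first record the geometric dictionary that $V_1$ is a rational line iff $g \in \Gamma P_2$, and $V_2$ is a rational plane iff $g \in \Gamma P_1$: each implication uses transitivity of $\SL_3(\Z)$ on primitive integer vectors, respectively on rational $2$-planes, to move $V_i$ onto the corresponding coordinate subspace. Under the hypothesis of (3) both $V_1$ and $V_2$ are therefore irrational. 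Assuming $\gamma \ne I$, I case-split on the Jordan type of the rational nilpotent $\gamma - I$: if it has rank $2$ (regular nilpotent), then $\ker(\gamma - I)$ is a rational $1$-dimensional line and, as the unique $\gamma$-invariant line, must equal $V_1$, contradicting irrationality of $V_1$; if it has rank $1$, then the Jordan partition $(2,1)$ gives $(\gamma - I)^2 = 0$, so $K := \ker(\gamma - I)$ is a rational $2$-plane containing the rational line $L := \operatorname{im}(\gamma - I)$, with $V_1 \subset K$ and $L \subset V_2$; either $V_1 = L$ (making $V_1$ rational) or $V_1 \ne L$, in which case $V_1 + L \subset V_2 \cap K$ forces $V_2 = K$ by dimension (making $V_2$ rational). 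Either alternative yields a contradiction, so $\gamma = I$.

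The main obstacle is the case analysis in (3), specifically the bookkeeping that translates $\gamma gU = gU$ into invariance of an irrational flag under a rational unipotent and then rules out each Jordan type. Once the geometric dictionary relating rationality of $V_1$ and $V_2$ to membership in $\Gamma P_2$ and $\Gamma P_1$ is in place, the remainder of the argument is elementary linear algebra; the conceptual point is that a rational unipotent element of $\SL_3(\Z)$ cannot preserve a flag both of whose components are irrational.
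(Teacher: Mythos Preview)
Your proof is correct. Parts (1) and (2) match the paper's argument essentially verbatim, with one small quibble: the fact that the $\SL_2(\Z)$-stabilizer of $v\notin\R\cdot\Q^2$ is trivial does not really come from Hedlund's Lemma~\ref{lem:Hedlund} (which only tells you that a nontrivial stabilizer forces a closed orbit). It is the elementary observation that a nonidentity $\gamma\in\SL_2(\Z)$ fixing $v$ must be unipotent, so $v$ lies on the rational line $\ker(\gamma-I)$. The paper simply states this as obvious.

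For part (3) you take a genuinely different route. The paper normalizes $g$ by hand: after noting that the first column of $g$ cannot have $\Q$-rank $1$ (else $g\in\Gamma P_2$) and disposing of $\Q$-rank $3$ trivially, it acts by $\Gamma$ to force $g_{31}=0$, $g_{11}=1$, and then solves $\gamma g=gu$ coordinate-by-coordinate to squeeze out $\gamma=e$. Your flag argument replaces all of this with a single conceptual statement: a rational unipotent in $\SL_3$ cannot preserve a flag both of whose steps are irrational, and the Jordan-type case split makes this precise. Your approach is cleaner and generalizes more readily (the same flag/Jordan argument would work for $\SL_n$ with the appropriate parabolics), while the paper's coordinate reduction is more elementary but tied to $n=3$. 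The only step you might expand slightly is the assertion $L\subset V_2$ in the rank-$1$ case: it holds because either $(\gamma-I)|_{V_2}=0$, whence $V_2=K$ is already rational, or $(\gamma-I)V_2$ is a nonzero subspace of the line $L$ contained in $V_2$.
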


\bp
(1) The stabilizer of $U\in G/U$ is $\Gamma\cap U=H_3(\Z)$. Since $P_+$ normalizes $U$, by acting by~$P_+$ on the right we conclude that the same is true for every point in $P_+/U\subset G/U$.

\smallskip

(2) First of all note that if $\gamma\in\Gamma$ stabilizes $gU$ for some $g\in P_{1,+}\setminus\Gamma P$, then $\gamma\in\Gamma\cap P_{1,+}=\Gamma\cap Q_1$. By Lemma~\ref{lem:P1-P2-spaces}(1) we can identify $(P_{1,+}\setminus\Gamma P)/U$ with $(\R^2\setminus\R\cdot\Q^2)\times(0,+\infty)$. Since the stabilizer in $\SL_2(\Z)$ of any vector in $\R^2\setminus\R\cdot\Q^2$ is trivial, the stabilizer in $\Gamma\cap Q_1$ of any point in $(\R^2\setminus\R\cdot\Q^2)\times(0,+\infty)$ is $\Gamma_1$. This proves (2) for $i=1$. The case $i=2$ is similar.

\smallskip

(3) Take $g\in G\setminus(\Gamma P_1\cup\Gamma P_2)$. We want to show that the stabilizer of $gU$ is trivial. If $\gamma\in\Gamma$ stabilizes $gU$, then $\gamma$ stabilizes the first column of $g$. If the entries of this column are linearly independent over $\Q$, this already implies that $\gamma=e$. On the other hand, this column cannot lie in $\R\cdot\Q^3$, since otherwise we would have $g\in\Gamma P_2$. Therefore we may assume that the entries of the first column of $g$ span a $2$-dimensional space over $\Q$. Choose $a_{1}, a_{2}, a_{3} \in \mathbb{Z}$ with $\gcd(a_{1}, a_{2}, a_{3})=1$ such that $a_{1}g_{11}+a_{2}g_{21}+a_{3}g_{31}=0$. Take any $\gamma'\in\Gamma$ with the last row $(a_1,a_2,a_3)$. Replacing $g$ by $\gamma' g$ we may then assume that $g_{31}=0$. By acting by a diagonal matrix on the right we may also assume that $g_{11}=1$.

Thus, we may assume that $g$ has the form
$$
g=
\begin{pmatrix}
1 & a &* \\
\theta & b & * \\
0 & c & *
\end{pmatrix},
$$
with $\theta\in\R\setminus\Q$. As $g\not\in P_1$, we must have $c\ne0$. Now, as we have already observed, if $\gamma\in\Gamma$ stabilizes~$gU$, then $\gamma$ stabilizes the first column of $g$. This implies that $\gamma\in\Gamma_1$. Hence we have
$$
\begin{pmatrix}
1 & 0 & \gamma_{13} \\
0 & 1 & \gamma_{23} \\
0 & 0 & 1
\end{pmatrix}
\begin{pmatrix}
1 & a &* \\
\theta & b & * \\
0 & c & *
\end{pmatrix}
=
\begin{pmatrix}
1 & a &* \\
\theta & b & * \\
0 & c & *
\end{pmatrix}
\begin{pmatrix}
1 & r & *\\
0 & 1 & * \\
0 & 0 & 1
\end{pmatrix}
$$
for some $r\in\R$. This gives $\gamma_{13}c=r$ and $\gamma_{23}c = \theta r$. As $c\ne0$ and $\theta\not\in\Q$, it follows that $\gamma_{13}=\gamma_{23}=0$. In conclusion, $\gamma =e$.
\ep

\subsection{The discrete Heisenberg group}\label{ssec:Heisenberg}
Next we recall the structure of the primitive spectrum for the discrete Heisenberg group $H_3(\Z)$. This seems to go back at least to the thesis of Howe~\cite{Howe}, but we will rely on the paper of Baggett and Packer~\cite{BP}.

The group $H_3(\Z)$ is generated by the elements
$$
X:=\begin{pmatrix}
    1 & 0 & 0 \\
    0 & 1 & 1 \\
    0 & 0 & 1
  \end{pmatrix},\quad
Y:=\begin{pmatrix}
    1 & 1 & 0 \\
    0 & 1 & 0 \\
    0 & 0 & 1
  \end{pmatrix},\quad
Z:=\begin{pmatrix}
    1 & 0 & 1 \\
    0 & 1 & 0 \\
    0 & 0 & 1
  \end{pmatrix}.
$$
The element $Z$ is central and $YX=ZXY$. We parameterize $\Prim C^*(H_3(\Z))$ by the triples $(z,a,b)\in\T^3$ such that $a=b=1$ if $\ord z=+\infty$, as follows.

Given an irreducible unitary representation $\pi\colon H_3(\Z)\to U(H)$, we have $\pi(Z)=z1$ for some $z\in\T$. If $\ord z=n\in \N$, then $\pi(X^n)$ and $\pi(Y^n)$ lie in the center of $C^*_\pi(H_3(\Z))$, hence $\pi(X^n)=a1$ and $\pi(Y^n)=b1$ for some $a,b\in\T$. The triple $(z,a,b)$ (with $a=b=1$ if $\ord z=+\infty$) determines~$\pi$ up to weak equivalence.

An irreducible representation $\pi$ corresponding to a given triple $(z,a,b)$ can be obtained as follows. If $\ord z=+\infty$, we consider the quotient of $C^*(H_3(\Z))$ by the ideal generated by $Z-z1$. This quotient is an irrational rotation algebra, so it is simple, and as the representation $\pi$ we can take any irreducible representation of this C$^*$-algebra. If $\ord z=n\in\N$, we take unitary generators $u$ and $v$ of $\Mat_n(\C)$ satisfying the relations $u^n=v^n=1$ and $vu=zuv$, and define
$$
\pi(X):=a^{1/n}u,\quad, \pi(Y):=b^{1/n}v,\quad \pi(Z):=z1,
$$
where $a^{1/n}$ and $b^{1/n}$ are arbitrary $n$-th roots of $a$ and $b$.

We will need the following immediate consequence of this discussion. Recall from~\eqref{eq:Gamma-groups} that we introduced subgroups $\Gamma_1=\langle X, Z\rangle$ and $\Gamma_2=\langle Y,Z\rangle$ of $H_3(\Z)$. We identify $\widehat \Gamma_1$ with~$\T^2$, namely, the character corresponding to $(a,z)\in\T^2$ maps $X$ to $a$ and $Z$ to $z$. Similarly, we identify~$\widehat \Gamma_2$ with $\T^2$.

\begin{lemma}\label{lem:H3-hull}
Let $J\in\Prim C^*(H_3(\Z))$ be the primitive ideal corresponding to parameters $(z,a,b)\in\T^3$. Then
$\hull\big(\operatorname{Res}^{H_3(\Z)}_{\Gamma_1}J\big)$ consists of the points $(c,z)\in\T^2$ with $c^{\ord z}=a$, and $\hull\big(\operatorname{Res}^{H_3(\Z)}_{\Gamma_2}J\big)$ consists of the points $(c,z)\in\T^2$ with $c^{\ord z}=b$.
\end{lemma}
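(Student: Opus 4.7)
The plan is to exploit the explicit form of the representations of $H_3(\Z)$ recalled just above the lemma. Since $\Gamma_1=\langle X,Z\rangle$ and $\Gamma_2=\langle Y,Z\rangle$ are abelian, $\Prim C^*(\Gamma_i)$ coincides with $\widehat{\Gamma_i}\cong\T^2$, so both hulls are closed subsets of $\T^2$. Let $\pi$ be an irreducible representation of $H_3(\Z)$ corresponding to $(z,a,b)$; then $\hull(\Res^{H_3(\Z)}_{\Gamma_1}J)$ is exactly the joint spectrum of the commuting unitaries $\pi(X)$ and $\pi(Z)$, that is, the support of the spectral measure associated with $\pi|_{\Gamma_1}$.

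I will treat the two cases $\ord z=n\in\N$ and $\ord z=+\infty$ separately. In the first case, $\pi(Z)=z\cdot 1$, which forces the $\T$-coordinate of every point of $\hull(\Res^{H_3(\Z)}_{\Gamma_1}J)$ to equal $z$. Moreover $\pi(X^n)=a\cdot 1$, so the spectrum of $\pi(X)$ lies in the set of $n$-th roots of $a$. For the reverse inclusion I will use the concrete realization $\pi(X)=a^{1/n}u$ with $u^n=1$ and $vu=zuv$: since $u$ and $v$ generate $\Mat_n(\C)$, the unitary $u$ is forced to have all $n$-th roots of unity as eigenvalues (for instance, because $vuv^*=\bar z u$ permutes the $n$-dimensional eigenspaces), and hence $\pi(X)$ has every $n$-th root of $a$ in its spectrum. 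This yields equality.

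In the case $\ord z=+\infty$ we have $a=b=1$ by convention, and $\pi$ factors through the irrational rotation algebra $A_z$, which is simple. In this quotient $X$ maps to a unitary whose spectrum is all of $\T$ (any unitary in a unital simple C$^*$-algebra with $\sigma(u)\subsetneq\T$ would have a proper spectral projection, contradicting simplicity), so $\hull(\Res^{H_3(\Z)}_{\Gamma_1}J)=\T\times\{z\}$, which matches the stated formula with the convention $c^{\infty}=1$ interpreted as no constraint on $c$.

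The argument for $\Gamma_2$ is symmetric, exchanging the roles of $X$ and $Y$ and of $a$ and $b$. The only mildly subtle point, which I regard as the main (and only real) obstacle, is the claim that in the finite-order case the spectrum of the chosen $u\in\Mat_n(\C)$ really is the full group of $n$-th roots of unity; this needs a short argument using the relation $vu=zuv$ together with irreducibility, rather than being a mere definition.
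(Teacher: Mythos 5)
Your overall strategy is exactly the one the paper intends (the lemma is stated there as an ``immediate consequence'' of the explicit description of the irreducible representations, with no written proof): identify $\hull\big(\Res^{H_3(\Z)}_{\Gamma_1}J\big)$ with the joint spectrum of the commuting unitaries $\pi(X),\pi(Z)$, observe that $\pi(Z)=z1$ pins the second coordinate, and use the covariance relation coming from $Y$ to show that $\sigma(\pi(X))$ is the full set of $(\ord z)$-th roots of $a$. Your treatment of the finite-order case is correct: $\sigma(u)\subset\mu_n$ since $u^n=1$, and conjugation by $v$ multiplies $u$ by $z^{\pm1}$, a primitive $n$-th root of unity, so $\sigma(u)$ is a nonempty subset of $\mu_n$ invariant under a transitive rotation and hence all of $\mu_n$.

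The one genuine flaw is your justification in the case $\ord z=+\infty$. The claim that ``any unitary in a unital simple C$^*$-algebra with $\sigma(u)\subsetneq\T$ would have a proper spectral projection, contradicting simplicity'' is false on both counts: a proper closed subset of $\T$ need not be disconnected, so no spectral projection need exist in the algebra; and even if one did, that would not contradict simplicity --- the irrational rotation algebra itself is simple and contains many nontrivial projections (e.g.\ $1-2p$ for a Rieffel projection $p$ is a unitary with two-point spectrum). So as written this step does not prove that the canonical generator has full spectrum. The correct argument is the same covariance argument you already used in the finite case: $\pi(Y)\pi(X)\pi(Y)^*=z^{\pm1}\pi(X)$, so $\sigma(\pi(X))$ is a nonempty closed subset of $\T$ invariant under multiplication by $z$, which generates a dense subgroup of $\T$ when $\ord z=+\infty$; hence $\sigma(\pi(X))=\T$. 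With that one-line replacement (no appeal to simplicity needed), the proof is complete, and the $\Gamma_2$ case follows by the symmetry you describe.
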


As before, here we use the convention that if $\ord z=+\infty$, then there are no restrictions on $c$.

\smallskip

Applying \cite{BP}*{Theorem~1.6} we get the following description of the Jacobson topology.

\begin{lemma}
Identifying $\Prim C^*(H_3(\Z))$ with the set of triples $(z,a,b)$ as described above, we have $(z_n,a_n,b_n)\to (z,a,b)$ if and only if the following properties hold: $z_n\to z$ and, if $\ord z=m\in\N$, whenever we have a subsequence $((z_{n_k},a_{n_k},b_{n_k}))_k$ with $\ord z_{n_k}=m$ for all~$k$, we must have $a_{n_k}\to a$ and $b_{n_k}\to b$.
\end{lemma}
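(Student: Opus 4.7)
The plan is to reduce the statement to \cite{BP}*{Theorem~1.6}, which gives the Jacobson topology on $\Prim C^*(H_3(\Z))$ in a parameterization equivalent to the one introduced above.

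First I would verify the bijection between $\Prim C^*(H_3(\Z))$ and the set of triples $(z,a,b) \in \T^3$ (with $a=b=1$ when $\ord z = +\infty$): the central element $Z$ makes $C^*(H_3(\Z))$ into a $C(\T)$-algebra whose fiber over $z$ is an irrational rotation algebra (simple, with a single primitive ideal) when $\ord z = +\infty$, and a homogeneous $M_m$-algebra over $\T^2$ when $\ord z = m$, with the primitive spectrum of the fiber parameterized by the central values $\pi(X^m) = a$ and $\pi(Y^m) = b$. This bundle structure is precisely the content of the discussion preceding the lemma.

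Next, necessity of the convergence $z_n \to z$ is immediate from continuity of the natural map $\Prim C^*(H_3(\Z)) \to \T$ induced by $Z$. For a subsequence with $\ord z_{n_k} = \ord z = m$, all of the primitive ideals $(z_{n_k}, a_{n_k}, b_{n_k})$ lie in a single fiber of the $C(\T)$-algebra structure at the corresponding central character, and the topology on the primitive spectrum of this fiber is simply the Euclidean topology on~$\T^2$; hence $a_{n_k} \to a$ and $b_{n_k} \to b$ are forced.

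The sufficiency under the stated conditions, especially for subsequences where $\ord z_{n_k}$ differs from $m$ (notably when $\ord z_{n_k} = +\infty$), reflects a lower semicontinuity phenomenon familiar from rotation algebras: the ideal structure of the fiber at a rational $z$ is approached from the simple irrational fibers, so every triple $(z,a,b)$ arises as a limit of triples $(z_n,1,1)$ with $\ord z_n = +\infty$ and $z_n \to z$. Concretely, I would check this by producing weak$^*$ convergent sequences of states: for a pure state corresponding to $(z,a,b)$ one can write down matrix coefficients of the standard model and approximate them by matrix coefficients of representations in nearby simple fibers. The main obstacle, and the content we defer to \cite{BP}*{Theorem~1.6}, is carrying out this state-theoretic verification in a uniform way and tracking carefully how the parameters $a$ and $b$ (which are only defined when a divisibility condition on $\ord z$ holds) behave as $z_n$ varies. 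Once this is done, the result of Baggett and Packer translates verbatim into the description stated in the lemma.
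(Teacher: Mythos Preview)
Your proposal is correct and takes essentially the same approach as the paper: both reduce the lemma to \cite{BP}*{Theorem~1.6}. The paper's proof is more terse, simply spelling out the dictionary between the triples $(z,a,b)$ and Baggett--Packer's parameters $(\lambda,N_\lambda,\varphi)$ (where $\lambda$ is the central character $Z\mapsto z$, $N_\lambda=\langle X^m,Y^m,Z\rangle$ when $\ord z=m$, and $\varphi$ records $a,b,z$), while you add the $C(\T)$-algebra heuristic and an outline of the state-approximation picture; but the substance---and the only nontrivial input---is the same external citation in both cases. One small tightening: when you say that a subsequence with $\ord z_{n_k}=m$ lies ``in a single fiber,'' you should note (as the paper does) that since $z_{n_k}\to z$ and $\T$ has only finitely many elements of order $m$, one actually has $z_{n_k}=z$ for large $k$.
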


Note that since the set of elements of order $m$ is finite, we in fact have $z_{n_k} = z$ for all $k$ large enough.

\bp This is a routine translation of \cite{BP}*{Theorem~1.6} to our setting. Since~\cite{BP} deals with a more general class of groups, let us only explain the connection between  our triples $(z,a,b)$ and the triples $(\lambda,N_\lambda,\varphi)$ used in \cite{BP}. The center of $H_3(\Z)$ is generated by~$Z$, so every $z\in\T$ defines a character $\lambda$ of the center such that $\lambda(Z)=z$. The group $N_\lambda\subset H_3(\Z)$ is by definition the preimage of the center of $H_3(\Z)/\ker\lambda$. If $\ord z=+\infty$, then $N_\lambda=\langle Z\rangle$. If $\ord z= m\in\N$, then $N_\lambda=\langle X^m,Y^m,Z\rangle$. In the former case $\varphi=\lambda$, and in the latter case $\varphi$ is the character of~$N_\lambda$ given by $\varphi(X^m)=a$, $\varphi(Y^m)=b$, $\varphi(Z)=z$.
\ep

\subsection{The primitive spectrum}
We are now ready to describe the primitive spectrum of $\SL_3(\Z)\ltimes C_0(\SL_3(\R)/U_3(\R))$. In order to formulate our results, we need to introduce some notation.

Consider the set $(\N\times\R)^-:=(\N\times\R)\cup\{\infty\}$, so $(\N\times\R)^-$ is just the one-point compactification of $\N\times\R$, but later we will introduce a different topology on it.

For $\theta\in\R\setminus\Q$, define a map $\QQ_\theta\colon \T^2\to(\N\times\R)^-$ as follows. If $z\in\T^2$ is an element of finite order and $t\in\R$, then we let
$$
\QQ_\theta(zp(tv_\theta)):=(\ord z,t),\quad \text{where}\quad v_\theta:=\begin{pmatrix}1 \\ \theta \end{pmatrix}
$$
and $p$ denotes the quotient map $\R^2\to\T^2$. For $z\in\T^2\setminus p(\Q^2+\R v_\theta)$ we let $\QQ_\theta(z):=\infty$.

Recall also the elements $A(s,t),A_i(r,\theta)\in\SL_3(\R)$ given by~\eqref{eq:A-matrices} and the subgroups $\Gamma_i\subset H_3(\Z)$ given by~\eqref{eq:Gamma-groups}. We do not distinguish between $\Prim C^*(\Gamma_i)$ and $\widehat\Gamma_i$ and identify the latter space with $\T^2$ as in the previous subsection.

\begin{thm}\label{thm:SL3-1}
As a set, the primitive spectrum of $\SL_3(\Z)\ltimes C_0(\SL_3(\R)/U_3(\R))$ can be identified with the disjoint union of $\{0\}$, two copies $(0,+\infty)_1\times (\N\times\R)^-$ and $(0,+\infty)_2\times (\N\times\R)^-$ of $(0,+\infty)\times (\N\times\R)^-$, and $(0,+\infty)^2\times\Prim C^*(H_3(\Z))$. The identification is described as follows:
\begin{enumerate}
  \item the zero ideal corresponds to the point $0$;
  \item the ideal corresponding to $(r,x)\in (0,+\infty)_i\times (\N\times\R)^-$ ($i=1,2$) is $\Ind(A_i(r,\theta)U_3(\R),z)$, where $\theta\in\R\setminus\Q$ is arbitrary and $z=(a,b)\in\T^2=\widehat\Gamma_i$ is any point such that $\QQ_\theta(a,\bar b)=x$ for $i=1$ and $\QQ_\theta(a,b)=x$ for $i=2$;
  \item the ideal corresponding to $(s,t,J)\in (0,+\infty)^2\times\Prim C^*(H_3(\Z))$ is $\Ind(A(s,t)U_3(\R),J)$.
\end{enumerate}
\end{thm}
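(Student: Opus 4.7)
The plan is to apply Theorem~\ref{thm:main-prim-fc}. By Lemma~\ref{lem:stabilizers} every isotropy group of the action $\Gamma \curvearrowright G/U$ is trivial, $\Z^2$, or the discrete Heisenberg group $H_3(\Z)$---all nilpotent, hence FC-hypercentral---and the action is amenable since $U=U_3(\R)$ is amenable (see Section~\ref{ssec:unipotent}). The theorem then gives surjectivity of $\Ind$ and identifies $\Prim C^*(\G)$ as a set with the $\Gamma$-quasi-orbit space of $\Stab(\G)^\prim$. Since $C_0(G/U)\cap\Ind(x,J)$ determines $\overline{[x]}$, primitive ideals are stratified by the quasi-orbit of $x$ in $G/U$, and I work through each of the four types from Lemma~\ref{lem:representatives}.

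The dense quasi-orbit $\{0\}$ contributes only $0\in\Prim C^*(\G)$: the stabilizer is trivial and the orbit dense, so the induced representation has kernel zero by amenability (cf.\ Proposition~\ref{prop:IIP2}), establishing part~(1). For each closed orbit $\Gamma A(s,t)U$ with $A(s,t)\in P_+$, the orbit is discrete as a quotient $\Gamma/H_3(\Z)$, so $\G|_{\Gamma A(s,t)U}$ is Morita equivalent to $H_3(\Z)$; inner conjugation acts trivially on $\Prim C^*(H_3(\Z))$, so $J\mapsto\Ind(A(s,t)U,J)$ is a bijection from $\Prim C^*(H_3(\Z))$ onto the fiber over $\QQ_0(s,t)$, giving part~(3).

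For $\QQ_1(r)$ I work on the invariant closed set $X_r:=\Gamma A_1(r,\theta)Q_1=\overline{\Gamma A_1(r,\theta)U}$. Setting $Y_r:=A_1(r,\theta)Q_1/U$, a direct computation with the last row of $A_1(r,\theta)^{-1}\gamma A_1(r,\theta)$ shows, using irrationality of $\theta$, that $\gamma Y_r=Y_r$ iff $\gamma\in Q_1(\Z):=\Gamma\cap Q_1=\SL_2(\Z)\ltimes\Z^2$, and moreover $X_r$ is the disjoint union of the $\Gamma$-translates of $Y_r$. Thus $\G|_{X_r}$ is Morita equivalent to $Q_1(\Z)\ltimes Y_r$. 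Parameterising $Y_r$ by the first column of $A_1(r,\theta)q$ identifies $Y_r\cong\R^2\setminus\{0\}$ so that $A_1(r,\theta)U$ corresponds to $v_\theta=(1,\theta)^T$; one checks that the $\Z^2$-factor of $Q_1(\Z)$ acts trivially (it conjugates into $U$, which preserves first columns) while $\SL_2(\Z)$ acts standardly. Consequently $C^*(\G|_{X_r})$ is Morita equivalent to
\[
\SL_2(\Z)\ltimes C_0\bigl((\R^2\setminus\{0\})\times\T^2\bigr),
\]
where $\T^2=\widehat{\Z^2}$ carries the dual $\SL_2(\Z)$-action, which is exactly the groupoid analysed in Section~\ref{ssec:SL2-R2-T2}.

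The primitive ideals above the sub-stratum $\Gamma A_1(r,\theta)U$ (stabilizer $\Gamma_1$) then correspond to $\SL_2(\Z)$-quasi-orbits of pairs $(v_\theta,z')\in(\R^2\setminus\R\cdot\Q^2)\times\T^2$, which by Lemma~\ref{lem:SL2-1} are parameterised by $\QQ_\theta\colon\T^2\to(\N\times\R)^-$; this parameterisation is $\theta$-independent because the target depends only on the quasi-orbit. Tracing the Morita equivalence, the character $(a,b)\in\widehat{\Gamma_1}$ (with $X\mapsto a$, $Z\mapsto b$) corresponds to the point $z'=(a,\bar b)\in\T^2$: the coordinate swap comes from the isomorphism $\Gamma_1\cong\Z^2$ via $X\leftrightarrow f_2$, $Z\leftrightarrow f_1$, and the passage from the dual action (by $A^{-T}$) to the standard one (by $A$) is implemented by conjugation with an element of $\SL_2(\Z)$ that sends $(\alpha,\beta)\mapsto(\beta,\bar\alpha)$. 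This establishes part~(2) for $i=1$; the case $i=2$ is analogous with $Q_2$ in place of $Q_1$. The hard part is precisely this last coordinate bookkeeping; everything else reduces to the quasi-orbit analysis already completed in Section~\ref{ssec:SL2-R2-T2}.
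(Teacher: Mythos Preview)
Your argument is correct and lands in the same place as the paper, but the mechanism for the strata $\QQ_i(r)$ is organised differently. The paper does not pass to the closed invariant set $X_r$ and invoke a Morita equivalence; instead it works directly on $G/U$, using Theorem~\ref{thm:main-prim-fc} together with Lemma~\ref{lem:cornerconv}: if $g,h\in P_{1,+}\setminus\Gamma P$ and $\gamma_n gU\to hU$, then eventually $\gamma_n\in\Gamma\cap Q_1$, so equality of induced primitive ideals is detected by the $(\Gamma\cap Q_1)$-quasi-orbits on $(P_{1,+}/U)\times\widehat\Gamma_1\cong(\R^2\setminus\{0\})\times(0,+\infty)\times\T^2$, handling all $r$ simultaneously. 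Your route through $X_r$ is more structural but needs one fact you leave implicit: for the groupoid equivalence $\G|_{X_r}\sim Q_1(\Z)\ltimes Y_r$ you need $Y_r$ to be \emph{open} in $X_r$, not just closed. This holds because under the continuous map $G/U\to G/Q_1\cong\R^3\setminus\{0\}$, $gQ_1\mapsto(\text{third row of }g^{-1})$, the image of $X_r$ is the $\Gamma$-orbit $r\cdot(\Z^3)_{\mathrm{prim}}$, which is discrete, and $Y_r$ is the fibre over a single point. Both approaches then feed into the same $\SL_2(\Z)$-quasi-orbit computation from Section~\ref{ssec:SL2-R2-T2}, and your coordinate bookkeeping for the $(a,\bar b)$ twist agrees with the paper's computation via conjugation by $\operatorname{diag}(1,-1)$.

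Two small further remarks. Your opening sentence that Theorem~\ref{thm:main-prim-fc} ``identifies $\Prim C^*(\G)$ as a set with the $\Gamma$-quasi-orbit space of $\Stab(\G)^\prim$'' overstates what that theorem says in general (the criterion there involves closures in $\Sub(\G)^\prim$, not $\Stab(\G)^\prim$); fortunately you do not actually use this claim, since your stratum-by-stratum analysis goes through Morita equivalence instead. And for the closed orbits $\Gamma A(s,t)U$, the paper spells out why a closed countable orbit in a second countable locally compact space must be discrete; you assert this without comment, which is fine but worth one line.
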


\bp
As before, we write $\Gamma$, $G$, $U$ for $\SL_3(\Z)$, $\SL_3(\R)$, $U_3(\R)$. The spaces $\Stab(\G)^\prim$ and $\Prim C^*(\G)$ for $\G:=\Gamma\ltimes G/U$ are fibered over the space of quasi-orbits for the action $\Gamma\curvearrowright G/U$. The space of quasi-orbits is described in Section~\ref{ssec:quasi-orbit-SL3}. It consists of four pieces and we need to understand what happens over each of them.

\smallskip

Consider first the points in $G/U$ with dense orbits. By Lemmas~\ref{lem:DM} and~\ref{lem:stabilizers}(3), every such point has trivial stabilizer, hence it defines the zero ideal.

\smallskip

Next, consider the points with closed orbits. Every such orbit is discrete, since otherwise it would have no isolated points and, being also a second countable locally compact space, it would have to be uncountable.  By Lemmas~\ref{lem:DM} and~\ref{lem:representatives} the closed orbits are represented by the matrices $A(s,t)$. By Lemma~\ref{lem:stabilizers}(1) the points $A(s,t)U\in G/U$ have stabilizers~$H_3(\Z)$. By Theorem~\ref{thm:main-prim-fc} it follows that the corresponding part of the primitive spectrum can be identified with $(0,+\infty)^2\times\Prim C^*(H_3(\Z))$, as described in (3).

\smallskip

Next, consider the points $gU$ with $g\in P_{1,+}\setminus\Gamma P$. By Lemma~\ref{lem:stabilizers}(2) the stabilizer of every such point is~$\Gamma_1$. If we are given two such points $gU$ and $hU$, and $\gamma_n gU\to hU$ for some $\gamma_n\in\Gamma$, then by Lemma~\ref{lem:cornerconv} we eventually have $\gamma_n\in\Gamma\cap Q_1$. By Theorem~\ref{thm:main-prim-fc} (or already by~\cite{CN3}), and the fact that $\Prim C^*(\G)$ is a $T_0$-space, it follows that if $\chi,\eta\in\widehat\Gamma_1$, then $\Ind(gU,\chi)=\Ind(hU,\eta)$ if and only if $(gU,\chi)$ and $(hU,\eta)$ have the same orbit closures for the diagonal action $\Gamma\cap Q_1\curvearrowright (P_{1,+}/U)\times\widehat\Gamma_1$, where the action on the second factor is given by conjugation, so $\gamma.\chi=\chi^\gamma=\chi(\gamma^{-1}\cdot\gamma)$. As the action of $\Gamma_1$ is trivial, we in fact have an action of $(\Gamma\cap Q_1)/\Gamma_1\cong\SL_2(\Z)$. Equivalently, we act by the block-diagonal $3\times 3$ matrices $\operatorname{diag}(\gamma,1)$ with $\gamma\in\SL_2(\Z)$.

By Lemma~\ref{lem:P1-P2-spaces}(1) we can identify $(P_{1,+}\setminus\Gamma P)/U$ with $(\R^2\setminus\R\cdot\Q^2)\times(0,+\infty)$. It follows that the part of the primitive spectrum corresponding to the points $gU$ with $g\in P_{1,+}\setminus\Gamma P$ can be parameterized by $\QQ((\R^2\setminus\R\cdot\Q^2)\times\widehat\Gamma_1)\times (0,+\infty)$, where $\QQ((\R^2\setminus\R\cdot\Q^2)\times\widehat\Gamma_1)$ denotes the quasi-orbit space for the diagonal action of $\SL_2(\Z)$ on $(\R^2\setminus\R\cdot\Q^2)\times\widehat\Gamma_1$. Observe now that under the identification of $\widehat\Gamma_1$ with $\T^2\cong\R^2/\Z^2$, the action of $\operatorname{diag}(\gamma,1)$ on $\widehat\Gamma_1$ corresponds to the action of
$$
\begin{pmatrix}
  0 & 1 \\
  1 & 0
\end{pmatrix}(\gamma^\intercal)^{-1}
\begin{pmatrix}
  0 & 1 \\
  1 & 0
\end{pmatrix}
=
\begin{pmatrix}
  1 & 0 \\
  0 & -1
\end{pmatrix}\gamma
\begin{pmatrix}
  1 & 0 \\
  0 & -1
\end{pmatrix}
$$
on $\R^2/\Z^2$. The quasi-orbits of the standard action of $\SL_2(\Z)$ on $(\R^2\setminus\R\cdot\Q^2)\times\T^2$ are described by Lemma~\ref{lem:SL2-1}, which shows that the space of quasi-orbits can be identified with $(\N\times\R)^-$ once we fix a vector $v\in\R^2\setminus(\R\cdot\Q^2)$. Taking $v=v_\theta$ we get (2) for $i=1$.

\smallskip

The remaining case of points $gU$ with $g\in P_{2,+}\setminus\Gamma P$ is similar. We note only that in this case the action of $(\Gamma\cap Q_2)/\Gamma_2\cong\SL_2(\Z)$ on $\widehat\Gamma_2$ gives the usual action of $\SL_2(\Z)$ on $\R^2/\Z^2$, which explains the difference between the parameterizations in (2) for $i=1$ and $i=2$.
\ep

We remark that our analysis of the points $A(s,t)U$ in the above proof used only the essentially trivial consequence of Theorem~\ref{thm:main-prim-fc} saying that $\Ind(x,I)\ne\Ind(y,J)$ if the orbits of $x$ and $y$ are closed and either these orbits are different, or $x=y$ and $I\ne J$. Therefore for the above theorem it suffices to have results of~\cite{CN3} rather than those of the present paper, thanks to the fact that the points with noncommutative stabilizers have closed orbits. This is no longer the case if we want to describe the topology on the primitive spectrum, since these points form a dense set.

In order to describe this topology, define the topology on $(\N\times\R)^-$ such that the closed sets are the empty set, the entire space $(\N\times\R)^-$ and the compact subsets of $\N\times\R$ in the usual product-topology. We parameterize $\Prim C^*(H_3(\Z))$ by triples $(z,a,b)$ as described in Section~\ref{ssec:Heisenberg}. To have a more concise formulation, we also fix $\theta\in\R\setminus\Q$ and use the quasi-orbit space $\QQ(\R^2\times\T^2)$ analyzed in Section~\ref{ssec:SL2-R2-T2}.

\begin{thm}\label{thm:SL3-2}
Using the description of $\Prim(\SL_3(\Z)\ltimes C_0(\SL_3(\R))/U_3(\R))$ from Theorem~\ref{thm:SL3-1}, the Jacobson topology is described as follows.
\begin{enumerate}
  \item The point $0$ is everywhere dense.
  \item A sequence $((r_n,x_n))_n$ in $(0,+\infty)_1\times(\N\times\R)^-$ converges to $0$ if and only if $r_n\to0$. If $(r_n)_n$ is bounded away from zero, then
  \begin{enumerate}
    \item $((r_n,x_n))_n$ has no cluster points in $(0,+\infty)_2\times(\N\times\R)^-$;
    \item $(r_n,x_n)\to(r,x)\in (0,+\infty)_1\times(\N\times\R)^-$ if and only if $r_n\to r$ and $x_n\to x$ in $(\N\times\R)^-$;
    \item $(r_n,x_n)\to (s,t,(z,a,b))\in(0,+\infty)^2\times\Prim C^*(H_3(\Z))$ if and only if $r_n\to t$ and $\QQ(v_\theta,z_n)\to\QQ(se_1,(a^{1/\ord z},\bar z))$ in $\QQ(\R^2\times\T^2)$, where $z_n\in\T^2$ is any point such that $\QQ_\theta(z_n)=x_n$, $v_\theta=\begin{pmatrix}
                                                                                                              1 \\
                                                                                                              \theta
                                                                                                            \end{pmatrix}$ and $e_1=\begin{pmatrix}
                                                                                                              1 \\
                                                                                                              0
                                                                                                            \end{pmatrix}$.
  \end{enumerate}
  \item A sequence $((r_n,x_n))_n$ in $(0,+\infty)_2\times(\N\times\R)^-$ converges to $0$ if and only if $r_n\to0$. If $(r_n)_n$ is bounded away from zero, then
  \begin{enumerate}
    \item $((r_n,x_n))_n$ has no cluster points in $(0,+\infty)_1\times(\N\times\R)^-$;
    \item $(r_n,x_n)\to(r,x)\in (0,+\infty)_2\times(\N\times\R)^-$ if and only if $r_n\to r$ and $x_n\to x$;
    \item $(r_n,x_n)\to (s,t,(z,a,b))\in(0,+\infty)^2\times\Prim C^*(H_3(\Z))$ if and only if $r_n\to s$ and $\QQ(v_\theta,z_n)\to\QQ(s^{-1}te_1,(b^{1/\ord z},z))$, where $z_n\in\T^2$ is any point such that $\QQ_\theta(z_n)=x_n$.
  \end{enumerate}
  \item A sequence $\big((s_n,t_n,(z_n,a_n,b_n))\big)_n$ in $(0,+\infty)^2\times\Prim C^*(H_3(\Z))$ converges to~$0$ if and only if $s_n\to0$ and $t_n\to0$. If $(s_n,t_n)\to(s_0,t_0)\in[0,+\infty]^2\setminus\{(0,0)\}$, then
  \begin{enumerate}
    \item $(s_n,t_n,(z_n,a_n,b_n))\to (r,x)\in (0,+\infty)_1\times(\N\times\R)^-$ if and only if $s_0=0$, $t_0=r$ and $\QQ(s_ne_1,(a_n^{1/\ord z_n},\bar z_n))\to \QQ(v_\theta,z)$, where $z\in\T^2$ is any point such that $\QQ_\theta(z)=x$;
    \item $(s_n,t_n,(z_n,a_n,b_n))\to (r,x)\in (0,+\infty)_2\times(\N\times\R)^-$ if and only if $s_0=r$, $t_0=0$ and $\QQ(s_n^{-1}t_ne_1,(b_n^{1/\ord z_n},z_n))\to \QQ(v_\theta,z)$, where $z\in\T^2$ is any point such that $\QQ_\theta(z)=x$;
    \item $(s_n,t_n,(z_n,a_n,b_n))\to(s,t,(z,a,b))\in (0,+\infty)^2\times\Prim C^*(H_3(\Z))$ if and only if one of the following holds:
    \begin{enumerate}
      \item $s_0=0$, $t_0=t$ and $\QQ(s_ne_1,(a_n^{1/\ord z_n},\bar z_n))\to \QQ(se_1,(a^{1/\ord z},\bar z))$;
      \item  $s_0=s$, $t_0=0$ and $\QQ(t_ne_1,(b_n^{1/\ord z_n},z_n))\to \QQ(te_1,(b^{1/\ord z},z))$;
      \item $s_0=s$, $t_0=t$ and $(z_n,a_n,b_n)\to(z,a,b)$ in $\Prim C^*(H_3(\Z))$.
    \end{enumerate}
  \end{enumerate}
\end{enumerate}
\end{thm}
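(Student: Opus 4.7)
The plan is to deduce every statement of Theorem~\ref{thm:SL3-2} from Theorem~\ref{thm:main-prim-fc} applied to the groupoid $\G=\Gamma\ltimes G/U$, whose isotropy groups — trivial, $\Gamma_i\cong\Z^2$, or $H_3(\Z)$ — are all FC-hypercentral by Lemma~\ref{lem:stabilizers}. That theorem identifies convergence of a sequence $\Ind(x_n,J_n)$ to $\Ind(x,J)$ with the existence of a point $(S,I)$ in the closure of $\bigcup_n\Gamma\cdot(x_n,J_n)$ inside $\Sub(\G)^\prim$ satisfying $S\subset\Gamma_x$ and $\pi_I\prec\pi_J|_S$. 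Item~(1) is immediate from this: the zero ideal is contained in every primitive ideal, so its closure is all of $\Prim C^*(\G)$.

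Next I would handle the ``convergence to $0$'' clauses in (2), (3), and (4), together with the ``no cluster points'' assertions (2a), (3a), and the incompatibility in (4a) vs.\ (4b). These reduce to the analogous statements for the quasi-orbit space $\QQ(G/U)$. Continuity of the composition $\Prim C^*(\G)\to\QQ(G/U)$, sending $\Ind(x,J)$ to the quasi-orbit of $x$, follows from Lemma~\ref{lem:Ind-continuity} combined with openness of $G/U\to\QQ(G/U)$; hence the stated asymptotic conditions on $r_n$ or $(s_n,t_n)$ are necessary by Lemmas~\ref{lem:SL3-convergence-1}, \ref{lem:Q1}, and \ref{lem:SL3-convergence-3}. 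For sufficiency of convergence to $0$, one uses that $r_n\to0$ or $s_n,t_n\to0$ implies the base quasi-orbits become dense in $G/U$, so one can find translates $\gamma_n\cdot g_n$ approximating \emph{any} unit with trivial stabilizer, whence condition~(2) of Theorem~\ref{thm:main-prim-fc} is vacuously satisfied by $(S,I)=(\{e\},0)$.

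The substantive cases are (2b)--(2c), (3b)--(3c), and (4c). For points living over the same stratum, I translate convergence in $\Sub(\G)^\prim$ using Lemma~\ref{lem:Fell-convergence} together with the fact, already used in the proof of Theorem~\ref{thm:SL3-1}, that for points $gU$ with $g\in P_{i,+}\setminus\Gamma P$ only elements of $\Gamma\cap Q_i$ can move their $U$-cosets near one another (Lemma~\ref{lem:cornerconv}), so the effective action on the fiber $\widehat{\Gamma_i}\cong\T^2$ is by $(\Gamma\cap Q_i)/\Gamma_i\cong\SL_2(\Z)$. Combined with the identification $(P_{i,+}\setminus\Gamma P)/U\cong(\R^2\setminus\R\cdot\Q^2)\times(0,+\infty)$ from Lemma~\ref{lem:P1-P2-spaces}, the $(0,+\infty)$-factor decouples as $r_n$ while the $\R^2\times\T^2$-part is governed by Lemmas~\ref{lem:SL2-1}, \ref{lem:SL2-2}, \ref{lem:SL2-3}; this yields the parameterization by $(\N\times\R)^-$ via $\QQ_\theta$ and gives (2b) and (3b) directly, as well as the intra-stratum part of (4c(iii)).

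The main obstacle is the \emph{cross-stratum} cases (2c), (3c), (4a), (4b), where the limiting stabilizer is strictly larger than the stabilizer along the sequence. Here one must identify the limit subgroups $S\subset H_3(\Z)$ arising from conjugates $\gamma_n\Gamma_i\gamma_n^{-1}$, combined with the associated characters, and match these with the restriction hulls described in Lemma~\ref{lem:H3-hull}: $\operatorname{Res}^{H_3(\Z)}_{\Gamma_1}J_{(z,a,b)}$ has hull $\{(c,z):c^{\ord z}=a\}$, which is the source of the exponents $a^{1/\ord z}$ and $b^{1/\ord z}$, while the swap $z\leftrightarrow\bar z$ between (2c) and (3c) comes from the opposite $\SL_2(\Z)$-actions on $\widehat{\Gamma_1}$ versus $\widehat{\Gamma_2}$ noted in the proof of Theorem~\ref{thm:SL3-1}. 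Concretely, to prove the forward direction of (2c), given a convergence $(r_n,x_n)\to(s,t,(z,a,b))$ one uses Theorem~\ref{thm:main-prim-fc} to produce $\gamma_n$ such that $\gamma_nA_1(r_n,\theta)U\to A(s,t)U$ (forcing $r_n\to t$) and such that, after conjugation, the $\gamma_n\Gamma_1\gamma_n^{-1}$ converge in $\Sub(\Gamma)$ to a subgroup $S\subset H_3(\Z)$ with $\Res^{H_3(\Z)}_SJ\subset\ker(\lim\chi_n^{\gamma_n^{-1}})$; translating the subgroup convergence via Lemma~\ref{lem:subG-convergence} and the character convergence via Lemma~\ref{lem:Fell-convergence} back through the identifications of Lemma~\ref{lem:P1-P2-spaces} and Lemma~\ref{lem:H3-hull} yields exactly the $\QQ(\R^2\times\T^2)$-condition stated. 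The converse is then a constructive application of the same translation, using the explicit sequences provided by Lemmas~\ref{lem:SL2-2}, \ref{lem:SL2-3} and the matrix computations of the type in Lemma~\ref{lem:modd}, which ensure that approximations by elements of $\SL_2(\Z)\subset(\Gamma\cap Q_1)/\Gamma_1$ can indeed be lifted to $\Gamma$ and realized simultaneously at the level of base and fiber.
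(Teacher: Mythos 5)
Your overall strategy coincides with the paper's: everything is driven by Theorem~\ref{thm:main-prim-fc}, the quasi-orbit analysis of Sections~\ref{ssec:SL2-R2-T2}--\ref{ssec:quasi-orbit-SL3}, the stabilization Lemma~\ref{lem:cornerconv}, and the restriction hulls of Lemma~\ref{lem:H3-hull}. Your treatment of (1), of the convergence-to-$0$ clauses (where, for a target with trivial isotropy, convergence of quasi-orbits already forces convergence in $\Sub(\G)^\prim$ because the neighbourhood basis of the trivial subgroup in Lemma~\ref{lem:Sigma-basis} involves no nontrivial bisections), of (2a)/(3a), (2b)/(3b), of the cross-stratum cases (2c), (3c), (4a), (4b), and of subcase (iii) of (4c) is essentially the argument in the paper.

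There is, however, a genuine gap: your dichotomy --- ``same stratum'', reduced to the $\SL_2(\Z)$-action on $(\R^2\setminus\R\cdot\Q^2)\times\widehat\Gamma_i$, versus ``cross-stratum'', where the limiting stabilizer differs from the one along the sequence --- does not cover subcases (i) and (ii) of (4c). There both the sequence and the limit sit over points with stabilizer $H_3(\Z)$, so this is not cross-stratum in your sense; yet since $s_n\to 0\ne s$ (resp.\ $t_n\to0\ne t$), Lemma~\ref{lem:cornerconv} only forces $\gamma_n\in\Gamma\cap Q_1$ (resp.\ $Q_2$), not $\gamma_n\in H_3(\Z)$, and the convergence is not governed by $\Prim C^*(H_3(\Z))$ --- these are precisely the subcases the paper singles out as the delicate ones. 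For the ``only if'' direction one must extract from Theorem~\ref{thm:main-prim-fc} a limit pair $(S,I)$ with $\Gamma_1\subset S\subset H_3(\Z)$ (using that $\Gamma\cap Q_1$ normalizes $\Gamma_1$, so $\Gamma_1\subset\gamma_nH_3(\Z)\gamma_n^{-1}$) and then restrict everything to $\Gamma_1$ via Lemma~\ref{lem:H3-hull}. For the ``if'' direction a direct construction of such a point in the closure is awkward, since one would have to control the exact limit of the conjugated Heisenberg stabilizers; the paper instead uses a transitivity device: from $\QQ(s_ne_1,(a_n^{1/\ord z_n},\bar z_n))\to\QQ(se_1,(a^{1/\ord z},\bar z))$ it manufactures, via Lemmas~\ref{lem:SL2-1} and~\ref{lem:SL2-3}, intermediate points $(t,x_k)\in(0,+\infty)_1\times(\N\times\R)^-$ with $(s_n,t_n,(z_n,a_n,b_n))\to(t,x_k)$ for every $k$ (by the already proved (4a)) and $(t,x_k)\to(s,t,(z,a,b))$ (by (2c)), and concludes by taking closures. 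Without some such mechanism your outline does not yield (4c)(i)--(ii). A minor additional slip: in (4a)--(4b) the limiting stabilizer $\Gamma_i$ is strictly \emph{smaller} than the stabilizer $H_3(\Z)$ along the sequence, not larger as you state.
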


Here an expression like $a^{1/\ord z}$ means any $(\ord z)$-th root of $a$ if $\ord z\in\N$. If $\ord z=+\infty$, then $a^{1/\ord z}\in\T$ is an arbitrary element. In both cases the points $\QQ(se_1,(a^{1/\ord z},z))$ are independent of any choices, as we already used in the proof of Lemma~\ref{lem:SL2-3}(3). Note also that Lemmas~\ref{lem:SL2-2} and~\ref{lem:SL2-3} give a description of convergence in $\QQ(\R^2\times\T^2)$ needed in a number of cases in Theorem~\ref{thm:SL3-2}, so it is possible to give a formulation of this theorem that does not involve the space $\QQ(\R^2\times\T^2)$.

We remark that parts (i) and (ii) of 4(c) in the above theorem are probably the most interesting, since they involve only points with noncommutative stabilizers $H_3(\Z)$, yet the convergence there is not dictated by the topology on $\Prim C^*(H_3(\Z))$.

\bp[Proof of Theorem~\ref{thm:SL3-2}]
(1) is obvious. Next, note that a necessary condition for a convergence $\Ind(g_nU,J_n)\to\Ind(gU,J)$ is that the quasi-orbit of $g_nU$ converges to that of $gU$. By Theorem~\ref{thm:main-prim-fc} (or already by Lemma~\ref{lem:Ind-continuity}) this condition is also sufficient if the stabilizer of~$gU$ is trivial. Therefore a number of statements of the theorem follow immediately from our description of the quasi-orbit space for the action $\Gamma\curvearrowright G/U$. Namely, this is the case for the convergence to~$0$ in~(2)--(4) by Lemmas~\ref{lem:SL3-convergence-1}, \ref{lem:Q1} and~\ref{lem:SL3-convergence-3}, as well as for the absence of convergence in~(2a) and~(3a) by Lemmas~\ref{lem:Q1} and~\ref{lem:SL3-convergence-3}.

\smallskip

(2b) Similarly to the proof of Theorem~\ref{thm:SL3-1}(2), once $r_n$ are bounded away from zero, the limit points of $((r_n,x_n))_n$ in $(0,+\infty)_1\times(\N\times\R)^-$ are described using the topology on the quasi-orbit space for the action of $(\Gamma\cap Q_1)/\Gamma_1\cong\SL_2(\Z)$ on $(P_{1,+}\setminus\Gamma P)/U\times\widehat\Gamma_1$. We get that $(r_n,x_n)\to(r,x)$ if and only if $r_n\to r$ and $\QQ(v_\theta,z_n)\to\QQ(v_\theta,z)$, where $z_n$ and $z$ are any points in $\T^2$ such that $\QQ_\theta(z_n)=x_n$ and $\QQ_\theta(z)=x$. But by Lemma~\ref{lem:SL2-2} we have $\QQ(v_\theta,z_n)\to\QQ(v_\theta,z)$ if and only if $x_n\to x$ in $(\N\times\R)^-$. This proves (2b). (3b) is proved similarly.

\smallskip

(2c) By Lemma~\ref{lem:Q1}, once $(r_n)_n$ is bounded away from zero, a necessary and sufficient condition for the quasi-orbits of $A_1(r_n,\theta)U$ to converge to that of $A(s,t)U$ is that $r_n\to t$. Assume therefore that $r_n\to t$. By Lemma~\ref{lem:cornerconv}, if $\gamma_nA_1(r_n,\theta)U\to A(s,t)U$ for some $\gamma_n\in\Gamma$, then eventually we must have $\gamma_n\in \Gamma\cap Q_1$. Hence, for $n$ large enough, the stabilizer of $\gamma_nA_1(r_n,\theta)U$ is $\gamma_n\Gamma_1\gamma_n^{-1}=\Gamma_1$. By Theorem~\ref{thm:main-prim-fc} it follows that a sequence $(\Ind(A_1(r_n,\theta)U,\chi_n))_n$, with $\chi_n\in\widehat\Gamma_1$, converges to $\Ind(A(s,t)U,J)$ for some $J\in\Prim C^*(H_3(\Z))$ if and only if for every subsequence we can choose a subsequence $(\Ind(A_1(r_{n_k},\theta)U,\chi_{n_k}))_{k}$ of that subsequence and elements $\gamma_{n_k}\in\Gamma\cap Q_1$ such that $\gamma_{n_k}A_1(r_{n_k},\theta)U\to A(s,t)U$ and the sequence $(\chi_{n_k}^{\gamma_{n_k}})_k$ converges to a point in $\hull\big(\operatorname{Res}^{H_3(\Z)}_{\Gamma_1}J\big)$. Similarly to~(2b), the requirement on the subsequence means that the $(\Gamma\cap Q_1)$-quasi-orbit of $(A(r_{n_k},\theta)U,\chi_{n_k})\in P_{1,+}/U\times\widehat\Gamma_1$ converges to that of $(A(s,t)U,\chi)$ for some $\chi\in\hull\big(\operatorname{Res}^{H_3(\Z)}_{\Gamma_1}J\big)$. By Lemma~\ref{lem:H3-hull}, if~$J$ corresponds to the parameters $(z,a,b)$, then the last hull consists of the characters $Z\mapsto z$, $X\mapsto a^{1/\ord z}$. This implies~(2c). Similar considerations prove also (3c), (4a) and (4b).

\smallskip

(4c) By Lemma~\ref{lem:SL3-convergence-1}, once $(s_0,t_0)\ne(0,0)$, the quasi-orbits of $A(s_n,t_n)U$ converge to the quasi-orbit of $A(s,t)U$ if and only if $s_0$ and $t_0$ are as in one of the cases (i)--(iii). Consider these three cases separately.

\smallskip

(i) Assume $s_n\to0$ and $t_n\to t$. Assume first that
$$
\QQ(s_ne_1,(a_n^{1/\ord z_n},\bar z_n))\to \QQ(se_1,(a^{1/\ord z},\bar z)).
$$
Choose a sequence of vectors $v_k\in\R^2\setminus\R\cdot\Q^2$ such that $v_k\to se_1$. From Lemma~\ref{lem:SL2-3}(3),(4) we see that we can choose points $w_k\in\T^2$ such that
$w_k\to (a^{1/\ord z},\bar z)$ and $\QQ(s_ne_1,(a_n^{1/\ord z_n},\bar z_n))\to \QQ(v_k,w_k)$ for all $k$. Then $\QQ(v_k,w_k)\to \QQ(se_1,(a^{1/\ord z},\bar z))$. By Lemma~\ref{lem:SL2-1} we have $\QQ(v_k,w_k)=\QQ(v_\theta, w_k')$ for some $w_k'\in\T^2$. Let $x_k:=\QQ_\theta(w_k')$. Then, on the one hand, by (4a) we have $(s_n,t_n,(z_n,a_n,b_n))\to(t,x_k)\in(0,+\infty)_1\times(\N\times\R)^-$ for all $k$. On the other hand, by (2c) we have $(t,x_k)\to (s,t,(z,a,b))$. Hence $(s_n,t_n,(z_n,a_n,b_n))\to(s,t,(z,a,b))$.

Conversely, assume $(s_n,t_n,(z_n,a_n,b_n))\to(s,t,(z,a,b))$. Since this assumption is stable under passing to subsequences, in order to show that $\QQ(s_ne_1,(a_n^{1/\ord z_n},\bar z_n))\to \QQ(se_1,(a^{1/\ord z},\bar z))$ it suffices to prove this convergence for some subsequence. Denote by $\pi_n$ and $\pi$ irreducible unitary representations of $H_3(\Z)$ corresponding to $(z_n,a_n,b_n)$ and $(z,a,b)$. By Theorem~\ref{thm:main-prim-fc}, by passing to a subsequence we can find $\gamma_n\in\Gamma$ such that $\gamma_n A(s_n,t_n)U\to A(s,t)U$, the stabilizers $S_n:=\gamma_n H_3(\Z)\gamma_n^{-1}$ of $\gamma_nA(s_n,t_n)U$ converge to a subgroup $S\subset H_3(\Z)$, and $(S_n,\ker\pi_n^{\gamma_n})\to(S,\ker\rho)$ in $\Sub(H_3(\Z))^\prim$ for an irreducible unitary representation $\rho$ such that $\rho\prec\pi|_S$. By Lemma~\ref{lem:cornerconv} we eventually have $\gamma_n\in\Gamma\cap Q_1$. Since $\Gamma\cap Q_1$ normalizes $\Gamma_1$, it follows that, for $n$ large enough, we have $\Gamma_1\subset S_n$. Hence also $\Gamma_1\subset S$. By restricting representations to $\Gamma_1$ we then conclude from the convergence $(S_n,\ker\pi_n^{\gamma_n})\to(S,\ker\rho)$ that for any $\chi\in\hull(\ker\rho|_{\Gamma_1})\subset\hull(\ker \pi|_{\Gamma_1})$ we can find $\chi_n\in\hull(\ker\pi_n|_{\Gamma_1})$ such that $\chi_n^{\gamma_n}\to \chi$. Similarly to our considerations in~(2b) and~(2c), this is possible only when $\QQ(s_ne_1,(a_n^{1/\ord z_n},\bar z_n))\to \QQ(se_1,(a^{1/\ord z},\bar z))$. This finishes the proof of (i). Case (ii) is analyzed in a similar way.

\smallskip

(iii) Assume $s_n\to s$, $t_n\to t$. Then $A(s_n,t_n)U\to A(s,t)U$ and, moreover, by Lemma~\ref{lem:cornerconv}, if $\gamma_n A(s_n,t_n)U\to A(s,t)U$ for some $\gamma_n\in\Gamma$, then eventually $\gamma_n\in\Gamma\cap Q_1\cap Q_2=H_3(\Z)$. Since~$H_3(\Z)$ is the stabilizer of all these points $A(s_n,t_n)U$ and $A(s,t)U$, it is then immediate from Theorem~\ref{thm:main-prim-fc} that $(s_n,t_n,(z_n,a_n,b_n))\to(s,t,(z,a,b))$ if and only if $(z_n,a_n,b_n)\to(z,a,b)$ in $\Prim C^*(H_3(\Z))$.
\ep

\bigskip

\begin{bibdiv}
\begin{biblist}

\bib{MR1799683}{book}{
   author={Anantharaman-Delaroche, C.},
   author={Renault, J.},
   title={Amenable groupoids},
   series={Monographies de L'Enseignement Math\'ematique 
   },
   volume={36},
   publisher={L'Enseignement Math\'ematique, Geneva},
   date={2000},
   pages={196},
   isbn={2-940264-01-5},
   review={\MR{1799683}},
}

\bib{MR1258035}{article}{
   author={Archbold, R. J.},
   author={Spielberg, J. S.},
   title={Topologically free actions and ideals in discrete $C^*$-dynamical
   systems},
   journal={Proc. Edinburgh Math. Soc. (2)},
   volume={37},
   date={1994},
   number={1},
   pages={119--124},
   issn={0013-0915},
   review={\MR{1258035}},
   doi={10.1017/S0013091500018733},
}

\bib{MR0409720}{article}{
   author={Baggett, Lawrence},
   title={A description of the topology on the dual spaces of certain
   locally compact groups},
   journal={Trans. Amer. Math. Soc.},
   volume={132},
   date={1968},
   pages={175--215},
   issn={0002-9947},
   review={\MR{0409720}},
   doi={10.2307/1994889},
}

\bib{MR1473630}{article}{
   author={Baggett, Lawrence},
   author={Kaniuth, Eberhard},
   author={Moran, William},
   title={Primitive ideal spaces, characters, and Kirillov theory for
   discrete nilpotent groups},
   journal={J. Funct. Anal.},
   volume={150},
   date={1997},
   number={1},
   pages={175--203},
   issn={0022-1236},
   review={\MR{1473630}},
   doi={10.1006/jfan.1997.3115},
}

\bib{BP}{article}{
   author={Baggett, Lawrence},
   author={Packer, Judith},
   title={The primitive ideal space of two-step nilpotent group
   $C^*$-algebras},
   journal={J. Funct. Anal.},
   volume={124},
   date={1994},
   number={2},
   pages={389--426},
   issn={0022-1236},
   review={\MR{1289356}},
   doi={10.1006/jfan.1994.1112},
}

\bib{MR0998613}{article}{
   author={Bekka, Mohammed E. B.},
   title={A characterization of locally compact amenable groups by means of
   tensor products},
   journal={Arch. Math. (Basel)},
   volume={52},
   date={1989},
   number={5},
   pages={424--427},
   issn={0003-889X},
   review={\MR{0998613}},
   doi={10.1007/BF01198348},
}

\bib{MR1047140}{article}{
   author={Bekka, Mohammed E. B.},
   title={Amenable unitary representations of locally compact groups},
   journal={Invent. Math.},
   volume={100},
   date={1990},
   number={2},
   pages={383--401},
   issn={0020-9910},
   review={\MR{1047140}},
   doi={10.1007/BF01231192},
}

\bib{MR1195714}{article}{
   author={Boshernitzan, Michael D.},
   title={Elementary proof of Furstenberg's Diophantine result},
   journal={Proc. Amer. Math. Soc.},
   volume={122},
   date={1994},
   number={1},
   pages={67--70},
   issn={0002-9939},
   review={\MR{1195714}},
   doi={10.2307/2160842},
}

\bib{MR2922380}{article}{
   author={Bowditch, B. H.},
   title={Relatively hyperbolic groups},
   journal={Internat. J. Algebra Comput.},
   volume={22},
   date={2012},
   number={3},
   pages={1250016, 66},
   issn={0218-1967},
   review={\MR{2922380}},
   doi={10.1142/S0218196712500166},
}

\bib{MR2391387}{book}{
   author={Brown, Nathanial P.},
   author={Ozawa, Narutaka},
   title={$C^*$-algebras and finite-dimensional approximations},
   series={Graduate Studies in Mathematics},
   volume={88},
   publisher={American Mathematical Society, Providence, RI},
   date={2008},
   pages={xvi+509},
   isbn={978-0-8218-4381-9},
   isbn={0-8218-4381-8},
   review={\MR{2391387}},
   doi={10.1090/gsm/088},
}

\bib{BS}{article}{
   author={Bruce, Chris},
   author={Scarparo, Eduardo},
   title={A tracial characterization of {F}urstenberg's $\times p, \times q$ conjecture},
  journal={Canad. Math. Bull.},
   volume={67},
   number={1},
   date={2024},
   pages={244--256},
  issn={0008-4395,1496-4287},
  review={\MR{4706815}},
  doi={10.4153/s0008439523000693},
}

\bib{MR0352326}{article}{
   author={Brown, Ian D.},
   title={Dual topology of a nilpotent Lie group},
   journal={Ann. Sci. \'Ecole Norm. Sup. (4)},
   volume={6},
   date={1973},
   pages={407--411},
   issn={0012-9593},
   review={\MR{0352326}},
}

\bib{BM}{article}{
   author={Buss, Alcides},
   author={Martinez, Diego},
   title={Essential groupoid amenability and nuclearity of groupoid C$^*$-algebras},
   how={preprint},
   date={2025},
   eprint={\href{https://arxiv.org/abs/2501.01775}{\texttt{2501.01775 [math.OA]}}},
}

\bib{BL}{article}{
   author={B\"onicke, Christian},
   author={Li, Kang},
   title={Ideal structure and pure infiniteness of ample groupoid
   $C^*$-algebras},
   journal={Ergodic Theory Dynam. Systems},
   volume={40},
   date={2020},
   number={1},
   pages={34--63},
   issn={0143-3857},
   review={\MR{4038024}},
   doi={10.1017/etds.2018.39},
}

\bib{CN2}{article}{
 author={Christensen, Johannes},
   author={Neshveyev, Sergey},
   title={Isotropy fibers of ideals in groupoid $\rm C^*$-algebras},
   journal={Adv. Math.},
   volume={447},
   date={2024},
   pages={Paper No. 109696, 32},
   issn={0001-8708},
   review={\MR{4742724}},
   doi={10.1016/j.aim.2024.109696},
}

\bib{CN3}{article}{
      author={Christensen, Johannes},
   author={Neshveyev, Sergey},
   title={The primitive spectrum of C$^{*}$-algebras of \'etale groupoids with abelian isotropy},
        how={preprint},
        date={2024},
      eprint={\href{https://arxiv.org/abs/2405.02025}{\texttt{2405.02025 [math.OA]}}},
}

\bib{MR2359724}{article}{
   author={Clark, Lisa Orloff},
   title={CCR and GCR groupoid $C^*$-algebras},
   journal={Indiana Univ. Math. J.},
   volume={56},
   date={2007},
   number={5},
   pages={2087--2110},
   issn={0022-2518},
   review={\MR{2359724}},
   doi={10.1512/iumj.2007.56.2955},
}

\bib{MR4052213}{article}{
   author={Courtney, Kristin},
   author={Shulman, Tatiana},
   title={Free products with amalgamation over central
   $\rm{C}^*$-subalgebras},
   journal={Proc. Amer. Math. Soc.},
   volume={148},
   date={2020},
   number={2},
   pages={765--776},
   issn={0002-9939},
   review={\MR{4052213}},
   doi={10.1090/proc/14746},
}

\bib{MR2154349}{article}{
   author={Dahmani, Fran\c cois},
   author={Yaman, Asl\i},
   title={Bounded geometry in relatively hyperbolic groups},
   journal={New York J. Math.},
   volume={11},
   date={2005},
   pages={89--95},
   review={\MR{2154349}},
}

\bib{DM}{article}{
   author={Dani, S. G.},
   author={Margulis, G. A.},
   title={Orbit closures of generic unipotent flows on homogeneous spaces of
   ${\rm SL}(3,{\bf R})$},
   journal={Math. Ann.},
   volume={286},
   date={1990},
   number={1-3},
   pages={101--128},
   issn={0025-5831},
   review={\MR{1032925}},
   doi={10.1007/BF01453567},
}

\bib{MR0744294}{article}{
   author={Dani, S. G.},
   author={Smillie, John},
   title={Uniform distribution of horocycle orbits for Fuchsian groups},
   journal={Duke Math. J.},
   volume={51},
   date={1984},
   number={1},
   pages={185--194},
   issn={0012-7094},
   review={\MR{0744294}},
   doi={10.1215/S0012-7094-84-05110-X},
}

\bib{MR1066810}{article}{
   author={Echterhoff, Siegfried},
   title={On maximal prime ideals in certain group $C^*$-algebras and
   crossed product algebras},
   journal={J. Operator Theory},
   volume={23},
   date={1990},
   number={2},
   pages={317--338},
   issn={0379-4024},
   review={\MR{1066810}},
}

\bib{EE}{article}{
   author={Echterhoff, Siegfried},
   author={Emerson, Heath},
   title={Structure and $K$-theory of crossed products by proper actions},
   journal={Expo. Math.},
   volume={29},
   date={2011},
   number={3},
   pages={300--344},
   issn={0723-0869},
   review={\MR{2820377}},
   doi={10.1016/j.exmath.2011.05.001},
}

\bib{MR3012147}{article}{
   author={Echterhoff, Siegfried},
   author={Kl\"uver, Helma},
   title={A general Kirillov theory for locally compact nilpotent groups},
   journal={J. Lie Theory},
   volume={22},
   date={2012},
   number={3},
   pages={601--645},
   issn={0949-5932},
   review={\MR{3012147}},
}

\bib{EH}{book}{
   author={Effros, Edward G.},
   author={Hahn, Frank},
   title={Locally compact transformation groups and $C\sp{\ast} $- algebras},
   series={},
   volume={No. 75},
   publisher={American Mathematical Society, Providence, R.I.},
   date={1967},
   pages={92},
   review={\MR{0227310}},
}

\bib{MR2060024}{article}{
   author={Elkies, Noam D.},
   author={McMullen, Curtis T.},
   title={Gaps in ${\sqrt n}\bmod 1$ and ergodic theory},
   journal={Duke Math. J.},
   volume={123},
   date={2004},
   number={1},
   pages={95--139},
   issn={0012-7094},
   review={\MR{2060024}},
   doi={10.1215/S0012-7094-04-12314-0},
}

\bib{MR2165547}{article}{
   author={Elkies, Noam D.},
   author={McMullen, Curtis T.},
   title={Correction to: ``Gaps in $\sqrt{n}\bmod1$ and ergodic theory''
   [Duke Math. J. {\bf 123} (2004), no. 1, 95--139; MR2060024]},
   journal={Duke Math. J.},
   volume={129},
   date={2005},
   number={2},
   pages={405--406},
   issn={0012-7094},
   review={\MR{2165547}},
   doi={10.1215/S0012-7094-05-12927-1},
}

\bib{MR2745642}{article}{
   author={Exel, R.},
   title={Non-Hausdorff \'etale groupoids},
   journal={Proc. Amer. Math. Soc.},
   volume={139},
   date={2011},
   number={3},
   pages={897--907},
   issn={0002-9939},
   review={\MR{2745642}},
   doi={10.1090/S0002-9939-2010-10477-X},
}

\bib{MR0146681}{article}{
   author={Fell, J. M. G.},
   title={The dual spaces of $C\sp{\ast} $-algebras},
   journal={Trans. Amer. Math. Soc.},
   volume={94},
   date={1960},
   pages={365--403},
   issn={0002-9947},
   review={\MR{0146681}},
   doi={10.2307/1993431},
}

\bib{MR0139135}{article}{
   author={Fell, J. M. G.},
   title={A Hausdorff topology for the closed subsets of a locally compact
   non-Hausdorff space},
   journal={Proc. Amer. Math. Soc.},
   volume={13},
   date={1962},
   pages={472--476},
   issn={0002-9939},
   review={\MR{0139135}},
   doi={10.2307/2034964},
}

\bib{MR0155932}{article}{
   author={Fell, J. M. G.},
   title={Weak containment and Kronecker products of group representations},
   journal={Pacific J. Math.},
   volume={13},
   date={1963},
   pages={503--510},
   issn={0030-8730},
   review={\MR{0155932}},
}

\bib{MR0159898}{article}{
   author={Fell, J. M. G.},
   title={Weak containment and induced representations of groups. II},
   journal={Trans. Amer. Math. Soc.},
   volume={110},
   date={1964},
   pages={424--447},
   issn={0002-9947},
   review={\MR{0159898}},
   doi={10.2307/1993690},
}

\bib{MR0213508}{article}{
   author={Furstenberg, Harry},
   title={Disjointness in ergodic theory, minimal sets, and a problem in
   Diophantine approximation},
   journal={Math. Systems Theory},
   volume={1},
   date={1967},
   pages={1--49},
   issn={0025-5661},
   review={\MR{0213508}},
   doi={10.1007/BF01692494},
}

\bib{MR0393339}{article}{
   author={Furstenberg, Harry},
   title={The unique ergodicity of the horocycle flow},
   conference={
      title={Recent advances in topological dynamics (Proc. Conf.
      Topological Dynamics, Yale Univ., New Haven, Conn., 1972; in honor of
      Gustav Arnold Hedlund)},
   },
   book={
      series={Lecture Notes in Math.},
      volume={Vol. 318},
      publisher={Springer, Berlin-New York},
   },
   date={1973},
   pages={95--115},
   review={\MR{0393339}},
}

\bib{GlH}{collection}{
   title={Sur les groupes hyperboliques d'apr\`es Mikhael Gromov},
   language={French},
   series={Progress in Mathematics},
   volume={83},
   editor={Ghys, \'E.},
   editor={de la Harpe, P.},
   publisher={Birkh\"auser Boston, Inc., Boston, MA},
   date={1990},
   pages={xii+285},
   isbn={0-8176-3508-4},
   review={\MR{1086648}},
   doi={10.1007/978-1-4684-9167-8},
}

\bib{MR0146297}{article}{
   author={Glimm, James},
   title={Families of induced representations},
   journal={Pacific J. Math.},
   volume={12},
   date={1962},
   pages={885--911},
   issn={0030-8730},
   review={\MR{0146297}},
}

\bib{MR2966476}{article}{
   author={Goehle, Geoff},
   title={The Mackey machine for crossed products by regular groupoids. II},
   journal={Rocky Mountain J. Math.},
   volume={42},
   date={2012},
   number={3},
   pages={873--900},
   issn={0035-7596},
   review={\MR{2966476}},
   doi={10.1216/RMJ-2012-42-3-873},
}

\bib{MR0335681}{article}{
   author={Gootman, Elliot C.},
   title={The type of some $C\sp{\ast} $ and $W\sp{\ast} $-algebras
   associated with transformation groups},
   journal={Pacific J. Math.},
   volume={48},
   date={1973},
   pages={93--106},
   issn={0030-8730},
   review={\MR{0335681}},
}

\bib{GR}{article}{
   author={Gootman, Elliot C.},
   author={Rosenberg, Jonathan},
   title={The structure of crossed product $C\sp{\ast} $-algebras: a proof
   of the generalized Effros--Hahn conjecture},
   journal={Invent. Math.},
   volume={52},
   date={1979},
   number={3},
   pages={283--298},
   issn={0020-9910},
   review={\MR{0537063}},
   doi={10.1007/BF01389885},
}

\bib{MR0246999}{article}{
   author={Greenleaf, F. P.},
   title={Amenable actions of locally compact groups},
   journal={J. Functional Analysis},
   volume={4},
   date={1969},
   pages={295--315},
   issn={0022-1236},
   review={\MR{0246999}},
   doi={10.1016/0022-1236(69)90016-0},
}

\bib{MR0623534}{article}{
   author={Gromov, Mikhael},
   title={Groups of polynomial growth and expanding maps},
   journal={Inst. Hautes \'Etudes Sci. Publ. Math.},
   number={53},
   date={1981},
   pages={53--73},
   issn={0073-8301},
   review={\MR{0623534}},
}

\bib{MR0147925}{article}{
   author={Guichardet, Alain},
   title={Caract\`eres des alg\`ebres de Banach involutives},
   language={French},
   journal={Ann. Inst. Fourier (Grenoble)},
   volume={13},
   date={1963},
   pages={1--81},
   issn={0373-0956},
   review={\MR{0147925}},
}

\bib{MR0330079}{article}{
   author={Hausman, Miriam},
   author={Shapiro, Harold N.},
   title={On the mean square distribution of primitive roots of unity},
   journal={Comm. Pure Appl. Math.},
   volume={26},
   date={1973},
   pages={539--547},
   issn={0010-3640},
   review={\MR{0330079}},
   doi={10.1002/cpa.3160260407},
}

\bib{HH}{article}{
   author={Healy, Brendan Burns},
   author={Hruska, G. Christopher},
   title={Cusped spaces and quasi-isometries of relatively hyperbolic groups},
        how={preprint},
        date={2020},
      eprint={\href{https://arxiv.org/abs/2010.09876}{\texttt{2010.09876 [math.GR]}}},
}

\bib{MR1545946}{article}{
   author={Hedlund, Gustav A.},
   title={Fuchsian groups and transitive horocycles},
   journal={Duke Math. J.},
   volume={2},
   date={1936},
   number={3},
   pages={530--542},
   issn={0012-7094},
   review={\MR{1545946}},
   doi={10.1215/S0012-7094-36-00246-6},
}

\bib{Howe}{thesis}{
   author={Howe, Roger Evans},
   title={On representations of nilpotent groups},
   type={Ph.D. Thesis},
   date={1969},
   organization={University of California, Berkeley},
}

\bib{IW0}{article}{
   author={Ionescu, Marius},
   author={Williams, Dana P.},
   title={Irreducible representations of groupoid $C^*$-algebras},
   journal={Proc. Amer. Math. Soc.},
   volume={137},
   date={2009},
   number={4},
   pages={1323--1332},
   issn={0002-9939},
   review={\MR{2465655}},
   doi={10.1090/S0002-9939-08-09782-7},
}

\bib{IW}{article}{
   author={Ionescu, Marius},
   author={Williams, Dana P.},
   title={The generalized Effros--Hahn conjecture for groupoids},
   journal={Indiana Univ. Math. J.},
   volume={58},
   date={2009},
   number={6},
   pages={2489--2508},
   issn={0022-2518},
   review={\MR{2603756}},
   doi={10.1512/iumj.2009.58.3746},
}

\bib{MR0769602}{article}{
   author={Kaniuth, Eberhard},
   title={Weak containment and tensor products of group representations. II},
   journal={Math. Ann.},
   volume={270},
   date={1985},
   number={1},
   pages={1--15},
   issn={0025-5831},
   review={\MR{0769602}},
   doi={10.1007/BF01455523},
}

\bib{MR0884559}{article}{
   author={Kaniuth, Eberhard},
   title={On topological Frobenius reciprocity for locally compact groups},
   journal={Arch. Math. (Basel)},
   volume={48},
   date={1987},
   number={4},
   pages={286--297},
   issn={0003-889X},
   review={\MR{0884559}},
   doi={10.1007/BF01195101},
}


\bib{MR1088230}{article}{
   author={Kawamura, Shinz\={o}},
   author={Tomiyama, Jun},
   title={Properties of topological dynamical systems and corresponding
   $C^*$-algebras},
   journal={Tokyo J. Math.},
   volume={13},
   date={1990},
   number={2},
   pages={251--257},
   issn={0387-3870},
   review={\MR{1088230}},
   doi={10.3836/tjm/1270132260},
}

\bib{KKLRU}{misc}{
      author={Kennedy, Matthew},
      author={Kim, Se-Jin},
      author={Li, Xin},
      author={Raum, Sven},
      author={Ursu, Dan},
       title={The ideal intersection property for essential groupoid C$^{*}$-algebras},
         how={preprint},
        date={2021},
      eprint={\href{https://arxiv.org/abs/2107.03980v3}{\texttt{2107.03980v3 [math.OA]}}},
}

\bib{MR0031489}{article}{
   author={Mackey, George W.},
   title={Imprimitivity for representations of locally compact groups. I},
   journal={Proc. Nat. Acad. Sci. U.S.A.},
   volume={35},
   date={1949},
   pages={537--545},
   issn={0027-8424},
   review={\MR{0031489}},
   doi={10.1073/pnas.35.9.537},
}

\bib{MR0084498}{article}{
   author={McLain, D. H.},
   title={Remarks on the upper central series of a group},
   journal={Proc. Glasgow Math. Assoc.},
   volume={3},
   date={1956},
   pages={38--44},
   issn={2040-6185},
   review={\MR{0084498}},
}

\bib{MR0419675}{article}{
   author={Moore, Calvin C.},
   author={Rosenberg, Jonathan},
   title={Groups with $T\sb{1}$ primitive ideal spaces},
   journal={J. Functional Analysis},
   volume={22},
   date={1976},
   number={3},
   pages={204--224},
   issn={0022-1236},
   review={\MR{0419675}},
   doi={10.1016/0022-1236(76)90009-4},
}

\bib{MR0667315}{article}{
   author={Poguntke, Detlev},
   title={Discrete nilpotent groups have a $T\sb{1}$\ primitive ideal space},
   journal={Studia Math.},
   volume={71},
   date={1981/82},
   number={3},
   pages={271--275},
   issn={0039-3223},
   review={\MR{0667315}},
   doi={10.4064/sm-71-3-271-275},
}

\bib{MR1081649}{article}{
   author={Ramsay, Arlan},
   title={The Mackey--Glimm dichotomy for foliations and other Polish
   groupoids},
   journal={J. Funct. Anal.},
   volume={94},
   date={1990},
   number={2},
   pages={358--374},
   issn={0022-1236},
   review={\MR{1081649}},
   doi={10.1016/0022-1236(90)90018-G},
}

\bib{MR1106945}{article}{
   author={Ratner, Marina},
   title={Raghunathan's topological conjecture and distributions of
   unipotent flows},
   journal={Duke Math. J.},
   volume={63},
   date={1991},
   number={1},
   pages={235--280},
   issn={0012-7094},
   review={\MR{1106945}},
   doi={10.1215/S0012-7094-91-06311-8},
}

\bib{Rbook}{book}{
   author={Renault, Jean},
   title={A groupoid approach to $C^{\ast} $-algebras},
   series={Lecture Notes in Mathematics},
   volume={793},
   publisher={Springer, Berlin},
   date={1980},
   pages={ii+160},
   isbn={3-540-09977-8},
   review={\MR{584266}},
}

\bib{R}{article}{
   author={Renault, Jean},
   title={The ideal structure of groupoid crossed product $C^\ast$-algebras},
   note={With an appendix by Georges Skandalis},
   journal={J. Operator Theory},
   volume={25},
   date={1991},
   number={1},
   pages={3--36},
   issn={0379-4024},
   review={\MR{1191252}},
}

\bib{Sau}{article}{
   author={Sauvageot, Jean-Luc},
   title={Id\'eaux primitifs de certains produits crois\'es},
   language={French},
   journal={Math. Ann.},
   volume={231},
   date={1977/78},
   number={1},
   pages={61--76},
   issn={0025-5831},
   review={\MR{0473355}},
   doi={10.1007/BF01360030},
}

\bib{Scar}{article}{
   author={Scarparo, Eduardo},
   title={A torsion-free algebraically C$^{*}$-unique group},
   journal={Rocky Mountain J. Math.},
   volume={50},
   date={2020},
   number={5},
   pages={1813--1815},
   review={\MR{4170690}},
   doi={10.1216/rmj.2020.50.1813},
}

\bib{MR2775364}{article}{
   author={Sierakowski, Adam},
   title={The ideal structure of reduced crossed products},
   journal={M\"unster J. Math.},
   volume={3},
   date={2010},
   pages={237--261},
   issn={1867-5778},
   review={\MR{2775364}},
}

\bib{SSW}{collection}{
   author={Sims, Aidan},
   author={Szab\'{o}, G\'{a}bor},
   author={Williams, Dana},
   title={Operator algebras and dynamics: groupoids, crossed products, and
   Rokhlin dimension},
   series={Advanced Courses in Mathematics. CRM Barcelona},
   editor={Perera, Francesc},
   publisher={Birkh\"{a}user/Springer, Cham},
   date={2020},
   pages={x+163},
   isbn={978-3-030-39712-8},
   isbn={978-3-030-39713-5},
   review={\MR{4321941}},
   doi={10.1007/978-3-030-39713-5},
}

\bib{SW}{article}{
   author={Sims, Aidan},
   author={Williams, Dana P.},
   title={The primitive ideals of some \'etale groupoid C$^{*}$-algebras},
   journal={Algebr. Represent. Theory},
   volume={19},
   date={2016},
   number={2},
   pages={255--276},
   doi={10.1007/s10468-015-9573-4},
}

\bib{MR0160118}{article}{
   author={Thoma, Elmar},
   title={\"Uber unit\"are Darstellungen abz\"ahlbarer, diskreter Gruppen},
   language={German},
   journal={Math. Ann.},
   volume={153},
   date={1964},
   pages={111--138},
   issn={0025-5831},
   review={\MR{0160118}},
   doi={10.1007/BF01361180},
}

\bib{MR1313451}{article}{
   author={Tukia, Pekka},
   title={Convergence groups and Gromov's metric hyperbolic spaces},
   journal={New Zealand J. Math.},
   volume={23},
   date={1994},
   number={2},
   pages={157--187},
   issn={1171-6096},
   review={\MR{1313451}},
}

\bib{MR3835454}{article}{
   author={van Wyk, Daniel W.},
   title={The orbit spaces of groupoids whose $C^*$-algebras are GCR},
   journal={J. Operator Theory},
   volume={80},
   date={2018},
   number={1},
   pages={167--185},
   issn={0379-4024},
   review={\MR{3835454}},
   doi={10.7900/jot.2017sep11.2185},
}

\bib{MR4395600}{article}{
   author={van Wyk, Daniel W.},
   author={Williams, Dana P.},
   title={The primitive ideal space of groupoid $C^*$-algebras for groupoids
   with abelian isotropy},
   journal={Indiana Univ. Math. J.},
   volume={71},
   date={2022},
   number={1},
   pages={359--390},
   issn={0022-2518},
   review={\MR{4395600}},
   doi={10.1512/iumj.2022.71.9523},
}

\bib{MR0617538}{article}{
   author={Williams, Dana P.},
   title={The topology on the primitive ideal space of transformation group
   $C\sp{\ast} $-algebras and C.C.R. transformation group $C\sp{\ast}
   $-algebras},
   journal={Trans. Amer. Math. Soc.},
   volume={266},
   date={1981},
   number={2},
   pages={335--359},
   issn={0002-9947},
   review={\MR{0617538}},
   doi={10.2307/1998427},
}

\bib{MR0248688}{article}{
   author={Wolf, Joseph A.},
   title={Growth of finitely generated solvable groups and curvature of
   Riemannian manifolds},
   journal={J. Differential Geometry},
   volume={2},
   date={1968},
   pages={421--446},
   issn={0022-040X},
   review={\MR{0248688}},
}


\end{biblist}
\end{bibdiv}

\bigskip

\end{document}